\newcommand{\F}{{\mathcal {F}}}
\newcommand{\Tgn}{{\bf {T} }_{g,n} }
\newcommand{\Mgn}{{\bf {M}}_{g,n} }
\newcommand{\PSL}{{\bf {PSL}}(2,\Z)}
\newcommand{\Mob}{{\bf {Mob}}(\Ha)}
\newcommand{\Mobi}{{\bf {Mob}_{\infty}}(\Ha)}
\newcommand{\D}{{\bf D}}
\newcommand{\Ha}{{\bf H}}
\newcommand{\ang}{{\Theta}}
\newcommand{\R}{{\bf R}}
\newcommand{\Z}{{\bf Z}}
\newcommand{\N}{{\bf N}}
\newcommand{\h}{{\bf h}}
\newcommand{\z}{{\bf z}}
\newcommand{\p}{{\bf p}}
\newcommand{\at}{{\bf t}}
\newcommand{\abs}{{\bf a}}
\newcommand{\taso}{{\bf b}}
\newcommand{\LM}{{\Lambda}}
\newcommand{\shc}{{\bf {r}}}
\newcommand{\dis}{{\bf d}}
\newcommand{\Thick}{{\bf {Th}}}
\newcommand{\Thin}{{\bf {Thin}}}
\newcommand{\pe}{{\bf {e}}}
\newcommand{\Lab}{{\mathcal {L}}}
\newcommand{\flow}{{\bf {g}}}
\newcommand{\Col}{{\mathcal {C}}}
\newcommand{\Hor}{{\mathcal {H}}}
\newcommand{\Tr}{{\mathcal {T}}}
\newcommand{\E}{{\mathcal {E}}}
\newcommand{\A}{{\mathcal {A}}}
\newcommand{\B}{{\mathcal {B}}}
\newcommand{\U}{{\mathcal {U}}}
\newcommand{\Pol}{{\mathcal {P}}}
\newcommand{\FD}{{\mathcal {D}}}
\newcommand{\cl}{{\mathcal {K}}}
\newcommand{\TB}{T^{1}}
\newcommand{\NB}{N^{1}}
\newcommand{\NBG}{{\bf {N}}^{1}\Gamma}
\newcommand{\Hom}{H_{1}(\overline{S},\Cus(S))}
\newcommand{\Mes}{{\mathcal {M}}}
\newcommand{\ph}{\widehat{\partial}}
\newcommand{\refl}{{\mathcal {R}}}
\newcommand{\tra}{{\mathcal {S}}}
\newcommand{\inv}{{\mathcal {I}}}
\newcommand{\fta}{{\bf {f}}}
\newcommand{\tft}{\bar{{\bf {f}}}}
\newcommand{\numb}{{\bf {n}}}
\newcommand{\gen}{{\bf {g}}}
\newcommand{\wt}{\widetilde}
\newcommand{\wh}{\widehat}
\newcommand{\Area}{\operatorname{Area}}
\newcommand{\Teich}{\operatorname{Teich}}
\newcommand{\ct}{\operatorname{ct}}
\newcommand{\Cus}{\operatorname{Cusp}}
\newcommand{\Gen}{\operatorname{Gen}}
\newcommand{\midp}{\operatorname{mid}}
\newcommand{\IM}{\operatorname{Im}}
\newcommand{\rat}{\operatorname{rat}}
\newcommand{\Sol}{\operatorname{Sol}}
\newcommand{\intg}{\operatorname{int}}
\newcommand{\lab}{\operatorname{lab}}
\newcommand{\rot}{\operatorname{rot}}
\newcommand{\Proj}{\operatorname{Pr}}
\newcommand{\Rot}{{\operatorname {R}}}
\newcommand{\Jac}{{\operatorname {Jac}}}
\newcommand{\ft}{{\operatorname {foot}}}
\newcommand{\ent}{{\operatorname {ent}}}
\newcommand{\ex}{{\operatorname {ex}}}
\newcommand{\supp}{{\operatorname {supp}}}
\newtheorem{theorem}{Theorem}[section]
\newtheorem{definition}{Definition}[section]
\newtheorem{lemma}{Lemma}[section]
\newtheorem{proposition}{Proposition}[section]
\theoremstyle{remark}
\newtheorem*{remark}{Remark}
\begin{document}

\title[Random ideal triangulations] {Random ideal triangulations and the Weil-Petersson distance between finite degree covers of punctured Riemann surfaces}
\author[Kahn and Markovic]{Jeremy Kahn and Vladimir Markovic}
\address{\newline Mathematics Department \newline  Stony Brook University \newline Stony Brook, 11794 \newline  NY, USA \newline  and
\newline University of Warwick \newline Institute of Mathematics \newline Coventry,
CV4 7AL, UK}
\email{kahn@math.sunysb.edu,  v.markovic@warwick.ac.uk}

\today

\subjclass[2000]{Primary 20H10}

\begin{abstract} Let $S$ and $R$ be two hyperbolic finite area surfaces with cusps. We show that for every $\epsilon>0$ there are finite degree unbranched covers $S_{\epsilon} \to S$ and $R_{\epsilon} \to R$, such that the Weil-Petersson distance between $S_{\epsilon}$ and $R_{\epsilon}$ is less than $\epsilon$ in the corresponding Moduli space.
\end{abstract}

\maketitle

\section{Introduction}
We say that a hyperbolic Riemann  surface is of finite type if it has the finite area with respect to the underlining hyperbolic metric. 
Such surfaces are  
either closed or are obtained from closed surfaces after removing at most finitely punctures. 
All Riemann surfaces in this paper are hyperbolic and of finite type
(except the unit disc/upper half space  which is the universal cover of such surfaces). 
Let $S$ and $R$ be two finite type Riemann surfaces that are both either closed or both have at 
least one puncture. Then there is always a common holomorphic (possibly branched) cover of $S$ and $R$. 
However a generic pair of such surfaces will not have a common holomorphic, unbranched  finite degree cover.  
Except the universal cover, from now on  all covers in this  paper will be assumed to be holomorphic, unbranched and finite degree,  
so every time we use the term cover this will be understood. Since we assume that both $S$ and $R$ are either closed or have at least one puncture, one can find  covers  $S_1$ and $R_1$, of $S$ and $R$ respectively, that are quasiconformally  equivalent (there is a quasiconformal map between them). 
This is equivalent to saying that $S_1$ and $R_1$ have the same genus $\gen$ and the same number of punctures $\numb$ (both $S_1$ and $R_1$ 
are of the type $(\gen,\numb)$).

\vskip .1cm

The well-known Ehrenpreis conjecture asserts that for a given $\epsilon>0$ one can find covers $S_1$ and $R_1$,
of $S$ and $R$ respectively, so that $S_1$ and $R_1$ are quasiconformally equivalent and the distance between them
is less than $\epsilon$. Since $S_1$ and $R_1$ are quasiconformally equivalent they belong to the same moduli space $\Mgn$, where $\gen$ is the genus and $\numb$ is the number of 
punctures in $S_1$.  Recall that $\Mgn$ is the space of all hyperbolic metrics one can put on a surface that has genus $\gen$ and $n$ punctures. The  distance between  them  is measured in terms of a metric that exists on $\Mgn$. Originally the problem was posed in terms of any natural metric on the corresponding $\Mgn$ (see \cite{eh}). Note that there are two cases of this conjecture, the first is when $S$ and $R$ have punctures, and the second when they are both closed. 
\begin{remark} In fact there is no easy way showing that one can find the corresponding covers $S_1$ and $R_1$ so that the distance between them in their moduli space is strictly less than the distance between $S$ and $R$ in their moduli space (providing that $S$ and $R$ are homeomorphic). 
\end{remark}

Recall that the Teichm\"uller space is the universal holomorphic cover of $\Mgn$. The two standard (and most studied) metrics on  
$\Tgn$ that are well defined on $\Mgn$ are the Teichm\"uller metric and the Weil-Petersson metric. In this paper we prove this conjecture in the case of 
punctured surfaces and where the distance is measured with respect to the normalised Weil-Petersson metric. We stress that our results do not imply the case when the distance is measured  with respect to the  Teichm\"uller metric. 

\begin{remark}
Let $M$ be a finite type surface and $\pi:M_1 \to M$ be a cover (the genera of $M$ and $M_1$ is $\gen$ and $\gen _1$ respectively and the number of punctures is $\numb$ and $\numb _1$  respectively). Then the covering map $\pi$ induces an embedding $\iota:\Tgn \to {\bf T}_{\gen _1,\numb_1}$. This embedding is an isometry of $\Tgn$ onto its image with respect to the Teichm\"uller metric. However, if one takes the traditional definition of the Weil-Petersson metric this embedding increases the distance by multiplying it by the  square root of the degree of the cover. Therefore it is important that we normalise the Weil-Petersson metric on $\Tgn$ (and therefore on $\Mgn$) by dividing it by the square root of the hyperbolic area of a surface of the type $(\gen,\numb)$  (every such surface has the area equal to $2\pi(2\gen-2+\numb)$). 
After this normalisation the embedding $\iota$ becomes an isometry with respect to the Weil-Petersson metric as well.
\end{remark}

\vskip .1cm
\begin{theorem}   Let $S$ and $R$ be two finite type Riemann surfaces that both have at least one puncture. 
Then given $\epsilon>0$ one can find covers $S_{\epsilon}$ and $R_{\epsilon}$ of $S$ and $R$ respectively, so that $S_{\epsilon}$ and $R_{\epsilon}$ are 
quasiconformally equivalent and the Weil-Petersson  distance between them is less than $\epsilon$.
\end{theorem}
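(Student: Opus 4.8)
The plan is to reduce the problem to a combinatorial statement about random ideal triangulations. Every finite-type surface with at least one puncture admits an ideal triangulation, and by passing to a finite cover we may arrange that the triangulation is as ``spread out'' as we like: concretely, given a large parameter $N$, I would first pass to covers $S'\to S$ and $R'\to R$ on which a fixed ideal triangulation has all edges represented many times, and then pass to further covers so that both $S'$ and $R'$ carry ideal triangulations with the \emph{same} number of triangles, hence the same topological type $(\gen,\numb)$. The key geometric input is that the Weil-Petersson metric is well-approximated, near a point determined by an ideal triangulation, by the flat structure coming from the shear coordinates (the lengths of the horocyclic segments cut off by the triangulation), so that controlling the Weil-Petersson distance amounts to controlling how close the two shear-coordinate vectors are after the normalisation by $\sqrt{\Area}$ described in the Remark.

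Next I would set up the probabilistic model. Fix the ``symmetric'' ideal triangle --- the one all of whose shears are zero, equivalently the once-punctured-torus-type local picture where each triangle is regular --- and think of an ideal triangulation of a cover as a gluing pattern of $2(2\gen-2+\numb)$ copies of the ideal triangle along their edges, with a shear parameter on each edge. The claim I would aim for is: for a random degree-$d$ cover (chosen, say, by a random permutation representation of $\pi_1$), with probability tending to $1$ as $d\to\infty$, the normalised empirical distribution of shears along the edges concentrates near the delta mass at $0$; equivalently, the resulting point in Moduli space is within $\epsilon$ of the ``symmetric'' point $\sigma_{\gen,\numb}$ determined by the all-zero-shear triangulation. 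If this holds for a random cover of $S$ and, separately, for a random cover of $R$, then by the triangle inequality (and after matching topological types) the Weil-Petersson distance between suitable covers $S_\epsilon$ and $R_\epsilon$ is at most $2\epsilon$.

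The two places where real work is needed are: (i) showing that a generic / random finite cover, equipped with a natural ideal triangulation, does have shear coordinates that equidistribute to the symmetric configuration --- this is the ``random ideal triangulations'' heart of the paper and presumably uses a mixing or spectral-gap argument for the action of the mapping-class-group-type moves (or Whitehead moves) on the space of triangulations, or else an explicit averaging construction producing covers whose shears cancel in pairs --- and (ii) the comparison estimate between Weil-Petersson distance and the $\ell^2$ (normalised) distance in shear coordinates, uniformly over a neighbourhood of the symmetric point, which lets one convert the combinatorial concentration into a genuine metric bound. Step (i) is the main obstacle; step (ii) I expect to be a quantitative but essentially standard consequence of Wolpert-type formulas for the Weil-Petersson form in terms of shear / twist-length data, together with the scale invariance built into the normalisation. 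Finally, once both covers are $\epsilon$-close to the \emph{same} symmetric point in the \emph{same} Moduli space $\Mgn$, a last adjustment of the covers (taking a common further cover, or matching the number of triangles exactly) completes the argument.
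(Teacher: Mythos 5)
Your endgame (get a cover of $S$ and a cover of $R$ each Weil--Petersson close to a \emph{modular} point, i.e.\ a surface $\Ha/G_i$ with $G_i$ of finite index in $\PSL$, then pass to the common cover given by $G_1\cap G_2$) is exactly the paper's, and your step (ii) is broadly in the spirit of Section 2 of the paper, although the paper does not use a Wolpert-type $\ell^2$ comparison: it builds an explicit quasiconformal map between $F_{\tau}(0)$ and $F_{\tau}(\shc)$ and bounds the WP norm of the tangent Beltrami differentials via an $L^{\infty}\times L^{1}$ estimate (Lemma 2.1), with the distance controlled not just by $\|\shc\|_{\infty}$ but by the oscillation norm $O_{\tau}(\shc)$ and by the \emph{proportion} $N_{\tau}(\delta)/|\lambda|$ of edges with non-small shear (Theorems 2.1--2.2). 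So an $\epsilon$ bound in sup norm on the shears of most edges is what you should aim for, not concentration of an empirical distribution per se.

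The genuine gap is your step (i). A cover $S'\to S$ chosen by a random permutation representation of $\pi_1(S)$ is locally isometric to $S$, and the lift of any fixed ideal triangulation $\tau(S)$ to $S'$ has \emph{exactly the same} shear coordinates as $\tau(S)$ on every edge; no amount of randomness in the choice of the cover makes these shears small, so ``the empirical distribution of shears concentrates at $0$'' is false for the lifted triangulation, and there is no mechanism in your proposal producing any \emph{other} ideal triangulation of the random cover whose shears are small. This is precisely the hard point the paper addresses, and it does so deterministically rather than by a random-cover model: it builds a measure $\mu(r)$ on \emph{immersed} ideal triangles of $S$ by pushing forward (a weighted version of) the Liouville measure under the absorption map $\abs^{2}_r$ (geodesic rays from $(z,v)$, $(z,\omega v)$, $(z,\omega^2 v)$ flow into cusps, and the three cusps span an ideal triangle); the invariance of Liouville measure under the reflection $\refl$ and the inversions $\inv^{L},\inv^{R}$ makes the induced measure on feet of triangle centres nearly symmetric across each geodesic (Theorem 4.1). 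One then rationalises and integerises this measure (the transport-of-measure propositions of Section 4), repairs the boundary imbalance with the Correction Lemma (Lemma 3.3), pairs up triangles across edges whose centre-feet are within $re^{-r}$ for most edges, and invokes Lemma 3.1 to recognise the resulting admissible pair as a disjoint union of genuine finite covers of $S$, at least one of which has an ideal triangulation with small shears on all but a $P(r)e^{-r}$ fraction of edges. Without some construction of this type (or another concrete mechanism for producing a \emph{new} small-shear triangulation on a finite cover), your argument does not get off the ground; the appeal to mixing or spectral gap for Whitehead moves is not a substitute, since it is not tied to any statement about shears of covers.
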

\vskip .1cm
We construct the covers $S_{\epsilon}$ and $R_{\epsilon}$ explicitly. We believe that  it can be recovered from our construction
that the degree of the covers $S_{\epsilon} \to S$ and $R_{\epsilon} \to R$  is of the order $P({{1}\over{\epsilon}})$, where $P$ is a polynomial that depends on $S$ and $R$. We believe that the degree of this polynomial is independent of $S$ and $R$. 
\vskip .1cm
It has been shown in \cite{m-s} that for every $\epsilon>0$ there are covers of the Modular torus (this is the punctured torus that is isomorphic to $\Ha/G$ where $G$ is a finite index subgroup of $\PSL$) that are not conformally the same but are $\epsilon$ close in the corresponding moduli space and with respect to the Teichm\"uller metric. See  \cite{b-n-s} for equivalent formulations of the Ehrenpreis conjecture (see also \cite{p-r}, \cite{m-s-1} for related results). 
\vskip .1cm
We say that a Riemann surface $S_0$ is modular if  $S_0$ is isomorphic to $\Ha/G$ where $G$ is a finite index subgroup of $\PSL$. Such surfaces are characterised by having an ideal triangulation where all the shears are equal to zero (a shear coordinate that corresponds to an edge $\lambda_i$  of an ideal  triangulation is the signed hyperbolic distance between the normal projections to $\lambda_i$  of the centres of the two ideal triangles that contain $\lambda_i$ as their edge). Give a Riemann surface $S$, for every $r>>0$ we construct  a finite degree cover $S(r) \to S$ such that $S(r)$ has an ideal triangulation where the shear coordinates are ''small". Then there exists a modular surface $S_0(r)$ such that all the shear coordinates of  the corresponding ideal triangulation are equal to zero. One has to make this precise and in particular be able to estimate the Weil-Petersson distance between
$S(r)$ and $S_0(r)$. We show that the Weil-Petersson distance between $S(r)$ and $S_0(r)$ less than $e^{ -{{r}\over{8}}}$. It can be recovered from the construction that the degree of the cover  $S(r) \to S$ is less than $P(e^{r})$, where $P(r)$ is a polynomial in $r$.
\vskip .1cm
In Section 2. we discuss the Weil-Petersson distance and obtain the needed estimates of this distance in terms of the shear coordinates.
In Section 3. we develop the method of construction finite degree covers of $S$ by gluing ideal immersed ideal triangles in $S$. In Section 4. we discuss measures on triangles and the notion of transport of measure. We state Theorem 4.1 which claims existence of certain measures on the space of immersed ideal triangles in $S$. We prove Theorem 1.1 using Theorem 4.1. In Section 5. we construct the measures from Theorem 4.1 and prove this theorem. Heavier computations needed in the proof of Theorem 4.1 are done in Section 6 and Section 7. In the appendix we prove an ergodic type theorem about the geodesic flow on a finite area hyperbolic surface with cusps. This theorem is most likely known but in the absence of an appropriate reference we offer a proof.

\section{The shear coordinates and the Weil-Petersson metric}

\subsection{The Weil-Petersson metric}
Let $S$ be a Riemann  surface of the type $(\gen,\numb)$. Let $\rho(z)|dz|$ be the line 
element for the hyperbolic metric on $S$ (here $z=x+iy$ is the local parameter). 
Denote by $Q(S)$ the Banach space of holomorphic quadratic differentials 
on $S$ with the norm given by
$$
||\phi||_2= \sqrt{ {{1}\over{2\pi(2\gen-2+\numb)}}\int\limits_{S}\rho^{-2}(z)|\phi(z)|^{2}\, dxdy }.
$$
\noindent
Here $\Area(S)=2\pi(2\gen-2+\numb)$ represents the hyperbolic area of $S$. 
Note that if the $\numb \ge 1$ than elements of $Q(S)$ have at most first 
order poles at the punctures. 
\vskip .1cm
By $L^{\infty}(S)$ we  denote the Banach space of Beltrami differentials 
on $S$. 
These are measurable $(-1,1)$ forms with the finite 
supremum norm $||\mu||_{\infty}$ for $\mu \in L^{\infty}(S)$. 
We introduce the equivalence relation on $L^{\infty}(S)$ by saying that 
$\mu \sim \nu$ if
$$
\int\limits_{S}\mu \phi(z)\, dxdy=\int\limits_{S}\nu \phi(z)\, dxdy,
$$
\noindent
for every $\phi \in Q(S)$.  The equivalence class of $\mu \in 
L^{\infty}(S)$ is denoted by $[\mu]$.
The space $L^{\infty}(S)/\sim$ is a finite dimensional vector space. The 
induced supremum norm
on  $L^{\infty}(S)/\sim$ is given by 
$$
||[\mu]||_{\infty}=\inf_{\nu \in [\mu]}||\nu||_{\infty}.
$$
\vskip .1cm
The Teichm\"uller space $\Tgn$ is a complex manifold and its tangent 
space $T_{S}(\Tgn)$ at $S$ is 
identified with the vector space $L^{\infty}/\sim$. The corresponding 
cotangent space $T^{*}_{S}(\Tgn)$ 
is identified with $Q(S)$. The Weil-Petersson pairing on $Q(S)$ is given 
by
$$
\left< \phi,\psi \right>_{WP}={{1}\over{2\pi(2\gen-2+\numb)}}\int\limits_{S}\rho^{-2}(z)\phi(z) 
\overline{\psi(z)}\, dxdy.
$$
\noindent
The induced scalar product on  $T_{S}(\Tgn)=L^{\infty}(S)/ \sim$ is called the 
Weil-Petersson product. The corresponding norm of a vector
$[\mu] \in L^{\infty}(S)/ \sim$ is given by
$$
||[\mu]||_{WP}=\sup_{\phi \in Q(S)} {{1}\over{2\pi(2\gen-2+\numb)||\phi||_2}}
\left|\int\limits_{S}\mu\phi \, dxdy \right|
$$
\noindent
Below we show how to define the norm $||[\mu]||_{WP}$ in terms of  harmonic Beltrami differentials.
The induced Riemannian metric on $\Tgn$ is called the Weil-Petersson 
metric, and by $d_{WP}$ we denote the corresponding distance
on $\Tgn$. 
\vskip .1cm
This definition of the Weil-Petersson distance is a 
modification of the usual one. If $P,Q \in \Tgn$ than the usual Weil-Petersson distance between $P$ and $Q$ is given by 
$\sqrt{ 2\pi(2\gen-2+\numb)}  d_{WP}(P,Q)$. 
Clearly we have just rescaled the distance by the factor 
$\sqrt{ 2\pi(2\gen-2+\numb) }$. If $S_1$ is a Riemann surface of the 
type
$(g_1,n_1)$ that covers $S$ and if $\iota:\Tgn \to {\bf {T_{g_1,n_1}}}$ is 
the induced embedding, than $\iota$ is an isometric embedding
with our definition of the Weil-Petersson distance. Also, according to 
\cite{li}   we have that $d_{WP}(P,Q)$
is strictly less than the Teichm\"uller distance between $P$ and $Q$.
\vskip .1cm

We have 
\begin{lemma} Let $\mu \in L^{\infty}(S)$. Then
$$
||[\mu]||^{2}_{WP}\le 9||[\mu]||_{\infty} \left( \inf_{\nu \in 
[\mu]}\int\limits_{S}|\nu|\rho^{2}(z)\, dxdy \right).
$$
\end{lemma}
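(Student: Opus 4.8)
The plan is to bound $\|[\mu]\|_{WP}$ directly from the variational formula
$$
\|[\mu]\|_{WP}=\sup_{\phi\in Q(S)}\frac{1}{2\pi(2\gen-2+\numb)\,\|\phi\|_2}\left|\int_S\mu\phi\,dxdy\right|,
$$
by picking, for a given $\mu$, a good competitor $\nu\sim\mu$ and estimating the integral $\int_S\nu\phi\,dxdy$ by splitting off one factor of $\rho^{-2}|\phi|$. The point of the splitting is that $\rho^{-2}|\phi|$ is exactly the density that appears in $\|\phi\|_2^2$, while the leftover factor $\rho^2$ pairs with $|\nu|$ to produce the quantity $\int_S|\nu|\rho^2\,dxdy$ in the statement.

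First I would fix $\phi\in Q(S)$ and $\nu\in[\mu]$ and write
$$
\left|\int_S\nu\phi\,dxdy\right|\le\int_S|\nu|\,|\phi|\,dxdy=\int_S\bigl(|\nu|\,\rho^{2}\bigr)^{1/2}\cdot\bigl(|\nu|\,\rho^{-2}|\phi|^{2}\bigr)^{1/2}dxdy,
$$
and apply Cauchy--Schwarz to the two displayed factors. This gives
$$
\left|\int_S\nu\phi\,dxdy\right|^2\le\left(\int_S|\nu|\,\rho^{2}\,dxdy\right)\left(\int_S|\nu|\,\rho^{-2}|\phi|^{2}\,dxdy\right)\le\|\nu\|_\infty\left(\int_S|\nu|\,\rho^{2}\,dxdy\right)\!\cdot 2\pi(2\gen-2+\numb)\,\|\phi\|_2^2,
$$
where in the last step I bound the pointwise factor $|\nu|$ in the second integral by $\|\nu\|_\infty$ and recognize what remains as $2\pi(2\gen-2+\numb)\|\phi\|_2^2$. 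Dividing by $\bigl(2\pi(2\gen-2+\numb)\|\phi\|_2\bigr)^2$ and taking the supremum over $\phi$ yields
$$
\|[\mu]\|_{WP}^2\le\frac{\|\nu\|_\infty}{2\pi(2\gen-2+\numb)}\int_S|\nu|\,\rho^2\,dxdy.
$$

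At this stage I would pass to the infimum over $\nu\in[\mu]$: the factor $\|\nu\|_\infty$ can be replaced by anything $\ge\|[\mu]\|_\infty$, and the integral factor can be replaced by its infimum, but since these two infima are not simultaneously attained by the same $\nu$ one must argue slightly more carefully — pick $\nu_1$ nearly realizing $\|[\mu]\|_\infty$ and $\nu_2$ nearly realizing the infimum of $\int|\nu|\rho^2$, then note the bound above holds with $\nu$ replaced by $\nu_2$ and $\|\nu_2\|_\infty$ only controlled by... actually the cleanest route is to observe that for the single choice $\nu=\nu_2$ we may bound $\|\nu_2\|_\infty$ trivially but that is not $\|[\mu]\|_\infty$. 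I expect the intended resolution is to bound $\|\nu\|_\infty$ in the second integral's pointwise estimate using a different representative than the one controlling the $L^1$-type integral, via a $1/3$-type covering or partition argument on $S$ — this is presumably where the constant $9=3^2$ enters, replacing the naive constant $1$ by $3^2$. So the main obstacle is the bookkeeping needed to decouple the two infima: combining a near-minimizer of the $\infty$-norm with a near-minimizer of the weighted $L^1$ integral at the cost of a universal multiplicative constant. Once that constant-$9$ maneuver is in place, the Cauchy--Schwarz computation above finishes the proof.
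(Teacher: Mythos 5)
Your Cauchy--Schwarz computation is sound as far as it goes: for any \emph{single} representative $\nu\in[\mu]$ it gives
$$
||[\mu]||^{2}_{WP}\le \frac{||\nu||_{\infty}}{2\pi(2\gen-2+\numb)}\int\limits_{S}|\nu|\rho^{2}(z)\,dxdy ,
$$
and this coupled inequality would in fact be enough for the way the lemma is later applied (in Theorem 2.1 both factors are estimated for the same representative $\mu$). But it does not prove the lemma as stated, and you have located the gap yourself: in the statement the infimum defining $||[\mu]||_{\infty}$ and the infimum of $\int|\nu|\rho^{2}$ are taken independently, and a near-minimizer of the weighted $L^{1}$ quantity may have arbitrarily large $L^{\infty}$ norm (the class $[\mu]$ is an affine subspace of $L^{\infty}(S)$, and nothing ties the two near-minimizers together; your attempt to run the estimate with $\nu=\nu_{2}$ alone only recovers $||[\mu]||_{WP}\le ||\nu_2||_{\infty}$-type information). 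The proposed rescue --- a ``$1/3$-type covering or partition argument'' as the source of the constant $9$ --- is not an argument, and it also misidentifies where $9$ comes from.

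The paper decouples the two infima by a different mechanism. It writes the WP norm through the harmonic Beltrami differential: $||[\mu]||^{2}_{WP}=\int_{S_0}v^{2}|\phi[\mu](w)|^{2}\,dudv$, where $\phi[\mu](w)=\frac{-12}{\pi}\int_{\Ha}\overline{\mu(z)}K(z,w)\,dxdy$ and $K$ is the Bergman kernel of $\Ha$. The crucial point is that $\phi[\nu]=\phi[\mu]$ for \emph{every} $\nu\in[\mu]$, so in the elementary bound
$$
||[\mu]||^{2}_{WP}\le \sup_{w}|v^{2}\phi[\nu_1](w)|\cdot\int\limits_{S_0}|\phi[\nu](w)|\,dudv
$$
one is free to use different representatives $\nu_1$ and $\nu$ in the two factors. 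The kernel normalisation $\frac{4v^{2}}{\pi}\int_{\Ha}|K(z,w)|\,dxdy=1$ gives $\sup_{w}|v^{2}\phi[\nu_1](w)|\le 3||\nu_1||_{\infty}$, while an unfolding of $\Ha$ into $G$-translates of the fundamental domain, the invariance property of $K$, Fubini, and the same normalisation give $\int_{S_0}|\phi[\nu]|\,dudv\le 3\int_{S}|\nu|\rho^{2}\,dxdy$; taking the infima separately over $\nu_1$ and $\nu$ produces $9=3\cdot 3$. To complete your route you would need some analogous class-invariant object whose sup norm is controlled by $||[\mu]||_{\infty}$; as written, that decoupling step is missing.
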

\begin{proof}
Let $G$ be the covering group of M\"obius transformations acting on the upper half plane $\Ha$
so that the Riemann surface $S$ is isomorphic to $\Ha /G$. Let $S_0$ be a fundamental domain for the
action of $G$. The lift of $\mu \in L^{\infty}(S)$ to $\Ha$ we also denote by $\mu$. Recall that the
density of the hyperbolic metric on $\Ha$ is given by $\rho^{2}(z)=y^{-2}$ where $z=x+iy \in \Ha$. 
Let $w=u+iv \in \Ha$ denote another complex parameter. Set
$$
K(z,w)={{1}\over{(\bar{z}-w)^{4}}},
$$
\noindent
where $z,w \in \Ha$. The function $K(z,w)$ is the Bergman kernel for $\Ha$.
The following are the well known properties of $K(z,w)$. We have 
\begin{itemize}
\item $|K(z,w)|=|K(w,z)|$.

\item For any M\"obius transformation $g:\Ha \to \Ha$ we have
\begin{equation}\label{kernel-1}
|K(g(z),g(w))||g'(z)|^{2}|g'(w)|^{2}=|K(z,w)|,
\end{equation}

\item  For every $z,w \in \Ha$ we have
\begin{equation}\label{kernel-2}
{{4v^{2}}\over{\pi}} \int\limits_{\Ha} |K(z,w)|\, dxdy=
{{4y^{2}}\over{\pi}} \int\limits_{\Ha} |K(z,w)|\, dudv=1.
\end{equation}

\end{itemize}

\vskip .1cm
Let
$$
\phi[\mu](w)={{-12}\over{\pi}} \int\limits_{\Ha} \overline{\mu(z)} K(z,w)\, dxdy.
$$
\noindent
The differential  $v^{2}\phi[\mu](w)$ is called the harmonic Beltrami differential. The Weil-Petersson
norm $||[\mu]||_{WP}$ can be expressed as (see \cite{ah})
$$
||[\mu]||^{2}_{WP}=\int\limits_{S_0} v^{2}|\phi[\mu](w)|^{2}\, dudv.
$$

Let $\nu, \nu_1 \in [\mu]$ be any Beltrami dilatations from $[\mu]$ and use the same notation for the lifts of $\nu$ and $\nu_1$ to $\Ha$. Then
$\phi[\mu](w)=\phi[\nu](w)=\phi[\nu_1](w)$.
We have
$$
||[\mu]||^{2}_{WP} \le \sup_{w \in \Ha}|v^{2} \phi[\nu_1](w)|
\left(\int\limits_{S_0} |\phi[\nu](w)|\, dudv \right).
$$
\noindent
From (\ref{kernel-2}) we have
 
\begin{equation}\label{WP-1}
||[\mu]||^{2}_{WP} \le 3||[\mu]||_{\infty}
\int\limits_{S_0} |\phi[\nu](w)|\, dudv.
\end{equation}

\vskip .1cm
We have 
 
$$
\int\limits_{S_0} |\phi[\nu](w)|\, dudv=
{{12}\over{\pi}} \int\limits_{S_0} \left|\int\limits_{\Ha}\overline{\nu(z)} K(z,w)\, dxdy \right|\, dudv \le
$$

$$
\le {{12}\over{\pi}} \int\limits_{S_0} \left(\int\limits_{\Ha}|\nu(z)||K(z,w)|\, dxdy \right)\, dudv.
$$
\noindent
We partition $\Ha$ into the sets $g(S_0)$, $g \in G$. This gives

$$
\int\limits_{S_0} |\phi[\nu](w)|\, dudv \le
{{12}\over{\pi}} \sum\limits_{g \in G} \int\limits_{S_0} \left(\int\limits_{g(S_0)}|\nu(z)||K(z,w)|\, dxdy \right)\, dudv.
$$
\noindent
We have

$$
{{12}\over{\pi}} \sum\limits_{g \in G} \int\limits_{S_0} \left(\int\limits_{g(S_0)}|\nu(z)||K(z,w)|\, dxdy \right)\, dudv=
$$

$$
={{12}\over{\pi}} \sum\limits_{g \in G} \int\limits_{S_0} 
\left(\int\limits_{S_0}|\nu(g(z))||K(g(z),w)||g'(z)|^{2}\, dxdy \right)\, dudv=
$$

$$
={{12}\over{\pi}} \sum\limits_{g \in G} \int\limits_{g^{-1}(S_0)} 
\left(\int\limits_{S_0}|\nu(g(z))||K(g(z),g(w))||g'(z)|^{2}|g'(w)|^{2}\, dxdy \right)\, dudv.
$$
\noindent
Since $|\nu(g(z))|=|\nu(z)|$ and from (\ref{kernel-1}) we get

$$
\int\limits_{S_0} |\phi[\nu](w)|\, dudv \le 
{{12}\over{\pi}} \sum\limits_{g \in G} \int\limits_{g^{-1}(S_0)} 
\left(\int\limits_{S_0}|\nu(z)||K(z,w)|\, dxdy \right)\, dudv=
$$

$$
={{12}\over{\pi}} \int\limits_{\Ha} 
\left(\int\limits_{S_0}|\nu(z)||K(z,w)|\, dxdy \right)\, dudv.
$$
\noindent
We exchange the integrals in the above inequality to get

$$
\int\limits_{S_0} |\phi[\nu](w)|\, dudv \le 
{{12}\over{\pi}} \int\limits_{S_0} 
\left(\int\limits_{\Ha}|\nu(z)||K(z,w)|\, dudv \right)\, dxdy.
$$
\noindent
From (\ref{kernel-2}) we have that 
$$
{{12}\over{\pi}} \int\limits_{\Ha}|K(z,w)|\, dudv \le 3y^{-2},
$$
\noindent
which shows that

$$
\int\limits_{S_0} |\phi[\nu](w)|\, dudv \le 
3\int\limits_{S_0} y^{-2}|\nu(z)|\, dxdy=3\int\limits_{S} \rho^{2}(z)|\nu(z)|\, dxdy,
$$

\noindent
where $\rho^{2}(z)$ is the density of the hyperbolic metric on $S$. Together with (\ref{WP-1})
this proves the lemma.

\end{proof}
 
\subsection{The Shear coordinates for $\Tgn$} 
The notation we introduce here remains valid throughout this section. Fix a surface $S$ of genus $\gen$ and with $\numb \ge 1$ punctures (here $S$ is
not assumed to be a Riemann surface, in fact we will equip $S$ with various complex structures). 
By $\Cus(S)=\{c_1(S),c_2(S),...,c_{\numb}(S) \}$  we  denote the set of punctures (recall that $\overline{S}$ is a closed surface and $S$ is obtained by removing
the set $\Cus(S)$ from $\overline{S}$). Let $\tau$ be an ideal triangulation of $S$. 
This means that  $\tau$ is a triangulation of $\overline{S}$ where the vertex set is exactly equal 
to $\Cus(S)$ (from now on all triangulations will be ideal triangulations and we will not use the term ideal anymore).  
By  $\lambda=\{\lambda_1,...,\lambda_{|\lambda|} \}$ we denote  the ordered set of  edges of the 
triangles from $\tau$. Here $|\lambda|$  denotes the total number of edges. 
For topological reasons we have $|\lambda|=6\gen-6+3\numb$.  Let $|\tau|$ denote the total number of triangles in $\tau$. We have
$|\tau|=2(2\gen-2+\numb)$ and therefore the equality

$$
{{2}\over{3}}|\lambda|=|\tau|,
$$

\noindent
holds.
We say that two triangulations $\tau$ and $\tau_1$ of $S$ are isotopic if there is a 
homeomorphism $f:\overline{S} \to \overline{S}$ that pointwise preserves the set
$\Cus(S)$ and  that is homotopic to the identity map on $S$ modulo the set $\Cus(S)$. 

\vskip .1cm

Fix a triangulation $\tau$ of $S$. We define the set $X(\tau) \subset \R^{|\lambda|}$ as follows. 
Let $c_i(S) \in \Cus(S)$ and let $\lambda_{i(1)},..,\lambda_{i(k)}$ be 
the subset of edges from the set $\lambda$  that have $c_i(S)$ as an end point. 
Also, let $\sigma: \lambda \to \{1,2\}$ be defined so that for $\lambda_i \in \lambda$ we have $\sigma(\lambda_i)=1$ if the endpoints of $\lambda_i$ 
represent different punctures on $S$ and $\sigma(\lambda_i)=2$ if the two endpoints represent the same puncture on $S$.
Let $\shc=(\shc_1,...,\shc_{|\lambda|}) \in \R^{|\lambda|}$. Then  $\shc \in X(\tau)$ if for every $i=1,...,\numb$ we have 

$$
\sum\limits_{j=1}^{j=k} \sigma(\lambda_{i(j)}) \shc_{i(j)}=0.
$$

\noindent

Clearly $X(\tau)$ is a linear subset of $\R^{|\lambda|}$. In particular, we have  $0=(0,...,0) \in X(\tau)$.

\vskip .1cm

Now we define the map $F_{\tau}:X(\tau) \to \Tgn$. Recall that $\Tgn$ is the space of complete 
hyperbolic metrics on $S$ with marking. To a point $\shc=(\shc_1,...\shc_{|\lambda|}) \in X(\tau)$ we associate 
an element $F_{\tau}(\shc) \in \Tgn$ that is represented by the marked hyperbolic metric 
with the following properties. There exists a triangulation $\tau'$ (with the corresponding 
set of edges $\lambda'$) of $S$ that is 
isotopic to $\tau$ and such that all the edges in $\lambda'$ are geodesics with respect to this 
metric and so that the following holds (a triangulation will be called a geodesic triangulation if all the edges are geodesics
in the corresponding hyperbolic metric). 
Let $\lambda'_i \in \lambda'$ and let $T_1$ and $T_2$ be the two triangles from the triangulation
$\tau'$ that have $\lambda'_i$ as an edge.  We lift $\lambda'_i$, $T_1$ and $T_2$ to the universal 
cover $\Ha$ (the upper half space). The edge $\lambda'_i$ lifts to the geodesic that connects $0$ 
and $\infty$. One of the triangles $T_1$ or $T_2$ (depending on how we lift $\lambda'_i$) lifts to 
the ideal triangle with vertices at $-1,0,\infty$ and the other triangle lifts to the triangle with 
vertices $0,e^{\shc_i},\infty$.  This requirement defines the map $F_{\tau}$.
This definition does not depend on how we lift the geodesic $\lambda'_i$ (there are exactly two ways we can lift it). 
The number $\shc_i$ can also be defined as the signed hyperbolic distance between 
the points on the geodesic $\lambda'_i$ that are the orthogonal projections of the centres of the triangles 
of $T_1$ and $T_2$ respectively  (equivalently one can see these two points as the feet of the 
perpendiculars from the two vertices of $T_1$ and $T_2$ respectively that  are opposite to $\lambda'_i$).

\vskip .1cm 

This maps was originally defined by Thurston 
(this concept has been developed by Bonahon, Penner, Fock, Chekhov...). 
The corresponding coordinates on $\Tgn$ are called the shear coordinates. 

\begin{proposition} 

The map $F_{\tau}:X(\tau) \to \Tgn$ is well defined real analytic homeomorphism.
 
\end{proposition}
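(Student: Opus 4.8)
The plan is to show that $F_\tau$ is a well-defined bijection from $X(\tau)$ onto $\Tgn$ and that it and its inverse are real analytic. I will proceed in several steps.

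\textbf{Step 1: Well-definedness and injectivity.} First I would check that the construction of $F_\tau(\shc)$ actually produces a point of $\Tgn$ and does not depend on the choices. Given $\shc \in X(\tau)$, one builds a hyperbolic surface by taking $|\tau|$ copies of the standard ideal triangle and gluing them along edges according to the combinatorics of $\tau$, using the prescribed shear $\shc_i$ along $\lambda_i$. Developing this in $\Ha$ produces a representation of $\pi_1(S)$ into $\mathbf{PSL}(2,\R)$; I would observe that the holonomy around each puncture $c_i(S)$ is a product of parabolic-type pieces whose net translation length is governed precisely by the linear combination $\sum_j \sigma(\lambda_{i(j)})\shc_{i(j)}$ (the factor $\sigma$ accounting for edges incident to $c_i$ once versus twice), so the defining equations of $X(\tau)$ are exactly the condition that each cusp holonomy is parabolic, hence the metric is complete. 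This gives a well-defined element of $\Tgn$. Injectivity follows because the shear coordinates of a marked hyperbolic structure along a fixed isotopy class of geodesic triangulation are intrinsically determined: given $F_\tau(\shc)$, straighten the edges to geodesics (the geodesic representative of an ideal arc is unique), and read off each $\shc_i$ as the signed distance between the feet of the perpendiculars described in the statement; this recovers $\shc$.

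\textbf{Step 2: Surjectivity.} For surjectivity I would start with an arbitrary marked hyperbolic structure $X \in \Tgn$ and realize the isotopy class $\tau$ by its geodesic representative $\tau'$ — each ideal arc in the class of $\lambda_i$ has a unique geodesic representative, and one checks these geodesics are pairwise disjoint and cut $X$ into ideal triangles (this uses that the arcs fill up a triangulation, so no bigons or monogons can appear). Then define $\shc_i$ as the shear of $X$ along $\lambda'_i$. Completeness of $X$ forces the cusp relations, so $\shc \in X(\tau)$, and by Step 1's reconstruction $F_\tau(\shc) = X$.

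\textbf{Step 3: Analyticity and homeomorphism.} To see $F_\tau$ is real analytic I would use Fenchel–Nielsen-type coordinates or, more directly, note that the holonomy representation depends real-analytically (in fact polynomially in the $e^{\shc_i/2}$) on $\shc$, and the map from conjugacy classes of discrete faithful representations to $\Tgn$ is a real-analytic chart; composing gives real analyticity of $F_\tau$. Since $X(\tau)$ and $\Tgn$ are both real-analytic manifolds of the same dimension $6\gen - 6 + 2\numb$ (the dimension of $X(\tau)$ is $|\lambda| - \numb = 6\gen-6+2\numb$, matching $\dim \Tgn$), a bijective real-analytic map with everywhere nonsingular differential is a real-analytic homeomorphism; alternatively, I would exhibit the inverse directly via the explicit shear-measuring formula in the statement and check it is real analytic, using that the geodesic representative of each $\lambda_i$ varies real-analytically with the hyperbolic structure.

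\textbf{Main obstacle.} The delicate point is Step 2: verifying that for \emph{every} point of $\Tgn$ the geodesic representatives of the arcs $\lambda_i$ are disjoint simple arcs that genuinely decompose the surface into ideal triangles isotopic to those of $\tau$ — i.e. that straightening does not create coincidences or degeneracies. This is where the combinatorial hypothesis that $\tau$ is an honest triangulation (so that the complementary regions are forced to be triangles once the arcs are disjoint) does the essential work, together with a standard argument ruling out bigons between distinct geodesics in a hyperbolic surface. The cusp-completeness bookkeeping in Step 1 — tracking how the parabolic condition translates into the linear equations cutting out $X(\tau)$, with the correct $\sigma$-weights — is the other place requiring care, though it is a finite check per cusp.
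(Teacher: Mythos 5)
The paper does not prove this proposition at all: it simply cites Chekhov--Penner (\cite{c-p}) for the result, so there is no ``paper proof'' to compare against. Your outline is essentially the standard argument from that literature (Thurston, Penner, Fock): gluing ideal triangles with prescribed shears, identifying the completeness/parabolicity condition at each cusp with the linear relations (with the weight $\sigma$ counting edges having both ends at the same puncture twice) that cut out $X(\tau)$, recovering the shears by straightening the triangulation for injectivity and surjectivity, and getting analyticity from the analytic dependence of the holonomy on $e^{\shc_i}$. You also correctly isolate the genuinely delicate point, namely that the geodesic straightening of $\tau$ on an arbitrary marked structure yields disjoint arcs cutting the surface into ideal triangles. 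One small slip in Step 3: you invoke ``bijective real analytic with everywhere nonsingular differential is a real-analytic homeomorphism'' without having established nonsingularity of the differential; this is unnecessary, since a continuous bijection between manifolds of equal dimension ($|\lambda|-\numb=6\gen-6+2\numb=\dim\Tgn$) is a homeomorphism by invariance of domain, or one can follow your alternative of exhibiting the inverse directly via the shear-measuring formula and the analytic dependence of the geodesic representatives on the hyperbolic structure.
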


See \cite{c-p} for the proof of this theorem. This parametrisation of $\Tgn$ depends on the choice of
the triangulation $\tau$ and different triangulations produce different parametrisation. 
\vskip .1cm
Recall that the Farey tessellation $\F$ is the ideal triangulation of $\Ha$ 
with the property that for any two adjacent triangles the feet of the perpendiculars that are dropped from the vertices of
these two triangles, that are opposite to their common edge, coincide. To make $\F$ be a unique tessellation satisfying this property we require that one its triangles is the one with the vertices at $0,1,\infty$. It is well known that this triangulation is preserved by the action of the group $\PSL$.

\begin{proposition} Let $\tau$ be a triangulation of a finite type surface $S$ ($S$ has at least one puncture). Then
the Riemann surface that corresponds to $F_{\tau}(0)$ is obtained as the quotient of $\Ha$ by a finite index subgroup of
$\PSL$.
\end{proposition}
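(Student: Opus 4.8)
\emph{Plan.} The idea is to show that the condition ``all shear coordinates vanish'' forces the developed image of a geodesic triangulation isotopic to $\tau$ to be precisely the Farey tessellation $\F$, and then to read off that the covering group is a finite index subgroup of $\PSL$. First I would set up the developing picture. By the definition of $F_{\tau}$, the hyperbolic metric representing $F_{\tau}(0)$ admits a geodesic triangulation $\tau'$ isotopic to $\tau$ all of whose shear coordinates are $0$. Write $S\cong \Ha/G$ for this metric (with $G$ a group of M\"obius transformations, since the covering is holomorphic) and let $\widetilde{\tau'}$ be the lift of $\tau'$ to $\Ha$. Since $\tau'$ is a finite geodesic triangulation of a finite area surface, $\widetilde{\tau'}$ is a $G$--invariant, locally finite ideal triangulation of $\Ha$ whose vertex set consists of (parabolic) cusps of $G$.

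The key step is a local statement: if $T$ is an ideal triangle with edge $e$, then the ideal triangle $T'$ that shares the edge $e$ with $T$, lies on the opposite side of $e$, and has zero shear with $T$ across $e$ is unique and equals the reflection $\refl_e(T)$ of $T$ in the complete geodesic $e$. This is immediate after normalising $e$ to the imaginary axis and the third vertex of $T$ to $-1$: the definition of the shear forces $T'=(0,1,\infty)$, which is $\refl_e(T)$, and $\refl_e$ is the unique isometry of $\Ha$ onto the other half--plane fixing $e$ pointwise. Iterating the rule $T\mapsto \refl_e(T)$ over all edges, starting from the base triangle $(0,1,\infty)$, produces exactly $\F$; so $\F$ is characterised among ideal triangulations of $\Ha$ as the unique one that contains $(0,1,\infty)$ and in which any two adjacent triangles have zero shear. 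Now pick any triangle $T_0$ of $\widetilde{\tau'}$ and a M\"obius map $A$ with $A(T_0)=(0,1,\infty)$. The dual graph of $A\widetilde{\tau'}$ is connected and all shears of $A\widetilde{\tau'}$ vanish, so induction on combinatorial distance from $(0,1,\infty)$ shows that every triangle of $A\widetilde{\tau'}$ is a triangle of $\F$; since both $A\widetilde{\tau'}$ and $\F$ tessellate $\Ha$ by ideal triangles with disjoint interiors, $A\widetilde{\tau'}=\F$.

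Finally I would identify the group. Since $G$ preserves $\widetilde{\tau'}$, the conjugate $AGA^{-1}$ preserves $A\widetilde{\tau'}=\F$. The group of orientation--preserving isometries of $\Ha$ preserving $\F$ is exactly $\PSL$: after composing with an element of $\PSL$ one may assume the base triangle $(0,1,\infty)$ is fixed setwise; an orientation--preserving isometry then induces an orientation--preserving, hence cyclic, permutation of $\{0,1,\infty\}$, each such permutation is realised inside $\PSL$ (e.g. by $z\mapsto 1/(1-z)$), and an orientation--preserving isometry of $\Ha$ fixing three boundary points is the identity. Hence $AGA^{-1}\le\PSL$. Since $S$ is of finite type, $\Ha/G$ has finite area, and $\Ha/\PSL$ has finite area, so $[\PSL:AGA^{-1}]<\infty$; therefore the Riemann surface $F_{\tau}(0)\cong\Ha/(AGA^{-1})$ is the quotient of $\Ha$ by a finite index subgroup of $\PSL$, as claimed.

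The main obstacle is the key step: correctly matching the vanishing--shear adjacency rule with the reflection (Farey) adjacency and running the induction to force $A\widetilde{\tau'}=\F$. The computation that the orientation--preserving symmetry group of $\F$ equals $\PSL$ is standard, and the area/index bookkeeping is routine.
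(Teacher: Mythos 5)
Your proof is correct and takes essentially the same route as the paper's: lift the zero-shear geodesic triangulation, normalise so one triangle is $(0,1,\infty)$, identify the lifted tessellation with the Farey tessellation $\F$, deduce $G \le \PSL$ from invariance of $\F$, and obtain finite index from finiteness of the hyperbolic area. You merely spell out the steps the paper treats as well known (the reflection/induction argument forcing the lift to equal $\F$, and the computation that the symmetry group of $\F$ is $\PSL$), which is fine.
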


\begin{proof} Let $R$ be the Riemann surface that corresponds to $F_{\tau}(0)$. We lift the corresponding geodesic triangulation $\tau'$ of $R$ to $\Ha$ and assume that
the triangle with vertices at $0,1,\infty$ belongs to this lift. Let  
$G$ be the corresponding covering group of M\"obius transformation acting on $\Ha$. 
Since $R$ corresponds to $F_{\tau}(0)$ we conclude that the corresponding tessellation of
$\Ha$ is the Farey tessellation $\F$ and $G$ preserves $\F$.
On the other hand, every M\"obius transformation that preserves $\F$ must be in $\PSL$.
This shows that $G$ is a subgroup of $\PSL$. The fact that $G$ is of finite index follows from the assumption that
$S$ is of finite type so $R$ has finite hyperbolic area.
\end{proof}

This simple proposition is important for us.

\vskip .1cm
For $\shc \in X(\tau)$ we define the supremum norm $||\shc||_{\infty}=\max \{ \shc_1,...,\shc_{|\lambda|} \}$ as usual (this norm does not depend on $\tau$ of course). 
Next, we define a norm on $X(\tau)$ that depends on $\tau$. We define the Oscillation norm $O_{\tau}(\shc)$ for $\shc \in X(\tau)$ as follows. Let $G$ be the covering group of M\"obius transformations acting on 
$\Ha$ so that $S$ is isomorphic to $\Ha/G$ (here $S$  has the complex structure that corresponds to $F_{\tau}(\shc)$). 
Let $\{\lambda_{i(1)},...,\lambda_{i(k)} \}$ be a k-tuple of edges from $\lambda$. We say that this k-tuple
is a k-tuple of consecutive edges if we can find the lifts $\{\lambda'_{i(1)},...,\lambda'_{i(k)} \}$ to $\Ha$ so that each curve $\lambda'_{i(j)}$
has $\infty$ as its endpoint, and so that each curve $\lambda'_{i(j)}$ is to the left of the curve $\lambda'_{i(j+1)}$, where $j=1,...k_1$.
In this case we also say that the k-tuple of consecutive edges $\{\lambda_{i(1)},...,\lambda_{i(k)} \}$ is left oriented (one similarly defines a right oriented k-tuple of consecutive edges).
\vskip .1cm

Set

$$
O_{\tau}(\shc)=\sup_{ \{\lambda_{i(1)},...,\lambda_{i(k)} \} } |\shc_{i(1)}+...+\shc_{i(k)}|,
$$
\noindent
where the supremum is taken among all consecutive k-tuples  $\{\lambda_{i(1)},...,\lambda_{i(k)} \} $ (and for any $k \in \N$).
Note that if $k$ is equal to the number of all edges that enter the puncture that corresponds to $\infty$ (an edge is counted twice if the
puncture on $S$ that corresponds to $\infty$ is equal to both of its endpoints) then by definition of $X(\tau)$ we have 
$\shc_{i(1)}+...+\shc_{i(k)}=1$. This shows that the supremum in the above definition is achieved and this shows that $O_{\tau}(\shc)$ is a well defined
non-negative real number. Note that $||\shc||_{\infty} \le O_{\tau}(\shc)$.
\vskip .1cm

\subsection{Estimating the Weil-Petersson distance in terms of the shear coordinates}
Our aim here is to estimate from above the Weil-Petersson distance $d_{WP}(F_{\tau}(0),F_{\tau}(\shc))$
for a given $\shc \in X(\tau)$. We make this estimate in terms of the vector $\shc$ (and under certain assumptions on $\shc$).  
Until the end of this section $\shc \in X(\tau)$ is a fixed vector.

\vskip .1cm
Let $\psi:[0,1] \to \Tgn$ be given by $\psi(t)=F_{\tau}(t\shc)$. The map $\psi$ is differentiable (since $F_{\tau}$ is differentiable),
and we compute its first derivative in order to estimate the distance. For $t \in [0,1]$ let $S_t$ be the Riemann surface that 
corresponds to the point $F_{\tau}(t\shc) \in \Tgn$. Fix $t_0 \in [0,1]$ and identify
$\Tgn$ with $\Teich(S_{t_0})$ in the standard way. In order to estimate the distance $d_{WP}(F_{\tau}(0),F_{\tau}(\shc))$ we 
estimate the Weil-Petersson norm of the vector ${{\partial{\psi}}\over{\partial{t}}}(t_0)$ in the tangent space of $\Tgn$ at the point $F_{\tau}(t_{0}\shc)$. First we construct an explicit quasiconformal map 
$f_t:S_{t_{0}} \to S_t$ so that the pair $(S_t,f_t)$ represents the point $\psi(t)$ in $\Teich(S_{t_{0}})$ 
(the requirement is that $f_t$ is homotopic to the identity as a map of $S$ onto itself). 

\begin{remark} The homotopy class of map $f_t$ has been studied by Penner-Saric in \cite{p-s}. Passing to the universal cover of $S_t$, they explicitly construct the quasisymmetric map of the unit circle that determines the homotopy class of $f_t$ in terms of the corresponding ideal triangulation of the unit disc. This has various important applications.
\end{remark}

Let $\tau(t)$ be the geodesic triangulation of $S_t$ that is homotopic to $\tau$. By $\lambda(t)$ we denote the corresponding set of edges (these edges are now geodesics). 
For a triangle $T \in \tau$ we denote by $T(t)$ the corresponding triangle in $\tau(t)$ (for $\lambda_i \in \lambda$ we denote by $\lambda_i(t)$
the corresponding element of $\lambda(t)$). Let $\ct(T(t))$ be the geometric centre of $T(t)$. For adjacent triangles $T_1,T_2 \in \tau$ let $l(T_1(t),T_2(t))$ denote the 
geodesic segment between the centres  $\ct(T_1(t))$ and  $\ct(T_2(t))$.  We define the map $f_t$ at the centres
$\ct(T(t_{0}))$ by setting $f_t(\ct(T(t_0)))=\ct(T(t))$.  We define $f_t$ on each $l(T_1(t_0),T_2(t_0))$ so that 
$f_t(l(T_1(t_0),T_2(t_0)))=l(T_1(t),T_2(t))$ and by the requirement that $f_t$ stretches the hyperbolic distance by the factor
$$
{{|l(T_1(t),T_2(t))|}\over {|l(T_1(t_0),T_2(t_0))|}},
$$
\noindent
where $|l(T_1(t),T_2(t))|$ and  $|l(T_1(t_0),T_2(t_0))|$ are the corresponding hyperbolic lengths. If $\lambda_i(t) \in \lambda$ is the common edge for
$T_1$ and $T_2$ then we have

$$
f_t(l(T_1(t_0),T_2(t_0)) \cap \lambda_i(t_0))= l(T_1(t),T_2(t)) \cap \lambda_i(t),
$$
\noindent
because the point $l(T_1(t),T_2(t)) \cap \lambda_i(t)$ is always the middle point (in terms of the hyperbolic distance) of the geodesic
segment $l(T_1(t),T_2(t))$.
\vskip .1cm
Fix $T \in \tau$ and let $T_1,T_2,T_3 \in \tau$ be triangles adjacent to $T$. The triangle $T(t)$ is partitioned into sets $E_1(t),E_2(t),E_3(t)$
where these three sets are separated by the segments $ l(T(t),T_i(t)) \cap T(t)$, $i=1,2,3$. In particular, the boundary of 
set $E_1(t)$ is the union of the curve $ (l(T(t),T_1(t)) \cap T(t)) \cup (l(T(t),T_2(t)) \cap T(t))$ and the two geodesic rays lying on the
edges that separate the pairs $T(t),T_1(t)$ and $T(t),T_2(t)$ respectively. Denote by $\lambda_{i(1)}(t)$ and $\lambda_{i(2)}(t)$
the corresponding edges that separate the pairs $T(t),T_1(t)$ and $T(t),T_2(t)$ respectively. The map $f_t$ is already defined
on the part of the boundary of $E_1(t_0)$ and we define it on the rest of $E_1(t_0)$ as follows.

\vskip .1cm
We use the same notation for the lifts of triangles from $\tau(t)$ and the edges from $\lambda(t)$ to the universal cover $\Ha$. 
We assume that for every $t \in [0,1]$ the triangle $T(t) \in \tau(t)$ lifts to the triangle in $\Ha$ that has the vertices at $0,1,\infty$ (our definition of $f_t$ does not depend on this normalisation). 
We may assume that $T_1(t)$ is to the left of the triangle $T(t)$ in the universal cover. Then the edges $\lambda_{i(1)}$ and  $\lambda_{i(2)}$ lift to the geodesics in $\Ha$ with the endpoints $0,\infty$ and $1,\infty$ respectively. 
We have already seen that the two vertices of the triangle $T_1(t)$ are $0$ and $\infty$. The third vertex of $T_1(t)$ is at the point $-e^{-t\shc_{i(1)}}$ (see Figure \ref{wp}).
Similarly, the vertices of $T_2(t)$ are at $1,(1+e^{t\shc_{i(2)}}),\infty$. We need to keep track of the vertices for $T_1(t)$ and $T_2(t)$ in order
to be able to verify later that the map $f_t$ is well defined on the geodesics $\lambda_{i(1)}(t_0)$ and  $\lambda_{i(2)}(t_0)$. 
\vskip .1cm

\begin{figure}
  \input{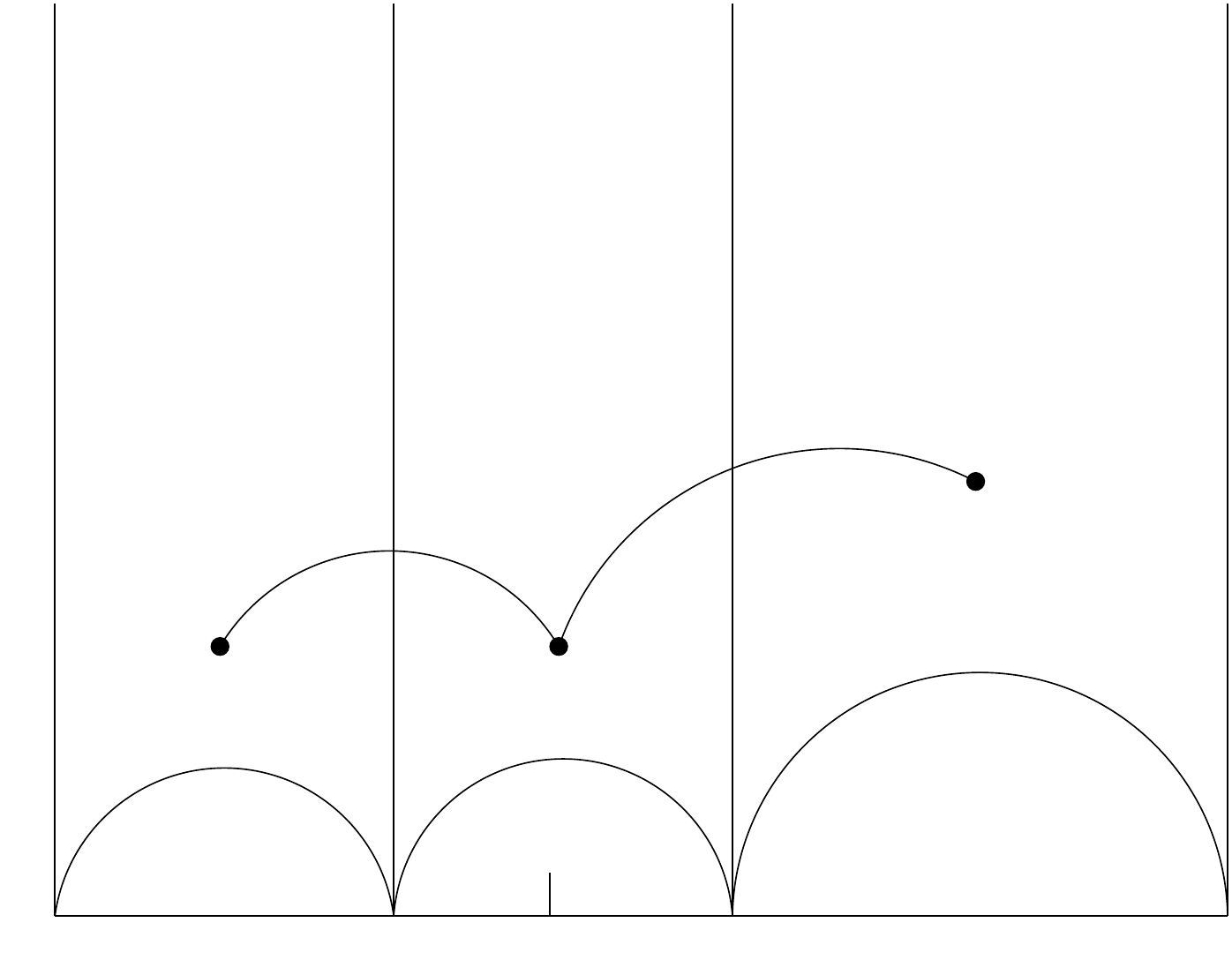_t}
\caption{}
  \label{wp}
\end{figure}

Set

$$
L_t= (l(T(t),T_1(t)) \cap T(t)) \cup (l(T(t),T_2(t)) \cap T(t)).
$$
\noindent
Let $u(t,.):[0,1] \to (0, \infty)$ so that $L_t$ is the graph of the function $u(t,x)$. The function $u(t,x)$ has all derivatives
everywhere except at the point ${{1}\over{2}}$ (but it has both  left and the right derivatives at this point).
At the point $t_0$ we have that the  function $u(t_0,x)$ depends only on the value of the shear coordinates $\shc_{i(1)}$ and $\shc_{i(2)}$. 
If we fix an upper bound on the sum $|\shc_{i(1)}|+|\shc_{i(2)}|$ we have that the set of such functions $u(t,.):[0,1] \to (0, \infty)$ is
compact in the $C^{\infty}$ topology on both intervals $[0,{{1}\over{2}}]$ and $[{{1}\over{2}},1]$. 
This shows that there is a constant $C_1>0$ that depends only on $|\shc_{i(1)}|+|\shc_{i(2)}|$ so that 

\begin{equation}\label{u-estimate}
  ||u(t_0,.)||_{\infty}||, ||{{1}\over{u(t_0,.)}}||_{\infty},   ||u_x(t_0,.)||_{\infty} \le C_1.
\end{equation}

\vskip .1cm
The set $E_1(t)$ is given by

$$
E_1(t)=\{z \in \Ha: 0\le x \le 1,y \ge u(t,x) \}.
$$
\noindent
The map $f_t$ is already defined on $L_{t_0}$. Let $\alpha(t,.):[0,1] \to [0,1]$ and $\beta(t,.):[0,1] \to (0,\infty)$ be the functions
so that
$$
f_t(x+iu(t_0,x))=\alpha(t,x)+i\beta(t,x).
$$
\noindent
The functions $\alpha(t,.)$ and $\beta(t,.)$ depend only on the values  $\shc_{i(1)}$ and $\shc_{i(2)}$ and both of them have all derivatives at every
point in $(0,1)$ except ${{1}\over{2}}$ (but it has the left and the right derivative at this point). 
Note that $\alpha(t,.)$ fixes the points  $0,{{1}\over{2}},1$. 
Also, $\alpha(t_0,x)=x$ and $\beta(t_0,x)=u(t_0,x)$. Again, by fixing an upper bound on the sum $|\shc_{i(1)}|+|\shc_{i(2)}|$ we have that the set
of all such functions $\alpha(t,.):[0,1] \to [0,1]$ and $\beta(t,.):[0,1] \to (0,\infty)$ is compact in the $C^{\infty}$ topology  on the intervals
$[0,{{1}\over{2}}]$ and $[{{1}\over{2}},1]$. This shows that there are functions $\varphi_1, \varphi_2:[0,1] \setminus{{{1}\over{2}}} \to \R$
that depend only on $|\shc_{i(1)}|+|\shc_{i(2)}|$ so that

\begin{equation}\label{alpha-beta-1}
\alpha_x(t,x)=1+\varphi_1(x)(t-t_0)+o(t-t_0), \, \, \beta_x(t,x)=u_x(t_0,x)+\varphi_2(x)(t-t_0)+o(t-t_0),
\end{equation}

\noindent
for every $x \in (0,1)$, $x \ne {{1}\over{2}}$. Moreover, the functions $\alpha(t,x)$ and $\beta(t,x)$ depend on the real variables 
$\shc_{i(1)}$ and $\shc_{i(2)}$ and have all derivatives with respect to these variables too. 
If we fix an upper bound on the sum $|\shc_{i(1)}|+|\shc_{i(2)}|$ then the set of functions $\varphi_1(x)$ and $\varphi_2(x)$ is compact in the variables
$x$, $\shc_{i(1)}$ and $\shc_{i(2)}$ in the $C^{\infty}$ topology. In particular, the derivatives of $\varphi_1$ and $\varphi_2$ with respect to $\shc_{i(1)}$ and $\shc_{i(2)}$ are bounded from above, and this bound depends only on the sum  $|\shc_{i(1)}|+|\shc_{i(2)}|$.
This shows that there is  a constant $C_2>0$ that depends only on the upper bound on the sum 
$|\shc_{i(1)}|+|\shc_{i(2)}|$, so that

\begin{equation}\label{alpha-beta-2}
||\varphi_1||_{\infty},||\varphi_2||_{\infty} \le C_2(|\shc_{i(1)}|+|\shc_{i(2)}|).
\end{equation}
\noindent
The estimate (\ref{alpha-beta-2}) will be used in the case when the sum $|\shc_{i(1)}|+|\shc_{i(2)}|$ is small.
\vskip .1cm
We define the map $f_t$ so that $f_t(E_1(t_0))=E_1(t)$. We have 
$$
f_t(z)=\alpha(t,x)+i(\beta(t,x)+\gamma(t,z)),
$$
\noindent
where the function $\gamma:E_1(t_0) \to [0,\infty)$ is defined as follows.
By $\lambda_{i(1)}(t_0), \, \lambda_{i(2)}(t_0),...,\lambda_{i(k)}(t_0)$ we denote a k-tuple of consecutive edges (see the definition above), starting
with the edge $\lambda_{i(1)}$. Let $z \in E_1(t_0)$. Then $y-u(t_0,x) \ge 0$.
If $y-u(t_0,x) \ge (1-x)$ then  let $k(z) \ge 2$ be the smallest integer so that 
$$
y-u(t_0,x) \le (1-x)+  \sum\limits_{j=2}^{j=k(z)} e^{t_0(\shc_{i(2)}+...+\shc_{i(j)})}.
$$
\noindent
In this case let $p(z) \in [0,1]$ be determined by the identity
\begin{equation}\label{y-condition}
y-u(t_0,x)=(1-x)+  \sum\limits_{j=2}^{j=k(z)-1} {e^{t_0(\shc_{i(2)}+...+\shc_{i(j)})}} + p(z)e^{t_0(\shc_{i(2)}+...+\shc_{i(k(z))})}.
\end{equation}
\noindent
If $0<y<1-x$ then $p(z) \in [0,1]$ is given by $y-u(t_0,x)=p(z)(1-x)$. 
\vskip .1cm
The function  $p(z)$ is defined for $y\ge u(t_0,x)$ and $0 \le x \le 1$  and it is continuous there, while it is differentiable outside a discrete 
set of smooth curves. From (\ref{y-condition}) for $y-u(t_0,x) \ge (1-x)$ we have 
\begin{equation}\label{p-derivative}
p_x(z)=(1-u_x(t_0,x))e^{-t_0(\shc_{i(2)}+...+\shc_{i(k(z))})}, \: p_y(z)=e^{-t_0(\shc_{i(2)}+...+\shc_{i(k(z))})},
\end{equation}
\noindent
where defined. For $y-u(t_0,x) \ge (1-x)$ set
$$
\gamma(t,z)=(1-x)+  \sum\limits_{j=2}^{j=k(z)-1} {e^{t(\shc_{i(2)}+...+\shc_{i(j)})}} + p(z)e^{t(\shc_{i(2)}+...+\shc_{i(k(z))})}.
$$
\noindent
For $y-u(t_0,x)<1-x$ set 

$$
\gamma(t,z)=p(z)(1-x)=y-u(t_0,x).
$$
\vskip .1cm
For $y-u(t_0,x) \ge (1-x)$ it follows from (\ref{p-derivative}) that
\begin{equation}\label{gamma-derivative}
\gamma_x(t,z)=(1-u_x(t_0,x))e^{(t-t_0)(\shc_{i(2)}+...+\shc_{i(k(z))})}-1, \, \, \gamma_y(t,z)=e^{(t-t_0)(\shc_{i(2)}+...+\shc_{i(k(z))})}.
\end{equation}
\noindent
For $y-u(t_0,x)<1-x$ we have   
\begin{equation}\label{gamma-derivative-1}
\gamma_x(t,z)=-u_x(t_0,x), \, \, \gamma_y(t,z)=1.
\end{equation}
\vskip .1cm
As we stated above we define $f_t(z)$ as
$$
f_t(z)=\alpha(t,x)+i(\beta(t,x)+\eta(t,z)).
$$
\noindent
Clearly $f_t:E_1(t_0) \to E_1(t)$ is a homeomorphism. It is differentiable outside a discrete set of smooth curves. By repeating the same process
we define $f_t$ on every triangle. Every triangles can mapped by a M\"obius transformation to the triangle with vertices $0,1,\infty$ and this how we defined the map $f_t$ on every triangle. It is directly seen that the definition of $f_t$ on an edge $\lambda_i(t_0)$ does not depend on the choice of the triangle that has $\lambda_i(t_0)$ in its  boundary. For example consider the triangle $T_1(t_0)$ that is adjacent to $T(t_0)$ (recall that $T(t)$ has the vertices at $0, \, 1, \, \infty$ and $T_1(t)$ has the vertices $-e^{-t\shc_{i(1)}}, \,  0, \, \infty$). Let $A_t(w)=e^{t\shc_{i(1)}}+1$. Then $A_{t}(T_1(t))=T(t)$. We define $g_t:T_1(t_0) \to T_1(t)$ by $ (A_t)^{-1} \circ f_t \circ A_{t_{0}}$ where $f_t:T(t_0) \to T(t)=T(t_0)$. 
It is elementary to see that $f_t=g_t$ on $\lambda_{i(1)}(t_0)$. We define $f_t$ on $T_1(t_0)$ by setting $f_t=g_t$.
\vskip .1cm
The Beltrami dilatation of $f_t$ is given by
$$
({{\partial{f_t}}\over{\partial{\bar{z}}}}/{{\partial{f_t}}\over{\partial{z}}}) (t,z)=
{{(\alpha_x(t,x)-\gamma_y(t,z))+i(\beta_x(t,x)+\gamma_x(t,z))}\over { 
(\alpha_x(t,x)+\gamma_y(t,z))+i(\beta_x(t,x)+\gamma_x(t,z))}}.  
$$
\noindent
Set
$$
\mu(z)={{ \partial{({{\partial{f_t}}\over{\partial{\bar{z}}}}/{{\partial{f_t}}\over{\partial{z}}})}}\over{\partial{t}}}(t_0,z).
$$
\noindent
If $y- u(t_0,x) \ge 1-x$ from (\ref{alpha-beta-1}) and (\ref{gamma-derivative})  we compute
$$
\mu(z)={{1}\over{2}} \big( \varphi_1(x)- (\shc_{i(2)}+...+\shc_{i(k(z))}) \big)+
$$
\begin{equation}\label{mu}
\end{equation}
$$
+{{i}\over{2}} \big(\varphi_2(x)+(1-u_x(t_0,x))(\shc_{i(2)}+...+\shc_{i(k(z))} ) \big).
$$
\noindent
If $y-u(t_0,x)<1-x$ then 
\begin{equation}\label{mu-1}
\mu(z)={{1}\over{2}}( \varphi_1(x)+i\varphi_2(x)).
\end{equation}
\noindent
Going back to our path $\psi:[0,1] \to \Tgn$ we have that $\mu \in L^{\infty}(S_{t_0})$ represents the tangent vector
${{\partial{\psi}}\over{\partial{t}}}(t_0)$ in the tangent space of $\Tgn$ at the point $F_{\tau}(t_{0}\shc)$.
\vskip .1cm
Next, we derive two estimates needed in the proofs of Theorem 2.1 and Theorem 2.2 below. From (\ref{u-estimate}), (\ref{alpha-beta-2}), (\ref{mu}), (\ref{mu-1}) we have

$$
|\mu(z)|\le C_2(|\shc_{i(1)}|+|\shc_{i(2)}|)+{{2+C_1}\over{2}}O_{\tau}(\shc).
$$
\noindent
which shows that

$$
||\mu||_{\infty} \le 2||\shc||_{\infty}C_2+{{2+C_1}\over{2}}O_{\tau}(\shc).
$$
\noindent
Set $C_3=2C_2+{{2+C_1}\over{2}}$. Since $||\shc||_{\infty} \le O_{\tau}(\shc)$ we get

\begin{equation}\label{mu-supremum-estimate}
||\mu||_{\infty} \le C_3O_{\tau}(\shc).
\end{equation}

\begin{remark} It follows from (\ref{mu-supremum-estimate}) that the Teichm\"uller norm of the tangent vector  
${{\partial{\psi}}\over{\partial{t}}}(t_0)$  is less or equal to $C_3O_{\tau}(\shc)$ for every $0<t_0<1$. 
Let $S_m$, $m \in \N$, be a sequence of surfaces equipped with  
triangulations $\tau_m$ and let $\shc_m \in X(\tau_m)$ be a sequence of vectors so that $O_{\tau_m}(\shc_m) \to 0$, $m \to \infty$. 
Since $||\shc_m||_{\infty} \le O_{\tau_m}(\shc_m)$ the constant $C_3$ in (\ref{mu-supremum-estimate}) does not depend on $m$. This implies that
the Teichm\"uller distance between the points $F_{\tau_m}(0)$ and $F_{\tau_m}(\shc_m)$ tends to $0$ when $m \to \infty$.
\end{remark}

\vskip .1cm
Let $0<\delta<1$. Let $k \in \N$ such that $k \ge 2$ and such that $k\delta \le 1$.
Assume that for every $1\le j \le k$ we have $|\shc_{i(j)}| \le \delta$. 
For $z \in E_1(t_0)$ which satisfies that
$$
y-u(t_0,x) \le (k-1)e^{-k \delta t_0},
$$
\noindent
from (\ref{y-condition}) we get that $k(z) \le k$.
Then from (\ref{mu})  we have
\begin{equation}\label{mu-estimate}
|\mu(z)| \le {{1}\over{2}}(2\delta C_2+k \delta+2 \delta C_2+k\delta+C_1 k \delta) 
\le C_4 k\delta,
\end{equation}
\noindent
where $C_4$ is a constant that depends only on the upper bound for $|\shc_{i(1)}|+|\shc_{i(2)}| \le 2\delta <1$ that is
$C_4$ is a universal constant.
\vskip .1cm
Furthermore

$$
\int\limits_{E_1(t_0)}|\mu(z)|\rho^{2}(z)\, dxdy=\int\limits_{E_1(t_0)}|\mu(z)|y^{-2}\, dxdy=
$$

$$
=\int\limits_{0}^{1}\left(\int\limits_{u(t_0,x)}^{(k-1)e^{-k \delta t_0}}|\mu(z)|y^{-2}\, dy 
+ \int\limits_{(k-1)e^{-k \delta t_0}}^{\infty}|\mu(z)|y^{-2}\, dy \right) \, dx.
$$
\noindent
In the first integral above we estimate $|\mu(z)|$ by (\ref{mu-estimate}) and in the second integral we estimate $|\mu(z)|$ by (\ref{mu-supremum-estimate}). Note that $0\le t_0 \le 1$ and since $k \ge 2$ we have 
${{1}\over{k-1}} \le {{2}\over{k}}$. From the assumption $k\delta \le 1$ we have $e^{k \delta} \le e$, therefore
$$
\int\limits_{(k-1)e^{-k \delta t_0}}^{\infty}|\mu(z)|y^{-2}\, dy \le  10{{O_{\tau}(\shc)}\over{k}}.
$$
\noindent
We conclude 

\begin{equation}\label{mu-int-1}
\int\limits_{E_1(t_0)}|\mu(z)|\rho^{2}(z)\, dxdy \le C_5 \big( k \delta+  {{O_{\tau}(\shc)}\over{k}} \big),
\end{equation}
\noindent
where $C_5$ depends only on the upper bound for $|\shc_{i(1)}|+|\shc_{i(2)}| \le 2\delta <1$, that is $C_5$ is a universal constant.
\vskip .1cm

\begin{theorem} Let $S$ be a surface of type $(\gen,\numb)$ (where $n \ge 1$) and let $\tau$ be a triangulation on
$S$ (where $\lambda$ denotes the corresponding set of edges). Fix $\shc \in X(\tau)$. Let $0<\delta< 1$ and 
denote by $N_{\tau}(\delta)$ the number of edges for which the absolute value of the corresponding shear  coordinate is greater than $\delta$. Then there is a constant $C$ that depends only on $||\shc||_{\infty}$ so that for any $k \in \N$ such that $k\delta \le 1$ and $k \ge 2$ we have
\vskip .1cm
\begin{equation}\label{WP-main}
d_{WP}(F_{\tau}(0),F_{\tau}(\shc)) \le
C_{7}  \sqrt{  O_{\tau}(\shc) \left( k \delta+  {{O_{\tau}(\shc) }\over{k}} +
{{N_{\tau}(\delta)}\over{|\lambda|}} O_{\tau}(\shc)  \right) }.
\end{equation}

\end{theorem}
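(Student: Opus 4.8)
The plan is to bound $d_{WP}(F_{\tau}(0),F_{\tau}(\shc))$ by the length of the real-analytic path $\psi(t)=F_{\tau}(t\shc)$, $t\in[0,1]$, joining the two points; so $d_{WP}(F_{\tau}(0),F_{\tau}(\shc))\le\int_{0}^{1}||\psi'(t_0)||_{WP}\,dt_0$, and it is enough to bound $||\psi'(t_0)||_{WP}$ by the right-hand side of (\ref{WP-main}) uniformly in $t_0\in[0,1]$. The tangent vector $\psi'(t_0)$ is represented by the Beltrami differential $\mu=\mu_{t_0}$ constructed above, with formula (\ref{mu})--(\ref{mu-1}). I would apply Lemma 2.1, with the Weil-Petersson metric normalised by $\sqrt{\Area(S)}$ as in this section (so its right-hand side carries a factor $1/\Area(S)$, and $\Area(S)=\frac{2\pi}{3}|\lambda|$), together with $||[\mu]||_{\infty}\le||\mu||_{\infty}$ and (\ref{mu-supremum-estimate}), to get $||\psi'(t_0)||_{WP}^{2}\le \frac{C}{|\lambda|}O_{\tau}(\shc)\int_{S}|\mu|\rho^{2}\,dxdy$. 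Everything therefore reduces to an estimate of the form $\int_{S}|\mu|\rho^{2}\,dxdy\le C\big(|\lambda|(k\delta+O_{\tau}(\shc)/k)+N_{\tau}(\delta)O_{\tau}(\shc)\big)$.

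To get this, I would decompose $S$ --- with the complex structure $S_{t_0}$ and the geodesic triangulation $\tau(t_0)$ --- into the $3|\tau|=2|\lambda|$ corner regions: each triangle $T(t_0)$ is cut into pieces $E_1,E_2,E_3$ by the three segments from its centre to the centres of its neighbours, one piece at each ideal vertex, and on each of them $\mu$ is governed by (\ref{mu})--(\ref{mu-1}) through the consecutive-edge spiral around that vertex. Call a corner region \emph{good} if all of its first $k$ consecutive edges satisfy $|\shc_{i(j)}|\le\delta$, and \emph{bad} otherwise. For a good region, estimate (\ref{mu-int-1}) --- which by the symmetry of the construction holds at any ideal vertex --- gives at once $\int_{E}|\mu|\rho^{2}\le C_5(k\delta+O_{\tau}(\shc)/k)$. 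For a bad region, the part above the height $y-u(t_0,x)=(k-1)e^{-k\delta t_0}$ still contributes at most $10\,O_{\tau}(\shc)/k$, by the same computation as in the derivation of (\ref{mu-int-1}) (only $||\mu||_{\infty}\le C_3O_{\tau}(\shc)$ and $\int_{(k-1)e^{-k\delta t_0}}^{\infty}y^{-2}\,dy\le 2e/k$ are used), while the part below it is bounded by $||\mu||_{\infty}$ times the hyperbolic area of that sub-region, which is at most $C_1$ by (\ref{u-estimate}); hence a bad region contributes at most $C_1C_3O_{\tau}(\shc)$. Summing over the $2|\lambda|$ corner regions, the good ones contribute $\le 2|\lambda|C_5(k\delta+O_{\tau}(\shc)/k)$.

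It remains to bound $(\#\text{bad})\cdot C_1C_3O_{\tau}(\shc)$ by $CN_{\tau}(\delta)O_{\tau}(\shc)$. A crude count gives only $\#\text{bad}=O(kN_{\tau}(\delta))$ --- each edge of shear $>\delta$ sits among the first $k$ consecutive edges of at most $2k$ corner regions --- costing a spurious power of $k$. The remedy is to reorganise the bad contribution edge by edge: when an edge of large shear occurs as the $j$-th consecutive edge of a region it enters the formula for $\mu$ only on the layers $\{k(z)\ge j\}$ of that region, whose total mass for $y^{-2}\,dxdy$ telescopes to order $1/j$; summing over the $\le k$ regions that detect a fixed large-shear edge (it is their $j$-th consecutive edge for $j=1,\dots,k$) produces a total of order $O_{\tau}(\shc)$ (at worst a harmless factor logarithmic in $k$). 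Hence the bad regions together contribute $\le CN_{\tau}(\delta)O_{\tau}(\shc)$, which with the preceding paragraph yields the displayed bound on $\int_{S}|\mu|\rho^{2}\,dxdy$; combining with the first paragraph and integrating over $t_0\in[0,1]$ gives (\ref{WP-main}). All constants depend only on $||\shc||_{\infty}$ (and, for the universal ones, on the fixed bound $|\shc_{i(1)}|+|\shc_{i(2)}|\le 2\delta<1$), because of the uniform $C^{\infty}$-compactness already established.

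The main obstacle is this last bookkeeping: the naive "$(\#\text{bad})\times(\text{per-region bound})$" loses a power of $k$, and the clean term $\frac{N_{\tau}(\delta)}{|\lambda|}O_{\tau}(\shc)$ appears only once one charges each large shear a total of size $O_{\tau}(\shc)$, spread across the $\sim k$ corner regions it affects, by telescoping the layer integrals. A smaller point to check is that on a good region the cut $(k-1)e^{-k\delta t_0}$ really lies below every layer $\{k(z)>k\}$, so that $|\mu|\le C_4k\delta$ genuinely holds below it --- this is what the computation behind (\ref{mu-estimate}) establishes.
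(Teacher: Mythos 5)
Your proposal follows the paper's proof through every structural step: bound $d_{WP}(F_{\tau}(0),F_{\tau}(\shc))$ by the length of $\psi(t)=F_{\tau}(t\shc)$, apply Lemma 2.1 with the area normalisation $\frac{2\pi}{3}|\lambda|$ (this is how the paper uses the lemma, as its parenthetical remark about expressing the area shows), and reduce everything to $\int_S|\mu|\rho^2$, estimated by splitting the surface into the corner regions $E_1,E_2,E_3$ of the triangles and using (\ref{mu-int-1}) where the first $k$ consecutive shears are $\le\delta$ and (\ref{mu-supremum-estimate}) together with the bounded area of a corner region (via (\ref{u-estimate})) elsewhere. The only real divergence is the final bookkeeping, and here the comparison is instructive: the paper does exactly the crude count you reject. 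It enlarges $\wt{\lambda}$ to $\wh{\lambda}$ (all edges lying in a consecutive $k$-tuple starting at a large-shear edge), uses $|\wh{\tau}|\le 8k|\wt{\lambda}|$ from (\ref{hat-tau-estimate}), and charges each triangle of $\wh{\tau}$ the full $\pi C_3O_{\tau}(\shc)$. Read literally, after dividing by $\frac{2\pi}{3}|\lambda|$ this produces the term $k\,\frac{N_{\tau}(\delta)}{|\lambda|}O_{\tau}(\shc)$, i.e.\ the paper's own write-up carries precisely the spurious factor $k$ you set out to avoid and simply omits it in the displayed conclusion. This is harmless for Theorem 2.2, since there $k=\lceil e^{r/2}\rceil$, $\delta=re^{-r}$, so the $k\delta$ term is already of size $re^{-r/2}$ and $kN_{\tau(r)}(\delta(r))/|\lambda(r)|\le P(r)e^{-r/2}$, and the conclusion $P_1(r)e^{-r/4}$ is unaffected.

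Your layer-by-layer refinement is sound as far as it goes: if the first large-shear edge of a corner region sits in position $j\le k$, the preceding layer widths are $\ge e^{-k\delta t_0}\ge e^{-1}$, so the layers $\{k(z)\ge j\}$ have $y^{-2}dxdy$-mass $O(1/j)$, and each edge of $\wt{\lambda}$ occupies position $j$ in boundedly many corner regions. But, as you concede in passing, summing $O_{\tau}(\shc)/j$ over $j=1,\dots,k$ yields $N_{\tau}(\delta)\,O_{\tau}(\shc)\log k$, so your closing claim that "the bad regions together contribute $\le C N_{\tau}(\delta)O_{\tau}(\shc)$" overstates what your argument establishes; a genuinely $k$-independent constant does not follow from this telescoping alone. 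So, strictly speaking, neither your argument (losing $\log k$) nor the paper's (losing $k$) proves (\ref{WP-main}) with $C_7$ depending only on $||\shc||_{\infty}$; your version is the sharper of the two, and either version is amply sufficient for the only application made of the theorem (Theorem 2.2 and hence Theorem 1.1). If you want to present your refinement, state the bound you actually prove, with the $\log k$ (or the paper's $k$) displayed explicitly, and note that it feeds into Theorem 2.2 unchanged.
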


\begin{remark} Important point in this theorem is that the constant $C$ does not depend on choice of the triangulation $\tau$ or on the type $(\gen,\numb)$. It only depends on the upper bound on $||\shc||_{\infty}$.
\end{remark}
 
\begin{proof}
Let $\wt{\lambda}=\{\lambda_i \in \lambda: |\shc_i|>\delta\}$. Then $|\wt{\lambda}|=N_{\tau}(\delta)$.
We enlarge the set $\wt{\lambda}$ to the set $\wh{\lambda}$ as follows. 
An edge from $\lambda$ belongs to $\wh{\lambda}$ if it belongs to a consecutive k-tuple
of edges that starts at an edge from $\wt{\lambda}$ (here we take both the left and the right oriented 
consecutive k-tuples). Note that
$$
|\wh{\lambda}|\le 4k|\wt{\lambda}|.
$$

By $\wh{\tau}$ we denote the set of triangles that have at least one of its edges in $\wh{\lambda}$.
Note that

\begin{equation}\label{hat-tau-estimate}
|\wh{\tau}|\le 2 |\wh{\lambda}|\le  8k|\wt{\lambda}|.
\end{equation}

To estimate $d_{WP}(F_{\tau}(o),F_{\tau}(\shc))$ we need to estimate 
$||{{ \partial{\psi}}\over{\partial{t}}}(t_0)||_{WP}$ for every $t_0 \in [0,1]$. Here $\psi:[0,1] \to \Tgn$
is the path defined above and we have

\begin{equation}\label{WP-integral}
d_{WP}(F_{\tau}(0),F_{\tau}(\shc)) \le \int\limits_{0}^{1} ||{{ \partial{\psi}}\over{\partial{t}}}(t)||_{WP}dt. 
\end{equation}

\vskip .1cm
Assume that a triangle $T$ does not belong to $\wh{\tau}$ and let $\lambda_{i(1)}$ be its edge. 
As above, by $\lambda_{i(1)},...,\lambda_{i(k)}$,
we denote the consecutive k-tuple of edges. For each $1 \le j \le k$, we have
$|\shc_{i(j)}|\le \delta$. Applying (\ref{mu-int-1}) on $E_1(t_0)$ (and repeating the same on $E_2(t_0)$ and $E_3(t_0)$)
we get
$$
\int\limits_{T(t_0)}|\mu(z)| \rho^{2}(z)\, dxdy \le 3C_5 \big( k \delta+  {{O_{\tau}(\shc)}\over{k}} \big).
$$
\noindent
On the other hand, for a triangle $T \in \wh{\tau}$ from (\ref{mu-supremum-estimate})  we get the estimate 
$$
\int\limits_{T(t_0)}|\mu(z)| \rho^{2}(z)\, dxdy \le \pi C_3 O_{\tau}(\shc).
$$
\noindent
This implies that (we express the hyperbolic area of $S_{t}$ as $\pi|\tau|={{2\pi}\over{3}}|\lambda|$)

$$
\int\limits_{S_{t}} |\mu(z)| \rho^{2}(z)\, dxdy \le C_6 \big( k \delta+  {{O_{\tau}(\shc) }\over{k}} +
{{N_{\tau}(\delta)}\over{|\lambda|}} O_{\tau}(\shc)  \big),
$$
\noindent
where $C_6$ is a constant that only depends on $||\shc||_{\infty}$.
Applying Lemma 2.1 and the last  estimate   we conclude that there is a constant $C_7$ that depends only on $||\shc||_{\infty}$ such that 
$$
||{{ \partial{\psi} } \over{ \partial{t} }} (t_0)||^{2}_{WP} \le 
C^{2}_{7}   O_{\tau}(\shc) \left( k \delta+  {{O_{\tau}(\shc) }\over{k}} +
{{N_{\tau}(\delta)}\over{|\lambda|}} O_{\tau}(\shc)  \right).
$$

\noindent
Together with (\ref{WP-integral}) this proves the  estimate (\ref{WP-main}) and this theorem.  
\end{proof}

The following theorem is the only result regarding Weil-Petersson distances that will be used later in this paper.
We use the notation from the previous theorem.

\begin{theorem} Let $r_0>0$ and assume that for every $r>r_0$ we are given a finite type surface $S(r)$ (with at least one puncture)  with an ideal triangulation $\tau(r)$ (the corresponding set of edges is $\lambda(r)$) that determines a point $\shc(r) \in X(\tau(r))$  with the following properties

\begin{itemize} 
\item There exists a universal constant $C>0$ such that $\{||\shc_m||_{\infty} \} \le C$ for every $r>r_0$. 
\item Let $\delta(r)=re^{-r}$. Then  there exists a polynomial $P(r)$ such that
$$
{{N_{\tau(r)}(\delta(r))}\over{|\lambda(r)|}}\le P(r)e^{-r}.
$$
\item We have $O_{\tau(r)}(\shc(r)) \le 2r^{2}$.
\end{itemize}
Then for some polynomial $P_1(r)$ we have
$$
d_{WP}(F_{\tau(r)}(0),F_{\tau(r)}(\shc(r)) )\le P_1(r)e^{ -{{r}\over{4}} }.
$$
\noindent
In particular  the Weil-Petersson distance between the points $F_{\tau(r)}(0)$ and $F_{\tau(r)}(\shc(r))$ tends to zero when $r \to \infty$.
\end{theorem}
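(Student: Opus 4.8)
The plan is to apply Theorem 2.1 directly to $S(r)$, $\tau(r)$, $\shc(r)$ with the prescribed threshold $\delta=\delta(r)=re^{-r}$ and a suitably chosen integer $k=k(r)$, and then to estimate each of the three terms appearing inside the square root of $(\ref{WP-main})$ using the three hypotheses of the theorem. No new ideas beyond Theorem 2.1 are needed; the entire content is an elementary optimisation in $k$ together with some bookkeeping.

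First I would choose $k$. In the bound of Theorem 2.1 the two terms $k\delta$ and $O_{\tau(r)}(\shc(r))/k$ pull in opposite directions, and since $O_{\tau(r)}(\shc(r))\le 2r^{2}$ and $\delta(r)=re^{-r}$ the natural balance point is $k\sim e^{r/2}$; so I would set $k(r)=\lfloor e^{r/2}\rfloor$. Note $k(r)\delta(r)\le re^{-r/2}\le 1$ for all $r>0$ (the function $re^{-r/2}$ is maximised at $r=2$, with value $2/e<1$), and $k(r)\ge 2$ as soon as $r\ge 2\ln 2$; enlarging $r_0$ if necessary we may assume this holds for all $r>r_0$. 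Hence Theorem 2.1 applies with this $k$ and this $\delta$. (For a possibly remaining small range of $r$, one may instead apply Theorem 2.1 with $k=2$, $\delta=\tfrac12$, getting a polynomial bound which is absorbed into $P_1$.)

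Next I would bound the parenthesis in $(\ref{WP-main})$. From the choice of $k(r)$ and the hypothesis $O_{\tau(r)}(\shc(r))\le 2r^{2}$ we get $k(r)\delta(r)\le re^{-r/2}$ and, for $r$ large enough that $e^{r/2}-1\ge\tfrac12 e^{r/2}$, also $O_{\tau(r)}(\shc(r))/k(r)\le 4r^{2}e^{-r/2}$; by the hypothesis on $N_{\tau(r)}(\delta(r))/|\lambda(r)|$ the third term is $\le 2r^{2}P(r)e^{-r}$. Factoring out $e^{-r/2}$ (and using $e^{-r/2}\le 1$) shows that the parenthesis is at most $Q_0(r)e^{-r/2}$ with $Q_0(r)=r+4r^{2}+2r^{2}P(r)$, a polynomial. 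Multiplying by $O_{\tau(r)}(\shc(r))\le 2r^{2}$ and taking square roots, $(\ref{WP-main})$ yields
$$
d_{WP}\bigl(F_{\tau(r)}(0),F_{\tau(r)}(\shc(r))\bigr)\le C_{7}\sqrt{2r^{2}Q_0(r)}\;e^{-r/4}.
$$
Because hypothesis (i) bounds $||\shc(r)||_{\infty}$ by the universal constant $C$, the constant $C_{7}$ of Theorem 2.1 is itself universal here; bounding $\sqrt{2r^{2}Q_0(r)}$ by the polynomial $\sqrt{2}\,r\,(1+Q_0(r))$ (using $\sqrt{x}\le 1+x$ for $x\ge 0$) gives $d_{WP}\le P_1(r)e^{-r/4}$ with $P_1(r)=C_{7}\sqrt{2}\,r\,(1+Q_0(r))$. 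Since $P_1(r)e^{-r/4}\to 0$ as $r\to\infty$, the final assertion follows.

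I do not expect a genuine obstacle. The only point requiring a little care is that $k$ must be a natural number that is at least $2$ and still satisfies the constraint $k\delta(r)\le 1$; this is comfortable precisely because $\delta(r)=re^{-r}$ decays much faster than the target rate $e^{-r/2}$, so there is a wide window of admissible $k$ and $k(r)=\lfloor e^{r/2}\rfloor$ lies well inside it. (If one wanted the sharper exponent implicit in the true optimum $k\sim\sqrt{2r}\,e^{r/2}$, the same computation goes through with a slightly different polynomial prefactor, but $e^{-r/4}$ is all that is needed.)
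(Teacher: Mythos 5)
Your proposal is correct and is essentially the paper's own argument: the paper also just takes the integer $k$ with $e^{r/2}\le k< e^{r/2}+1$, checks $k\ge 2$ and $k\delta(r)<1$ for $r$ large, and invokes Theorem 2.1 (estimate (\ref{WP-main})) with $\delta=re^{-r}$, the uniform bound on $||\shc(r)||_{\infty}$ making $C_7$ independent of $r$. Your version merely spells out the bookkeeping (and the harmless treatment of small $r$) that the paper leaves implicit.
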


\begin{proof} Let $k$ be an integer that is given by 
$$
e^{{{r}\over{2}}} \le k < e^{{{r}\over{2}}}+1.
$$
\noindent
Then $k \ge 2$ and $k\delta(r) <1$ for $r$ large enough. The proof follows directly from 
(\ref{WP-main}) for this choice of $k$.

\end{proof}

\section{Constructing finite degree covers of a punctured Riemann surface}
\subsection{Admissible collections of triangles} The aim of this section is to describe how to construct finite degree covers of a given punctured Riemann 
surface of finite type. We also need to keep the track of the geometry of these covers which is the motivation for the construction below. 
From now until the end of the paper  $S$ denotes  a fixed Riemann surface of type $(\gen,\numb)$ (we assume that $\numb \ge 1$). The corresponding set of punctures in the boundary of $S$ (or simply cusps) is denoted by $\Cus(S)$.  We number the cusps in $\Cus(S)$ as  $\Cus(S)=\{c_1(S),...,c_{\numb}(S)\}$. We also fix an ideal geodesic triangulation $\tau(S)$ on $S$ where $\lambda(S)$ denotes the corresponding set of the  edges. 
Most estimates we obtain will depend on the choice of $S$ and $\tau(S)$.
\vskip .1cm
Let $\Gamma(S)$ denote the set of all immersed ideal geodesics on $S$ (all geodesics are taken with respect to the underlining hyperbolic metric). 
A geodesic $\gamma$ on $S$ is said to be ideal if both of its endpoints are in the set $\Cus(S)$ (note that these geodesics can have self intersections).
We will say that a triangle $T$ on $S$ is an immersed ideal triangle if it lifts to an ideal triangle in the universal cover. The set of all
immersed ideal triangles is denoted by $\Tr(S)$. If $T \in \Tr(S)$ then its edges belong to $\Gamma(S)$. 
\vskip .1cm
A geodesic  $\gamma \in \Gamma(S)$ does not carry a natural orientation. That is, we have a choice of two orientations on each such $\gamma$. By $\Gamma^{*}(S)$
we denote the corresponding set of oriented geodesics. In particular, the set of oriented geodesics from $\lambda(S)$ will be called $\lambda^{*}(S)$.
For $\gamma^{*} \in \Gamma^{*}(S)$  the corresponding unoriented geodesic is typically denoted by $\gamma \in \Gamma(S)$.
\vskip .1cm 
Let $\N\Tr(S)$ denote the space of all formal sums of triangles from $\Tr(S)$ over non-negative integers. Similarly, by $\Z\Gamma^{*}(S)$ we denote the space of all formal sums of oriented geodesics over all integers subject to the following rule. If $\gamma_1^{*}$ and $\gamma_2^{*}$ represent the same geodesic with opposite orientations then we have the identity $\gamma_1^{*}=-\gamma_2^{*}$.
\vskip .1cm
We define the boundary operator $\partial:\N\Tr(S) \to \Z\Gamma^{*}(S)$ as follows. Let $T \in \Tr(S)$ and let $\gamma_i$, $i=1,2,3$, denote its edges.
Then $\partial{T}=\gamma_1^{*}+\gamma_2^{*}+\gamma_3^{*}$ where $\gamma_i^{*}$ are the corresponding oriented geodesics so that the triangle $T$
is to the left of each $\gamma_i^{*}$. In other words, the orientation of $\gamma_i^{*}$ agrees with the orientation that  $\gamma_i$ 
inherits as the part of the boundary of $T$ (such $\gamma^{*}_i$ is called an oriented edge of the triangle $T$). 
The operator $\partial$ is extended to $\N\Tr$ by linearity.
\vskip .1cm
Note that the space $\N\Tr(S)$ naturally embeds into the corresponding group of 2-chains on $S$. 
Also, $\Z\Gamma^{*}(S)$ naturally embeds into the corresponding group of 1-chains on $S$. The operator $\partial$ we defined agrees with the restriction
of the standard boundary operator on $\N\Tr(S)$. Let $\Hom$ denote the first homology of the surface $\overline{S}$ relative to the boundary 
of $S$ which is the set of cusps $\Cus(S)$. The class of each element of $\Z\Gamma^{*}(S)$ represents an element in $\Hom$. In particular,
for every $R \in \N\Tr(S)$ we have that $\partial{R}$ is equal to zero in $\Hom$ 
(although $\partial{R}$ is not necessarily equal to zero in $\Gamma^{*}(S)$).
\vskip .1cm
By $G$ we always denote a Fuchsian group so that $S$ is isomorphic to $\Ha/G$. The lift of $\tau(S)$, $\lambda(S)$, $\lambda^{*}(S)$, $\Tr(S)$, $\Gamma(S)$ and $\Gamma^{*}(S)$ under this uniformisation will be denoted by $\tau(G)$, $\lambda(G)$, $\lambda^{*}(G)$, $\Tr(G)$, $\Gamma(G)$ and $\Gamma^{*}(G)$. Also, by $\Cus(G)$ we denote the set of cusps in the boundary of $\Ha$ (we identify $\partial{\Ha}$ with $\overline{\R}$), that is, $\Cus(G)$ is the set of fixed points of all parabolic elements in $G$. We can identify the quotient $\Cus(G)/G$ with the set of punctures in the boundary of $S$ which we denoted by $\Cus(S)$. If $T \in \Tr(G)$ by $[T]_G$ we denote the orbit of $T$ under $G$. We identify $[T]_G$ with the corresponding
element of $\Tr(S)$. 
\vskip .1cm
The set $\Tr^{*}(G)$ is defined as the set of pairs $(T,\gamma^{*})$ where $T \in \Tr(G)$ and $\gamma^{*} \in \Gamma^{*}(G)$ 
is an oriented edge of $T$. If $g \in G$ then $(g(T),g(\gamma^{*})) \in \Tr^{*}(G)$ as well. Set $\Tr^{*}(S)=\Tr^{*}(G)/G$ (this definition does not depend on the choice of the group $G$). The equivalence class of $(T,\gamma^{*})$ is denoted by $[(T,\gamma^{*})]_G \in \Tr^{*}(S)$.
Set $\Proj_S([(T,\gamma^{*})]_G)=[T]_G \in \Tr(S)$. Then $\Proj_S:\Tr^{*}(S) \to \Tr(S)$ is  well defined.

\begin{remark} Let $T \in \Tr(S)$ and let $\gamma^{*}_1,\gamma^{*}_2,\gamma^{*}_3$ denote its oriented edges. 
It can happen that $\gamma^{*}_1=\gamma^{*}_2$. This is why we can not define  $\Tr^{*}(S)$ as the set of pairs $(T,\gamma^{*})$ where $T \in \Tr(S)$ and $\gamma^{*} \in \Gamma^{*}(S)$ is an oriented edge of $T$. 
\end{remark}

If $(T_1,\gamma^{*}),(T_2,-\gamma^{*}) \in \Tr^{*}(G)$ then   $[(T_1,\gamma^{*})]_G \ne [(T_2,-\gamma^{*})]$. To prove this, assume that 
$[(T_1,\gamma^{*})]_G = [(T_2,-\gamma^{*})]$. Then there is $g \in G$ so that $g(\gamma^{*})=-\gamma^{*}$. Also, such $g$ is not the identity map.
But then $g^{2}(\gamma^{*})=\gamma^{*}$. This implies that $g$ fixes the endpoints $c_1,c_2 \in \Cus(G)$ of $\gamma^{*}$. Since  $g^{2}(c_1)=c_1$ we have that $g$ is a parabolic element of $G$ and therefore $c_1$ is the unique fixed point of $g^{2}$. This contradicts the fact that $g^{2}(c_2)=c_2$. 
\vskip .1cm
Fix  $T \in \Tr(G)$ and let $\Rot_T:\Ha \to \Ha$ be the standard rotation around the centre of $T$ for the angle $2\pi/3$ (this rotation is of the order three).  For  $(T,\gamma^{*}) \in \Tr^{*}(G)$ set $\rot_G(T,\gamma^{*})=(T,\Rot_T(\gamma^{*})) \in \Tr^{*}(G)$. 
The map $\rot_G:\Tr^{*}(G) \to \Tr^{*}(G)$ is a bijection and $\rot^{3}_G$ is the identity map. It is obvious that $\rot_G$ and $\rot^{2}_G$ have no fixed points. If $g \in G$ then $g \circ \Rot_T=\Rot_{g(T)} \circ g$. This shows that the map $\rot_S:\Tr^{*}(S) \to \Tr^{*}(S)$ given by
$\rot_S([(T,\gamma^{*})]_G)=[(T,\Rot_T(\gamma^{*}))]_G$ is well defined. Assume that $\rot_S([(T,\gamma^{*})]_G)=[(T,\gamma^{*})]_G$. Then there is
$g \in G$ so that $g(T)=T$ and so that $g(\gamma^{*})=\Rot_T(\gamma^{*})$. This implies that $g$ and $\Rot_T$ agree at the vertices of $T$ which shows that $g=\Rot_T$. This is not possible since $G$ is torsion free. The conclusion is that the map $\rot_S$ has no fixed points. Since $\rot^{3}_S$ is the identity map, we conclude that $\rot^{2}_S$ has no fixed points either. Note that $\Proj_S \circ \rot_S=\Proj_S$.
\vskip .1cm
Let $\Lab_{\Col}$ be a finite set of labels, and let $\lab_{\Col}:\Lab_{\Col}\to \Tr^{*}(S)$ be a labelling map. We say that the pair $\Col=(\Lab_{\Col},\lab_{\Col})$ is a labelled collection of triangles if the following holds:
\begin{itemize}
\item  There exists a bijection $\rot_{\Col}:\Lab_{\Col} \to \Lab_{\Col}$ so that $\rot^{3}_{\Col}$ is the identity map.
\item We have $\lab_{\Col} \circ \rot_{\Col}=\rot_S \circ \lab_{\Col}$.
\end{itemize}
Since $\rot_S$ and $\rot^{2}_S$ have no fixed points, we see that $\rot_{\Col}$ and $\rot^{2}_{\Col}$ have no fixed points either.
\vskip .1cm
Choose $\gamma^{*} \in \Gamma^{*}(S)$ and let $\gamma^{*}_1$ be its lift to $\Ha$. We say that $a \in \Lab_{\Col,{\gamma}^{*}} \subset \Lab_{\Col}$
if $\lab_{\Col}(a)=[(T,\gamma^{*}_1)]_G$ for some $T \in \Tr(G)$ so that $(T,\gamma^{*}_1) \in \Tr^{*}(G)$. Then $\sigma_{\Col}(\Lab_{\Col,{\gamma}^{*}})
=\Lab_{\Col,-{\gamma}^{*}}$. It follows from the above discussion that the sets $\Lab_{\Col,{\gamma}^{*}}$ and $\Lab_{\Col,-{\gamma}^{*}}$ are disjoint (if the were not there would be an order two element in $G$ as we showed above). Let $\gamma_i \in \Gamma(S)$, $i=1,...,k$, be the set of different edges of triangles from  $\Proj_S(\lab_{\Col})(\Lab_{\Col})$. Choose an orientation for $\gamma^{*}_i$ for each $\gamma_i$. Then $\Lab_{\Col}$ is the disjoint union 
$\Lab_{\Col}=  \Lab_{\Col,{\gamma}^{*}_{1}} \cup \Lab_{\Col,-{\gamma}^{*}_{1}} \cup...\cup \Lab_{\Col,{\gamma}^{*}_{k}} \cup \Lab_{\Col,-{\gamma}^{*}_{k}}$.
\vskip .1cm
Set $(\Proj_S \circ \lab_{\Col})(\Lab_{\Col})=\{ T_1,T_2,...,T_m\} \subset \Tr(S)$. 
Let $k_i=|(\Proj_S \circ \lab_{\Col})^{-1}(T_i)|$,  where  $|(\Proj_S \circ \lab_{\Col})^{-1}(T_i)|$ is the number of elements in the preimage 
$(\Proj_S \circ \lab_{\Col})^{-1}(T_i)$.  Since $\rot_{\Col}$ has no fixed points, and since $\Proj_S \circ \rot_S=\Proj_S$ 
we have that $k_i=3l_i$, for some integer $l_i$. Then each  $\Col$ induces an element in $\N\Tr(S)$ given by $k_1T_1+kT_2+...+k_mT_m=3l_1T_1+3l_2T_2+...+3l_mT_m$.
If there is no confusion we will denote this element of $\N\Tr(S)$ by $\Col$ as well. We define  $\partial\Col$ as the boundary of the corresponding element of $\N\Tr(S)$.

\begin{remark} Every $R \in \N\Tr(S)$ where $R=l_1T_1+l_2T_2+...+l_mT_m$ and $l_i$ are positive integers,
induces a labelled collection of triangles as follows (this will be used in the proof of Theorem 1.1 in the next section). 
The corresponding set of labels is $\Lab_{\Col}=\{(i,i'): i=1,2,...,(l_1+l_2+...+l_m);i'=1,2,3 \}$
(note that the set $\Lab_{\Col}$ has $3(l_1+...+l_m)$ elements). 
Let $T'_j \in \Tr(G)$ be a lift of $T_j$. Let $\gamma^{*}_{i'}$, $i'=1,2,3$, be its oriented edges, so that $\Rot_T(\gamma^{*}_1)=\gamma^{*}_2$.
The corresponding labelling map  $\lab_{\Col}$ is given by $\lab_{\Col}(i,i')=[(T'_{j},\gamma^{*}_{i'})]_G$, for $l_1+...+l_{j-1}<i \le l_1+...+l_{j}$.
The required bijection $\rot_{\Col}:\Lab_{\Col} \to \Lab_{\Col}$ is given by $\rot_{\Col}(i,j)=(i,j+1 \mod 3)$.
\end{remark}

\begin{definition} Let $\Col$ be a labelled collection of triangles. Let $\sigma_{\Col}:\Lab_{\Col} \to \Lab_{\Col}$ be a bijection.
We say that the pair $(\Col,\sigma_{\Col})$ is an $admissible$ $pair$ if the following holds:
\begin{itemize}
\item The map $\sigma_{\Col}$ is an involution, that is, $\sigma^{2}_{\Col}$ is the identity map, and $\sigma_{\Col}$ has no fixed points.
\item If $a,b \in \Lab_{\Col}$ and $\sigma_{\Col}(a)=b$ then there are triangles $T_1,T_2 \in \Tr(G)$ and $\gamma^{*} \in \Gamma^{*}(G)$ such that
$\lab_{\Col}(a)=[(T_1,\gamma^{*})]_G$ and $\lab_{\Col}(b)=[(T_2,-\gamma^{*})]_G$ 
(recall that $-\gamma^{*}$ denotes the opposite orientation of $\gamma^{*}$).
\end{itemize}
\end{definition}
If $(\Col,\sigma_{\Col})$ is an admissible pair, we have two bijections $\sigma_{\Col},\rot_{\Col}:\Lab_{\Col} \to \Lab_{\Col}$. The group generated by these two bijections is denoted by $\left< \sigma_{\Col},\rot_{\Col} \right>$.
\vskip .1cm
One can construct examples of admissible pairs as follows. Let $S_1 \to S$ be a cover, where $S_1$ is a finite type surface (recall that in this paper all the covers, except the universal cover, are assumed to be finite degree, regular and holomorphic). Then there is a Fuchsian group $G_1<G$
so that $S_1$ is isomorphic to $\Ha/G_1$. Let $\tau_1(S_1)$ be a geodesic triangulation of $S_1$ and let $\tau_1(G_1)$ denote its lift to $\Ha$. 
By $\tau^{*}_1(G_1)$ we denote the set of pairs $(T,\gamma^{*})$ where $T \in \tau_1(G_1)$ and $\gamma^{*}$ is an oriented edge of $T$. 
Same as above, the group $G_1$ acts on $\tau^{*}_1(G_1)$. By $[(T,\gamma^{*})]_{G_{1}}$ we denote the corresponding orbit, and by $\tau^{*}_1(S_1)=\tau^{*}_1(G_1)/G_1$ we denote the corresponding set of orbits. Same as above, we define the map $\rot_{S_{1}}:\tau^{*}_1(S_1)\to \tau^{*}_1(S_1)$. Then $\rot_{S_{1}}$ is a bijection of order three. 
\vskip .1cm
Set $\Lab_{\Col}=\tau^{*}_1(S_1)$ and let $\rot_{\Col}=\rot_{S_{1}}$. 
For $[(T,\gamma^{*})]_{G_{1}} \in \tau^{*}_1(S_1)$ set $\lab_{\Col}([(T,\gamma^{*})]_{G_{1}})=[(T,\gamma^{*})]_G \in \Tr^{*}(S)$. 
One can verify that $\lab_{\Col} \circ \rot_{\Col}=\rot_S \circ \lab_{\Col}$. Then $\Col=(\Lab_{\Col},\lab_{\Col})$ is a labelled collection of triangles. We now define the involution $\sigma_{\Col}$. 
\vskip .1cm
Let $a \in \Lab_{\Col}$. We want to define $\sigma_{\Col}(a) \in \Lab_{\Col}$. Let $T \in \tau_1(G_1)$ and $\gamma^{*} \in \Gamma^{*}(G)$ be its oriented edge,
so that $a=[(T,\gamma^{*})]_{G_{1}}$. Let $T_1$ be the unique triangle in $\tau_1(G_1)$ that is adjacent to $T$ along $\gamma^{*}$.  Set
$\sigma_{\Col}(a)=[(T_1,-\gamma^{*})]_{G_{1}}$. Clearly $(T_1,-\gamma^{*}) \in \tau^{*}_1(G_1)$ so $\sigma_{\Col}(a)$ is well defined. Moreover, $\sigma_{\Col}$ is of order two.
If $\sigma_{\Col}$ had a fixed point, then there would exist an element of order two in $G$ which is not possible. We have that $(\Col,\sigma_{\Col})$ is an admissible pair.
\begin{definition} The above constructed pair $(\Col,\sigma_{\Col})$ is called a $virtual$ $triangulation$ $pair$. 
\end{definition}
If $(\Col(i),\sigma_{\Col(i)})$, $i=1,...,m$, are  virtual triangulations pairs, then we can construct a new admissible pair $(\Col,\sigma_{\Col})$
as follows. Set $\Lab_{\Col}=\Lab_{\Col(1)} \cup...\cup \Lab_{\Col(m)}$ (here we assume that the sets of labels  $\Lab_{\Col(i)}$ are mutually disjoint). 
On each $\Col(i)$ the map $\rot_{\Col}$ agrees with $\rot_{\Col(i)}$. Also, on each $\Lab_{\Col(i)}$ the map $\lab_{\Col}$ agrees with the map
$\lab_{\Col(i)}$. Then $\Col$ is a labelled collection of triangles. We define $\sigma_{\Col}$ to agree with $\sigma_{\Col(i)}$ on each $\Lab_{\Col(i)}$.
We have that $(\Col,\sigma_{\Col})$ is an admissible pair.

\begin{definition}  Let $(\Col(i),\sigma_{\Col(i)})$, $i=1,...,m$, be admissible pairs. We say that an admissible pair
$(\Col,\sigma_{\Col})$ is the union of  $(\Col(i),\sigma_{\Col(i)})$, $i=1,...,m$, if the following holds. 
\begin{itemize}
\item There exist injections $\phi_i:\Lab_{\Col(i)} \to \Lab_{\Col}$ so that the set $\Lab_{\Col}$ is the disjoint union of 
the sets $\phi_i(\Lab_{\Col(i)})$.
\item For each $\phi_i$ we have $\lab_{\Col} \circ \phi_i=\lab_{\Col(i)}$.
\item We have $\phi \circ \rot_{\Col(i)} =\rot_{\Col} \circ \phi$ and $\phi \circ \sigma_{\Col(i)} =\sigma_{\Col} \circ \phi$.
\end{itemize}
\end{definition}

We have the following lemma.

\begin{lemma} For every admissible triangulation pair $(\Col,\sigma_{\Col})$ there exist  virtual triangulation pairs $(\Col(i),\sigma_{\Col(i)})$, $i=1,...,l$, so that  $(\Col,\sigma_{\Col})$ is the union of $(\Col(i),\sigma_{\Col(i)})$.
\end{lemma}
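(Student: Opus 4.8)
The plan is to decompose $\Lab_{\Col}$ into the orbits of the group $\langle\sigma_{\Col},\rot_{\Col}\rangle$ acting on it, and to prove that each orbit carries the structure of a virtual triangulation pair. Each orbit $O_{i}$ is invariant under both $\sigma_{\Col}$ and $\rot_{\Col}$, so restricting $\lab_{\Col}$, $\rot_{\Col}$ and $\sigma_{\Col}$ to $O_{i}$ yields a labelled collection together with an involution $(\Col(i),\sigma_{\Col(i)})$ which inherits every axiom of an admissible pair and on which $\langle\sigma_{\Col(i)},\rot_{\Col(i)}\rangle$ acts transitively; taking the inclusions $O_{i}\hookrightarrow\Lab_{\Col}$ as the maps $\phi_{i}$ then exhibits $(\Col,\sigma_{\Col})$ as the union of the $(\Col(i),\sigma_{\Col(i)})$. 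Since a virtual triangulation pair comes from a connected cover equipped with a triangulation, its group $\langle\sigma_{\Col},\rot_{\Col}\rangle$ is automatically transitive on labels, so this decomposition is the natural one, and everything reduces to proving that \emph{a transitive admissible pair is a virtual triangulation pair.}

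To do this, assume $\langle\sigma_{\Col},\rot_{\Col}\rangle$ acts transitively on $\Lab_{\Col}$ and build the cover by hand. For each $\rot_{\Col}$-orbit $\{a,\rot_{\Col}a,\rot_{\Col}^{2}a\}$ take a copy of the ideal triangle with its three oriented edges indexed by the three labels of the orbit, and glue the edge indexed by $a$ to the edge indexed by $\sigma_{\Col}(a)$ using the isometry dictated by the labels: if $\lab_{\Col}(a)=[(T_{1},\gamma^{*})]_{G}$, the admissibility condition gives $\lab_{\Col}(\sigma_{\Col}(a))=[(T_{2},-\gamma^{*})]_{G}$, and there is a unique $T_{2}\in\Tr(G)$ realising this with oriented edge exactly the reverse of $\gamma^{*}$ itself (uniqueness because no nontrivial element of the torsion-free finite-covolume group $G$ fixes two distinct points of $\Cus(G)$); perform the gluing so that the two triangles lie in $\Ha$ as $T_{1}$ and $T_{2}$ do. Since $\sigma_{\Col}$ is a fixed-point-free involution, each edge is glued to exactly one other, so the resulting object $S_{1}$ is a surface assembled from the finitely many triangles, connected by transitivity. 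Developing the chosen lifts along a spanning tree of the dual graph of $S_{1}$ and composing with the covering $\pi\colon\Ha\to S$ produces a local homeomorphism $F\colon S_{1}\to S$ which realises each triangle as the immersed ideal triangle $\Proj_{S}(\lab_{\Col}(a))$ in a way compatible with the oriented-edge data; the admissibility condition is exactly what makes $F$ continuous across each edge (with $\pi\circ g=\pi$ absorbing the $G$-translate ambiguity coming from the non-tree edges).

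I would then pull the hyperbolic metric of $S$ back to $S_{1}$ via $F$, so that $F$ becomes a local isometry. Since $S_{1}$ is glued from finitely many copies of the complete, finite-area ideal triangle along complete geodesic sides, it is complete and of finite area; and a local isometry from a complete surface onto a connected one is a covering map, so $F$ exhibits $S_{1}$ as a finite-degree cover of $S$, say $S_{1}=\Ha/G_{1}$ with $G_{1}<G$ of finite index. The fan of triangles around each vertex closes up after finitely many steps (as $\Lab_{\Col}$ is finite), the return map being a parabolic element of $G$, so the vertices are honest cusps and the triangles form a geodesic ideal triangulation $\tau_{1}$ of $S_{1}$ with vertex set $\Cus(S_{1})$. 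Unwinding the construction, $\Lab_{\Col}$ is canonically the set $\tau_{1}^{*}(S_{1})$ of flags of $\tau_{1}$, the maps $\rot_{\Col}$ and $\sigma_{\Col}$ are the map $\rot_{S_{1}}$ and the edge-adjacency involution of $\tau_{1}$, and $\lab_{\Col}$ is the forgetful map $[(T,\gamma^{*})]_{G_{1}}\mapsto[(T,\gamma^{*})]_{G}$; that is, $(\Col(i),\sigma_{\Col(i)})$ is exactly the virtual triangulation pair of $(S_{1},\tau_{1})$. The hard part will be this last step — confirming that the abstract gluing really produces a complete, finite-area hyperbolic surface carrying a genuine ideal triangulation and a genuine finite covering onto $S$ — while the orbit decomposition is pure bookkeeping.
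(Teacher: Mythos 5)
Your first step (decomposing $\Lab_{\Col}$ into orbits of $\left<\sigma_{\Col},\rot_{\Col}\right>$ and reducing to the transitive case) is exactly the paper's, but for the transitive case you take a different route: you glue an abstract surface from ideal triangles and then develop it into $\Ha$, whereas the paper never leaves $\Ha$ --- it forms the set of triples $(T,\gamma^{*},a)$, shows that $\left<\sigma_{\Col}(G),\rot_{\Col}(G)\right>\cong\Z_2\star\Z_3$ acts freely on it and commutes with the $G$-action, takes a single orbit $O$ (whose triangles triangulate $\Ha$), and defines $G_1$ as the stabilizer of $O$ in $G$, so that $S_1=\Ha/G_1$ is complete by construction and covers $S$ tautologically, finiteness of $\Lab_{\Col}$ giving finite index. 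Your ``glue first, develop after'' version can be made to work, but it puts the entire burden on a point you currently dispose of with an invalid general principle.

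The gap is the completeness claim, which is precisely where you admit the difficulty lies. It is simply not true that a surface obtained by gluing finitely many ideal triangles along complete geodesic sides is automatically complete: completeness of such a gluing depends on the identifications (this is the classical shear/holonomy condition, and generic gluings of ideal triangles produce incomplete structures). In your setting completeness is equivalent to the statement you only assert afterwards --- that the return map of the fan of triangles around each ideal vertex is parabolic --- and since you use completeness to conclude that the local isometry $F\colon S_1\to S$ is a covering, this cannot be postponed or left unproved. What is actually needed, and what saves the argument here, is: developing the fan around a vertex, each successive gluing differs from the identity by an element of $G$ (because identified flags are $G$-equivalent), so the return map is an element $g\in G$ fixing the developed ideal vertex $c\in\Cus(G)$; the stabilizer of a cusp in the discrete torsion-free group $G$ consists of parabolics, so it remains to rule out $g=\mathrm{id}$; this holds because the endpoints other than $c$ of the successive developed edges move strictly monotonically along $\partial\Ha\setminus\{c\}$, so the last developed edge cannot coincide with the first, whence $g\neq\mathrm{id}$ and $g$ is parabolic. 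With that supplied, parabolic holonomy at every ideal vertex yields completeness of the finitely-triangulated surface, and the remainder of your outline (local isometry from a complete surface is a covering, finite area hence finite degree, identification of $\Lab_{\Col}$ with the flags of $\tau_1$) does go through; as written, however, the decisive step is missing.
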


\begin{proof}  We divide $\Lab_{\Col}$ into the orbits of the group $\left< \sigma_{\Col},\rot_{\Col} \right>$. Denote these orbits by $\Lab_{\Col(i)}$ where $i$ goes through some finite set of labels. Let $\lab_{\Col(i)}$, $\sigma_{\Col(i)}$, and $\rot_{\Col(i)}$ denote the restrictions of the the maps $\lab_{\Col}$, $\sigma_{\Col}$, and $\rot_{\Col}$ on the set $\Lab_{\Col(i)}$. Then each pair $\Col(i)=(\Lab_{\Col(i)},\lab_{\Col(i)})$ is an admissible pair, and $(\Col,\sigma_{\Col})$ is the union of $(\Col(i),\sigma_{\Col(i)})$. Moreover, the group $\left< \sigma_{\Col(i)},\rot_{\Col(i)} \right>$ acts transitively on $\Lab_{\Col(i)}$.  Therefore, in order to prove the lemma, it suffices to prove that every admissible pair $(\Col,\sigma_{\Col})$ for which the group  $\left< \sigma_{\Col},\rot_{\Col} \right>$ acts transitively on $\Lab_{\Col}$ is in fact a virtual triangulation pair.
\vskip .1cm
Assume that  $(\Col,\sigma_{\Col})$ is an admissible pair, so that the group  $\left< \sigma_{\Col},\rot_{\Col} \right>$ acts transitively on $\Lab_{\Col}$. Let $\Tr^{*}_{\Col}(G)$ be the space of triples $(T,\gamma^{*},a)$ where  $(T,\gamma^{*}) \in \Tr^{*}(G)$ and where $\lab_{\Col}(a)=[(T,\gamma^{*})]$.
There is a unique involution $\sigma_{\Col}(G):\Tr^{*}_{\Col}(G) \to \Tr^{*}_{\Col}(G)$ such that $\sigma_{\Col}(G)((T_1,\gamma^{*}_1,a_1))=(T_2,\gamma^{*}_2,a_2)$ if and only if $-\gamma^{*}_1=\gamma^{*}_2$ and $a_1=\sigma_{\Col}(a_2)$. Define
$\rot_{\Col}(G):\Tr^{*}_{\Col}(G) \to \Tr^{*}_{\Col}(G)$ by $\rot_{\Col}(G)((T,\gamma^{*},a))=(T,\Rot_T(\gamma^{*}),\rot_{\Col}(a))$. 
It is easy to verify that the group $\left< \sigma_{\Col}(G),\rot_{\Col}(G) \right>$ is isomorphic to $\Z_2 \star \Z_3$ and that it acts freely on 
$\Tr^{*}_{\Col}(G)$. Moreover, the collection of triangles from $\Tr(G)$ that appears in a given orbit under this action, is an ideal triangulation of $\Ha$.
\vskip .1cm
The group $G$ naturally acts on $\Tr^{*}_{\Col}(G)$ by $g(T,\gamma^{*},a)=(g(T),g(\gamma^{*}),a) \in \Tr^{*}_{\Col}(G)$ (note that $[(T,\gamma^{*})]=[(g(T),g(\gamma^{*}))]$). If  $(T_1,\gamma^{*}_1,a),(T_2,\gamma^{*}_2,a)  \in \Tr^{*}_{\Col}(G)$ then by definition we have
$[(T_1,\gamma^{*}_1)]=[(T_2,\gamma^{*}_2)]$. That is, there is $g \in G$ so that $g(T_1,\gamma^{*}_1,a)=(T_1,\gamma^{*}_1,a)$.
In particular, this shows that the group generated by $G$ and $\left< \sigma_{\Col}(G),\rot_{\Col}(G) \right>$ acts transitively on $\Tr^{*}_{\Col}(G)$.
Also, every element from $G$ commutes with every element from $\left< \sigma_{\Col}(G),\rot_{\Col}(G) \right>$.
\vskip .1cm 
Fix $(T,\gamma^{*},a) \in \Tr^{*}_{\Col}(G)$ and consider the orbit  $O=\left< \sigma_{\Col}(G),\rot_{\Col}(G) \right>(T,\gamma^{*},a)$. 
Let $G_1$ be the subgroup of $G$ so that $g \in G_1$ if $g$ preserves the orbit $O$. If  $(T_1,\gamma^{*}_1,a),(T_2,\gamma^{*}_2,a)  \in O$
then there is $g \in G$ so that $g(T_1,\gamma^{*}_1,a)=(T_1,\gamma^{*}_1,a)$. 
Moreover, since $g$ commutes with elements from  $\left< \sigma_{\Col}(G),\rot_{\Col}(G) \right>$ we conclude that $g$  preserves the entire orbit $O$. This shows that the map $\psi:O/G_1 \to \Lab_{\Col}$ given by $\phi(T,\gamma^{*},a)=a$ is an injection. Since the group $\left< \sigma_{\Col},\rot_{\Col} \right>$ acts transitively on $\Lab_{\Col}$ we find that $\psi$ is a bijection. 
\vskip .1cm
Let $S_1$ be the Riemann surface isomorphic to $\Ha/G_1$. Set $\tau^{*}(S_1)=O/G_1$ and denote by $\tau(S_1)$ the  union of triangles that appear in $O/G_1$. Then $\tau(S_1)$  is an ideal triangulation of the surface $S_1$. Since $\tau(S_1)$ contains  finitely many such triangles (the map $\psi:\tau^{*}(S_1) \to \Lab_{\Col}$ is a bijection onto a finite set $\Lab_{\Col}$), we see that $S_1$ is a finite type surface, and the group $G_1$ has finite index in $G$. Now it follows that the admissible pair $(\Col,\sigma_{\Col})$ is a virtual triangulation pair, that via the map $\psi$ corresponds to the triangulation of $S_1$.

\end{proof}

\vskip .1cm
Consider a labelled collection of triangles $\Col$ as an element of $\N\Tr(S)$. Assume that $(\Col,\sigma_{\Col})$ is an admissible pair.
We saw above that  $\Lab_{\Col}$ is the disjoint union  $\Lab_{\Col}=  \Lab_{\Col,{\gamma}^{*}_{1}} \cup \Lab_{\Col,-{\gamma}^{*}_{1}} \cup...\cup \Lab_{\Col,{\gamma}^{*}_{k}} \cup \Lab_{\Col,-{\gamma}^{*}_{k}}$ where $\gamma_i \in \Gamma^{*}(S)$ are the different edges of triangles
from $\Proj_S(\lab_{\Col}(\Lab_{\Col}))$. Then $\sigma_{\Col}(\Lab_{\Col,{\gamma}^{*}_{i}})
=\Lab_{\Col,-{\gamma}^{*}_{i}}$. 
We conclude that the sets $\Lab_{\Col,{\gamma}^{*}_{i}}$ and $\Lab_{\Col,-{\gamma}^{*}_{i}}$ have the same number of elements, which implies that
$\gamma^{*}_{i}$ does not figure in $\partial{\Col}$. We have that  $\partial{\Col}$ is equal to zero in the space $\Z\Gamma^{*}(S)$. 
\vskip .1cm
On the other hand, if $\Col$ is a labelled collection of triangles so that $\partial{\Col}$ is zero in
$\Z\Gamma^{*}(S)$ then  the corresponding  sets $\Lab_{\Col,{\gamma}^{*}}$  and $\Lab_{\Col,-{\gamma}^{*}}$
have the same number of elements. Again, from the fact that  $\Lab_{\Col}$ is the disjoint union  $\Lab_{\Col}=  \Lab_{\Col,{\gamma}^{*}_{1}} \cup \Lab_{\Col,-{\gamma}^{*}_{1}} \cup...\cup \Lab_{\Col,{\gamma}^{*}_{k}} \cup \Lab_{\Col,-{\gamma}^{*}_{k}}$ we can construct
an appropriate involution $\sigma_{\Col}$. However, there are many ways in which we can do this (unless each $\Lab_{\Col,{\gamma}^{*}_{i}}$ has one element). This is where the geometric considerations start. Our aim will be to construct these involutions so that the pair of triangles that are related by  the involution satisfies that the hyperbolic distance between the orthogonal projections of the centres of these triangles to their common edge,
is as small as possible.

\subsection{The definitions of height, $0$-horoball and combinatorial length}
As we said, $G$  denotes a Fuchsian group so that $S$ is isomorphic to $\Ha/G$. 
For every  $c \in \Cus(G)$ by $G(c)$ we denote the orbit of $c$ under the group $G$. Each such orbit corresponds to a cusp $c_i(S) \in \Cus(S)$.
Fix $c_i(S)$ that is we fix the orbit of some $c \in \Cus(G)$.  Let $f:\Ha \to \Ha$  be a M\"obius transformation, so that $f(c)=\infty$ and so that 
the parabolic element in in $f \circ G \circ f^{-1}$ that generates the corresponding cyclic subgroup of $f \circ G \circ f^{-1}$ , is the translation $g_{\infty}(z)=z+1$ (there are many such M\"obius transformations $f$ but we choose one for each orbit). The group  $G_{c_{i}}=G_c=f \circ G \circ f^{-1}$ is called the normalised group with respect to the cusp $c$. If $c' \in G(c)$ then $G_{c_{i}(S)}=G_c=G_{c'}$. There are exactly $\numb$ normalised groups $G_c$ (where $\numb$ is the number of punctures in the boundary of $S$). From now on $G$ is  one of the normalised groups (if we do not specify which one, then it can be any one). The constants we introduce below  may depend on  the choice of the group $G$. However, since we consider only finitely many such groups, these constants  depend only on  $S$.
 
\begin{definition} Let $z \in \Ha$ $c \in \Cus(G)$. Let $f$ be the corresponding M\"obius transformation that conjugates $G$ onto $G_c$.
We have:
\begin{itemize}
\item Set $\h_c(z)=\log(\IM(f(z)))$ where $\IM(f(z))$ is the imaginary part of $f(z) \in \Ha$. We say that $\h_c(z)$ is the height
of the point $z$ with respect to the cusp $c \in \Cus(G)$. 
\item Let $T \in \Tr(G)$ and denote by $c_1,c_2,c_3 \in \Cus(G)$ its vertices. 
Set $\h(T)=\max \{|\h_{c_{1}}(\ct(T))|, |\h_{c_{2}}(\ct(T))|,|\h_{c_{3}}(\ct(T))|\}$. Note that $h(T) \ge 0$.
\item Set $\h(z)=\max_{c \in \Cus(G)}\h_c(z)$. 
\item Let $c \in \Cus(G)$ and let $\gamma \in \Gamma(G)$ be such so that $c$ is not one of its endpoints. Denote by $\z_{\max}(\gamma,c)$ the point on $\gamma$ so that $\h_c(\z_{\max}(\gamma,c))\ge \h_c(z)$ for any other point $z \in \gamma$. 
\item Let $t \in \R$. Set $\Hor_c(t)=\{z \in \Ha:\h_c(z) \ge t \}$. We say that $\Hor_c(t)$ is the $t$-horoball at $c$. 
\item Let $t\in \R$. We set $\Thick_G(t)=\{z \in \Ha:\h(z)\le t\}$ and $\Thin_G(t)=\Ha \setminus \Thick_G(t)$.
\end{itemize}
\end{definition}
\begin{remark} We establish the following related definitions. If $T_1,T_2$ are two triangles so that $f(T_1)=T_2$ for some $f \in G$ then $\h(T_1)=T_2$. This shows that $\h(T)$ is well defined for every $T \in \Tr(S)$. 
If $c_1,c_2$ are equivalent under $G$ then the projection of the horoballs $\Hor_{c_{1}}(t)$ and $\Hor_{c_{2}}(t)$ to $S$ agree. Therefore we can define the $t$-horoball $\Hor_c(t) \subset S$ for every  $c \in \Cus(S)$. It is well known that any two $0$-horoballs on $S$ either coincide or  are disjoint (note that the closures of two zero horoballs for $\PSL$ may touch, but if a Fuchsian group $G$ is a covering group of a Riemann surface, then this can not happen) . Therefore, for any $z \in \Ha$ there can exist at most one cusp $c \in \Cus(G)$ so that $\h_c(z)>0$. Moreover, given $z \in \Ha$ and any interval $[a,\infty) \subset \R$ there are at most finitely many cusps $c \in \Cus(G)$ so that $\h_c(z) \in [a,b]$.
This shows that $\h(z)$  is well defined. If $g \in G$ then  $\h(z)=\h(g(z))$. This shows that for $p \in S$ the value $\h(p)$ is well defined. Also, by $\Thick_S(t)=\{p \in S:\h(p)\le t\}$ we denote the projection of the set $\Thick_G(t)=\{z \in \Ha:\h(z)\le t\}$ to $S$. We have $\Thin_S(t)=S \setminus \Thick_S(t)$.
\end{remark}

\vskip .1cm
In the following proposition we introduce certain constants that will be used in the proof of the correction lemma below. The constant $N(S)$ will be used throughout the paper. Since $S$ is of finite type the following numbers are well defined.
\begin{proposition} There exist a large enough number $t_1(S)>0$ a sufficiently negative number $t_2(S)<0$ and an integer $N(S) \ge 1$ with the following properties:
\begin{itemize}
\item Let $c \in \Cus(G)$ and let $N_{c}(S)$ denote the number of non-equivalent geodesics from $\lambda(G)$ that end at $c$. Then the integer 
$N(S)=\max_{c \in \Gamma(G)}N_c(S)$ is well defined.
\item Let $c \in \Cus(G)$ and let $z \in \Hor_c(-\log(8N(S)))$. Then $t_1(S)$ is large enough so that $z$ does not belong to any $t_1(S)$-horoball,
except possibly the one at $c$. 
\item We have $e^{t_1(S)}>4N(S)$ and $e^{t_1(S)}>4+2N(S)+1$.
\item Let $\gamma \in \lambda(G)$ and let $c_1,c_2 \in \Cus(G)$ be its endpoints. Let $T \in \tau(G)$ be one of the two triangles that have $\gamma$ in their boundary. Then the number $t_2(S)$ is sufficiently negative, so that for every point $z \in \Thick_G(t_1(S))$ and $z \in \gamma$ we have  $z \in \Hor_{c_{1}}(t_2(S))$ and $z \in \Hor_{c_{2}}(t_2(S))$.
\end{itemize}
\end{proposition}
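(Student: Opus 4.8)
The proposition is a package of one finiteness statement and a couple of elementary computations in the cusp\/-normalised coordinates, and the plan is simply to verify the four bullets in turn. For the first bullet: since $\tau(S)$ has the finite edge set $\lambda(S)$, with $|\lambda|=6\gen-6+3\numb$ elements, $\lambda(G)$ consists of finitely many $G$\/-orbits, so a fixed $c\in\Cus(G)$ is an endpoint of at most $2|\lambda|$ pairwise non\/-$G$\/-equivalent geodesics of $\lambda(G)$; hence $N_c(S)<\infty$. As $\Cus(S)$ is finite there are only finitely many normalised groups to consider, and $N(S)=\max_{c}N_c(S)$ is a well defined integer $\ge 1$.

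For the second and third bullets (the choice of $t_1(S)$) I would pass to the $c$\/-normalised picture, in which $c=\infty$ and $\Hor_c(s)=\{\,\IM(z)\ge e^{s}\,\}$ for every $s\in\R$. Any other cusp $c'\in\Cus(G)$ lies at a real point, and $\Hor_{c'}(s)=f_{c'}^{-1}(\{\IM\ge e^{s}\})$ is a Euclidean disc tangent to $\R$ at $c'$. The isometry $f_{c'}^{-1}\circ(w\mapsto e^{s}w)\circ f_{c'}$ fixes $c'$, has derivative $e^{-s}$ there, and carries $\Hor_{c'}(0)$ onto $\Hor_{c'}(s)$; since a M\"obius map preserving $\Ha$ and fixing a real point scales the Euclidean diameter of a horocircle at that point by exactly the modulus of its derivative there, the Euclidean diameter of $\Hor_{c'}(s)$ equals $e^{-s}$ times that of $\Hor_{c'}(0)$. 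Because the $0$\/-horoballs are pairwise disjoint (for a covering group, with disjoint closures), $\Hor_{c'}(0)$ does not meet $\Hor_c(0)=\{\IM\ge 1\}$, so its diameter is at most $1$; hence $\Hor_{c'}(t_1)\subset\{\IM<e^{-t_1}\}$ for every $t_1>0$. Consequently, as soon as $e^{-t_1}\le 1/(8N(S))$, i.e. $t_1\ge\log(8N(S))$, no point of $\Hor_c(-\log(8N(S)))=\{\IM\ge 1/(8N(S))\}$ lies in $\Hor_{c'}(t_1)$ for any $c'\ne c$, which is the second bullet. The third bullet imposes the two additional lower bounds $e^{t_1(S)}>4N(S)$ and $e^{t_1(S)}>2N(S)+5$; choosing $t_1(S)$ larger than the maximum of these finitely many explicit bounds secures both bullets.

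For the fourth bullet (the choice of $t_2(S)$) let $\gamma\in\lambda(G)$ have endpoints $c_1,c_2\in\Cus(G)$. Normalising at $c_1$, the geodesic $\gamma$ becomes a vertical line $\{\RE(w)=B\}$ with $B=f_{c_1}(c_2)\in\R$, and for $z\in\gamma$ with $f_{c_1}(z)=B+iy$ one has $\h_{c_1}(z)=\log y$. The map $\phi=f_{c_2}\circ f_{c_1}^{-1}$ sends $B$ to $\infty$ and $\infty$ to a real point, hence $\phi(w)=\phi(\infty)+\beta/(w-B)$ with $-\beta=\det\phi>0$, and a one\/-line computation gives $\IM\phi(B+iy)=-\beta/y$, so $\h_{c_2}(z)=\log(-\beta)-\log y$. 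Therefore $\h_{c_1}(z)+\h_{c_2}(z)$ equals the constant $\ell(\gamma):=\log(-\beta)$ along all of $\gamma$; this constant is insensitive to the horizontal\/-translation ambiguity in the normalisations and is unchanged under $\gamma\mapsto g(\gamma)$ ($g\in G$, choosing $f_{g(c_i)}=f_{c_i}g^{-1}$), so $\ell$ takes only the finitely many values indexed by $\lambda(S)$ (and $\ell(\gamma)<0$, which is just the disjointness of the $0$\/-horoballs). Now if $z\in\gamma$ and $z\in\Thick_G(t_1(S))$ then $\h_{c_1}(z)\le t_1(S)$ and $\h_{c_2}(z)\le t_1(S)$, whence $\h_{c_1}(z)=\ell(\gamma)-\h_{c_2}(z)\ge\ell(\gamma)-t_1(S)$ and likewise $\h_{c_2}(z)\ge\ell(\gamma)-t_1(S)$. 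Setting $t_2(S)=\min_{\gamma\in\lambda(S)}\ell(\gamma)-t_1(S)$, which is negative, we get $z\in\Hor_{c_1}(t_2(S))\cap\Hor_{c_2}(t_2(S))$, as required; the triangle $T$ plays no role in the conclusion.

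The only genuinely geometric point, and the one to get right, is the assertion in the second bullet that in the $c$\/-normalised picture the $t_1$\/-horoballs at the remaining cusps shrink to Euclidean discs of diameter below $e^{-t_1}$ — i.e. the derivative\/-scaling property of M\"obius maps fixing a real point together with the disjointness of the $0$\/-horoballs. Everything else reduces to the finiteness of $\lambda(S)$ and the height identity on each edge.
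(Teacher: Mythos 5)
Your proof is correct. The paper itself disposes of the proposition in three lines: finiteness of $\lambda(S)$ gives the first bullet, and the remaining bullets come from ``the basic compactness argument'', using that there are only finitely many $G$-orbits of edges and that $\gamma\cap\Thick_G(t_1(S))$ is compact. You replace the compactness step by explicit computations: for the second bullet, the exact scaling of the Euclidean diameter of $\Hor_{c'}(t_1)$ by the multiplier $e^{-t_1}$ at the fixed real cusp (so any $t_1(S)$ with $e^{-t_1(S)}\le 1/(8N(S))$ works), and for the fourth bullet the identity $\h_{c_1}(z)+\h_{c_2}(z)=\ell(\gamma)$ along an edge $\gamma$ with endpoints $c_1,c_2$, which yields the explicit choice $t_2(S)=\min_{\gamma\in\lambda(S)}\ell(\gamma)-t_1(S)$. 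What your route buys is quantitative control over $t_1(S)$ and $t_2(S)$, where the paper's argument only gives existence; both rest on the same two inputs, finiteness of $\lambda(S)$ and disjointness of the $0$-horoballs. Two cosmetic points: in the second bullet the inequality should be handled strictly (either take $e^{-t_1(S)}<1/(8N(S))$, or, as you in effect do, use that for a covering group the closed $0$-horoballs are genuinely disjoint, so the diameters in the normalised picture are strictly below $1$); and ``$-\beta=\det\phi$'' is only meaningful after normalising the matrix, though $\beta$ is intrinsically the residue of $\phi$ at $B$ and your computation $\IM\phi(B+iy)=-\beta/y$ is unaffected. Your remark that the triangle $T$ plays no role in the conclusion is consistent with the paper, where it is likewise never used.
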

\begin{remark} If  $t>t'$ then for every $c \in \Cus(G)$ we have $\Hor_{c}(t) \subset \Hor_{c}(t')$.
\end{remark}
\begin{proof} There are only $|\lambda(S)|$ non-equivalent geodesics in $\lambda(G)$. Also, the set $\gamma \cap \Thick_G$ is compact in $\Ha$. The propositions follows from the basic compactness argument.
\end{proof}

\begin{figure}
  {
    \def\H{{\mathcal H}}
    \input{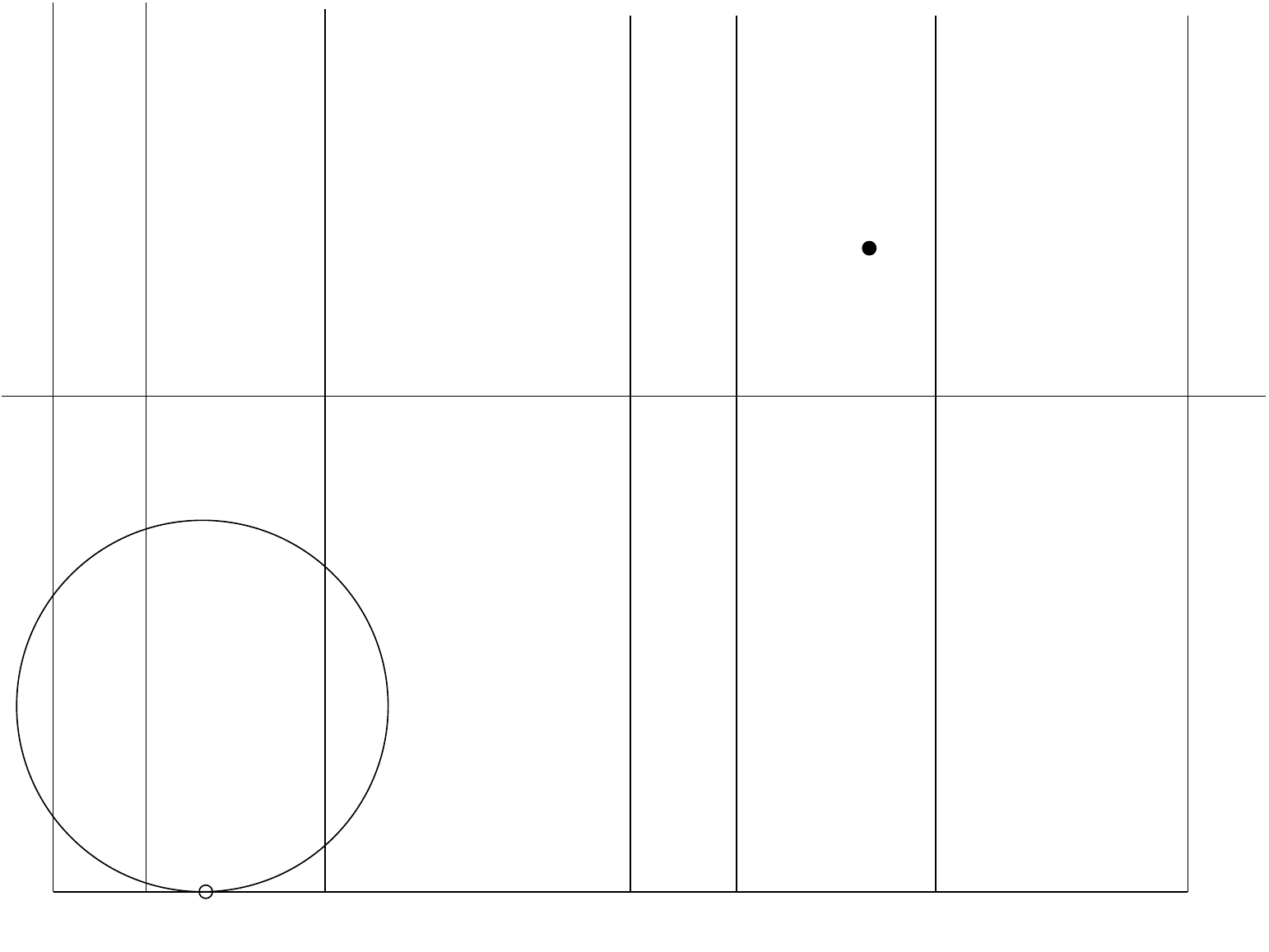_t}
  }
  \caption{The case $N(S) = 3$; $h_\infty(z) = h(z) > 0$}
  \label{fig:horoball}
\end{figure}

If $\gamma \subset \Ha$ is an arbitrary geodesic segment (or finite or infinite length), by $\iota(\gamma,\tau(G))$ we denote the number of (transverse) intersections between $\gamma$ and edges from $\lambda(G)$. In particular, if $\gamma \in \Gamma(G)$ the intersection number   $\iota(\gamma,\tau(G))$
is finite. For a geodesic segment $\gamma \subset S$ by $\iota(\gamma,\tau(S))$ we denote the number of (transverse) intersections between $\gamma$ and edges from $\lambda(S)$.

\begin{definition}  Let $\gamma \in \Gamma(G)$ and let $z \in \gamma$. Let $c_1,c_2 \in \Cus(G)$ be the endpoints of $\gamma$. We define the $combinatorial$ $length$ $\cl(\gamma,z)$ as
$$
\cl(\gamma,z)=\iota(\gamma,\tau(G))+ \psi(c_1,c_2,z),
$$
\noindent
where $\psi(c_1,c_2,z)=0$ if $\max\{\h_{c_{1}(z)},\h_{c_{2}(z)} \} \le t_{1}(S)$ and 
$$
\psi(c_1,c_2,z)=\max\{[e^{\h_{c_{1}(z)}}],[e^{\h_{c_{2}(z)}}] \},
$$ 
\noindent
if $\max\{\h_{c_{1}(z)},\h_{c_{2}(z)} \} > t_{1}(S)$. Here $[e^{\h_{c_{1}(z)}}]$ and $[e^{\h_{c_{2}(z)}}]$ denote the integer parts of
$e^{\h_{c_{1}(z)}}$ and $e^{\h_{c_{2}(z)}}$ respectively. Note that if $z \in \Thick_G(t_1(S))$ then $\cl(\gamma,z)=\iota(\gamma,\tau(G))$.
\end{definition}
Since the 0-horoballs are disjoint, only one of the numbers $\h_{c_{1}(z)},\h_{c_{2}(z)}$ can be non-negative for any $z \in \Ha$. 
If $\gamma^{*} \in \Gamma^{*}(G)$ represents $\gamma$ together with a choice of orientation on $\gamma$ then we set $\cl(\gamma^{*},z)=\cl(\gamma,z)$. 
\vskip .1cm
The following lemma will be used in the proof of Theorem 1.1 (in the next section). 
\begin{lemma} Let $R$ be a finite type Riemann surface that covers $S$ and let $\tau(R)$ be a geodesic triangulation on $R$ (by $\lambda(R)$ we denote the corresponding set of edges). Let $\shc \in X(\tau(R))$ be the vector so that $R$ is the underlying Riemann surface for the point $F_{\tau(R)}(\shc)$
(see Section 2.). Let $A=\max_{T \in \tau(R)}|\h(T)|$. Then $O_{\tau(R)}(\shc) \le 2A$.
\end{lemma}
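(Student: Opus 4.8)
The plan is to unwind the definition of $O_{\tau(R)}$ together with the definition of the shear coordinates, and to show that the sum of the shears along a consecutive $k$-tuple of edges equals the difference of the heights of the centres of two triangles of $\tau(R)$, each of which is bounded by $A$ in absolute value.

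Fix a consecutive $k$-tuple $\{\lambda_{i(1)},\dots,\lambda_{i(k)}\}$. By definition there are lifts $\ell_1,\dots,\ell_k\subset\Ha$, all sharing a common ideal endpoint which, after applying a M\"obius transformation, we may place at $\infty$; this common endpoint represents some cusp $c$ of $R$. Let $b_j\in\R$ be the finite endpoint of $\ell_j$. Consecutiveness (the condition that $\ell_j$ sits immediately to the left of $\ell_{j+1}$ in the fan of edges at $\infty$) means $b_1<b_2<\dots<b_k$, and that the lift of the geodesic triangulation $\tau(R)$ contains the ideal triangles $T_0=(b_0,b_1,\infty)$, the triangle to the left of $\ell_1$; $T_j=(b_j,b_{j+1},\infty)$ for $1\le j\le k-1$, the triangle between $\ell_j$ and $\ell_{j+1}$; and $T_k=(b_k,b_{k+1},\infty)$, the triangle to the right of $\ell_k$; here $b_0<b_1$ and $b_k<b_{k+1}$ are the appropriate real numbers. (A right-oriented $k$-tuple is handled identically after a reflection of $\Ha$.)

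I then compute each shear. Normalising $\ell_j$ to the geodesic from $0$ to $\infty$ by the affine map $z\mapsto (z-b_j)/(b_j-b_{j-1})$, the triangle $T_{j-1}$ becomes $(-1,0,\infty)$ and $T_j$ becomes $(0,\,(b_{j+1}-b_j)/(b_j-b_{j-1}),\,\infty)$; by the definition of $F_{\tau(R)}$ this gives $\shc_{i(j)}=\log(b_{j+1}-b_j)-\log(b_j-b_{j-1})$ for every $j=1,\dots,k$. Consequently the sum telescopes,
$$
\shc_{i(1)}+\dots+\shc_{i(k)}=\log(b_{k+1}-b_k)-\log(b_1-b_0).
$$
To rewrite this in terms of heights, recall that the centre $\ct$ of an ideal triangle is equivariant under M\"obius transformations and that $(b,b',\infty)$ is the image of $(0,1,\infty)$ under $z\mapsto (b'-b)z+b$; hence $\IM(\ct((b,b',\infty)))=(b'-b)\,\IM(\ct((0,1,\infty)))$, so with our chosen lift and the normalisation of the parabolic at $c$ built into the definition of $\h_c$ one has $\h_c(\ct((b,b',\infty)))=\log(b'-b)+C_0$ with $C_0$ a constant depending only on $c$, not on the triangle. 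Applying this to $T_k$ and to $T_0$ (so the two copies of $C_0$ cancel) we obtain
$$
\shc_{i(1)}+\dots+\shc_{i(k)}=\h_c(\ct(T_k))-\h_c(\ct(T_0)).
$$

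Finally, both $T_0$ and $T_k$ have $c$ (the point at $\infty$) as one of their three ideal vertices, so by the definition of $\h(\cdot)$ on triangles we get $|\h_c(\ct(T_0))|\le\h(T_0)\le A$ and $|\h_c(\ct(T_k))|\le\h(T_k)\le A$; therefore $|\shc_{i(1)}+\dots+\shc_{i(k)}|\le 2A$. Taking the supremum over all consecutive $k$-tuples and all $k\in\N$ gives $O_{\tau(R)}(\shc)\le 2A$. The point needing the most care is the bookkeeping in the second paragraph — that consecutiveness forces the triangles sitting between successive $\ell_j$'s to be exactly the $T_j$ above, so that the sum genuinely telescopes — together with the observation that the additive constant $C_0$ is the same for $T_0$ and $T_k$; the rest is a direct substitution.
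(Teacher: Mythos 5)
Your proof is correct and follows essentially the same route as the paper: the paper's key identity $\IM(\ct(T_k))=e^{\shc_1+\cdots+\shc_{k-1}}\IM(\ct(T_1))$ is exactly your telescoping of the shears along the fan of edges at the cusp, and both arguments then bound the height of the two extreme triangle centres by $A$. The only cosmetic difference is that the paper telescopes between the triangles adjacent to the first and last edges (bounding a sum of $k-1$ shears for a $k$-tuple, which suffices since $k$ is arbitrary), while you use the two outer triangles $T_0$ and $T_k$ of the fan to bound the full sum directly.
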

\begin{proof} Let $G_1$ be a finite index subgroup of $G$ so that $R$ is isomorphic to $\Ha/G_1$. By $\tau(G_1)$ and $\lambda(G_1)$ we denote the corresponding lifts of $\tau(R)$ and $\lambda(R)$. We have $\tau(G_1) \subset \Tr(G)$ so we can define the height $\h(T)$ for every $T \in \tau(G_1)$
(and by projecting $T$ to $R$ we define $\h(T)$ for $T \in \tau(R)$). 
\vskip .1cm 
The sets of cusps for $S$ and $R$ agree. Choose $c \in \Cus(G)$ and let $G$ be normalised, and $c=\infty$. Let $\lambda_1,...,\lambda_k \in \lambda(G_1)$ be a k-tuple of consecutive edges that all end at $\infty$. Let $\shc_i \in \R$ denote the corresponding shear coordinates on $\lambda_i$
$i=1,...,k$. Let $T_i \in \tau(G_1)$ be the triangle that has $\lambda_i$ as its edge, and so that $T_i$ is to the left of $\lambda^{*}_i$ where the orientation $\lambda^{*}_i$ is chosen so that $+\infty$ is to the right of $\lambda^{*}_i$. Then
$$
\IM(\ct(T_k))=e^{\shc_1+...+\shc_{k-1}}\IM(\ct(T_1)),
$$
\noindent
and
$$
|\shc_1+...+\shc_{k-1}|=|\log(\IM(\ct(T_k)))-\log(\IM(\ct(T_1)))|\le |\log(\IM(\ct(T_k)))|+|\log(\IM(\ct(T_1)))|=
$$
$$
=|\h_{\infty}(\ct(T_k))|+|\h_{\infty}(\ct(T_1))| \le 2A. 
$$
\noindent
Since $O_{\tau(R)}(\shc)$ is the maximum of all the sums of the type $|\shc_1+...+\shc_{k-1}|$ the lemma follows.
\end{proof}
Next, we determine a set of independent generators for the homology group $\Hom$. Let $\FD$ be an ideal fundamental polygon for $G$ so that
the edges of $\FD$ are geodesics from $\lambda(G)$ (every such $\FD$ has $2(2g+n-1)$ edges). Choose an orientation on every such edge, so that the identification (given by $G$) respects the orientation. Denote by $\lambda^{*}_{\Gen}(S) \subset \lambda^{*}(S)$ the set of oriented geodesics, whose lifts to the universal cover $\Ha$ are the oriented geodesics from the boundary of $\FD$ (if we forget about the orientation, the corresponding set of  geodesic is called $\lambda_{\Gen}(S)$). The set  $\lambda^{*}_{\Gen}(S)$ contains $2g+n-1$ different elements. 
Note that the complement of the set $\lambda^{*}_{\Gen}(S)$ in $S$ is simply connected. This shows that the geodesics from $\lambda^{*}_{\Gen}(S)$ generate the group $\Hom$. On the other hand, the dimension of the group is $\Hom$ is $2g+n-1$. This shows that the set $\lambda^{*}_{\Gen}(S)$ is a set of independent generators. The following proposition is elementary.
\begin{proposition} Let $R \in \N\Tr(S)$. Assume that 

$$
\partial{R}=\sum_{i=1}^{i=m}k_i\gamma^{*}_{k_{i}},
$$
\noindent
where $\gamma^{*}_{k_{i}} \in \lambda^{*}_{\Gen}(S)$ and $k_i \in \Z$. Then $k_i=0$ for every $i$ that is $\partial{R}=0$ in $\Z\Gamma^{*}(S)$.
\end{proposition}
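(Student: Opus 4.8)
\emph{Proof proposal.} The plan is to deduce the statement directly from two facts already established: for every $R\in\N\Tr(S)$ the homology class of $\partial R$ vanishes in $\Hom$, and $\lambda^{*}_{\Gen}(S)$ is a set of independent generators of $\Hom$ (so, having exactly $2g+n-1=\operatorname{rank}\Hom$ elements, their classes form a $\Z$-basis, since $\Hom$ is free abelian).

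First I would recall that passing to homology classes defines a group homomorphism $\Z\Gamma^{*}(S)\to\Hom$; it is compatible with the relation $\gamma_{1}^{*}=-\gamma_{2}^{*}$ imposed on both sides, and by construction its composition with $\partial$ is the zero map $\N\Tr(S)\to\Hom$. Applying this homomorphism to the hypothesis $\partial R=\sum_{i=1}^{m}k_{i}\gamma^{*}_{k_{i}}$ yields $\sum_{i=1}^{m}k_{i}[\gamma^{*}_{k_{i}}]=0$ in $\Hom$, where each $[\gamma^{*}_{k_{i}}]$ is the class of an element of $\lambda^{*}_{\Gen}(S)$. After collecting terms — replacing $-\gamma^{*}$ by $\gamma^{*}$ with a sign change of the coefficient, and merging repetitions — independence of the classes of $\lambda^{*}_{\Gen}(S)$ forces every coefficient to be zero; hence $k_{i}=0$ for each $i$ in the original expression, and therefore $\partial R=0$ in $\Z\Gamma^{*}(S)$.

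I do not expect a genuine obstacle: the proposition is elementary and is exactly the translation of the homological input into the combinatorial group $\Z\Gamma^{*}(S)$. The one point worth recording explicitly is that the subgroup of $\Z\Gamma^{*}(S)$ spanned by $\lambda^{*}_{\Gen}(S)$ is free abelian of rank $2g+n-1$, so that no relation among distinct generators can be hidden there beyond the orientation identification $\gamma_{1}^{*}=-\gamma_{2}^{*}$ that is already built into the definition of $\Z\Gamma^{*}(S)$; this makes the homology-class map restrict to an isomorphism of that subgroup onto $\Hom$, which is what lets the vanishing be detected already in $\Z\Gamma^{*}(S)$.
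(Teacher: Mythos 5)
Your argument is correct and follows exactly the paper's own route: the class of $\partial R$ vanishes in $\Hom$, and since the elements of $\lambda^{*}_{\Gen}(S)$ are independent generators of $\Hom$, all coefficients $k_i$ must be zero. The paper's proof is just a two-line version of what you wrote, so no further comment is needed.
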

\begin{proof} We have already observed that the homology class of $\partial{R}$ is equal to zero in $\Hom$. Since $\lambda^{*}_{\Gen}(S)$ are independent generators the proposition follows.
\end{proof}

\subsection{The Correction lemma} Given a labelled collection of triangles $\Col$ our aim is to equip $\Col$ with the corresponding involution  
to produce an admissible pair. This of course not always possible. As we saw above, only if $\partial{\Col}=0$ in $\Gamma^{*}(S)$ one can do this. Moreover, we want to be able to construct this involution so that the centres of the triangles that are paired are as close as possible. Given an arbitrary $\Col$ we can add more labels to the set  $\Lab_{\Col}$ and expand the domain and the range of the map $\lab_{\Col}$ so that we are able to construct the corresponding involution. The purpose of this subsection is to prove the lemma which tells us how many labels we need to add.
\vskip .1cm 
We first prove  a few propositions. All the distances mentioned below are considered to in the hyperbolic metric (unless stated otherwise).
\begin{proposition} There exists a constant $d_1(S)>0$ which depends only on $S$ so that the following holds. Let $c \in \Cus$ and let $G=G_c$ $c=\infty$ be normalised. Let $\gamma_0 \in \Gamma(G)$  with the endpoints $c_1,c_2 \in \Cus(G)$, $c_1,c_2 \in \R$, $c_1<c_2$, and let $z_0 \in \gamma$. Assume that for $\infty \in \Cus(G)$ the geodesic $\gamma_0$ intersects at least $100N(S)$ geodesics from $\lambda(S)$ that end at $\infty$. 
Also, assume that $z_0 \in \Hor_{\infty}(t_2(S))$. Let $\gamma'$ be the geodesic that is orthogonal to $\gamma_0$ and that contains $z_0$. Let  $c_1<x<c_2$ and $y \in \R$ be the endpoints of $\gamma'$. We have:
\begin{itemize}
\item  There exists $c_3 \in \Cus(G)$  so that the geodesic that connects $c_3$ and $\infty$ belongs to $\lambda(G)$ and so that
$c_3$ is the closest point to $x$ subject to the condition $c_1+2 <c_3 <c_2-2$.
Let $\gamma_i$, $i=1,2$, be the geodesic that connects $c_i$ with $c_3$ and let $T$ be the triangle bounded by $\gamma_0$, $\gamma_1$ and $\gamma_2$. 
Then the distance between the centre $\ct(T)$ and the point $z_0$ is less than $d_1(S)$.
\item Assume $y>c_2$ (the analogous statement holds for $y<c_1$). There exists $c_3 \in \Cus(G)$  so that the geodesic that connects $c_3$ and $\infty$ belongs to $\lambda(G)$ and so that $c_3$ is the closest point to $y$ subject to the condition $c_2+2 <c_3$. Let $\gamma_i$, $i=1,2$, be the geodesic that connects $c_i$ with $c_3$ and let $T$ be the triangle bounded by $\gamma_0$, $\gamma_1$ and $\gamma_2$. 
Then the distance between the centre $\ct(T)$, and the point $z_0$ is less than $d_1(S)$.
\end{itemize}
\end{proposition}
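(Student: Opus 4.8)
The plan is to compare $T$ with the genuine ideal triangle $T^{\circ}$ whose vertices are $c_1,c_2$ and the inner perpendicular–endpoint $x$ (for the first bullet; for the second bullet one uses $y$ in place of $x$ throughout). For $T^{\circ}$ the picture is exact: since $x$ is an endpoint of $\gamma'$ and $\gamma'\perp\gamma_0$, the geodesic $\gamma'$ is precisely the perpendicular dropped from the vertex $x$ of $T^{\circ}$ to its opposite edge $\gamma_0$, with foot $z_0=\gamma'\cap\gamma_0$. The centre of any ideal triangle lies on the perpendicular from each vertex to the opposite edge, at the universal distance $\tfrac12\log 3=\cosh^{-1}(2/\sqrt3)$ from that edge; hence $\ct(T^{\circ})\in\gamma'$ and $d(\ct(T^{\circ}),z_0)=\tfrac12\log 3$. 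Now $T$ is obtained from $T^{\circ}$ by the hyperbolic translation $A$ with axis $\gamma_0$ (so $A$ fixes $c_1,c_2$) that moves the third vertex from $x$ to $c_3$; thus $\ct(T)=A(\ct(T^{\circ}))$, and if $\ell$ denotes the translation length of $A$ then, using that $\ct(T^{\circ})$ is at distance $r=\tfrac12\log 3$ from the axis, $\cosh d(\ct(T^{\circ}),A\ct(T^{\circ}))=\cosh^2 r\,\cosh\ell-\sinh^2 r=\tfrac43\cosh\ell-\tfrac13$, whence
\[
d(\ct(T),z_0)\ \le\ \tfrac12\log3+d(\ct(T^{\circ}),A\ct(T^{\circ}))\ \le\ \ell+2.
\]
It remains to bound $\ell$, which equals the absolute value of the logarithm of the cross ratio $[c_1,c_2;x,c_3]=\dfrac{(x-c_1)(c_3-c_2)}{(x-c_2)(c_3-c_1)}$.

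\textbf{Preliminaries from the hypotheses.} First, because the parabolic generator fixing $\infty$ is $z\mapsto z+1$, the endpoints on $\R$ of the geodesics of $\lambda(G)$ that terminate at $\infty$ form the union of at most $N(S)$ arithmetic progressions of step $1$; hence consecutive such endpoints are at distance $\le 1$. Since $\gamma_0$ crosses at least $100N(S)$ of these geodesics, $(c_1,c_2)$ contains at least $100N(S)$ of their endpoints, so $c_2-c_1\ge 99$, and after discarding the $O(N(S))$ endpoints within distance $2$ of $c_1$ or $c_2$ there are still many left in $(c_1+2,c_2-2)$; in particular the cusp $c_3$ of the first bullet exists, and in all cases $c_3$ can be chosen within distance $\le 3$ of $x$ (respectively of $y$, subject to $c_3>c_2+2$). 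Second, writing $z_0=m+\rho e^{i\theta}$ with $m=\tfrac{c_1+c_2}{2}$, $\rho=\tfrac{c_2-c_1}{2}$, a half–angle computation gives
$x-c_1=\dfrac{2\rho\cos(\theta/2)}{\cos(\theta/2)+\sin(\theta/2)}$, $\;c_2-x=\dfrac{2\rho\sin(\theta/2)}{\cos(\theta/2)+\sin(\theta/2)}$, while $\IM(z_0)=2\rho\sin(\theta/2)\cos(\theta/2)$; consequently $x-c_1$ and $\IM(z_0)$ are comparable up to a factor in $[\tfrac1{\sqrt2},\sqrt2]$ whenever $z_0$ lies in the left half of $\gamma_0$ (and symmetrically $c_2-x\asymp\IM(z_0)$ on the right half).

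\textbf{Bounding $\ell$ and conclusion.} Split into two regimes. If $z_0$ is \emph{central}, meaning $x$ is at distance $\ge 3$ from each of $c_1,c_2$, then $c_3$ is within $1$ of $x$, so all four factors of the cross ratio are quotients of comparable positive quantities and $\ell\le\log 2$, independently of $S$. If instead $z_0$ is near an endpoint, say near $c_1$ so that $x-c_1\le 3$ (the case near $c_2$, and both cases for the second bullet, being the same with the roles of $c_1,c_2$ or of $x,y$ interchanged), then $c_3-c_1\in(2,4)$, while $z_0\in\Hor_\infty(t_2(S))$, i.e.\ $\IM(z_0)\ge e^{t_2(S)}$, together with $x-c_1\asymp\IM(z_0)$ forces $x-c_1\ge \tfrac1{\sqrt2}e^{t_2(S)}$; since $c_2-c_1$ is large the remaining factor $(c_3-c_2)/(x-c_2)$ lies in $[\tfrac12,2]$, so $[c_1,c_2;x,c_3]\in\big[e^{t_2(S)}/(8\sqrt2),\,3\big]$ and $\ell\le |t_2(S)|+3$ (using that $t_2(S)$ is very negative). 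Therefore $d(\ct(T),z_0)\le \ell+2\le |t_2(S)|+5$, and we may take $d_1(S)=|t_2(S)|+5$. The second bullet is proved in the same way, comparing $T$ with the ideal triangle on $c_1,c_2,y$: there $z_0$ is the foot of the perpendicular from $y$ to $\gamma_0$, one again pushes $y$ to $c_3$ by a translation along $\gamma_0$, and the only case requiring the horoball hypothesis is $z_0$ close to $c_2$.

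\textbf{Main obstacle.} The delicate step is the near–endpoint estimate: one must be certain that the horoball lower bound on $\IM(z_0)$ genuinely converts into a lower bound for the \emph{small} endpoint difference ($x-c_1$, or $c_2-x$, or $y-c_2$) that appears in the denominator of the cross ratio. This is exactly what the half–angle identities $x-c_1=2\rho\cos(\theta/2)/(\cos(\theta/2)+\sin(\theta/2))$ and $\IM(z_0)=2\rho\sin(\theta/2)\cos(\theta/2)$ are for, but one has to combine them with a careful bookkeeping of how the $\pm 2$ cutoffs in the choice of $c_3$ interact with the position of $x$ (resp.\ $y$), so that $c_3-c_1$ (resp.\ $c_3-c_2$) stays in a bounded interval; once that is in place the remainder is elementary two–dimensional hyperbolic geometry (the displacement formula for a translation in terms of its translation length and the distance of the moved point from the axis, and the universal inradius data of an ideal triangle).
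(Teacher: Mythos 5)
Your argument is correct, but it takes a genuinely different route from the paper's. The paper proves this proposition by contradiction and compactness: assuming no uniform bound, it rescales by the M\"obius maps $f_m(z)=(z-c_2(m))/|c_3(m)-c_2(m)|$, uses exactly your two quantitative inputs --- $|x-c_3|<3$ and the lower bound $c_2-x>e^{t_2(S)}/2$ coming from $z_0\in\Hor_{\infty}(t_2(S))$ (your half-angle identities are the explicit form of the paper's brief remark on this point) --- to show the rescaled triangles, geodesics and points converge to a nondegenerate limit, and concludes that $\dis(\ct(T(m)),z_0(m))$ cannot blow up; the constant $d_1(S)$ is therefore non-explicit. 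You instead compare $T$ with the ideal triangle on $c_1,c_2,x$ (resp.\ $c_1,c_2,y$), use the universal inradius $\tfrac12\log 3$ and the displacement formula for the hyperbolic translation along $\gamma_0$ carrying $x$ (resp.\ $y$) to $c_3$, and bound its translation length by the logarithm of an explicit cross ratio; this yields an explicit constant of the form $d_1(S)=|t_2(S)|+O(1)$, which is more information than the paper extracts and avoids any limiting argument. Two small quantitative slips, neither of which matters: the finite endpoints of the $\lambda(G)$-edges ending at $\infty$ may form up to $2N(S)$ (not $N(S)$) step-one progressions, since an edge of $\tau(S)$ can have both ends at the cusp, so your crossing count only gives $c_2-c_1\ge 49$ rather than $99$; and in the near-endpoint regime the closest admissible $c_3$ satisfies $c_3-c_1\in(2,6)$ rather than $(2,4)$, since the nearest endpoint to $x$ need not be the one you exhibit in $(c_1+2,c_1+3]$. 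Both only perturb the final numerical constant, and the second-bullet near-$c_2$ case does follow as you indicate, since $y-c_2\ge c_2-x\ge\tfrac{1}{\sqrt2}e^{t_2(S)}$ by the same identities.
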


\begin{proof} The existence of such $c_3$ in both cases follows from the fact that $|c_1-c_2| \ge 100$.
We prove the first statement. Assume that $|c_3-c_2| \le |c_3-c_1|$ 
(the other case is done in the same way). Clearly 
\begin{equation}\label{dis-0}
|x-c_3|<3.
\end{equation}
\noindent
Since  $z \in \Hor_{\infty}(t_2(S))$ it is elementary to see that 
\begin{equation}\label{dis-1}
c_2-x>{{e^{t_2(S)}}\over{2}}. 
\end{equation}

\begin{remark} Let $z_0=p+iq$.  If $c_2-x<2$ then $c_2-x<x-c_1$. This implies $x < p <c_2$. 
Then one shows that $\max\{(c_2-p),(p-x) \} \ge {{q}\over{2}} \ge {{e^{t_2(S)}}\over{2}}$.
\end{remark}

We prove the existence of the constant $d_1(S)$ by contradiction. Assume that there is no such $d_1(S)$. 
Then there exists the corresponding sequence of geodesic $\gamma_0(m)$ and points $z_0(m)$, $m \in \N$, so that $\dis(\ct(T(m)),z_0(m)) \to \infty$ when $m \to \infty$ (here $T(m)$ is the corresponding sequence of triangles). Let 
$$
f_m(z)={{(z-c_2(m))}\over{|c_3(m)-c_2(m)|}},
$$
\noindent
where $c_1(m)$, $c_2(m)$, $c_3(m)$ and $x(m)$, are the corresponding sequences of points. Since
$|c_3(m)-c_2(m)| \le |c_3(m)-c_1(m)|$, we have that the sequence of triangles $f_m(T(m))$ after passing onto a subsequence if necessary, converges to a non-degenerate ideal triangle $T(\infty)$ in $\Ha$.  From the choice of $f_m$ we have that $f_{m}(c_2(m))=0$ for every $m \in \N$. 
\vskip .1cm
If $|c_3(m)-c_2(m)| \to \infty$ then from (\ref{dis-0}) we conclude that the sequence $f_m(x(m))$ converges to $x(\infty)<0$.
If $|c_3(m)-c_2(m)|$ remains bounded then from (\ref{dis-1}), and from the fact that $|c_2-c_3| \ge 2$ (which means that this sequence is bounded from below as well),  we conclude that the sequence $f_m(x(m))$ converges to $x(\infty)<0$. Also, $f_m(c_1(m)) \to c_1(\infty)$ and $c_1(\infty)<x(\infty)$.
This shows that the sequence of geodesic $f_m(\gamma_0(m))$ converges to a proper geodesic in $\Ha$. Moreover, $f_m(z_0(m))$ converges to a point $z_0(\infty) \in \Ha$.  Since $\dis(\ct(T(m)),z_0(m)) \to \dis(\ct(T(\infty)),z_0(\infty)) $, $m \to \infty$, we obtain a contradiction. 
The second statement is proved in a similar way.
\end{proof}

\begin{proposition} There exist constants $d_2(S),d_3(S),\theta(S)>0$ so that the following holds. Let $\gamma_0 \in \Gamma(G)$ and suppose $\iota(\gamma_0,\tau(G))>0$. Let $z_0 \in \gamma_0 \cap \Thick_G(t_1(S))$. Let $\gamma_1 \in \lambda(G)$ be the geodesic, so that letting $w=\gamma_0 \cap \gamma_1$ the distance $\dis(z_0,w)$ is the smallest among all the distances between $z_0$ and the intersection points between $\gamma_0$ and  geodesics from
$\lambda(G)$ (if there are two such closest points $w$ we choose either one of them). Assume in addition that if for some $c \in \Cus(G)$ the geodesic
$\gamma_0$ intersects at least $100N(S)$ geodesics from $\lambda(G)$ that all have $c$ as their endpoint, then $z_0$ does not belong to the horoball
$\Hor_c(t_2(S)-1)$.  Then,
\begin{enumerate}
\item We have $\dis(z_0,w) < d_2(S)$ and the smaller angle between $\gamma_0$ and $\gamma_1$ is greater than $\theta(S)$.
\item Let $c_1 \in \Cus(G)$ be an endpoint of $\gamma_1$. Let $\gamma'_0, \gamma''_0 \in \Gamma(G)$ be the geodesics that connect 
$c_1$ with the two endpoints of $\gamma_0$ respectively. Let $T \in \Tr(G)$ be the triangle bounded by $\gamma_0$ $\gamma'_0$ and $\gamma''_0$.
Then $\iota(\gamma'_0,\tau(G))+\iota(\gamma''_0,\tau(G)) \le \iota(\gamma_0,\tau(G))-1$. Moreover, there are points 
$z'_0 \in \gamma'_0 \cap \Thick_G(t_1(S))$ and $z''_0 \in \gamma''_0 \cap \Thick_G(t_1(S))$ so that
the distance between the centre of the triangle $T$ and any of the points $z_0,z'_0,z''_0$ is less than $d_3(S)$.
\end{enumerate}
\end{proposition}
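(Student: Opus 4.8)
The plan is to establish the two items in turn. Item (1) — the distance and angle bounds — is where the real work lies, and it also confines the whole picture to a bounded region, after which item (2) follows by soft arguments.

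\emph{Item (1).} I would argue by contradiction and compactness, in the manner of the previous proposition's proof. Suppose there are $\gamma_0(m) \in \Gamma(G)$, $z_0(m) \in \gamma_0(m) \cap \Thick_G(t_1(S))$, $\gamma_1(m) \in \lambda(G)$ and $w(m) = \gamma_0(m) \cap \gamma_1(m)$ satisfying all the hypotheses, with either $\dis(z_0(m),w(m)) \to \infty$ or the smaller angle at $w(m)$ tending to $0$. Replacing each configuration by a $G$-translate we may take the $z_0(m)$ in a fixed compact part of $\Thick_G(t_1(S))$, and pass to a subsequence with $z_0(m) \to z_0(\infty)$ and $\gamma_0(m) \to \gamma_0(\infty)$. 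The engine is a finiteness statement: $\Thick_S(t_1(S))$ is compact and, since $\tau(G)$ has only finitely many triangle shapes, each triangle meets $\Thick_G(t_1(S))$ in a set of diameter at most a fixed $D(S)$; hence a geodesic remaining in $\Thick_G(t_1(S))$ for hyperbolic length $>D(S)$ must cross an edge of $\lambda(G)$. Travelling along $\gamma_0(m)$ from $z_0(m)$, within distance $D(S)$ we therefore either cross an edge — giving $\dis(z_0(m),w(m)) \le D(S)$, a contradiction — or we leave $\Thick_G(t_1(S))$ into a horoball $\Hor_c(t_1(S))$; no edge having been crossed, $z_0(m)$ still lies in the triangle $T_0(m)$ containing it, and (since $t_1(S)$ is large enough that $\Hor_c(t_1(S))$ meets a triangle only at a vertex of that triangle, which we may include among its defining properties) $c$ is a vertex of $T_0(m)$. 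It remains to analyse the behaviour inside $\Hor_c(t_1(S))$: normalising $c=\infty$, the edges of $\lambda(G)$ ending at $c$ are equally spaced vertical half-lines, at most $N(S)$ per period, and $\gamma_0(m)$ is either asymptotic to $c$ or a bounded semicircle meeting them; a direct computation bounds how far $\gamma_0(m)$ runs past $\partial\Hor_c(t_1(S))$ before meeting one of these edges, and bounds the angle there, unless $\gamma_0(m)$ penetrates very deep — but ``very deep'' forces $\gamma_0(m)$ to meet at least $100N(S)$ edges ending at $c$ while $z_0(m) \in \Hor_c(t_2(S)-1)$, which the hypothesis forbids. This is the contradiction, and it yields $d_2(S)$ and $\theta(S)$.

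\emph{Item (2), the inequality.} This is purely combinatorial, proved one edge at a time. Write $u,v$ for the endpoints of $\gamma_0$ and $c_1'$ for the other endpoint of $\gamma_1$; since $\gamma_1$ crosses $\gamma_0$ at $w$, the segment $\gamma_1 \cap T$ runs from the vertex $c_1$ of $T$ to $w \in \gamma_0$ and splits $T$ into a piece bounded in part by $\gamma_0'$ and a piece bounded in part by $\gamma_0''$, each meeting $\gamma_0$. Now $\gamma_1$ contributes $1$ to $\iota(\gamma_0,\tau(G))$ and $0$ to $\iota(\gamma_0',\tau(G))+\iota(\gamma_0'',\tau(G))$, as it shares the vertex $c_1$ with $\gamma_0'$ and $\gamma_0''$ and hence is disjoint from them; this accounts for the $-1$. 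For any other edge $e$: distinct edges of $\tau(G)$ do not cross, so $e$ misses $\gamma_1$; and $T$ is convex, so $e \cap T$ is a single segment meeting $\partial T = \gamma_0 \cup \gamma_0' \cup \gamma_0''$ in at most two points. If $e$ met both $\gamma_0'$ and $\gamma_0''$, or met one of them without meeting $\gamma_0$, it would have to pass from one side of $\gamma_1 \cap T$ to the other and hence cross $\gamma_1$ — impossible. (The same remark covers the cases where an endpoint of $e$ is a vertex of $T$.) Thus every edge $e \ne \gamma_1$ contributes at least as much to $\iota(\gamma_0,\tau(G))$ as to $\iota(\gamma_0',\tau(G))+\iota(\gamma_0'',\tau(G))$, and summing gives the inequality.

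\emph{Item (2), the thick points near $\ct(T)$.} All ideal triangles are isometric, so $\ct(T)$ lies within a universal distance of each of $\gamma_0,\gamma_0',\gamma_0''$; let $p'$ and $p''$ be the feet of the perpendiculars from $\ct(T)$ to $\gamma_0'$ and $\gamma_0''$. By item (1), $w$ — and hence $z_0$ — lies within a bounded distance of $\ct(T)$: a point of a side lying far from $\ct(T)$ is deep in the horoball at a vertex of $T$, whereas $z_0$ is thick and $w$ lies within $d_2(S)$ of it, which also forces $T$ not to be too ``thin as seen from a vertex'' (via the computation in item (2)). Hence $p'$ and $p''$ are within a bounded distance of $z_0$, so their heights exceed $t_1(S)$ by at most a fixed amount; since the $t_1(S)$-horoballs are pairwise disjoint, each of $\gamma_0'$ and $\gamma_0''$ carries a definite open sub-arc near $\ct(T)$ lying in $\Thick_G(t_1(S))$, and points $z_0'$ and $z_0''$ of such sub-arcs do the job, all three of $z_0,z_0',z_0''$ lying within a suitable $d_3(S)$ of $\ct(T)$.

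The main obstacle is the cusp analysis inside item (1): making precise how deep $\gamma_0$ can travel into a horoball before crossing an edge, and at what angle, and checking that the exact hypothesis — at least $100N(S)$ crossings at a cusp $c$ forcing $z_0 \notin \Hor_c(t_2(S)-1)$ — is just strong enough both to exclude the degenerate configurations there and to keep $T$ from degenerating in item (2).
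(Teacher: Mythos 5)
The combinatorial count in item (2) is fine---it is the paper's argument with the separation by $\gamma_1\cap T$ spelled out---but item (1), where you yourself locate the real work, has a genuine hole: your contradiction hypothesis has two branches, $\dis(z_0(m),w(m))\to\infty$ or the smaller angle at $w(m)$ tending to $0$, and the travelling argument only refutes the first. The crossing whose angle you bound is the one you produce inside $\Hor_c(t_1(S))$; nothing identifies it with the nearest crossing $w(m)$, so the configuration in which $w(m)$ stays at bounded distance from $z_0(m)$ while the angle there shrinks to $0$ is never excluded. This is exactly the case the paper treats with the limit you set up and then abandon: if the angle tends to $0$ then $\gamma_0(m)$ and $\gamma_1(m)$ converge to the same geodesic, so $\gamma_0(\infty)\in\lambda(G)$; by the fourth property of Proposition 3.1 the thick point $z_0(\infty)\in\gamma_0(\infty)$ lies in $\Hor_{c_i}(t_2(S))$ for both endpoints $c_i$ of that limit edge, hence $z_0(m)\in\Hor_{c_i}(t_2(S)-\frac{1}{2})$ for large $m$, while $\gamma_0(m)\neq\gamma_0(\infty)$ converging to an edge forces $\gamma_0(m)$ to cross more than $100N(S)$ edges ending at one of the $c_i$---contradicting the hypothesis. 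A second, smaller, gap sits in your ``very deep'' branch: you assert $z_0(m)\in\Hor_c(t_2(S)-1)$, but what your set-up gives is $\h_c(z_0(m))\ge t_1(S)-D(S)$, and this yields the assertion only if $t_2(S)-1\le t_1(S)-D(S)$, which is not guaranteed by the properties listed in Proposition 3.1 (note that $D(S)$ itself grows with $t_1(S)$); you would have to add this, like your extra requirement that $\Hor_c(t_1(S))$ meet a triangle only at its own vertices, to the choice of constants---or route this branch through the limit as the paper does. (You should also address the shallow-penetration case, where $\gamma_0(m)$ crosses no edge inside the horoball and exits, and the direction in which $\gamma_0(m)$ may be asymptotic to $c$.)

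For the last part of item (2) your explicit construction can be made to work and is a legitimate alternative to the paper's second compactness argument (which instead derives a contradiction from a limiting side lying inside a single horoball), but the pivotal step is misjustified: ``a point of a side lying far from $\ct(T)$ is deep in the horoball at a vertex of $T$'' is false as a statement about the function $\h$, because $T$ is an arbitrary immersed ideal triangle and may sit at very negative height with respect to its own vertices, so thickness of $z_0$ alone does not bound $\dis(z_0,\ct(T))$. What does bound it is the angle estimate of item (1): since $\gamma_1$ ends at the vertex $c_1$ of $T$ opposite $\gamma_0$, the identity (\ref{ang-estimate-2}) shows that $w$ lies within $\log\big(\csc\theta(S)+\cot\theta(S)\big)$ of the foot of the perpendicular from $c_1$ to $\gamma_0$, which in turn is at universal distance from $\ct(T)$; this, not the horoball-depth claim, is what keeps $T$ from being ``thin as seen from a vertex''. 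Once $\dis(z_0,\ct(T))$ is bounded, your argument for $z_0',z_0''$ (the feet $p',p''$ have height exceeding $t_1(S)$ by at most a bounded amount, and sliding a bounded distance along $\gamma_0'$, $\gamma_0''$ out of the unique $t_1(S)$-horoball containing them lands in $\Thick_G(t_1(S))$, by disjointness of these horoballs) is sound.
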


\begin{proof} We first prove (1). Let $\gamma_0(m)$, $m \in \N$, be  a sequence of  geodesics, and $z_0(m) \in \gamma_0(m)$ a sequence of points,
so that $\gamma_0(m)$ and $z_0(m)$ share the above stated properties of $\gamma_0$ and $z_0$. Let $\gamma_1(m)\in \tau(G)$ and $w(m) \in \gamma_1(m)$
be the corresponding geodesics and points. Assume that either $\dis(z_0(m),w(m)) \to \infty$ or that the angle between $\gamma_0(m)$ and $\gamma_1(m)$
tends to zero, when $m\to \infty$. Since $z_0(m) \in \Thick_G(t_1(S))$ we can choose $f_m \in G$ so  that after passing to a subsequence if necessary, we have $f_m(z_0(m)) \to z_0(\infty) \in \Ha$.
Therefore, we may assume that $z_0(m) \to z_0(\infty) \in \Thick_G(t_1(S))$. Let $\gamma_0(\infty)$ be a geodesic in $\Ha$ where  $\gamma_0(m) \to \gamma_0(\infty)$. If $\dis(z_0(m),w(m)) \to \infty$ then the geodesic
$\gamma_0(\infty)$ does not intersect any geodesics from $\lambda(G)$. We conclude that $\gamma_0(\infty) \in \lambda(G)$. If the sequence
$\{\dis(z_0(m),w(m))\}$ is bounded, then the sequence of geodesics $\gamma_1(m)$ tends to a geodesic $\gamma_1(\infty) \in \lambda(G)$. If the angle between $\gamma_0(m)$ and $\gamma_1(m)$ tends to zero, when $m\to \infty$ then $\gamma_0(\infty)=\gamma_1(\infty)$ and we again conclude that  $\gamma_0(\infty) \in \lambda(G)$. 
\vskip .1cm
Let $c_1(\infty),c_2(\infty) \in \Cus(G)$ denote the endpoints of $\gamma_0(\infty) \in \lambda(G)$. Since $z_0(\infty) \in \Thick_G(t_1(S))$ we conclude from Proposition 3.1 that $z_0(\infty)$ belongs to both horoballs 
$\Hor_{c_{1}(\infty)}(t_2(S))$ and $\Hor_{c_{2}(\infty)}(t_2(S))$. Therefore, for $m_0$ large enough, and for every $m \ge m_0$  we have
$$
z_0(m) \in \Hor_{c_{1}(\infty)}(t_2(S)-{{1}\over{2}}),
$$
\begin{equation}\label{horo-1}
\end{equation}
$$
z_0(m) \in \Hor_{c_{2}(\infty)}(t_2(S)-{{1}\over{2}}).
$$
\noindent
Note that $\Hor_{c_{1}(\infty)}(t_2(S)-{{1}\over{2}})$ is contained in the horoball $\Hor_{c_{1}(\infty)}(t_2(S)-1)$ (the same is true for $c_2(\infty)$).
On the other hand, since $\gamma_0(m) \to \gamma_0(\infty)$ and $\gamma_0(m) \ne \gamma_0(\infty)$ (since by the assumption of the lemma we have that $\gamma_0(m)$ does not belong to $\lambda(G)$), we conclude that the number of geodesics from $\lambda(G)$ that have either $c_1(\infty)$ or $c_2(\infty)$ as their endpoints, and that are intersected by $\gamma_0(m)$ tends to $\infty$. Let $m_1>m_0$ be large enough, so that $\gamma_0(m_1)$ intersects
at least $100N(S)+1$ geodesics from $\lambda(G)$ that all have $c_1(\infty)$ as their endpoint (the case when these geodesics end at $c_2(\infty)$ is similar). From this and from (\ref{horo-1}) we obtain a contradiction with the assumption  that $z_0(m_1)$ does not belong to a $t_2(S)-1$-horoball
of any cusp $c\in \Cus(G)$ when $\gamma_0(m_1)$ intersects at least $100N(S)$ geodesics from $\lambda(G)$ that end at $c$.
\vskip .1cm
Next we prove (2). Every geodesic from $\lambda(G)$ that intersects $\gamma_0$ either intersects exactly one of the  geodesics $\gamma'_0$ or.  $\gamma''_0$ or this geodesic ends at $c_1$. Since $\gamma_1 \in \lambda(G)$ has $c_1$ as its endpoint, we see that
$\iota(\gamma'_0,\tau(G))+\iota(\gamma''_0,\tau(G))\le \iota(\gamma_0,\tau(G))-1$. 
\vskip .1cm
We prove the last part of (2) by contradiction. The argument is very similar as in the proof of (1). Let $\gamma_0(m) \in \Gamma(G)$, $m \in \N$, 
be a sequence of geodesics, and $z_0(m) \in \gamma_0(m)\cap \Thick_G(t_1(S))$, $m \in \N$, a sequence of points. Let 
$\gamma'_0(m)$ and   $\gamma''_0(m)$ be the corresponding sequences of geodesics, and let $T(m)$ be the corresponding sequence of triangles, where
$\ct(T(m))$ denotes the centres of $T(m)$. Assume that at least one of the distances $\dis(\ct(T(m)),z_0(m))$ $\dis(\ct(T(m)),\Thick_G(t_1(S))\cap\gamma'_0(m))$ or $\dis(\ct(T(m)),\Thick_G(t_1(S))\cap\gamma''_0(m))$ tends to $\infty$ when $m \to \infty$.
Same as above, we can assume that $z_0(m) \to z_0(\infty) \in \Ha$ and $\gamma_0(m) \to \gamma_0(\infty)$. From the conclusion (1) of this lemma that was proved above, 
we conclude that $T(m)$ converges to an ideal triangle $T(\infty)$. 
In particular,  $\gamma'_0(m) \to \gamma'_0(\infty)$ and 
$\gamma''_0(m) \to \gamma''_0(\infty)$. Also, $\ct(T(m)) \to \ct(T(\infty))$ where $\ct(T(\infty))$ is the centre of $T(\infty)$.
Since $\dis(\ct(T(m)),z_0(m))\to \dis(\ct(T(\infty)),z_0(\infty))$ we see that $\dis(\ct(T(m)),z_0(m))$ does not tend to $\infty$.
This means that one of the distances $\dis(\ct(T(m)),\Thick_G(t_1(S))\cap\gamma'_0(m))$ or $\dis(\ct(T(m)),\Thick_G(t_1(S))\cap\gamma''_0(m))$ tends to $\infty$ when $m \to \infty$. This implies that at least one of the geodesics $\gamma'_0(\infty)$ or $\gamma''_0(\infty)$ is contained in
$\Ha \setminus \Thick_G(t_1(S))$. Since $t_1(S)>0$ we have that the set $\Ha \setminus \Thick_G(t_1(S))$ is a union of disjoint horoballs (the base points of these horoballs are in $\Cus(G)$) in $\Ha$. Since every geodesic in $\Ha$ is a connected set, we conclude that at least one of the geodesics $\gamma'_0(\infty)$ or $\gamma''_0(\infty)$ is contained in a horoball in $\Ha$. This is a contradiction.
\end{proof}

We will use the following definition:
\begin{definition}
Let $\gamma \in \lambda(G)$ be a geodesic. Let $T_1,T_2 \in \tau(G)$ be the two triangles that are adjacent along $\gamma$. 
By $\midp(\gamma)$ we denote the intersection point between $\gamma$ and the geodesic segment that connects the centres of the triangles $T_1$ and $T_2$.
\end{definition}

We now prove the Correction Lemma.

\begin{lemma} Let $\gamma_0 \in \Gamma(G)$ be a geodesic, and let $\gamma_0^{*} \in \Gamma^{*}(G)$ be the geodesic $\gamma_0$ with a chosen orientation. 
Let $z_0 \in \gamma_0$ be a point. Then there exists an ideal polygon $\Pol$ with a triangulation $\tau(\Pol)$ that has the following properties:

\begin{enumerate}
\item The geodesic $\gamma_0$ is an edge of $\Pol$ and all other edges of $\Pol$ are geodesics from $\lambda_{\Gen}(G)$ where
$\lambda_{\Gen}(G)$ is the lift of $\lambda_{\Gen}(S)$ to $\Ha$.
\item The polygon $\Pol$ is to the right of $\gamma_0^{*}$.
\item There exists a constant $D>0$ which depends only on $S$ so that the centres of any two adjacent triangles from $\Pol$ are within the $D$ hyperbolic distance.  If $T \in \tau(\Pol)$ is the triangle adjacent to $\gamma_0$ then the distance between the centre of $T$ and the point $z_0$ is less than ${{D}\over{2}}$.  If  $T \in \tau(\Pol)$ is a triangle adjacent to another edge $\gamma$ of $\Pol$ then the  hyperbolic distance between the centre of $T$ and the point $\midp (\gamma) \in \gamma$ is less than ${{D}\over{2}}$. 
\item There exist constants $C,K>0$ which depend only on $S$ so that  $|\tau(\Pol)| \le C (\cl(\gamma_0,z_0))+K$. 
Here $|\tau(\Pol)|$ stands for the total number of triangles in $\Pol$.
\end{enumerate}

\end{lemma}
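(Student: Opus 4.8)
The plan is to construct $\Pol$ and $\tau(\Pol)$ by a recursive peeling procedure: at each stage one ideal triangle adjacent to the current frontier geodesic is split off --- with its centre forced to stay close to a prescribed point of that geodesic --- and the process is driven down to finitely many bounded-complexity pieces coming from the fundamental polygon $\FD$. I would first establish the statement under the generic running hypothesis that the base point $z$ on the current geodesic $\gamma$ either lies in $\Thick_G(t_1(S))$ or lies deep in the horoball $\Hor_c(t_1(S))$ at a cusp $c$ which is not an endpoint of $\gamma$; the induction is on $\iota(\gamma,\tau(G))$. If $\iota(\gamma,\tau(G))=0$ then $\gamma\in\lambda(G)$, and I take $\Pol$ to be the copy of $\FD$ lying to the right of $\gamma^{*}$, or the sub-polygon of that copy cut off by $\gamma$ when $\gamma$ is a diagonal of $\FD$, triangulated by the restriction of $\tau(G)$. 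Its non-$\gamma$ edges lie on a copy of $\partial\FD$, hence in $\lambda_{\Gen}(G)$; its number of triangles is at most $|\tau(G)\cap\FD|$; and property (3) holds because, modulo $G$, the configuration consisting of a $\tau(G)$-triangle, an incident edge, and a point of that edge lying in $\Thick_G(t_1(S))$ ranges over a compact set, so all relevant distances --- in particular the distance from a point of an edge of $\lambda(G)$ to the centres of the two $\tau(G)$-triangles adjacent to it, which governs $\midp$ --- are uniformly bounded in terms of $S$.

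For the inductive step $\iota(\gamma,\tau(G))>0$. If there is a cusp $c$ such that $\gamma$ crosses at least $100N(S)$ edges of $\lambda(G)$ ending at $c$, then $c$ is not an endpoint of $\gamma$ (a geodesic ending at $c$ is disjoint from every edge ending at $c$); sliding $z$ along $\gamma$ by a bounded hyperbolic distance so that $z\in\Hor_c(t_2(S))$ --- possible because such a $\gamma$ reaches deep into $\Hor_c$ --- I apply Proposition 3.3 with $c$ normalized to $\infty$. Otherwise Proposition 3.4 applies to $\gamma,z$ directly (its extra hypothesis holds precisely when no such $c$ forces $z$ into $\Hor_c(t_2(S)-1)$, which is again arranged by a bounded perturbation). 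Either way one gets the first triangle $T$, adjacent to $\gamma$, with $\dis(\ct(T),z)$ bounded by $d_1(S)$ or $d_3(S)$, and two new geodesics $\gamma',\gamma''$; on each I take as new base point the point nearest $\ct(T)$ --- at distance the universal inradius of an ideal triangle from $\ct(T)$ --- and one checks it again satisfies the running hypothesis (it is never deep near an endpoint of $\gamma'$ or $\gamma''$, and if deep at all it is deep near the cusp that was normalized to $\infty$, which is not an endpoint of $\gamma'$ or $\gamma''$). Recursing and putting $\Pol=T\cup\Pol'\cup\Pol''$, $\tau(\Pol)=\{T\}\cup\tau(\Pol')\cup\tau(\Pol'')$ gives (1) and (2) at once, and (3) with a single $D=D(S)$ because $\ct(T)$ is within a bounded distance of the centres of the first triangles of $\Pol'$ and $\Pol''$. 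For (4) I need $\iota(\gamma',\tau(G))+\iota(\gamma'',\tau(G))\le\iota(\gamma,\tau(G))-1$: this is Proposition 3.4(2) in that case, and in the Proposition 3.3 case the same blocking argument applies, since an edge of $\tau(G)$ meeting a new side $(c_i,c_3)$ of $T$ but not $\gamma$ would have to cross $(c_3,\infty)\in\lambda(G)$, impossible as distinct edges of $\lambda(G)$ are disjoint, while $(c_3,\infty)$ itself meets $\gamma$ but neither new side, giving the strict drop. Hence the tree of cuts has at most $\iota(\gamma,\tau(G))\le\cl(\gamma,z)$ internal nodes and at most as many leaves, each of bounded complexity, so $|\tau(\Pol)|\le C(S)\cl(\gamma,z)+K(S)$.

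It remains to treat the one situation the running hypothesis excludes: the given $z_0$ lies deep in the horoball at an endpoint $c_1$ of $\gamma_0$. Normalizing $c_1=\infty$, so $\gamma_0=\{\RE w=c_2\}$ and $z_0=c_2+ih$ with $h$ large, and noting that $\cl(\gamma_0,z_0)$ is comparable to $h$, I would cap $\gamma_0$ off by a fan of ideal triangles $(\infty,v_0,v_1),(\infty,v_1,v_2),\dots,(\infty,v_{k-1},v_k)$ with $v_0=c_2$, choosing $v_k$ in the parabolic orbit of the finite endpoint of some edge of $\lambda_{\Gen}(S)$ ending at $c_1$ --- so that $(\infty,v_k)\in\lambda_{\Gen}(G)$ is a legitimate boundary edge of $\Pol$ --- and letting the gaps $v_i-v_{i-1}$ decrease by a bounded factor at each step from order $h$ (so the first triangle's centre is within a bounded distance of $z_0$, as (3) requires) down to order $1$ (so the last triangle's centre is within a bounded distance of $\midp((\infty,v_k))$, which lies at bounded height); this uses $O(\log h)$ triangles with consecutive centres within $D$. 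Below each semicircle $(v_{i-1},v_i)$ I run the generic procedure of the previous paragraphs on $(v_{i-1},v_i)$, with base point the point of $(v_{i-1},v_i)$ nearest the centre of the triangle above it --- deep near $\infty$ for small $i$ and in $\Thick_G(t_1(S))$ for large $i$, and with $\infty$ never an endpoint of $(v_{i-1},v_i)$. Since each $\iota((v_{i-1},v_i),\tau(G))$ is at most a constant depending on $S$ times $v_i-v_{i-1}$, the total count is $O(h)=O(\cl(\gamma_0,z_0))$, which gives (4) here as well.

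The step I expect to be most delicate is precisely this cusp fan: for one fixed $D=D(S)$, one must reconcile the requirement that the triangle adjacent to $\gamma_0$ lie near the possibly extremely deep point $z_0$ with the requirement that every other boundary edge lie within a bounded distance of its $\midp$ (which near a cusp sits at bounded height), and these competing demands must be met by the bounded-factor decay of the fan widths. A secondary technical point is the repeated claim that Propositions 3.3 and 3.4 become applicable after a bounded perturbation of the base point; this needs a mild strengthening of the lower bound on $t_1(S)$ in Proposition 3.1 --- so that any geodesic penetrating a $t_1(S)$-horoball at $c$ automatically crosses at least $100N(S)$ edges of $\lambda(G)$ ending at $c$ --- which is harmless since that proposition only asks $t_1(S)$ to be large enough.
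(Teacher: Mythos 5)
Your overall architecture (peel off one triangle with controlled centre, recurse, bottom out at copies of the fundamental polygon, treat the deep-at-an-endpoint case by a separate fan) matches the paper's, but the generic inductive step has a genuine gap exactly where the paper needs its Case 4. You funnel every geodesic $\gamma$ that crosses at least $100N(S)$ edges of $\lambda(G)$ ending at a cusp $c$ into one application of Proposition 3.3, and you justify the drop $\iota(\gamma',\tau(G))+\iota(\gamma'',\tau(G))\le\iota(\gamma,\tau(G))-1$ by the blocking argument through the vertical edge $(c_3,\infty)$. That argument is valid only when the splitting vertex $c_3$ lies \emph{between} the endpoints $c_1<c_2$ of $\gamma$, i.e.\ when the new triangle sits on the side of $\gamma$ opposite to $c$. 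But the lemma prescribes the side: when the right-hand side of $\gamma_0^{*}$ is the side containing $c$ (normalise $c=\infty$), you must use the second alternative of Proposition 3.3, with $c_3$ outside $[c_1,c_2]$, and then the long new edge $(c_1,c_3)$ still crosses every one of the $\ge 100N(S)$ vertical edges of $\lambda(G)$ over $(c_1,c_2)$ that $\gamma$ crossed, plus the new verticals over $(c_2,c_3)$; so $\iota(\gamma')+\iota(\gamma'')\ge\iota(\gamma)$, the new base point sits at bounded height near the short side, and you land in exactly the same configuration with the right endpoint advanced by a bounded amount. The recursion neither terminates in $O(\cl(\gamma_0,z_0))$ steps nor gives the bound (4). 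This is precisely why the paper's Case 4 replaces the one-triangle split by the geometric ``doubling'' fan $T_1,\dots,T_{k-2},\wh T$ whose widths grow by factors of $2$ up to $[c_2-c_1]$ and which is capped by a triangle with a vertex at $\infty$: the intermediate base points $\wt z_i$ are high over $\infty$, so $\cl(\wt\gamma_i,\wt z_i)=\iota(\wt\gamma_i,\tau(G))\lesssim 2^{i}[c_3-c_2]N_{\infty}(S)$, and the geometric sum is absorbed because $\cl(\gamma_0,z_0)\ge m_3\ge[c_2-c_1]N_{\infty}(S)-1$. Your proposal contains no substitute for this step.

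A second unsound reduction is the ``bounded slide'': if some cusp $c$ has many crossings but the given $z$ lies in $\Thick_G(t_1(S))$ far along $\gamma$ from the excursion of $\gamma$ into $\Hor_c(t_2(S))$, the slide into $\Hor_c(t_2(S))$ is not bounded (the distance from $z$ to that horoball along $\gamma$ is unbounded), and even granting the slide, the triangle adjacent to $\gamma_0$ then has its centre near the slid point, breaking requirement (3) that it lie within $D/2$ of the \emph{original} $z_0$. The paper avoids this by keeping $z_0$ and applying Proposition 3.4, whose hypothesis only forbids $z_0$ from lying deep in such a horoball, not the existence of such a cusp; so your dichotomy should be on where $z_0$ sits, not on whether a heavily-crossed cusp exists. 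Finally, your endpoint-cusp fan is a legitimate alternative to the paper's Case 2 (which instead performs a single split that roughly halves the exponential weight and closes the induction through the choice of $C$), but your count $\iota((v_{i-1},v_i),\tau(G))\lesssim v_i-v_{i-1}$ needs every $v_i$, not just $v_k$, to be the foot of an edge of $\lambda(G)$ ending at $c_1$; with that choice the disjointness argument shields both ends and the estimate holds, otherwise crossings accumulating near the intermediate vertices are not controlled.
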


\begin{proof} First consider the case  when $\cl(\gamma_0,z_0)=0$. We allow any orientation on $\gamma^{*}_0$. 
Then we have $\gamma_0 \in \lambda(G)$. Moreover, let $c_1,c_2 \in \Cus(G)$ be the two endpoint of $\gamma_0$. In this case we have that $\h_{c_{1}}(z_0),\h_{c_{2}}(z_0) \le t_1(S)$.
We build the corresponding polygon as follows. Let $T$ be the triangle from $\tau(G)$ that is to the right of $\gamma^{*}_0$. If an edge of $T$ belongs to the set $\lambda_{\Gen}(G)$ we do not add any more triangles along this edge. If an edge of $T$ does not belong to $\lambda_{\Gen}(G)$ we glue the corresponding triangle from $\tau(G)$ to the right of this edge. We repeat this process until all the edges of the polygon $\Pol$ we obtained are all from $\lambda_{\Gen}(G)$ (except the edge $\gamma_0$). This process has to end because the geodesics from $\lambda_{\Gen}(G)$ are the edges of an ideal fundamental polygon for $G$. This way we have also constructed the corresponding triangulation $\tau(\Pol)$ of the polygon $\Pol$.
\vskip .1cm
We repeat this construction for every $\gamma_0 \in \lambda(G)$. If  $f \in G$ and $f(\gamma_0)=\gamma'_0$ then the corresponding polygon for 
$\gamma'_0$ is the image of the corresponding polygon for $\gamma_0$ under the map $f$. So, we need to consider only $|\lambda(S)|$ different geodesics
from $\lambda(G)$. Since $\gamma_0 \cap \Thick_G(t_1(S))$ is a compact set in $\Ha$ by the compactness argument we find that there exist constants $D_1>0$ and $K>0$ so that for every such $\gamma_0\in \lambda(G)$ the corresponding polygon $\Pol$ satisfies that $|\tau(\Pol)| \le K$.
Also, the distances between the centres of adjacent triangles in $\Pol$ are  bounded by $D_1$. The distances between the centres of boundary triangles, and the corresponding points on the edges of $\Pol$ are bounded by ${{D_1}\over{2}}$.
\vskip .1cm
We now prove the general case by induction. We first determine the constants $D$, $C$ from the statement of the lemma (the constant $K$ has already been defined). Set
$$
D=2\big(D_1+2d_3(S)+(64+\log32)+2d_1(S)+2\log(4N(S)+1)\big)
$$
\noindent
Let
$$
C_1={{K+1}\over{ {{[e^{t_{1}(S)}]}\over{2}} -2-N(S)}}.
$$
\noindent
Recall that we chose $t_1(S)$ large enough so that the denominator is a positive constant. Set
$$
C=(2K+1)+10K+C_1.
$$
\noindent
The numbers that appear in the definition of $C$ and $D$, will appear in the arguments below. 
\vskip .1cm 
The proof is by induction on $m=\cl(\gamma_0,z_0)$. The case $0=m=\cl(\gamma_0,z_0)$ has been done above. Fix $m \in \N$ and assume that the statement of the lemma is true for every non-negative integer that is less than $m$. We now prove that the statement for $m$. There are four cases to consider.
\vskip .1cm
\noindent
${\bf {Case}}$ $1.$ In this case we assume that $z_0 \in \Thick_G(t_1(S))$. Moreover, we assume that if for some $c \in \Cus(G)$
$\gamma_0$ intersects at least $100N(S)$ geodesics from $\lambda(G)$ that all end at $c$ then
the point $z_0$ does not belong to $\Hor_c(t_2(S))$ (then $z_0$ does not belong to $\Hor_c(t_2(S)-1)$ either, so we may apply Proposition 3.4). 
We allow any orientation $\gamma^{*}_0$. We have
$\cl(\gamma_0,z_0)=\iota(\gamma_0,\tau(G))=m$. 
\vskip .1cm
We apply  Proposition 3.4. That is, there are geodesics $\gamma'_0, \gamma''_0 \in \Gamma(G)$ so that there exists a triangle $T \in \Tr(G)$ that is  bounded by $\gamma_0$, $\gamma'_0$ and $\gamma''_0$. Moreover, there are points  $z'_0 \in \gamma'_0 \cap \Thick_G(t_1(S))$ and $z''_0 \in \gamma''_0 \cap \Thick_G(t_1(S))$ so that the distance between the centre of the triangle $T$ and any of the points $z_0,z'_0,z''_0$ is less than $d_3(S)$.
Also, we have that $\iota(\gamma'_0,\tau(G))+\iota(\gamma''_0,\tau(G)) \le \iota(\gamma_0,\tau(G))-1=m-1$. We choose the orientations of ${\gamma_0'}^{*}$ 
and  ${\gamma_0''}^{*}$ so that $\gamma_0$ is to the left of both of them. 
\vskip .1cm
Since  $z'_0,z''_0 \in \Thick_G(t_1(S))$ we have $\cl(\gamma'_0,z_0)=\iota(\gamma'_0,\tau(G))$ and $\cl(\gamma''_0,z_0)=\iota(\gamma''_0,\tau(G))$. By the induction hypothesis, the statement is true for both pairs $({\gamma_0'}^{*},z'_0)$ and  $({\gamma_0''}^{*},z''_0)$. We glue the corresponding triangulated polygons to the right of ${\gamma_0'}^{*}$ and ${\gamma_0''}^{*}$ respectively. Together with the triangle $T$ this gives the needed polygon $\Pol$. We have (using $C>K+1$)
$$
|\tau(\Pol)| \le 1+ C\cl(\gamma'_0,z'_0)+C\cl(\gamma''_0,z''_0)+2K=(2K+1)+C(\cl(\gamma_0,z_0)-1) \le C\cl(\gamma_0,z_0)+K.
$$
\noindent
By the induction hypothesis, the distances between the centres of adjacent triangles in the two polygons we have glued along
$\gamma'_0$ and $\gamma''_0$ are less than $D$. Also, by the induction hypothesis, the distances between the points $z'_0$ and $z''_0$ and the
centres of the corresponding triangles in those glued polygons, that contain the points $z'_0$ and $z''_0$ in the their boundaries respectively, are less than ${{D}\over{2}}$. Since the distance between the centre of the triangle $T$ and any of the points $z_0,z'_0,z''_0$ is less than $d_3(S)<{{D}\over{2}}$ we have that the distance between the centre of $T$ and either one of the two adjacent triangles to $T$ in $\Pol$ is less than $D$. This shows that all the distances between the corresponding points in $\Pol$ are within $D$ or ${{D}\over{2}}$ as required.
This settles the first case.
\vskip .1cm
\noindent
${\bf {Case}}$ $2.$  Let $c_1,c_2 \in \Cus(G)$ be the endpoints of $\gamma_0$. In this case we assume that 
$\max\{\h_{c_{1}}(z_0),\h_{c_{2}}(z_0) \} > t_{1}(S)$. For the sake of the argument assume $\h_{c_{1}}(z_0) > t_{1}(S)$. We allow any orientation on $\gamma_0$.  Normalise $G$ that is
$G=G_{c_{1}}$ and $c_1=\infty$. Also, assume that the orientation of $\gamma^{*}_0$ is such that the pair $(c_2,\infty)$ has the positive orientation
on $\gamma^{*}_0$ (the other case is done analogously). Let $k=\iota(\gamma_0,z_0)$. Then $[e^{\h_{\infty}(z_0)}]=m-k$.  
Let $c_3 \in \Cus(G)$ be the point on $\R$ defined as follows. We require that the geodesic that connects $c_3$ and $\infty$ belongs to  $\lambda(G)$. Then,  $c_3$ is the smallest such point subject to the condition 
$$
c_2+{{e^{\h_{\infty}(z_0)}}\over{4N(S)}} \le c_3. 
$$
\noindent
Let $\gamma_1 \in \Gamma(G)$ be the geodesic that connects $c_2$ and $c_3$ (we fix the orientation of $\gamma^{*}_1$ so that $\infty$ is to the left of 
$\gamma^{*}_1$). Since $G$ is normalised, we have 
$$
{{e^{\h_{\infty}(z_0)}}\over{4N(S)}}<c_3-c_2<{{e^{\h_{\infty}(z_0)}}\over{4N(S)}}+1,
$$
\noindent
and this yields
$$
\iota(\gamma_1,\tau(G)) \le k+\left( {{e^{\h_{\infty}(z_0)}}\over{4N(S)}}+1 \right) N_{\infty}(S) \le k+  {{[e^{\h_{\infty}(z_0)}]}\over{4}}+1+N(S).
$$
\noindent
Let $z_1=\z_{\max}(\gamma_1,\infty)$. Since $(c_3-c_2) \ge {{1}\over{4N(S)}}$ we have that $z_1 \in \Hor_{\infty}(-\log(8N(S)))$ and therefore
by the choice of $t_1(S)$ in Proposition 3.1 the point $z_1$ does not belong to any $t_1(S)$-horoball (except possibly the one at $\infty$ which is not an endpoint of $\gamma_1$). This implies that 
$$
\cl(\gamma_1,z_1)=\iota(\gamma_1,\tau(G)). 
$$
\vskip .1cm
Let $\gamma_2$ be the geodesic that connects $c_3$ and $\infty$ (we fix the orientation of $\gamma^{*}_2$ so that $c_2$ is to the left of $\gamma^{*}_2$). By the choice of $c_3$ we have that $\gamma_2 \in \lambda(G)$. Let $z_2 \in \gamma_2$
be the point whose imaginary part is ${{e^{\h_{\infty}(z_0)}}\over{4}}$. Since
$$
{{e^{\h_{\infty}(z_0)}}\over{4}}>{{e^{t_{1}(S)}}\over{4}}>1,
$$
\noindent
we have that $z_2$ does not belong to any $t_1(S)$-horoball except possibly the one at
$\infty$.  We have 
$$
\cl(\gamma_2,z_2)\le [e^{\h_{\infty}(z_2)}] \le {{e^{\h_{\infty}(z_0)}}\over{4}} \le {{[e^{\h_{\infty}(z_0)}]}\over{4}}+1.
$$
\noindent
By the induction hypothesis, we have that the statement of the lemma is true for both pairs $(\gamma^{*}_1,z_1)$ and $(\gamma^{*}_2,z_2)$. We construct the polygon $\Pol$ by gluing the corresponding polygon for $\gamma_1$ to the right of $\gamma^{*}_1$ and the corresponding polygon for $\gamma_2$ to the right of $\gamma^{*}_2$. We also add the triangle $T \in \Tr(G)$ bounded by $\gamma_0$ $\gamma_1$ and $\gamma_2$ to $\Pol$. Combining this with the above estimates (and the definition of $C$) we have 
$$
|\Pol| \le 1+ C(k+{{[e^{\h_{\infty}(z_0)}]}\over{4}}+1+N(S))+C({{[e^{\h_{\infty}(z_0)}]}\over{4}}+1)+2K \le
$$

$$
\le 1+C(k+{{[e^{\h_{\infty}(z_0)}]}\over{2}}+2+N(S))+2K \le
$$
$$
\le C(k+[e^{\h_{\infty}(z_0)}])+K +\big(K+1- C({{[e^{t_{1}(S)}]}\over{2}}-2-N(S))\big) \le C\cl(\gamma_0,z_0)+K
$$
\noindent
Note that the distance between the centre of $T$ and any of the points $z_0,z_1,z_2$  is less than $\log(4N(S)+1)$.
One now shows that the distances between the corresponding points in $\Pol$ are within $D$ or ${{D}\over{2}}$ in much the same way as in the Case 1.
\vskip .1cm
\noindent
${\bf {Case}}$ $3.$ The only case left to consider is when  $z_0 \in \Thick_G(t_1(S))$ but when there exists $c \in \Cus(G)$ so that $z_0$  belongs 
to $\Hor_c(t_2(S))$ and where $\gamma_0$ intersects at least $100N(S)$ geodesics from $\lambda(G)$ that end at $c$. In this case $\gamma_0$ can not have $c$ as its endpoint. Moreover, $\cl(\gamma_0,z_0)=\iota(\gamma_0,\tau(G))=m$.
Assume that $G$ is normalised, $G=G_c$. Let $c_1,c_2 \in \Cus(G)$, $c_1<c_2$, be the endpoints of $\gamma_0$.  We first consider the case when the orientation of $\gamma^{*}_0$ is such that the cusp $c$ is to the left of $\gamma_0$.
\vskip .1cm
Let $\gamma'_0$ be the geodesic that is orthogonal to $\gamma_0$ and that contains $z_0$. Let $c_1<x<c_2$ be the corresponding endpoint of $\gamma'_0$. 
Since $\gamma_0$ intersects at least $100N(S)$ geodesics from $\lambda(G)$ that end at $\infty$ we can choose a point  
$c_3 \in \Cus(G)$ so that the geodesic that connects $c_3$ and $\infty$ belongs to $\lambda(G)$ and so that $c_3$ is the closest such point 
to the point $x$ subject to the condition $c_1+2<c_3<c_2-2$ (see Proposition 3.3). Let $\gamma_i \in \Gamma(G)$
$i=1,2$, be the geodesic that connects $c_i$ with $c_3$. Choose the orientation of $\gamma^{*}_i$ so that $\infty$ is to the left of  $\gamma^{*}_i$.
Let $T \in \Tr(G)$ be the triangle bounded by $\gamma_0$, $\gamma_1$ and $\gamma_2$.
Let $z_i$, $i=1,2$, be the orthogonal projections of the centre $\ct(T)$ to the geodesic $\gamma_i$. Since $|c_3-c_i|>2$, $i=1,2$, it is easily seen 
that $\h_{\infty}(z_i)>0$. We conclude that the point $z_i$ does not belong to the $t_1(S)$-horoballs at $c_i$ or $c_3$. Therefore, 

$$
\cl(\gamma_i,z_i)=\iota(\gamma_i,\tau(G)), i=1,2.
$$
\vskip .1cm  
From Proposition 3.3 we have that $\iota(\gamma_1,\tau(G))+\iota(\gamma_2,\tau(G))=\iota(\gamma_0,\tau(G))-1=\cl(\gamma_0,z_0)-1$. By the induction hypothesis, the statement of the lemma is true for the pair $(\gamma_i,z_i)$. We glue the corresponding polygons to the right of $\gamma^{*}_i$. Let $T \in \Tr(G)$ be the triangle bounded by $\gamma_0$, $\gamma_1$ and $\gamma_2$. We add this triangle to obtain the required polygon $\Pol$. We have 
$$
|\tau(\Pol)| \le 1+C\cl(\gamma_1,z_1)+K+C\cl(\gamma_2,z_2)+K \le C\cl(\gamma_0,z_0)+(2K+1)-C \le C\cl(\gamma_0,z_0)+K.
$$
\noindent
By Proposition 3.3 the distance between the centre of $T$ and the point $z_0$ is less than $d_1(S)$. By the definition of
$z_1$ and $z_2$  the distance between the centre of $T$ and the point $z_i$ is less than $1$ (the distance between the centre of an ideal triangle,
and its orthogonal projections to one of its sides is less than $1$).
In much the same way as before one shows that the distances between the corresponding points in $\Pol$ are within the required bounds.
\vskip .1cm

\begin{figure}
  \input{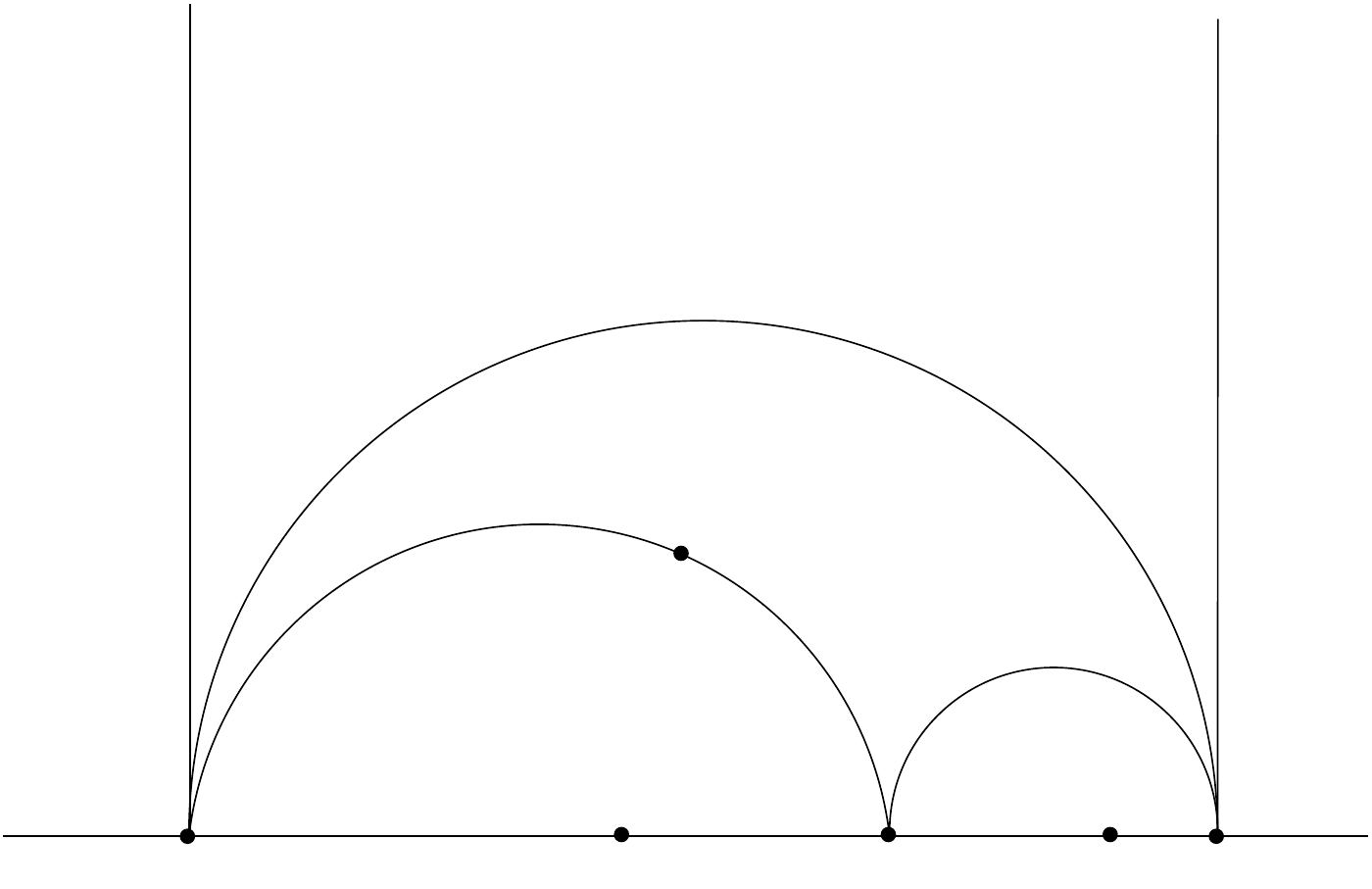_t}
\label{three}
\caption{}
\end{figure}

\noindent
${\bf {Case}}$ $4.$ The assumptions are the same as in Case 3. except that the orientation of $\gamma^{*}_0$ is such that
$\infty$ is to the right of $\gamma^{*}_0$.  Let $y$  denote the second endpoint of $\gamma'_0$ and assume that $y>c_2$ (the case $y<c_1$ is treated similarly).  Recall $\cl(\gamma_0,z_0)=\iota(\gamma_0,\tau(G))=m$. Let $m_3$ denote the number of geodesics from $\lambda(G)$ that all end at $\infty$ and that are intersected by $\gamma_0$. Let $m_1$ denote the number number of geodesics from $\lambda(G)$ that are intersected by $\gamma_0$ and that are to the left of those geodesics we counted for $m_3$. Let $m_2$ denote the  number of geodesics from $\lambda(G)$ that are intersected by $\gamma_0$ and that are to the right of those counted for $m_3$. Then $m_1+m_2+m_3=m$. 
\vskip .1cm
Choose a point   $c_3 \in \Cus(G)$ (see Figure \ref{three}) so that the geodesic that connects $c_3$ and $\infty$ belongs to $\lambda(G)$ and so that $c_3$ is the closest such point  to the point $y$ subject to the condition $c_2+2<c_3$ (see Proposition 3.3).  Let $k$ be the integer so that
\begin{equation}\label{k-correction}
\log_{2}[c_2-c_1]-\log_{2}[c_3-c_2] < k \le \log_{2}[c_2-c_1]-\log_{2}[c_3-c_2]+1,
\end{equation}
\noindent
where $[c_2-c_1]$ and $[c_3-c_2]$ denote the corresponding integer parts. Note that $m_3 \ge [c_2-c_1]N_{\infty}(S)-1$. This gives
$$
\cl(\gamma_0,z_0) \ge  [c_2-c_1]N_{\infty}(S).
$$
\vskip .1cm
First consider the case  $k>3$. Let $c_{i+3}$, $i=1,...,k-3$, be given by
$$
c_{i+3}=c_3+\sum_{j=i}^{j=i}2^{j}[c_3-c_2].
$$
\noindent
Let $\gamma_{i}\in \Gamma(G)$, $i=0,...,k-2$, denote the geodesic that connects $c_{1}$ and $c_{i+2}$. By $\wt{\gamma_i}$ denote the geodesic that connects
$c_{i+1}$ and $c_{i+2}$, $i=0,...,k-2$ (with this definition, the geodesics $\gamma_0$ and $\wt{\gamma}_0$ agree). 
Let $\wh{\gamma}_1$ be the geodesic that connects $c_1$ and $\infty$, and $\wh{\gamma}_k$ the one that connects $c_k$ and $\infty$. Let $T_{i+1}$ be the triangle bounded by $\gamma_i$, $\gamma_{i+1}$ and $\wt{\gamma}_{i+1}$. By $\wh{T}$ denote the triangle bounded by  $\wh{\gamma}_1$, $\wh{\gamma}_k$
and $\gamma_{k-2}$. 
\vskip .1cm
Note that by the construction, the distance between the centres of adjacent triangles $T_{i}$ and $T_{i+1}$ is less than $\log2<1$. It is directly seen
that the distance between $\ct(T_{k-2})$ and $\ct(\wh{T})$ is less than $\log32+64$. By Proposition 3.3
the distance between $z_0$ and the centre of the triangle $T_1$ is less than $d_1(S)$.
\vskip .1cm
Let $\wt{z}_i \in \wt{\gamma}_i$, $i=1,...,k-2$, be the points that are the orthogonal projections of the centres of  triangles $T_i$ to 
$\wt{\gamma}_i$. Since $(c_3-c_2)>2$ we have that the centre of $T_i$ is high enough with respect to $\infty$ so that $\h_{\infty}(\wt{z}_i)>0$.
Therefore, $\cl(\wt{\gamma}_i,\wt{z}_i)=\iota(\wt{\gamma}_i,\tau(G))$. Since for both endpoints of $\gamma_i$, $i \ge 2$, we have that there are geodesics
from $\lambda(G)$ that connect those points to $\infty$ we have that 

$$
\cl(\wt{\gamma}_i,\wt{z}_i)\le 2^{i}[c_3-c_2]N_{\infty}(S)-1+m_1+m_2, 
$$
\noindent
for $i \ge 2$. For the pair 
$(\wt{\gamma}_1,\wt{z}_1)$ we have 
$$
\cl(\wt{\gamma}_1,\wt{z}_1) \le m_2+2[c_3-c_2]N_{\infty}(S).
$$
\vskip .1cm
On the other hand,
let $\wh{z}_1 \in \wh{\gamma}_1$ and $\wh{z}_{k} \in \wh{\gamma}_{k}$ be the points whose imaginary parts are equal to ${{[c_2-c_1]}\over{8}}$.
Then 
$$
\cl(\wh{\gamma}_1,\wh{z}_1)\le m_1+{{[c_2-c_1]}\over{8}},
$$
\noindent
and
$$
\cl(\wh{\gamma}_{k},\wh{z}_{k})\le {{[c_2-c_1]}\over{8}}.
$$
\noindent
Also, it is easily seen that the distances between the centre of the triangle $\wh{T}$ and the points $\wh{z}_1$ and $\wh{z}_{k}$ are less than $\log64<8$.
\vskip .1cm 
We now apply the induction hypothesis on the pairs $(\wt{\gamma}_i,\wt{z}_i)$, $i=1,...,k-2$, $(\wh{\gamma}_1,\wh{z}_1)$, $(\wh{\gamma}_{k-2},\wh{z}_{k-2})$. We put the corresponding orientations on these geodesics, so that $\gamma_0$ is to the left
of them. After gluing the corresponding polygons to the right
of the corresponding geodesics, and adding the triangles $T_i$ and $\wh{T}$ we obtain the polygon $\Pol$. We have 

$$
|\tau(\Pol)| \le (k-2)+1+C\sum_{i=1}^{i=k-2}2^{i}[c_3-c_2]N_{\infty}(S)+Cm_2+C(m_1+{{[c_2-c_1]}\over{8}})+C{{[c_2-c_1]}\over{8}}+kK\le
$$

$$
\le (k+1)K-1+C \big(m_1+m_2+{{[c_2-c_1]}\over{4}}+2^{k-1}[c_3-c_2]N_{\infty}(S)\big) \le 
$$
$$
\le (k+1)K-1+C\big(m_1+m_2+{{[c_2-c_1]}\over{4}}+
{{[c_2-c_1]}\over{2}}N_{\infty}(S)+{{1}\over{2}}\big) \le
$$
$$
\le C(m_1+m_2+[c_2-c_1]N_{\infty}(S)-1)+K+\left({{3}\over{2}}C-C{{[c_2-c_1]}\over{4}}+kK \right) \le
C\cl(\gamma_0,z_0)+K.
$$
\noindent
The last part of the above estimate follows from (\ref{k-correction}), the fact that $c_2-c_1>100$ and the property
$C>10K$.
\vskip .1cm
Let $0 \le k \le 3$ in the above construction. Let $\wh{\gamma_1}$ and $\wh{\gamma}_2$ be the geodesics that connect
$c_1$ and $c_2$ with $\infty$, respectively. Let $\wh{T}$ be the triangle bounded by $\gamma_0$, $\wh{\gamma_1}$ and $\wh{\gamma}_2$.
Let $\wh{z}_1 \in \wh{\gamma}_1$ and  $\wh{z}_2 \in \wh{\gamma}_2$ be the corresponding points (chosen in the same way as above).
We now glue the corresponding polygons (whose existence follows from the induction hypothesis), to the appropriate sides of $\wh{\gamma_1}$ and $\wh{\gamma}_2$. We also add the triangle $\wh{T}$ to obtain the polygon $\Pol$. The rest is proved in the same way as in the case $k>3$.
\end{proof}

\section{Measures on triangles and the proof of Theorem 1.1}
\subsection{Measures on triangles and the $\ph$ operator} 
Let $\NBG(S)$ and  $\NBG(G)$ denote the unit normal bundles of $\Gamma(S)$ and $\Gamma(G)$ respectively. Here $\NBG(S)$ is the union of the normal bundles
$\NB\gamma$, where $\gamma \in \Gamma(S)$. Every point in $\NB\gamma$ is a pair $(z,\vec{n})$ where $z \in \gamma$ and $\vec{n}$ is the unit vector in the tangent space $\TB{S}$ at $z$ that is orthogonal to $\gamma$. Every  point $(z,\vec{n}) \in \NB\gamma$ 
corresponds to a unique pair $(\gamma^{*},z)$, where $\gamma^{*} \in \Gamma^{*}$ and $z \in \gamma$, such  that the vector $\vec{n}$ points to the left of $\gamma^{*}$.  
\vskip .1cm
Connected components of $\NBG(S)$ are the normal bundles $\NB\gamma^{*}$, where $\gamma^{*} \in \Gamma^{*}(S)$, and $\NBG(S)$ is the disjoint union $\NBG(S)=\cup_{\gamma^{*} \in \Gamma^{*}(S)}\NB\gamma^{*}$. The normal bundle $\NB\gamma^{*}$ is the subset of the normal bundle $\NB\gamma$ that contains the unit vectors that are pointing to the left of $\gamma^{*}$. Note that for a fixed geodesic $\gamma$ and a chosen orientation $\gamma^{*}$ the space $\NB\gamma^{*}$ is a connected 1-manifold. This manifold  is identified with $\gamma$ by the obvious projection map 
$\NB\gamma^{*} \to \gamma$. Therefore,  the  set $\NBG(S)$ is a 1-manifold with countably many components. Note that the group $G$ acts on $\NBG(G)$ 
and  $\NBG(S)=\NBG(G)/G$.

\begin{remark} Since every point  in $\NBG(G)$ corresponds to a unique pair  $(\gamma^{*},z)$. Therefore,  we  define the combinatorial length mapping
$\cl:\NBG(G) \to \N$ as $\cl(z,\vec{n})=\cl(\gamma^{*},z)$, where $\vec{n}$ points to the left of $\gamma^{*}$. 
The induced map $\cl:\NBG(S) \to \N$ is also denoted by $\cl$.
\end{remark}

\begin{definition} Let $\gamma$ be a geodesic in $\Ha$. We define the $foot$ $projection$ $\ft_{\gamma}:\Ha \setminus \gamma \to \NB\gamma$ as follows.
Let $w \in \Ha \setminus \gamma$ and let $z \in \gamma$ denote the orthogonal projection of $w$ to $\gamma$. Then $\ft_{\gamma}(w) \in \NB\gamma$ 
is the point $(z,\vec{n})$ such that the geodesic ray that starts at $(z,\vec{n})$ contains $w$.
\end{definition}
Unless otherwise stated, all measures in this paper are positive, Borel measures.
\begin{definition} If $X$ is a topological space, then $\Mes(X)$ denotes the space of  positive, Borel measures on $X$ (necessarily finite). 
If $X$ is a countable set,
we equip $X$ with the discrete topology. In particular, by $\Mes(\Tr(S))$ and $\Mes(\NBG(S))$ we denote the spaces of positive, Borel measures 
on the set of triangles $\Tr(S)$ and the manifold $\NBG(S)$. The corresponding spaces of measures on $\Tr(G)$ and $\NBG(G)$ are denoted by  
$\Mes(\Tr(G))$ and $\Mes(\NBG(G))$. 
\end{definition}
\begin{remark} Note that $\N\Tr(S)$ can be seen as the subset of $\Mes(\Tr(S))$. Each $R \in \N\Tr(S)$ induces a measure in  $\Mes(\Tr(S))$
in the obvious way (if $R=k_1T_1+...+k_mT_m$ then the corresponding measure $\mu \in \Mes(\Tr(S))$ satisfies that $\mu(T_i)=k_i$).
\end{remark}
We define the $\ph:\Mes(\Tr(S))\to \Mes(\NBG(S))$ operator as follows. The set $\Tr(S)$ is a countable set, so every measure
from $\Mes(\Tr(S))$ is determined by its value on every triangle in $\Tr(S)$. Let $T \in \Tr(S)$ and let $\gamma_i \in \Gamma(S)$, $i=1,2,3$, denote its edges. Set $V_i=\ft_{\gamma_{i}}(\ct(T))$ where $\ct(T)$ is the centre of $T$. Choose $\mu \in \Mes(\Tr(S))$. Let $\alpha^{T} \in \Mes(\NBG(S))$ be the atomic measure, supported on $V_1$, $V_2$ and $V_3$ such that $\alpha^{T}(V_i)=\mu(T)$. We define $\ph\mu=\alpha \in \Mes(\NBG(S))$ as
$$
\alpha=\sum_{T \in \Tr(S)} \alpha^{T}.
$$
\noindent
If $\mu \in \Mes(\Tr(S))$ is a finite measure, then $\ph\mu$ is a finite measure as well.
Moreover, the total measure of $\ph\mu$ is three times the total measure of $\mu$.

\subsection{Transport of measure} In this subsection we define the notion of equivalent measures. The following is the standard result in measure theory.

\begin{proposition} Let $\mu_i \in \Mes(\R)$, $i=1,2$, be two finite measures on the real line $\R$. Let $K>0$. The following conditions are equivalent:

\begin{enumerate}
\item For every $a \in \R$ the following inequalities hold
\begin{equation}\label{transport-condition}
\mu_1(-\infty,a] \le \mu_2(-\infty,a+K], \, \mu_2(-\infty,a] \le \mu_1(-\infty,a+K].
\end{equation}
\noindent
\item The total $\mu_1$ and $\mu_2$ measures of $\R$ coincide, that is $\mu_1(\R)=\mu_2(\R)$. There exist mappings $\psi_i:(0,\mu_1(\R)] \to \R$ so that
$(\psi_i)_{*}(\nu)=\mu_i$ and $||\psi_1-\psi_2||_{\infty} \le K$. Here $\nu$ denotes the Lebesgue measure on the interval $(0,\mu_1(\R)]$ and $(\psi_i)_{*}(\nu)$ denotes the push-forward of the measure $\nu$ under $\psi_i$. Moreover, the mapping $\psi_i$ is non-decreasing and left continuous.
\item There exists a topological space $X$ with a Borel measure $\eta \in \Mes(X)$ so that the following holds. There exist mappings $\psi_i:X \to \R$ so that
$(\psi_i)_{*}(\eta)=\mu_i$ and $||\psi_1-\psi_2||_{\infty} \le K$. 
\end{enumerate}
If either of these three conditions is satisfied we say that $\mu_1$ and $\mu_2$ are $K$-equivalent measures.
\end{proposition}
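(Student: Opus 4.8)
The plan is to prove the cyclic chain of implications $(1)\Rightarrow(2)\Rightarrow(3)\Rightarrow(1)$. The implication $(2)\Rightarrow(3)$ is immediate: take $X=(0,\mu_1(\R)]$ with $\eta=\nu$ the Lebesgue measure and keep the very maps $\psi_i$ furnished by $(2)$. For $(3)\Rightarrow(1)$: given $X,\eta,\psi_1,\psi_2$ as in $(3)$, the total masses agree since $\mu_1(\R)=\eta(X)=\mu_2(\R)$, and for every $a\in\R$ the bound $\|\psi_1-\psi_2\|_\infty\le K$ gives the inclusion $\{x\in X:\psi_1(x)\le a\}\subseteq\{x\in X:\psi_2(x)\le a+K\}$, hence $\mu_1(-\infty,a]=\eta(\psi_1^{-1}(-\infty,a])\le\eta(\psi_2^{-1}(-\infty,a+K])=\mu_2(-\infty,a+K]$; reversing the roles of the indices gives the second inequality of (\ref{transport-condition}).

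The content is in $(1)\Rightarrow(2)$, which I would carry out via the quantile (generalized inverse) construction. Letting $a\to+\infty$ in (\ref{transport-condition}) forces $\mu_1(\R)=\mu_2(\R)=:M$. Write $F_i(a)=\mu_i(-\infty,a]$ for the non-decreasing, right-continuous distribution functions, so $F_i(-\infty)=0$ and $F_i(+\infty)=M$, and set
$$
\psi_i(t)=\inf\{a\in\R:F_i(a)\ge t\},\qquad t\in(0,M].
$$
I would then invoke the standard properties of this map: $\psi_i$ is non-decreasing and left-continuous, and $\psi_i(t)\le a$ if and only if $F_i(a)\ge t$. The latter equivalence yields $\nu(\{t\in(0,M]:\psi_i(t)\le a\})=\nu((0,F_i(a)])=F_i(a)=\mu_i(-\infty,a]$, so $(\psi_i)_{*}(\nu)$ and $\mu_i$ agree on every half-line $(-\infty,a]$ and hence coincide. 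The half-open normalisation of the domain is precisely what makes left-continuity and the pushforward identity come out at once; if $\mu_i$ had unbounded support with no atom at the top of the support, the value $\psi_i(M)$ would be a single point of $\nu$-mass zero and thus harmless, while in the settings where this proposition is applied the relevant measures have bounded support and $\psi_i$ is genuinely $\R$-valued.

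It then remains to check $\|\psi_1-\psi_2\|_\infty\le K$. Fix $t\in(0,M]$ and put $a=\psi_2(t)$; by right-continuity of $F_2$ we have $F_2(a)\ge t$. The first inequality of (\ref{transport-condition}) applied at this $a$ gives $F_1(a+K)=\mu_1(-\infty,a+K]\ge\mu_2(-\infty,a]\ge t$, hence $\psi_1(t)\le a+K=\psi_2(t)+K$. Swapping the indices and using the second inequality of (\ref{transport-condition}) gives $\psi_2(t)\le\psi_1(t)+K$, so $|\psi_1(t)-\psi_2(t)|\le K$ for all $t$. This establishes $(2)$ and closes the loop, and the last sentence of the statement is then just the definition of $K$-equivalence. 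The only genuinely delicate point in the whole argument is the bookkeeping at the endpoints of the domain $(0,M]$ together with the matching of the left/right continuity conventions for $F_i$ and $\psi_i$; everything else is a one-line application of (\ref{transport-condition}).
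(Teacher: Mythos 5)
Your proof is correct and follows essentially the same route as the paper: the generalized-inverse (quantile) maps $\psi_i(t)=\inf\{a:\mu_i(-\infty,a]\ge t\}$ on $(0,\mu_1(\R)]$, the pushforward identity, and the same one-line use of (\ref{transport-condition}) to get $\|\psi_1-\psi_2\|_\infty\le K$. The only difference is organizational (you close a cyclic chain $(1)\Rightarrow(2)\Rightarrow(3)\Rightarrow(1)$, while the paper proves $(1)\Rightarrow(2)$ and treats $(2)\Rightarrow(1)$, $(3)\Rightarrow(1)$ as elementary), and you handle the endpoint $t=\mu_1(\R)$ slightly more carefully than the paper does.
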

\begin{remark} If $g(x)=x+c$ is a translation, then the measures $\mu_1$ and $\mu_2$ are $K$-equivalent, if and only if
the measures $g_{*}\mu_1$ and $g_{*}\mu_2$ are. Here $g_{*}\mu_i$ denotes the push-forward of the measure $\mu_i$ by $g$. Since $g$ is a homeomorphism
the similar statement is true for the pull-backs of $\mu_1$ and $\mu_2$. Also, note that the relation of being $K$-equivalent is symmetric. 
Moreover, if  a pair of measures $\mu_1,\mu_2$  is $K_1$ equivalent,  and a pair $\nu_1,\nu_2$ is $K_2$-equivalent,  
then the measures $\mu_1+\nu_1$ and $\mu_2+\nu_2$ are $\max \{K_1,K_2 \}$-equivalent.

\end{remark}
\begin{proof} The implications $(2) \to (1)$ and $(3) \to (1)$ are elementary. We prove  $(1) \to (2)$. This also proves $(1) \to (3)$ since one can take $X=(0,\mu_1(\R)]$. 
\vskip .1cm
Note that (\ref{transport-condition}) implies that the total $\mu_1$
and $\mu_2$ measures of $\R$ coincide, that is $\mu_1(\R)=\mu_2(\R)$. Set $I=(0, \mu_1(\R)]$. Define
$\psi_i$ as follows. Set 
$$
E_i(x)=\{y \in \R: \mu_i(-\infty,y] \ge x\}, 
$$
\noindent
for $i=1,2$. Let $\psi_i(x)=\inf{E_i(x)}=\min{E_i(x)}$. The fact the infimum of the set $E_i(x)$ belongs to this set follows from the countable additivity property of measures. In particular,
$$
\int\limits_{\infty}^{\psi_{i}(x)}d\mu_i \ge x.
$$
\noindent
Note that $\psi_i$ is  non-decreasing. It is elementary to check that $(\psi_i)_{*}(\nu)=\mu_i$.
We now show that $||\psi_1-\psi_2||_{\infty}\le K$. Let $x \in I$. It follows from (\ref{transport-condition})
that
$$
x \le \int\limits_{-\infty}^{\psi_{1}(x)}d\mu_1 \le \int\limits_{-\infty}^{\psi_1(x)+K}d\mu_2.
$$
\noindent
This shows that $(\psi_1(x)+K) \in E_2(x)$. We conclude $\psi_2(x) \le \psi_1(x)+K$. Similarly one shows $\psi_1(x) \le \psi_2(x)+K$. This proves the proposition.
\end{proof}

For $\gamma \in \Gamma(S)$ let $\phi:\R \to \gamma$ be an isometric parametrisation (from the Euclidean metric on $\R$ to the hyperbolic
on $\gamma$). Every other isometric parametrisation is obtained by pre-composing $\phi$ by a translation on $\R$. 
Let $\alpha \in \Mes(\NBG(S))$ and let $\gamma^{*} \in \Gamma^{*}(S)$. The restriction of the measure $\alpha$ to $\NB\gamma^{*}$ is denoted by $\alpha_{\gamma^{*}}$. Same as before, we identify (in the obvious way), the manifold $\NB\gamma^{*}$ and $\gamma$. Therefore, the
pull-back measure $\phi^{*}\alpha_{\gamma^{*}}$ is well defined.

\begin{definition} Let $\alpha, \alpha' \in \NBG(S)$ and let $K>0$. 
For  $\gamma \in \Gamma(S)$ let $\gamma^{*}(1), \gamma^{*}(2) \in \Gamma^{*}(S)$ denote the two orientations on $\gamma$. 
Let $\phi:\R \to \gamma$ be any isometric parametrisation. We have
\begin{itemize}
\item If for every $\gamma \in \Gamma(S)$ the measures $\phi^{*}\alpha_{\gamma^{*}(i)}$ and  $\phi^{*}\alpha'_{\gamma^{*}(i)}$, $i=1,2$, are $K$-equivalent, then we say that the measures $\alpha$ and $\alpha'$ are $K$-equivalent.
\item If for some $\gamma \in \Gamma(S)$ the measures $\phi^{*}\alpha_{\gamma^{*}(1)}$ and  $\phi^{*}\alpha_{\gamma^{*}(2)}$ are $K$-equivalent, we say that the measure $\alpha$ is $K$-symmetric on $\gamma$. If $\alpha$ is $K$-symmetric on every $\gamma \in \Gamma(S)$ then we say that $\alpha$ is $K$-symmetric.
\end{itemize}
\end{definition}
The above definition does not depend on the choice of the parametrisation $\phi$ (see the above remark).
\vskip .1cm
The following propositions are needed in the proof of Theorem 1.1 below.
\begin{proposition} Let $0<\epsilon<1$. Let $\alpha_1,\alpha_2 \in \Mes(\R)$ be discrete measures with finitely many non-trivial atoms, and suppose that
$\alpha_1$ and $\alpha_2$ are $K$-equivalent. Then there are measures $\alpha^{\rat}_i,\alpha'_i \in \Mes(\R)$, $i=1,2$,
so that $\alpha^{\rat}_i+\alpha'_i=\alpha_i$ and $\alpha^{\rat}_1$ and $\alpha^{\rat}_2$ are $K$ equivalent. 
Also,  $\alpha^{\rat}_i$ has atoms of rational weights,  and the weight of any atom of $\alpha'_i$ is at most $\epsilon$.
\end{proposition}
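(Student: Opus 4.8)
The plan is to split the transport plan between $\alpha_1$ and $\alpha_2$, rather than splitting the two measures independently (the latter would destroy $K$-equivalence). First I would invoke Proposition 4.1: since $\alpha_1$ and $\alpha_2$ are $K$-equivalent and discrete with finitely many atoms, condition (2) supplies a common total mass $M=\alpha_1(\R)=\alpha_2(\R)$ and non-decreasing, left continuous step functions $\psi_i:(0,M]\to\R$ with $(\psi_i)_{*}\nu=\alpha_i$ and $||\psi_1-\psi_2||_{\infty}\le K$, where $\nu$ is Lebesgue measure. Taking the common refinement of the finitely many jump points of $\psi_1$ and of $\psi_2$ partitions $(0,M]$ into consecutive intervals $I_1,\dots,I_L$ of lengths $t_1,\dots,t_L>0$, and on the $l$-th interval one has $\psi_1\equiv a_l$ and $\psi_2\equiv b_l$ with $|a_l-b_l|\le K$. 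Consequently $\alpha_1=\sum_{l}t_l\,\delta_{a_l}$ and $\alpha_2=\sum_{l}t_l\,\delta_{b_l}$ (atoms with coinciding base points being added).

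Next, for each $l$ I would pick a rational number $t_l^{\rat}$ in the nondegenerate closed interval $[\max(0,\,t_l-\epsilon/L),\,t_l]$ and set $t_l'=t_l-t_l^{\rat}$, so that $0\le t_l'\le\epsilon/L$. Define $\alpha_i^{\rat}$ and $\alpha_i'$ by replacing each weight $t_l$ by $t_l^{\rat}$, respectively by $t_l'$, in the above expressions: $\alpha_1^{\rat}=\sum_{l}t_l^{\rat}\,\delta_{a_l}$, $\alpha_1'=\sum_{l}t_l'\,\delta_{a_l}$, and likewise with $b_l$ for $i=2$. Then $\alpha_i^{\rat}+\alpha_i'=\alpha_i$ by construction; the weight of $\alpha_i^{\rat}$ at any atom is a finite sum of the rationals $t_l^{\rat}$, hence rational; and the weight of $\alpha_i'$ at any atom is a sum of some subcollection of the $t_l'$, hence at most $L\cdot(\epsilon/L)=\epsilon$ since there are only $L$ intervals in all.

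It remains to verify that $\alpha_1^{\rat}$ and $\alpha_2^{\rat}$ are $K$-equivalent, and for this I would produce the data of condition (3) of Proposition 4.1. Put $M^{\rat}=\sum_{l}t_l^{\rat}$, let $X=(0,M^{\rat}]$ carry Lebesgue measure $\eta$, and partition $X$ into consecutive intervals $J_1,\dots,J_L$ with $|J_l|=t_l^{\rat}$ (degenerate $J_l$ allowed when $t_l^{\rat}=0$). Setting $\psi_1^{\rat}\equiv a_l$ and $\psi_2^{\rat}\equiv b_l$ on $J_l$, one gets $(\psi_i^{\rat})_{*}\eta=\alpha_i^{\rat}$ and $||\psi_1^{\rat}-\psi_2^{\rat}||_{\infty}\le\max_{l}|a_l-b_l|\le K$, so $\alpha_1^{\rat}$ and $\alpha_2^{\rat}$ are $K$-equivalent.

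The only delicate point, and the reason a naive atom-by-atom perturbation does not work, is the preservation of $K$-equivalence: one must split the coupling uniformly along the common refinement $I_1,\dots,I_L$, so that the perturbed pieces still pair up within distance $K$; the rest is bookkeeping, and the uniform bound $t_l'\le\epsilon/L$ takes care of the smallness of the atoms of $\alpha_i'$.
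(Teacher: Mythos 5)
Your proof is correct, but it takes a genuinely different route from the paper's. The paper argues through condition (1) of Proposition 4.1: it treats the atom weights $x_i$, $y_j$ as real variables, observes that $K$-equivalence amounts to a finite system of linear inequalities with coefficients $0$ and $1$, uses the Krein--Milman theorem to show that rational points are dense in the solution set, chooses a nearby rational solution, and then multiplies it by a rational factor $t<1$ close to $1$ so that the rational weights drop strictly below the original ones, which guarantees that $\alpha_i'=\alpha_i-\alpha_i^{\rat}$ is a nonnegative measure with atoms of weight less than $\epsilon$. You instead work with the coupling characterization (conditions (2)--(3)): the common refinement of the two quantile functions matches pieces of mass within distance $K$, and rounding each piece's mass down to a rational preserves the coupling verbatim, so the $K$-equivalence of $\alpha_1^{\rat}$ and $\alpha_2^{\rat}$ is immediate rather than having to be re-checked through the inequalities; nonnegativity of $\alpha_i'$ and the bound $\epsilon$ on its atoms come for free from rounding downward by at most $\epsilon/L$ per piece. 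Your argument is more elementary and constructive (no convexity or density argument, no scaling trick), while the paper's stays entirely within the inequality formulation it uses elsewhere. The only point you pass over is that the maps $\psi_i$ supplied by Proposition 4.1(2) really are step functions with finitely many jumps when the $\alpha_i$ are finitely atomic; this follows from monotonicity and left continuity together with the fact that $(\psi_i)_{*}\nu$ is supported on the finite set of atoms (or directly from the explicit quantile construction in the paper's proof of Proposition 4.1), so it is a harmless omission.
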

\begin{proof}
Let $a_i \in \R$ and $b_j \in \R$ denote respectively the points where $\alpha_1$ and $\alpha_2$ have non-trivial atoms.
Set $x_i=\alpha_1(a_i)$ and $y_j=\alpha_2(b_j)$. Let $m_1$ be the total number of atoms $a_i$ and let $m_2$ be the total number of atoms $b_j$.
Set $m=m_1+m_2$. Also, let $A$ be the minimum of all non-zero weights of atoms of both measures, and $B$ the maximum of all non-zero weights.
\vskip .1cm
Since $\alpha_1$ and $\alpha_2$ are $K$-equivalent, we have that (\ref{transport-condition}) holds.
Since $\alpha_1$ and $\alpha_2$ have finitely many atoms, the condition (\ref{transport-condition}) becomes a finite systems of linear inequalities with integer coefficients, in $x_i$ and $y_j$. Each such inequality has the form
$$
\sum_{i}\sigma_1(i)x_{i}\le \sum_{j}\sigma_2(j)y_{j},
$$
or
$$
\sum_{j}\sigma_2(j)y_{j} \le \sum_{i}\sigma_1(i)x_{i},
$$
\noindent
where every $\sigma_1(i)$, $\sigma_2(j)$ is either $1$ or $0$.
If we treat $x_i$ and $y_j$ as real variables, then we conclude that this system of linear inequalities has a non-trivial solution. 
In fact, the set  of solutions of each above inequality is a half-space in $\R^{m}$ (each half space contains the origin in $\R^{m}$). The set of the solutions of the entire system is the intersection of all these half-spaces in $\R^{m}$. We denote this set by $\Sol$.  Let $N \in \N$ and let $\Sol_N=\Sol \cap \{|x_i|,|y_j| \le N \}$. We have that $\Sol_N$ is a convex polyhedron (possibly degenerate) in $\R^{m}$.  By the Krein-Milman theorem, $\Sol_N$ is the closure of the convex combinations of the extreme points on $\Sol_N$ (there are finitely many extreme points). Each extreme point is the unique solution of a certain system of equations with integer coefficients. Therefore, every extreme point is a rational point in $\R^{m}$. We conclude that the rational points in $\R^{m}$ are dense in
every $\Sol_N$ and therefore the rational points are dense in $\Sol=\cup_{N \in \N} \Sol_N$. 
\vskip .1cm
If $x_i$ and $y_j$ is the only solution to this system, then these numbers have to be rational. If this is not the only solution, then we can choose  rational solutions $x^{\rat}_i$ and $y^{\rat}_j$ to be as close to $x_i$ and $y_j$ as we want. 
Fix any $\epsilon_1>0$ and let $x^{\rat}_i$ and $y^{\rat}_j$ be rational numbers that satisfy all the above inequalities, and  such that $A\epsilon_1>|x_i-x^{\rat}_i|$ and $A\epsilon_1>|y_j-y^{\rat}_j |$. Let $t$ be a rational number so that
$$
1-2{{\epsilon_1}\over{A}}<t<1-{{\epsilon_1}\over{A}}.
$$
\noindent
Then $tx^{\rat}_i<x_i$ and $ty^{\rat}_j<y_j$ and also $tx^{\rat}_i$ and $ty^{\rat}_j$ satisfy all the above inequalities. Moreover, the following inequalities hold
$$
|x_i-x^{\rat}_i|<\epsilon_1\big( {{2B}\over{A}}+A\epsilon_1+2{\epsilon_1}^{2}\big),
$$
\noindent
and 
$$
|y_j-y^{\rat}_j|<\epsilon_1\big( {{2B}\over{A}}+A\epsilon_1+2{\epsilon_1}^{2}\big).
$$
\noindent
Choose $\epsilon_1$ small enough so that $\epsilon_1\big( {{2B}\over{A}}+A\epsilon_1+2{\epsilon_1}^{2}\big)<\epsilon$.
\vskip .1cm
Let $\alpha^{\rat}_1$ be the measure with the same non-trivial atoms as $\alpha_1$ and $\alpha^{\rat}_1(a_i)=tx^{\rat}_i$. Similarly define  $\alpha^{\rat}_2(b_j)=ty^{\rat}_j$. Set $\alpha'_1=\alpha_1-\alpha^{\rat}_1$ and
$\alpha'_2=\alpha_2-\alpha^{\rat}_2$. The measures $\alpha'_1$ and $\alpha'_2$ are non-negative, and the weight of any atom under either of these two measures is less than $\epsilon$. Since $tx^{\rat}_i$ and $ty^{\rat}_j$ satisfy the same inequalities as $x_i$ and $y_j$ we conclude that
$\alpha^{\rat}_1$ and $\alpha^{\rat}_2$ are $K$-equivalent.
\end{proof}

Let L be a finite set. By $\vartheta_L \in \Mes(L)$ we denote the counting measure, that is if $L_1 \subset L$ then $\vartheta_L(L_1)$ is equal to the number of elements in $L_1$. On the other hand, we say that a Borel measure is integral if the measure of every set is an integer. The following proposition is left to the reader.

\begin{proposition} Let $L$ be a finite set, and let $T:L \to A$ be a function. Suppose that $T_{*}(\vartheta_L)=\alpha_1+\alpha_2$ such that $\alpha_1,\alpha_2 \in \Mes(A)$ are integral measures. Then we can write $L=L_1 \cup L_2$ where $L_1$ and $L_2$ are disjoint, such that $T_{*}(\vartheta_{L_{1}})=\alpha_1$ and $T_{*}(\vartheta_{L_{2}})=\alpha_2$.
\end{proposition}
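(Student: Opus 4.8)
The plan is to reduce the statement to an elementary counting argument on the finite set $T(L)\subset A$. First I would unwind the definition of the pushforward: since $L$ is finite and $\vartheta_L$ is the counting measure, $\mu := T_{*}(\vartheta_L)$ is the atomic measure with $\mu(\{a\}) = |T^{-1}(a)|$ for every $a\in A$, supported on the finite set $T(L)$. Because $\alpha_1,\alpha_2$ are non-negative and $\alpha_1+\alpha_2=\mu$, each $\alpha_i$ is also supported on $T(L)$; and since each $\alpha_i$ is integral, for every $a\in T(L)$ the quantity $n_a := \alpha_1(\{a\})$ is a non-negative integer with $0\le n_a\le |T^{-1}(a)|$ and $|T^{-1}(a)|-n_a=\alpha_2(\{a\})$.

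Next I would carry out the splitting fiber by fiber. For each $a\in T(L)$, choose arbitrarily a subset $F_a^{1}\subseteq T^{-1}(a)$ with exactly $n_a$ elements, and put $F_a^{2}=T^{-1}(a)\setminus F_a^{1}$, so that $|F_a^{2}|=\alpha_2(\{a\})$. Then set
\[
L_1 = \bigcup_{a\in T(L)} F_a^{1}, \qquad L_2 = \bigcup_{a\in T(L)} F_a^{2}.
\]
Since the fibers $T^{-1}(a)$, $a\in T(L)$, form a partition of $L$ and $F_a^{1},F_a^{2}$ partition $T^{-1}(a)$, the sets $L_1$ and $L_2$ are disjoint and their union is $L$.

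Finally I would verify the two pushforward identities by testing on points. For $a\in T(L)$ one has $T_{*}(\vartheta_{L_1})(\{a\}) = |L_1\cap T^{-1}(a)| = |F_a^{1}| = n_a = \alpha_1(\{a\})$, and for $a\notin T(L)$ both sides vanish; since $T_{*}(\vartheta_{L_1})$ and $\alpha_1$ are both finite atomic measures supported on $T(L)$, this yields $T_{*}(\vartheta_{L_1})=\alpha_1$, and symmetrically $T_{*}(\vartheta_{L_2})=\alpha_2$. There is no real obstacle here; the only point that must be handled with any care is the bookkeeping in the first paragraph, namely that a non-negative integral measure dominated by the finite atomic measure $\mu$ is automatically a finite atomic measure whose weight at each atom is a non-negative integer not exceeding that of $\mu$ — this is precisely what makes the choice of the subsets $F_a^{1}$ possible.
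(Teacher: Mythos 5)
Your proof is correct. The paper does not actually supply an argument here --- it states the proposition is ``left to the reader'' --- and your fiber-by-fiber splitting is precisely the intended elementary argument: integrality and non-negativity of $\alpha_1,\alpha_2$ force $\alpha_1(\{a\})$ to be an integer between $0$ and $|T^{-1}(a)|$ for each $a\in T(L)$ (and both measures to be supported on $T(L)$, since they are dominated by $T_{*}(\vartheta_L)$), after which choosing a subset of each fiber of the right cardinality and taking complements gives the desired partition, with the pushforward identities checked atom by atom. Nothing further is needed.
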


\begin{proposition} Let $\Lab_A$ and $\Lab_B$ be finite sets of labels, and let 
$\lab_A:\Lab_A \to \R$ and $\lab_B:\Lab_B \to \R$ be labelling maps. Suppose that the measures $\alpha=(\lab_A)_{*}(\vartheta_{\Lab_{A}})$ and 
$\beta=(\lab_B)_{*}(\vartheta_{\Lab_{B}})$ are $K$-equivalent. Then we can find a bijection $\sigma_{A,B}:\Lab_A \to \Lab_B$ such that $||\lab_A-\lab_B \circ \sigma_{A,B}||_{\infty} \le K$.
\end{proposition}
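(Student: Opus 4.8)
The plan is to reduce the statement about labelled sets to the measure-transport machinery of Proposition 4.2. First I would invoke Proposition 4.2 (the equivalence $(1)\Leftrightarrow(2)$): since $\alpha$ and $\beta$ are $K$-equivalent finite discrete measures on $\R$ with equal total mass $N=\vartheta_{\Lab_A}(\Lab_A)=\vartheta_{\Lab_B}(\Lab_B)$ (note $|\Lab_A|=|\Lab_B|=N$ because both totals equal the integer $N$), there exist non-decreasing, left-continuous maps $\psi_A,\psi_B:(0,N]\to\R$ with $(\psi_A)_*(\nu)=\alpha$, $(\psi_B)_*(\nu)=\beta$, and $\|\psi_A-\psi_B\|_\infty\le K$, where $\nu$ is Lebesgue measure on $(0,N]$.

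The key point is that because $\alpha$ and $\beta$ are \emph{integral} discrete measures (every atom has integer weight, as they are push-forwards of counting measures), the monotone maps $\psi_A$ and $\psi_B$ are automatically step functions that are constant on each half-open interval $(j-1,j]$ for $j=1,\dots,N$: indeed the cumulative distribution function of an integral measure jumps only by integer amounts, so $\psi_A$ and $\psi_B$ take a constant value on each such unit interval. Thus I get well-defined functions $\widetilde\psi_A,\widetilde\psi_B:\{1,\dots,N\}\to\R$ by setting $\widetilde\psi_A(j)=\psi_A(j)$ and $\widetilde\psi_B(j)=\psi_B(j)$, and $(\widetilde\psi_A)_*(\vartheta_{\{1,\dots,N\}})=\alpha$, $(\widetilde\psi_B)_*(\vartheta_{\{1,\dots,N\}})=\beta$, with $|\widetilde\psi_A(j)-\widetilde\psi_B(j)|\le K$ for all $j$.

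Next I would choose bijections $\rho_A:\{1,\dots,N\}\to\Lab_A$ and $\rho_B:\{1,\dots,N\}\to\Lab_B$ that realise the push-forward identities on the nose: since $(\widetilde\psi_A)_*(\vartheta_{\{1,\dots,N\}})=(\lab_A)_*(\vartheta_{\Lab_A})$ as measures on $\R$, for each value $v$ in the (finite) image the fibre $\widetilde\psi_A^{-1}(v)$ and the fibre $\lab_A^{-1}(v)$ have the same cardinality, so one can fix a bijection between them, fibre by fibre, and assemble $\rho_A$ with $\lab_A\circ\rho_A=\widetilde\psi_A$; similarly for $\rho_B$. Then define $\sigma_{A,B}=\rho_B\circ\rho_A^{-1}:\Lab_A\to\Lab_B$. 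For $a\in\Lab_A$ write $j=\rho_A^{-1}(a)$; then $\lab_A(a)=\widetilde\psi_A(j)$ and $\lab_B(\sigma_{A,B}(a))=\lab_B(\rho_B(j))=\widetilde\psi_B(j)$, so $|\lab_A(a)-\lab_B(\sigma_{A,B}(a))|=|\widetilde\psi_A(j)-\widetilde\psi_B(j)|\le K$, giving $\|\lab_A-\lab_B\circ\sigma_{A,B}\|_\infty\le K$ as required.

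The only real obstacle is the observation that the transport maps coming from Proposition 4.2 can be taken constant on unit intervals when the measures are integral; everything after that is bookkeeping about bijections between equinumerous finite fibres. One could also bypass Proposition 4.2 entirely and argue by induction on $N$ directly from condition (\ref{transport-condition}), peeling off the leftmost atom of each measure and matching the corresponding labels, but invoking Proposition 4.2 keeps the argument short.
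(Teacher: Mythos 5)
Your proof is correct and follows essentially the same route as the paper: invoke the transport equivalence (the paper's Proposition 4.1) to get monotone maps $\psi_A,\psi_B$ with $\|\psi_A-\psi_B\|_\infty\le K$, use integrality to see they are constant on unit intervals, realise the labelling maps through bijections with $\{1,\dots,N\}$ fibre by fibre, and compose to get $\sigma_{A,B}$. The only difference is cosmetic (your bijections run in the opposite direction from the paper's $\phi_A,\phi_B$), so nothing further is needed.
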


\begin{proof} Since $\alpha$ and $\beta$ are $K$-equivalent, we conclude that the total mass of $\alpha$ is equal to the total mass of  $\beta$.
Set $m=\alpha(\R)=\beta(\R)$. By Proposition 4.1, we can find non-decreasing and left continuous functions $\psi_A,\psi_B:(0,m] \to \R$ such that
$||\psi_A-\psi_B||_{\infty} \le K$ where $(\psi_l)_{*}(\nu)=(\lab_l)_{*}(\vartheta_{\Lab_{l}})$ for $l=A,B$ (here $\nu$ is the Lebesgue measure on $(0,m]$). 
Since $(\lab_l)_{*}(\vartheta_{\Lab_{l}})$ is an integral measure, we have that $\psi_l(t)=\psi_l(t^*)$ where $t^*$ is the least integer greater or equal to $t$.
Therefore, we can find bijections $\phi_l \to \{1,2,...,m\}$, $l=A,B$, such that $\psi_l(\phi_l(x))=\lab_l(x)$ for every $x \in \Lab_l$. 
\vskip .1cm
For $a \in \Lab_A$  we let $\sigma_{A,B}(a)=(\phi_B)^{-1}(\phi_A(a))$. Then $|\lab_A(a)-\lab_B(\sigma_{A,B}(a))|=|\psi_A(\phi_A(a))-\psi_B(\phi_A(a))| \le K$ which proves the proposition.
\end{proof}

\begin{proposition} Let $A$ and $B$ be finite sets of real numbers. Let $\Lab_A$ and $\Lab_B$ be finite sets of labels, and
$\lab_A:\Lab_A \to A$ and $\lab_B:\Lab_B \to B$ labelling maps.
Let $\alpha, \beta \in \Mes(\R)$ be the atomic measures, that are supported
on $A$ and $B$ respectively, and $\alpha(a)=|\lab^{-1}_A(a)|$ for every $a \in A$ and  $\beta(b)=|\lab^{-1}_B(b)|$ for every $b \in B$
(here $|\lab^{-1}_A(a)|$, $|\lab^{-1}_B(b)|$ denote the numbers of the corresponding preimages in $\Lab_A$ and $\Lab_B$ respectively).
Suppose that $\alpha=\alpha_1+\alpha_2$ and $\beta=\beta_1+\beta_2$ where $\alpha_i,\beta_j \in \Mes(\R)$ are integral measures,  such that $\alpha_1$, $\beta_1$  are $K_1$-equivalent, and $\alpha_2$, $\beta_2$ are $K_2$-equivalent. Denote by $m_1$ the total mass of $\alpha_1$ and by $m_2$ the total mass of $\alpha_2$. Then, there is a bijection  $\sigma_{A,B}:\Lab_A \to \Lab_B$ so that for $m_1$ elements $a \in \Lab_A$ we have
$|\lab_B(\sigma_{A,B}(a))-\lab_A(a)|<K_1$ and for the remaining $m_2$ elements  $b \in \Lab_A$ we have $|\lab_B(\sigma_{A,B}(b))-\lab_A(b)|<K_2$.
\end{proposition}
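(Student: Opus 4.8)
The plan is to combine the decomposition Proposition 4.3 with the transport Proposition 4.4. First I would apply Proposition 4.3 to the labelling map $\lab_A\colon\Lab_A\to A$: since $(\lab_A)_{*}(\vartheta_{\Lab_A})=\alpha=\alpha_1+\alpha_2$ with $\alpha_1,\alpha_2$ integral, there is a disjoint decomposition $\Lab_A=\Lab_A^{1}\cup\Lab_A^{2}$ with $(\lab_A)_{*}(\vartheta_{\Lab_A^{i}})=\alpha_i$ for $i=1,2$. Applying the same proposition to $\lab_B$ and $\beta=\beta_1+\beta_2$ gives a disjoint decomposition $\Lab_B=\Lab_B^{1}\cup\Lab_B^{2}$ with $(\lab_B)_{*}(\vartheta_{\Lab_B^{i}})=\beta_i$. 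Since $\alpha_i$ and $\beta_i$ are $K_i$-equivalent they have the same total mass, hence $|\Lab_A^{1}|=\alpha_1(\R)=m_1=\beta_1(\R)=|\Lab_B^{1}|$ and similarly $|\Lab_A^{2}|=m_2=|\Lab_B^{2}|$.

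Next I would feed each matched pair of pieces into Proposition 4.4. The pushforwards of the counting measures under $\lab_A|_{\Lab_A^{1}}$ and $\lab_B|_{\Lab_B^{1}}$ are exactly $\alpha_1$ and $\beta_1$, which are $K_1$-equivalent, so Proposition 4.4 provides a bijection $\sigma^{1}\colon\Lab_A^{1}\to\Lab_B^{1}$ with $||\,\lab_A|_{\Lab_A^{1}}-\lab_B|_{\Lab_B^{1}}\circ\sigma^{1}\,||_{\infty}\le K_1$, that is $|\lab_B(\sigma^{1}(a))-\lab_A(a)|\le K_1$ for every $a\in\Lab_A^{1}$. In the same way the $K_2$-equivalence of $\alpha_2$ and $\beta_2$ yields a bijection $\sigma^{2}\colon\Lab_A^{2}\to\Lab_B^{2}$ with $|\lab_B(\sigma^{2}(a))-\lab_A(a)|\le K_2$ for every $a\in\Lab_A^{2}$. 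Gluing, the map $\sigma_{A,B}$ that equals $\sigma^{1}$ on $\Lab_A^{1}$ and $\sigma^{2}$ on $\Lab_A^{2}$ is a bijection $\Lab_A\to\Lab_B$ (the two halves of $\Lab_A$ and of $\Lab_B$ are disjoint with matching cardinalities), and it displaces the $m_1$ labels of $\Lab_A^{1}$ by at most $K_1$ and the remaining $m_2$ labels of $\Lab_A^{2}$ by at most $K_2$, which is the desired conclusion.

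The argument is a pure concatenation of Propositions 4.3 and 4.4, so I do not expect a genuine obstacle; the one point that needs a line of care is the equality of cardinalities of the two halves of $\Lab_A$ and of $\Lab_B$, which is immediate from the equality of the total masses of $\alpha_i$ and $\beta_i$. A minor cosmetic discrepancy is that Proposition 4.4 yields the non-strict bound $\le K_i$ whereas the statement asks for $<K_i$; this is harmless, since the displacement actually realized is a maximum over finitely many differences of atom positions, so one may shrink each $K_i$ by an arbitrarily small amount while keeping the relevant measures $K_i$-equivalent (or simply read $<$ as $\le$ throughout).
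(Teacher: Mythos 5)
Your proof is correct and is essentially the paper's own argument: the paper proves this proposition in one line by observing $\alpha=(\lab_A)_{*}(\vartheta_{\Lab_A})$, $\beta=(\lab_B)_{*}(\vartheta_{\Lab_B})$ and invoking Propositions 4.3 and 4.4, exactly the concatenation you carry out in detail. The strict-versus-nonstrict inequality issue you flag is already present in the paper's statement and is harmless, as you note.
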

\begin{proof} Note that $\alpha=(\lab_A)_{*}(\vartheta_{\Lab_{A}})$ and $\beta=(\lab_B)_{*}(\vartheta_{\Lab_{B}})$. The proof follows from Proposition 4.3 and Proposition 4.4.
\end{proof}

We end this subsection with the following two elementary propositions that will be used in Section 6.

\begin{proposition} Let $\alpha,\beta, \eta \in \Mes(\R)$ such that $\alpha$ and $\beta$ are $K$-equivalent, and such that $\beta$ and $\eta$ are $L$-equivalent. Then $\alpha$ and $\eta$ are $K+L$-equivalent.
\end{proposition}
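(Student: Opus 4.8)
The plan is to use the transport characterization of $K$-equivalence from Proposition 4.1, specifically the monotone-rearrangement maps in part (2), and compose them. First I would record that, since $\alpha$ and $\beta$ are $K$-equivalent, their total masses agree, $\alpha(\R)=\beta(\R)$; likewise $\beta(\R)=\eta(\R)$; call the common value $m$. Then I would invoke Proposition 4.1(2) to obtain non-decreasing, left continuous maps $\psi_1,\psi_2\colon(0,m]\to\R$ with $(\psi_1)_{*}(\nu)=\alpha$, $(\psi_2)_{*}(\nu)=\beta$, and $\|\psi_1-\psi_2\|_{\infty}\le K$, where $\nu$ is Lebesgue measure on $(0,m]$. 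Applying Proposition 4.1(2) again to the pair $\beta,\eta$ gives maps $\varphi_2,\varphi_3\colon(0,m]\to\R$ with $(\varphi_2)_{*}(\nu)=\beta$, $(\varphi_3)_{*}(\nu)=\eta$, and $\|\varphi_2-\varphi_3\|_{\infty}\le L$.

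The one genuine point to check is that one may take $\psi_2=\varphi_2$, i.e.\ that the monotone rearrangement of a finite measure on $\R$ is unique (as a left continuous non-decreasing function on $(0,m]$ pushing $\nu$ to the measure). This is immediate from the explicit formula used in the proof of Proposition 4.1: $\psi_2(x)=\inf\{y\in\R:\beta(-\infty,y]\ge x\}$ depends only on $\beta$, and the same formula produces $\varphi_2$. Hence $\psi_2=\varphi_2$ as functions on $(0,m]$.

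With this identification, I would set $\chi_1=\psi_1$ and $\chi_3=\varphi_3$ on $X=(0,m]$ with $\eta_0=\nu$. Then $(\chi_1)_{*}(\nu)=\alpha$ and $(\chi_3)_{*}(\nu)=\eta$, and for every $x\in(0,m]$ the triangle inequality gives
$$
|\chi_1(x)-\chi_3(x)|\le|\psi_1(x)-\psi_2(x)|+|\varphi_2(x)-\varphi_3(x)|\le K+L,
$$
so $\|\chi_1-\chi_3\|_{\infty}\le K+L$. This realizes condition (3) of Proposition 4.1 for the pair $\alpha,\eta$ with constant $K+L$ (taking the topological space to be $X=(0,m]$ with measure $\nu$), and therefore $\alpha$ and $\eta$ are $(K+L)$-equivalent.

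The only obstacle worth flagging is the uniqueness of the rearrangement map, and that is handled by the closed-form expression for $\psi_i$ already exhibited in the proof of Proposition 4.1; everything else is the triangle inequality. Alternatively one could bypass uniqueness entirely by composing at the level of condition (1): from $\mu_1(-\infty,a]\le\mu_2(-\infty,a+K]$ and $\mu_2(-\infty,b]\le\mu_3(-\infty,b+L]$ with $b=a+K$ one gets $\mu_1(-\infty,a]\le\mu_3(-\infty,a+K+L]$, and symmetrically, which is precisely \eqref{transport-condition} for $\mu_1,\mu_3$ with constant $K+L$; this is perhaps the cleanest route and avoids any appeal to part (2) at all.
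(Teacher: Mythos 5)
Your argument is correct, and your closing alternative is in fact exactly what the paper does: the paper's proof of this proposition is a one-line verification of condition (1) of Proposition 4.1, i.e.\ from $\alpha(-\infty,a]\le\beta(-\infty,a+K]$ and $\beta(-\infty,a+K]\le\eta(-\infty,a+K+L]$ (and the symmetric chain) one reads off the $(K+L)$-condition directly. Your primary route through condition (2) is a genuinely different, slightly heavier path: it requires the observation that the monotone rearrangement of $\beta$ is the same function in both applications of Proposition 4.1(2), which you correctly justify by the explicit formula $\psi(x)=\inf\{y:\beta(-\infty,y]\ge x\}$ appearing in the proof of that proposition; once the common representative is fixed, the triangle inequality gives condition (3) with constant $K+L$. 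What the rearrangement route buys is an explicit coupling of $\alpha$ and $\eta$ over $((0,m],\nu)$, which can be convenient if one wants the transport maps themselves; what the paper's route buys is brevity and the avoidance of any uniqueness discussion. Either way the statement is proved, so there is nothing to correct.
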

\begin{proof} One directly verifies that the condition $(1)$ from Proposition 4.1 holds for the measures $\alpha$ and $\eta$. 
\end{proof}

\begin{proposition} Let $(X,\nu)$ be a measure space and let $\mu_i:X \to \Mes(\R)$, $i=1,2$,   be such that the mapping 
$x \to \mu_i(x)(-\infty,t]$ is measurable for every $t \in \R$. Suppose that $\mu_1(x)$ and $\mu_2(x)$ are $K$-equivalent for every $x \in X$. Then the measures
$$
\mu_1(X)=\int\limits_{X} \mu_1(x) \, d\nu(x),
$$
\noindent
and 
$$
\mu_2(X)=\int\limits_{X} \mu_2(x) \, d\nu(x),
$$
\noindent
are $K$-equivalent.
\end{proposition}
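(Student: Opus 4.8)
The plan is to use characterization $(1)$ from Proposition 4.1 as the working definition of $K$-equivalence, exactly as in the proof of Proposition 4.7. So I want to show that for every $a \in \R$ the two inequalities
$$
\mu_1(X)(-\infty,a] \le \mu_2(X)(-\infty,a+K], \qquad \mu_2(X)(-\infty,a] \le \mu_1(X)(-\infty,a+K]
$$
hold. By symmetry it suffices to establish the first one.

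First I would unwind the definition of the integrated measure: for a fixed Borel set $E \subset \R$,
$$
\mu_i(X)(E) = \int\limits_X \mu_i(x)(E)\, d\nu(x).
$$
To make this legitimate I need the integrand $x \mapsto \mu_i(x)(E)$ to be $\nu$-measurable for arbitrary Borel $E$, not just for half-lines $(-\infty,t]$; this follows from the hypothesis by a standard Dynkin / monotone class argument, since the collection of Borel sets $E$ for which $x \mapsto \mu_i(x)(E)$ is measurable is a $\lambda$-system containing the $\pi$-system of half-lines $(-\infty,t]$. (One should also note the measures are finite, e.g. with a fixed common total mass, so that $\mu_i(X)$ is a well-defined finite measure; this is implicit in the statement of Proposition 4.1, whose measures are finite.)

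Then the key step is pointwise monotonicity of the integrand: for each $x \in X$, the hypothesis that $\mu_1(x)$ and $\mu_2(x)$ are $K$-equivalent gives, via Proposition 4.1$(1)$,
$$
\mu_1(x)(-\infty,a] \le \mu_2(x)(-\infty,a+K]
$$
for every $a$. Integrating this inequality over $X$ with respect to $\nu$ — which is legitimate by the measurability just discussed and monotonicity of the integral — yields
$$
\mu_1(X)(-\infty,a] = \int\limits_X \mu_1(x)(-\infty,a]\, d\nu(x) \le \int\limits_X \mu_2(x)(-\infty,a+K]\, d\nu(x) = \mu_2(X)(-\infty,a+K].
$$
The reverse inequality is obtained the same way by swapping the roles of $\mu_1$ and $\mu_2$. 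Hence condition $(1)$ of Proposition 4.1 holds for the pair $\mu_1(X),\mu_2(X)$, so they are $K$-equivalent.

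The only genuine obstacle is the measurability bookkeeping: one must check that all the integrals above make sense, i.e.\ that $x \mapsto \mu_i(x)(-\infty,a+K]$ is measurable (immediate from the hypothesis) and more generally that $\mu_i(X)$ is a countably additive finite Borel measure on $\R$. The latter is routine — countable additivity follows from the monotone convergence theorem applied to $\int_X \mu_i(x)(\cdot)\, d\nu$ — but it is the part that needs a sentence of care; everything else is a one-line application of Proposition 4.1. I would therefore keep the write-up short: state that we verify condition $(1)$, invoke the pointwise inequality from Proposition 4.1, integrate, and conclude.
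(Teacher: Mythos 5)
Your proof is correct and follows exactly the route the paper takes: the paper's own proof consists of the single remark that condition $(1)$ of Proposition 4.1 holds for $\mu_1(X)$ and $\mu_2(X)$, which is precisely what you verify by integrating the pointwise inequality. Your extra care about measurability and countable additivity is fine but not something the paper spells out.
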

\begin{remark} If  $\mu:X \to \Mes(\NB\R)$ is a measurable mapping, and if $\mu(x)$ is $K$-symmetric for every $x$, then the measure
$$
\mu(X)=\int\limits_{X} \mu(x) \, d\nu(x),
$$
\noindent
is also  $K$-symmetric.
\end{remark} 

\begin{proof} One verifies directly that the condition $(1)$ from Proposition 4.1 holds for the measures $\mu_1(X)$ and $\mu_2(X)$.
\end{proof}

\subsection{Proof of Theorem 1.1}  The remainder of the paper after this subsection is devoted to proving the next theorem.
From now on $P(r)$ denotes a polynomial that only depends on $S$. In particular we have $P(r)+P(r)=P(r)$ and $rP(r)=P(r)$.

\begin{theorem} There exist a constant $r_0(S)=r_0$ that depends only on $S$, so that for every $r>r_0$  there exists a finite measure $\mu(r) \in \Mes(\Tr(S))$ so that the total measure $|\mu(r)|$ satisfies the inequality

$$
{{\LM(\TB{S})}\over{2}} < |\mu(r)|<{{3\LM(\TB{S})}\over{2}},
$$
\noindent
and  with the following properties. There exist measures $\alpha(r),\alpha_1(r),\beta(r) \in \Mes(\NBG(S))$ so that the measure $\ph\mu(r) \in \Mes(\NBG(S))$ can be written  as $\ph\mu(r)=\alpha(r)+\alpha_1(r)+\beta(r)$ and  the following holds
\begin{enumerate}
\item Let $\wh{\mu}(r)$ denote the restriction of the measure $\mu(r)$ to the set of triangles $T \in \Tr(S)$ for which $\h(T) \ge r^2$. 
Then
$$
\int\limits_{\NBG (S) } (\cl(\gamma^{*},z)+1) d \ph \wh{\mu}(r) \le P(r)e^{-r}.
$$
\item The measure $\alpha(r)$ is $re^{-r}$-symmetric, and the measure $\alpha_1(r)$ is $Q$-symmetric, for any   $Q >200$.
\item We have
$$
\int\limits_{\NBG(S)}(\cl(\gamma^{*},z)+1) d\beta(r) \le P(r)e^{-r}.
$$
\noindent
\item We have
$$
\int\limits_{\NBG(S)}d\alpha_1(r) \le e^{-r}.
$$
\noindent

\end{enumerate}

\end{theorem}

We will explicitly construct the required measure $\mu(r)$. In the remainder of this section we prove Theorem 1.1 assuming Theorem 4.1 and its notation.
First, we show that we may assume that the measure $\mu(r)$ from Theorem 4.1, has finite support.

\begin{proposition} Assume that Theorem 4.1 holds. Then there exist a constant $r_0(S)=r_0$ that depends only on $S$ so that for every $r>r_0$  there exists a finite measure $\wt{\mu}(r) \in \Mes(\Tr(S))$ with finite support, so that the total measure $|\wt{\mu}(r)|$ satisfies the inequality

$$
{{\LM(\TB{S})}\over{4}} < |\wt{\mu}(r)|<2\LM(\TB{S}),
$$
\noindent
and  with the following properties. There exist measures $\wt{\alpha}(r),\wt{\alpha}_1(r),\wt{\beta}(r) \in \Mes(\NBG(S))$ so that the measure $\ph \wt{\mu}(r) \in \Mes(\NBG(S))$ can be written  as $\ph \wt{\mu}(r)=\wt{\alpha}(r)+\wt{\alpha}_1(r)+\wt{\beta}(r)$ and  the following holds
\begin{enumerate}

\item The measure $\wt{\alpha}(r)$ is $re^{-r}$-symmetric, and the measure $\wt{\alpha}_1(r)$ is $Q$-symmetric, for any  $Q>200$.
\item We have
$$
\int\limits_{\NBG(S)}(\cl(\gamma^{*},z)+1) d \wt{\beta}(r) \le P(r)e^{-r}.
$$
\noindent
\item We have
$$
\int\limits_{\NBG(S)}d\wt{\alpha}_1(r) \le e^{-r}.
$$
\noindent

\end{enumerate}

\end{proposition}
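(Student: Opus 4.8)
The plan is to obtain $\wt{\mu}(r)$ by discarding from $\mu(r)$ the triangles of large height, and then to check that the measure $\ph\wt{\mu}(r)$ still admits the required splitting, because what has been discarded is negligible both in total mass and in combinatorial length.

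First I would set $\wt{\mu}(r):=\mu(r)-\wh{\mu}(r)$, the restriction of $\mu(r)$ to $\{T\in\Tr(S):\h(T)<r^{2}\}$, with $\wh{\mu}(r)$ as in Theorem~4.1(1). Since $\cl\ge 0$, the total mass of $\ph\wh{\mu}(r)$, which equals $3|\wh{\mu}(r)|$, is at most $\int_{\NBG(S)}(\cl(\gamma^{*},z)+1)\,d\ph\wh{\mu}(r)\le P(r)e^{-r}$, so $|\wh{\mu}(r)|\le\tfrac13 P(r)e^{-r}$; hence for $r$ large
\[
\tfrac14\LM(\TB{S})<|\mu(r)|-\tfrac13 P(r)e^{-r}\le|\wt{\mu}(r)|\le|\mu(r)|<2\LM(\TB{S}),
\]
which is the required mass bound. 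For finiteness of the support, observe that the combinatorially long triangles produced by the Correction Lemma reach far into the cusps and therefore have large height, so in the construction of $\mu(r)$ in Section~5 every occurring triangle of height below $r^{2}$ has combinatorial length at most $P(r)$; each of its three edges then crosses $\tau(S)$ at most $P(r)$ times and so lies in the finite set $\{\gamma\in\Gamma(S):\iota(\gamma,\tau(S))\le P(r)\}$. Since there are only finitely many triangles with all edges in a fixed finite set, $\supp\wt{\mu}(r)$ is finite.

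For the splitting, additivity of $\ph$ and $\wh{\mu}(r)\le\mu(r)$ give $\ph\wt{\mu}(r)=(\alpha(r)+\alpha_1(r)+\beta(r))-\ph\wh{\mu}(r)$ with $0\le\ph\wh{\mu}(r)\le\alpha(r)+\alpha_1(r)+\beta(r)$. If the construction already places $\ph\wh{\mu}(r)$ inside the error term $\beta(r)$, one simply takes $\wt{\alpha}(r):=\alpha(r)$, $\wt{\alpha}_1(r):=\alpha_1(r)$, $\wt{\beta}(r):=\beta(r)-\ph\wh{\mu}(r)\ge 0$, and all properties are inherited, using $\int(\cl+1)\,d\wt{\beta}(r)\le\int(\cl+1)\,d\beta(r)\le P(r)e^{-r}$ and $\int d\wt{\alpha}_1(r)\le e^{-r}$. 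In general one first performs a standard measure‑theoretic splitting $\ph\wh{\mu}(r)=\wh{\alpha}+\wh{\alpha}_1+\wh{\beta}$ with $\wh{\alpha}\le\alpha(r)$, $\wh{\alpha}_1\le\alpha_1(r)$, $\wh{\beta}\le\beta(r)$ (so each summand has $\cl$‑weighted mass at most $P(r)e^{-r}$), and then removes $\wh{\alpha}$ from $\alpha(r)$ \emph{along a transport plan}: by Proposition~4.1 the $re^{-r}$‑equivalence of the two orientations of $\alpha(r)$ on each $\gamma\in\Gamma(S)$ is witnessed by maps $\psi_1,\psi_2$ from a common measure space with $\|\psi_1-\psi_2\|_\infty\le re^{-r}$, and restricting that common measure to the part carrying $\wh{\alpha}$ realizes both $\wh{\alpha}$ and $\alpha(r)-\wh{\alpha}$ as $re^{-r}$‑symmetric measures; the same device with a $Q$‑plan handles $\alpha_1(r)$ and $\wh{\alpha}_1$. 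Setting $\wt{\alpha}(r):=\alpha(r)-\wh{\alpha}$, $\wt{\alpha}_1(r):=\alpha_1(r)-\wh{\alpha}_1$ and $\wt{\beta}(r):=\ph\wt{\mu}(r)-\wt{\alpha}(r)-\wt{\alpha}_1(r)$, one checks that $\wt{\beta}(r)$ differs from $\beta(r)-\wh{\beta}$ by a discrepancy that is $\le 2\ph\wh{\mu}(r)$ in $\cl$‑weighted mass, so it is a positive measure with $\int(\cl+1)\,d\wt{\beta}(r)\le P(r)e^{-r}$, while $\int d\wt{\alpha}_1(r)\le\int d\alpha_1(r)\le e^{-r}$.

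The main obstacle is this last reconciliation. Deleting triangles removes atoms of $\ph\mu(r)$ in a way that need not respect the splitting $\alpha(r)+\alpha_1(r)+\beta(r)$, and $K$‑equivalence is destroyed by naive subtraction, so one cannot simply restrict the three pieces. Removing instead along a transport plan keeps both the removed part and its complement $K$‑equivalent, and Theorem~4.1(1) is exactly what guarantees that whatever cannot be matched this way is $\cl$‑negligible and can therefore be absorbed into $\wt{\beta}(r)$; carrying this out while keeping $\wt{\beta}(r)$ positive and keeping $\int(\cl+1)\,d\wt{\beta}(r)$ within $P(r)e^{-r}$ is the technical heart of the proof.
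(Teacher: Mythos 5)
Your overall skeleton matches the paper's: discard the triangles with $\h(T)\ge r^2$, get the mass bound from Theorem 4.1(1) (the paper's finiteness of support is immediate since there are only finitely many $T\in\Tr(S)$ with $\h(T)\le r^2$ -- your detour through combinatorial lengths and ``the construction in Section 5'' is both unnecessary and outside the hypothesis ``assume Theorem 4.1 holds''), then split $\ph\wt{\mu}(r)$ and repair the symmetry. But the step on which condition (1) rests is wrong as stated. You set $\wt{\alpha}(r)=\alpha(r)-\wh{\alpha}$ and claim it is $re^{-r}$-symmetric because one can ``restrict the common measure to the part carrying $\wh{\alpha}$.'' There is no such part: the coupling $(\psi_+,\psi_-)$ witnessing the symmetry of $\alpha$ pairs mass on the two sides of a geodesic $\gamma$ without any regard to which triangles were discarded, so the set of parameters sent by $\psi_+$ into the removed atoms is in general different from the set sent by $\psi_-$ into them. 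Since the removed atoms are forced (they are the feet of the discarded high triangles, and whatever of $\ph\wh{\mu}$ cannot be charged to $\beta$ or $\alpha_1$ must come out of $\alpha$), you cannot arrange the removal to be a sub-coupling; consequently neither $\wh{\alpha}$ nor $\alpha(r)-\wh{\alpha}$ need be $re^{-r}$-symmetric, and the same objection applies to $\alpha_1(r)-\wh{\alpha}_1$. Your closing paragraph names exactly this as ``the technical heart'' and then does not carry it out, so the proposal does not yet prove (1).

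The paper's resolution, which you would need to supply, is to shrink the symmetric part rather than keep all of $\alpha-\wh{\alpha}$: writing $\alpha^{\pm}=(\psi_{\pm})_*\nu$ on each $\gamma$, one restricts $\nu$ to $A=A_+\cap A_-$, where $A_{\pm}$ is the set of parameters whose $\psi_{\pm}$-image lands in the surviving support; the pushforwards over $A$ are $re^{-r}$-equivalent by construction, and their sum is the new $\wt{\alpha}(r)$. The leftover $\eta=\underline{\alpha}-\wt{\alpha}$ (surviving mass whose partner under the plan was removed) is dumped into $\wt{\beta}(r)$, and the point that makes this legitimate is quantitative: each bit of $\eta$ is within distance $re^{-r}$ (resp.\ $Q$ for the $\alpha_1$ piece) of a bit $\eta'\le\wh{\alpha}$ of removed mass, and $\cl(\gamma^*,\cdot)$ changes by at most a factor $e^{K}$ under a move of size $K$, so $\int(\cl+1)\,d\eta\le e^{re^{-r}}\int(\cl+1)\,d\wh{\alpha}\le 3\int(\cl+1)\,d\ph\wh{\mu}(r)\le P(r)e^{-r}$ by Theorem 4.1(1), and similarly with the constant $e^{Q}$ for $\eta_1$. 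With that, $\wt{\beta}(r)=\underline{\beta}+\eta+\eta_1$ satisfies (2), positivity is automatic, and (3) and the mass bound go through as you argued. (The paper also records the small geometric fact that $\ph\wt{\mu}$ and $\ph\wh{\mu}$ have disjoint supports, which makes the preliminary splitting of $\alpha,\alpha_1,\beta$ canonical; your generic splitting is an acceptable substitute for that particular point.)
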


\begin{proof} Let $\wt{\mu}(r) \in \Mes(\Tr(S))$ be such that $\mu(r)=\wt{\mu}(r)+\wh{\mu}(r)$ where $\wh{\mu}(r)$ was defined in the statement of Theorem 4.1. Then $\wt{\mu}(r)$ has finite support since it is supported only on triangles from $\Tr(S)$ which satisfy $\h(T)\le r^2$ (and there are only finitely many such triangles). By the assumption in Theorem 4.1 we have  $|\wh{\mu}(r)| \le P(r)e^{-r}$. This shows that 
$$
{{\LM(\TB{S})}\over{4}} < |\wt{\mu}(r)|<2\LM(\TB{S}),
$$
\noindent
for $r$ large enough. Note that the measure $\ph\mu(r)$ consists of countably many atoms, that are obtained as  the feet of the centres of the triangles from $\Tr(S)$. The measure $\wt{\mu}(r)$ consists of finitely many atoms. It remains to construct the decomposition $\ph \wt{\mu}(r)=\wt{\alpha}(r)+\wt{\alpha}_1(r)+\wt{\beta}(r)$
\vskip .1cm
For a discrete positive measure $\eta$ we let $\supp (\eta)$ be the set of points $t \in \R$ for which $\eta(\{t\}) >0$. 
We observe that $\wh{\mu}$ and $\wt{\mu}$ have disjoint support because if $T_1,T_2$ are triangles that share a geodesic $\gamma$, and such that $\ft_{\gamma}(\ct(T_1))=\ft_{\gamma}(\ct(T_2))$ then $T_1=T_2$ (recall that $\ft_{\gamma}$ is an element of $\NB\gamma$ so if 
$\ft_{\gamma}(\ct(T_1))=\ft_{\gamma}(\ct(T_2))$ then $T_1$ and $T_2$ are on the same side of $\gamma$). We let $\underline{\alpha}$ be the restriction of $\alpha$ to $\supp{\wt{\mu}}$ and $\wh{\alpha}$ the restriction of $\alpha$ to $\supp{\wh{\mu}}$. One defines $\underline{\alpha}_1$, $\wh{\alpha}_1$,
$\underline{\beta}$ and $\wh{\beta}$ likewise. Then 
$$
\wt{\mu}=\underline{\alpha}+\underline{\alpha}_1+\underline{\beta}, \quad \quad \wh{\mu}=\wh{\alpha}+\wh{\alpha}_1+\wh{\beta},
$$
\noindent
and $\underline{\alpha}+\wh{\alpha}=\alpha$,  $\underline{\alpha}_1+\wh{\alpha}_1=\alpha_1$ and $\underline{\beta}+\wh{\beta}=\beta$.
\vskip .1cm
We aim to ''symmetrise" $\underline{\alpha}$ so that it is $re^{-r}$-symmetric. To this end for each $\gamma \in \Gamma(S)$ we choose an orientation $\gamma^{*} \in \Gamma^{*}(S)$ for $\gamma$. Let $\alpha^{+}$ be the restriction of $\alpha$  to $\NB\gamma^{*}$ and $\alpha^{-}$ be the restriction of $\alpha$  to $\NB(-\gamma^{*})$, and think of $\alpha^{+}$ and $\alpha^{-}$ as measures on $\gamma$. We define $\underline{\alpha}^{+}$, $\underline{\alpha}^{-}$, $\wh{\alpha}^{+}$ and $\wh{\alpha}^{-}$ likewise. Since $\alpha$ is $re^{-r}$-symmetric by Proposition 4.1  we can write 
$$
\alpha^{+/-}=(\psi_{+/-})_{*}[0,\alpha^{+}(\gamma)),
$$
\noindent
such that $\psi_{+/-}: [0,\alpha^{+}(\gamma))\to \gamma$ satisfy that $\dis(\psi_{+}(t),\psi_{-}(t)) \le re^{-r}$ for all $t \in [0,\alpha^{+}(\gamma))$.
Then we let $A_{+/-} \subset [0,\alpha^{+}(\gamma))$ be defined by $A_{+/-}=\psi_{+/-}^{-1}(\supp(\alpha_{+/-}))$. Set $A=A_{+} \cap A_{-}$.
Also, let $\wt{\alpha}^{+/-}=(\psi_{+/-})_{*}A$. By construction $\wt{\alpha}^{+}$ and $\wt{\alpha}^{-}$ are $re^{-r}$-equivalent. 
Now consider $\wt{\alpha}^{+}$ and $\wt{\alpha}^{-}$ as measures on $\NB\gamma^{*}$ and $\NB(-\gamma^{*})$ respectively. Let $\wt{\alpha}=\wt{\alpha}^{+}+\wt{\alpha}^{-}$. Then $\wt{\alpha}$ is $re^{-r}$-symmetric. 
\vskip .1cm
We let $\eta=\underline{\alpha}-\wt{\alpha}$. The reader can verify that
there exists a measure $\eta' \le \wh{\alpha}$ such that $\eta'+\eta$ is $re^{-r}$-symmetric (the measure $\eta'$ is constructed in the obvious way using
the maps $\psi_{+/-}$). Note that if $(\gamma^{*},z), (\gamma^{*},z') \in \NB\gamma$ satisfy $\dis(z,z') \le K$ then $\cl(\gamma^{*},z) \le e^{K}\cl(-\gamma^{*},z_1)$. It follows that
$$
\int\limits_{\NBG(S)} (\cl(\gamma^{*},z)+1) \, d\eta \le e^{re^{-r}}\int\limits_{\NBG(S)} (\cl(\gamma^{*},z)+1) \, d\eta' \le
$$

$$
\le e^{re^{-r}}\int\limits_{\NBG(S)} (\cl(\gamma^{*},z)+1) \, d\wh{\alpha}  \le 3\int\limits_{\NBG(S)} (\cl(\gamma^{*},z)+1) \, d\wh{\alpha},
$$
\noindent
for $r$ large enough. 
\vskip .1cm
We likewise find $\wt{\alpha}_1 \le \underline{\alpha}_1$ such that $\wt{\alpha}_1$ is $Q$-symmetric and $\eta_1=\underline{\alpha}_1-\wt{\alpha}_1$ satisfy 
$$
\int\limits_{\NBG(S)} (\cl(\gamma^{*},z)+1) \, d\eta_1 \le e^{Q}\int\limits_{\NBG(S)} (\cl(\gamma^{*},z)+1) \, d\wh{\alpha}_1.
$$
\noindent
Also
$$
\int\limits_{\NBG(S)} \, d \wt{\alpha}_1 \le e^{-r}.
$$
\noindent
This shows that $\wt{\alpha}$ and $\wt{\alpha}_1$ satisfy the conditions $(1)$ and $(3)$ of this proposition. 
\vskip .1cm
We then let $\wt{\beta}=\underline{\beta}+\eta+\eta_1$. Then $\wt{\mu}=\wt{\alpha}+\wt{\alpha}_1+\wt{\beta}$. 
Moreover
$$
\int\limits_{\NBG(S)} (\cl(\gamma^{*},z)) \, d\wt{\beta}=\int\limits_{\NBG(S)} (\cl(\gamma^{*},z)) \, d(\underline{\beta}+\eta+\eta_1) \le
$$
$$
\le \int\limits_{\NBG(S)} (\cl(\gamma^{*},z)) \, d\beta +\le \int\limits_{\NBG(S)} (\cl(\gamma^{*},z)) \, d(\wh{\alpha}+\wh{\alpha}_1) \le P(r)e^{-r},
$$
\noindent 
since $\wh{\alpha}+\wh{\alpha}_1 \le \wh{\mu}$. This proves the proposition.

\end{proof}

The idea is to use the measures $\wt{\mu}(r)$ to construct certain admissible pairs $(\Col(r),\sigma_{\Col(r)})$. In turn, this will enable us to construct the corresponding covers of $S$ that are required by Theorem 1.1.
\vskip .1cm
From now on we suppress the dependence on $r$ that is we set  $\wt{\mu}(r)=\mu$, $\wt{\alpha}(r)=\alpha$, $\wt{\alpha}_1(r)=\alpha_1$ and $\wt{\beta}(r)=\beta$. Since $\mu$ is finitely supported, the measure $\mu$ is atomic and it has finitely many atoms. The measure $\ph\mu$ is atomic and it has finitely many atoms, as well. Let $m$ denote the total number of non-trivial atoms for $\ph\mu$. Let $L$ be the maximum of the combinatorial length function $\cl$ over the $m$  points where $\ph\mu$ is supported. Let 
$$
\epsilon={{1}\over{m(L+1)e^{r}}}.
$$ 
\vskip .1cm
For $T \in \Tr(S)$ choose a number $0 \le \mu'(T)\le \epsilon$ so  that $\mu'(T)+\mu(T)$
is a rational number, and set $\mu'(T)+\mu(T)=\mu^{\rat}(T)$. If $\mu(T)=0$ then we set $\mu'(T)=0$ as well. This is how we define two new 
measures $\mu',\mu^{\rat} \in \Mes(\Tr(S))$. Both these measures are atomic (finitely many atoms), and $\mu^{\rat}$ has the same set of atoms as $\mu$ (the weights of atoms of $\mu^{\rat}$ are rational numbers). In particular, $\mu'$ has at most $m$ atoms, and each of them has  the weight at most $\epsilon$. Note that the total measure of $\mu^{\rat}$ satisfies $\mu(\Tr(S)) \ge \mu^{\rat}(\Tr(S)) \ge \mu(\Tr(S))-m\epsilon>\mu(\Tr(S))-e^{-r}$
\vskip .1cm
The measure $\ph\mu \in \Mes(\NBG(S))$ is also atomic, with finitely many atoms as well. Let $\gamma \in \Gamma(S)$ and let
$\gamma^{*}_1, \gamma^{*}_2 \in \Gamma^{*}(G)$ denote the two orientations on $\gamma$.  Since $\alpha$ is $re^{-r}$-symmetric,
the two measures on $\R$ that arise as the restrictions of $\alpha$ on $\NB\gamma^{*}_1$ and $\NB\gamma^{*}_2$ respectively,
are atomic (finitely many atoms), $re^{-r}$-equivalent measures. Applying Proposition 4.2 on these two measures, and repeating the same for
each $\gamma \in \Gamma(S)$ we construct the measures $\alpha^{\rat},\alpha' \in \Mes(\NB\Gamma(S))$ with the  properties:
\begin{itemize}
\item $\alpha^{\rat}+\alpha'=\alpha$. 
\item $\alpha^{\rat}$ is $re^{-r}$-symmetric.
\item for any atom $V \in \NBG(S)$ of $\alpha'$ we have $\alpha'(V) \le \epsilon$. 
\end{itemize}
We repeat the same for $\alpha_1$. We construct the measures $\alpha_1^{\rat},\alpha'_1 \in \Mes(\NB\Gamma(S))$
with 
\begin{itemize}
\item $\alpha_1^{\rat}+\alpha'_1=\alpha_1$.
\item $\alpha^{\rat}$ is $Q$-symmetric.
\item for any atom $V \in \NBG(S)$ of $\alpha'_1$ we have $\alpha'_1(V) \le \epsilon$. 
\end{itemize}
We have
$$
\ph\mu^{\rat}=\ph\mu+\ph\mu'=\alpha^{\rat}+{\alpha_1}^{\rat}+(\beta+\ph\mu'+\alpha'+\alpha'_1).
$$
\noindent
Note that from the fourth inequality in Proposition 4.6, we have 
$$
\int\limits_{\NBG(S)}d\alpha^{\rat}_1 \le  \int\limits_{\NBG(S)}d\alpha_1 \le e^{-r}.
$$
\vskip .1cm
Set $\beta_1=\beta+\ph\mu'+\alpha'+\alpha'_1$. Since $\mu^{\rat}$ is atomic (finitely many atoms) with rational weights,
so is the measure $\ph\mu^{\rat}$. Since $\alpha^{\rat}$ and ${\alpha_1}^{\rat}$ are also atomic (finitely many atoms) with rational weights,
and since all the measure in question are positive, we conclude that $\beta_1$ is also atomic (finitely many atoms) and with rational weights.
Moreover, we have
$$
\int\limits_{\NBG(S)}(\cl(\gamma^{*},z)+1) d\beta_1 \le \int\limits_{\NBG(S)}(\cl(\gamma^{*},z)+1) d\beta+\int\limits_{\NBG(S)}(\cl(\gamma^{*},z) +1)
d\ph \mu'+
$$

$$
+\int\limits_{\NBG(S)}(\cl(\gamma^{*},z)+1) d\alpha'+\int\limits_{\NBG(S)}(\cl(\gamma^{*},z)+1) d\alpha'_1 
\le P(r)e^{-r}+3m(L+1)\epsilon,
$$
\noindent
that is, for $r$ large enough we have
$$
\int\limits_{\NBG(S)}(\cl(\gamma^{*},z)+1) d\beta_1 \le P(r)e^{-r}.
$$
\vskip .1cm
Let $n$ be a large enough integer so that the weights of atoms of the measures $\mu^{\rat}$, $\alpha^{\rat}$, ${\alpha_1}^{\rat}$ and
$\beta_1$ are all integers. We multiply the measures in question by $n$. Set 
$\mu^{\intg}=n\mu^{\rat}$, $\alpha^{\intg}=n\alpha^{\rat}$, $\alpha^{\intg}_1=n{\alpha_1}^{\rat}$ and
$\beta^{\intg}_1=n\beta_1$. We have that $\alpha^{\intg}$ is still  $re^{-r}$-symmetric, and  $\alpha^{\intg}_1$ is
$Q$-symmetric. Moreover, we have the following estimate for the total measure of $\alpha^{\intg}_1$
\begin{equation}\label{total-measure-2}
\int\limits_{\NBG(S)}d\alpha^{\intg}_1 \le n e^{-r}.
\end{equation}
\noindent
Also, the total measure of $\mu^{\intg}$ satisfies that $n \mu(\Tr(S))\ge \mu^{\intg}(\Tr(S))>n( \mu(\Tr(S) )-e^{-r})$.
\vskip .1cm
Now we apply the Correction lemma (Lemma 3.3). Let $(\gamma^{*},z) \in \NBG(S)$ be an atom of $\beta^{\intg}_1$ (note that the same point can be an atom of the measure $\alpha^{\intg}$ or $\alpha^{\intg}_1$).  Choose a lift of $(\gamma^{*},z)$ to $\NBG(G)$ and denote it also by $(\gamma^{*},z)$. We apply Lemma 3.3
to this pair, to obtain the corresponding polygon $\Pol$ in $\Ha$ that is to the right of $\gamma^{*}$. Project the triangles from the triangulation
$\tau(\Pol)$ to $S$. Let $T' \in \tau(\Pol)$ and let $[T']_G=T \in \Tr(S)$ be its projection. Let $\nu^{T} \in \Mes(\Tr(S))$ be the  measure that is supported on $T$ and so that $\nu^{T}(T)=\beta^{\intg}_1(\gamma^{*},z)$. Set  

$$
\nu^{(\gamma^{*},z)}=\sum_{T}\nu^{T},
$$ 
\noindent
where we sum over all such triangles $T$. From Lemma 3.3 we have the bound on the number of triangles in $\tau(\Pol)$ and thus we obtain the following estimate of the total measure
$$
\nu^{(\gamma^{*},z)}(\Tr(S)) \le \big(C\cl(\gamma^{*},z)+K\big)\beta^{\intg}_1(\gamma^{*},z).
$$
\noindent
Here $C$ and $K$ are the constants from Lemma 3.3.
Note that each atom of $\nu^{(\gamma^{*},z)}$ has an integer weight. 
\vskip .1cm
Let $D$ be the constant from Lemma 3.3, and let $\gamma_0 \in \Gamma(S) \setminus \lambda_{\Gen}(S)$ be a geodesic that lifts to  an edge of a triangle from $\tau(\Pol)$. Denote by $\beta^{\intg}_1|_{(\gamma^{*},z)}$ the restriction of
$\beta^{\intg}_1$ to the point $(\gamma^{*},z) \in \NB\Gamma(S)$.
Then the two measures on $\gamma_0$ that are the restrictions of the measure $\ph\nu^{(\gamma^{*},z)}+\beta^{\intg}_1|_{(\gamma^{*},z)}$ on $\NB\gamma_0$ are $D$-equivalent. This follows from Lemma 3.3, that is,
the two atoms of $\ph\nu^{(\gamma^{*},z)}+\beta^{\intg}_1|_{(\gamma^{*},z)}$ in $\NB\gamma_0$ (one on each side of $\gamma_0$), are within the hyperbolic distance $D$ (these two atoms have the same weight by the definition of $\nu^{(\gamma^{*},z)}$). 
\vskip .1cm
Repeat this process for every non-trivial atom of $\beta^{\intg}_1$ and set
$$
\nu=\sum_{(\gamma^{*},z)} \nu^{(\gamma^{*},z)},
$$
\noindent
where we sum over all non-trivial atoms for $\beta^{\intg}_1$. We have $\nu \in \Mes(\Tr(S))$ and
$$
\nu(\Tr(S)) \le \int\limits_{\NBG(S)}\big(C\cl(\gamma^{*},z)+ K\big) d\beta^{\intg}_1 \le nP(r)e^{-r}.
$$
\vskip .1cm

Let $\gamma_0 \in \Gamma(S) \setminus \lambda_{\Gen}(S)$. Then the two measures on $\gamma_0$ that are the  restrictions of the measure $\ph\nu+\beta^{\intg}_1$ to $\NB\gamma_0$ are $D$-equivalent. 
Let $\gamma_0 \in \lambda_{\Gen}(S)$. Then by Lemma 3.3 all the atoms of  $\ph\nu$ on $\gamma_0$ are within the $D/2$ hyperbolic distance  from the point $\midp(\gamma_0) \in \gamma_0$. If we can show that the total measures are equal, that is of $\ph\nu(\NB\gamma^{*}_0)=\ph\nu(\NB(-\gamma^{*}_0))$ that would show that the measure $\ph\nu+\beta^{\intg}_1$ is $D$-symmetric.
\vskip .1cm
Set $\mu^{\intg}_1=\mu^{\intg}+\nu$. Again, $\mu^{\intg}_1$ has finitely many atoms, and all the weights are integers. 
The above estimate for $\nu(\Tr(S))$ implies that, for $r$ large enough, the total measure of $\mu^{\intg}_1$ satisfies the following inequalities
\begin{equation}\label{number-triangles}
{{n\mu(\Tr(S))}\over{2}}<n(\mu(\Tr(S))-e^{-r})<\mu^{\intg}_1(\Tr(S))<n\mu(\Tr(S))+n\mu(\Tr(S))P(r)e^{-r}<2n\mu(\Tr(S)).
\end{equation}
\vskip .1cm
Set $\beta^{\intg}_2=\beta^{\intg}_1+\ph\nu$. Then the following equality holds
$$
\ph\mu^{\intg}_1=\alpha^{\intg}+\alpha^{\intg}_1+\beta^{\intg}_2.
$$
\vskip .1cm
We have the following estimates on the total measures of $\beta^{\intg}_2$
\begin{equation}\label{total-measure-1}
\beta^{\intg}_2(\Tr(S)) \le \int\limits_{\NBG(S)}d\nu+\int\limits_{\NBG(S)}d\beta^{\intg}_1<n P(r)e^{-r}.
\end{equation}
\vskip .1cm
Since $\mu^{\intg}_1$ has finitely many atoms with integer weights, we can consider $\mu^{\intg}_1$ as an element of $\N\Tr(S)$. We construct the labelled collection of triangles $\Col$ that corresponds to  $\mu^{\intg}_1 \in \N\Tr(S)$ (see remark before Definition 3.1) . Let $M$ denote the total number of elements of $\Lab_{\Col}$ (clearly $M$ is equal to the total measure of $3\mu^{\intg}_1$). 
From (\ref{number-triangles}) we have ${{3n\mu(\Tr(S))}\over{2}}<M<6n\mu(\Tr(S))$. 
\vskip .1cm
Fix $\gamma \in \Gamma(S)\setminus \lambda_{\Gen}(S)$ and choose an orientation $\gamma^{*}$ on $\gamma$. We have already seen that the restriction of the measure $\mu^{\intg}_1$ on $\NB\gamma$ can be written as the sum of three measures from $\Mes(\NB\gamma)$ where each of these three measures produces a pair of measures on $\gamma$ that are equivalent (for some constant). This implies that the sets
$\Lab_{\Col,{\gamma}^{*}},\Lab_{\Col,-{\gamma}^{*}} \subset \Lab_{\Col}$ have the same number of elements. This shows that such $\gamma^{*}$
does not figure in the formal sum $\partial\Col \in \Z\Gamma^{*}(S)$. So
$$
\partial{\Col}=\sum_{i=1}^{l}k_i\gamma^{*}_{k_{i}},
$$
\noindent
where $\gamma^{*}_{k_{i}} \in \lambda^{*}_{\Gen}(S)$ and $k_i \in \Z$. By Proposition 3.2 we have  that every $k_i=0$  
that is $\partial{\Col}=0$ in $\Z\Gamma^{*}(S)$. Thus for every $\gamma \in \Gamma(S)$ the total measures of $\ph\mu^{\intg}_1$ on $\NB\gamma^{*}$ and on $\NB(-\gamma^{*})$ are the same. As indicated above, this proves that the measure $\beta^{\intg}_2=\beta^{\intg}_1+\ph\nu$
is $D$-symmetric.
\vskip .1cm
Set $\alpha^{\intg}_2=\alpha^{\intg}_1+\beta^{\intg}_2$. Then $\alpha^{\intg}_2$ is $\max\{Q,D\}$-symmetric, and from
(\ref{total-measure-2}), (\ref{total-measure-1}), for $r$ large enough, we have
\begin{equation}\label{total-measure-3}
\alpha^{\intg}_2(\Tr(S)) \le nP(r)e^{-r}+n P(r)e^{-r} \le nP(r)e^{-r}.
\end{equation}
\vskip .1cm
Let $\gamma \in \Gamma(S)$ and consider the restriction of the measure $\ph \mu^{\intg}_1$ to $\NB\gamma$. This produces a pair of measures in $\Mes(\R)$. Apply Proposition 4.5 to this pair of measures. Choose an orientation $\gamma^{*}$ on $\gamma$. Recall the notation from Proposition 4.5.
We say that  $z \in A \subset \gamma$ if $(\gamma^{*},z)$ is a non-trivial atom  of the measure $\ph\mu^{\intg}_1$. We say 
that $z \in B \subset \gamma$ if $(-\gamma^{*},z)$ is a non-trivial atom  of the measure $\ph\mu^{\intg}_1$.
We identify $\Lab_A$ with $\Lab_{\Col,\gamma^{*}}$ and $\Lab_B$ with $\Lab_{\Col,-\gamma^{*}}$. We define the involution $\sigma_{\Col}: \Lab_{\Col,\gamma^{*}} \to \Lab_{\Col,-\gamma^{*}}$ by $\sigma_{\Col}=\sigma_{A,B}$ where $\sigma_{A,B}:A \to B$ is the bijection from Proposition 4.5.  This is how we construct the admissible pair $(\Col,\sigma_{\Col})$. 
\vskip .1cm
Fix $a \in \Lab_{\Col}$. Let $(T_1,\gamma^{*}_1), (T'_1,-\gamma^{*}_1) \in \Tr^{*}(G)$ such that $\lab_{\Col}(a)=[(T_1,\gamma^{*}_1)]$
and $\lab_{\Col}(\sigma_{\Col}(a))=[(T'_1,-\gamma^{*}_1)]$. Let $[T_1]_G=\Proj_S(\lab_{\Col}(a))=T \in \Tr(S)$ and  
$[T'_1]_G=\Proj_S(\lab_{\Col}(\sigma_{\Col}(a)))=T' \in \Tr(S)$. Also, let $\gamma^{*} \in \Gamma^{*}(S)$ be the projection of $\gamma^{*}_1$ to 
$S$ and finally let $\gamma \in \Gamma(S)$ be the corresponding unoriented geodesic.
We have $\dis(\ft_{\gamma}(\ct(T)),\ft_{\gamma}(\ct(T'))) \le \max\{ Q,D \}$ where $\lab_{\Col}(\sigma_{\Col}(a))=T'$. 
Let $N_{\Col}(re^{-r})$ denote the number of elements $a \in \Lab_{\Col}$ so that for the corresponding  edge $\gamma$ of $T=\Proj_S(\lab_{\Col}(a))$ we have $\dis(\ft_{\gamma}(\ct(T)),\ft_{\gamma}(\ct(T'))) > re^{-r}$.
From Proposition 4.5 and from  (\ref{total-measure-3}), we conclude  that $N_{\Col}(r e^{-r}) \le nP(r)e^{-r}$. 
\vskip .1cm
By Lemma 3.1, there exists finitely many virtual triangulation pairs  $(\Col_i,\sigma_i)$ so that $(\Col,\sigma)$ is their union. 
For each $i$ there exists a finite cover $S_i$ of $S$ so that $\Lab_{\Col_i}=\tau^{*}(S_i)$ is a triangulation of $S_i$ (by $\lambda(S_i)$ we denote the corresponding set of edges). Also, by $\shc(i) \in X(\tau(S_i))$ we denote the corresponding shear coordinates. Note that
the Riemann surface $S_i$ corresponds to the point $F_{\tau(S_i)}(\shc(i))$ in the corresponding Teichm\"uller space.
For each $S_i$ we have the following: 
\begin{itemize}
\item Since $\h(T)\le r^2$ for every $T \in \tau(S_i)$ from Lemma 3.2  we have \newline
$O_{\tau(S_{i})}(\shc(i)) \le 2r^2$.
\item We have $||\shc(i)||_{\infty}<\max \{Q,D \}$.
\end{itemize}
One can verify the following elementary proposition.
\begin{proposition} Let $x_i,y_i>0$, $i=1,...,k$, and set $x=x_1+...+x_k$, $y=y_1+...+y_k$. Then
$$
\min_{1 \le i \le k}{{x_i}\over{y_i}} \le {{x}\over{y}}.
$$
\end{proposition}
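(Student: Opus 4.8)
The plan is to reduce the inequality to a single application of the defining property of the minimum together with the linearity of summation. Write $m=\min_{1\le i\le k}\frac{x_i}{y_i}$. By definition of the minimum, for every index $i$ we have $\frac{x_i}{y_i}\ge m$, and since $y_i>0$ this is equivalent to $x_i\ge m y_i$.

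Next I would sum these $k$ inequalities over $i=1,\dots,k$. This gives
$$
x=\sum_{i=1}^{k}x_i\ge m\sum_{i=1}^{k}y_i=my.
$$
Since $y=y_1+\dots+y_k>0$, dividing by $y$ preserves the inequality and yields $\frac{x}{y}\ge m=\min_{1\le i\le k}\frac{x_i}{y_i}$, which is exactly the claim.

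There is essentially no obstacle here: the only points worth noting are that all the $y_i$ are strictly positive (so the individual fractions and the final division are legitimate) and that the minimum over a finite nonempty set is attained, so that $m$ is a well-defined real number with $x_i\ge my_i$ for \emph{all} $i$ simultaneously. One could also phrase it slightly differently by choosing an index $i_0$ realizing the minimum and observing that $x\ge m y$ with $m=x_{i_0}/y_{i_0}$, but the summation argument above is the cleanest and is the one I would record.
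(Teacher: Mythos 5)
Your proof is correct: the paper states this proposition without proof ("One can verify the following elementary proposition"), and your summation argument ($x_i\ge my_i$ for all $i$, sum, divide by $y>0$) is exactly the standard verification intended.
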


Let $N_{\tau(S_{i})}(re^{-r})$ denote the number  of edges from $\lambda(S_i)$ for which the corresponding shear coordinate is greater  
than  $re^{-r}$.  We have 
$$
\sum_{i}N_{\tau(S_{i})}(re^{-r})=N_{\Col}(re^{-r}) \le nP(r)e^{-r}.
$$
\noindent
Also, 
$$
\sum_{i}|\lambda(S_i)|=M \ge {{3n\mu(\Tr(S))}\over{2}}.
$$
\noindent
From the above proposition we have that  for at least one surface $S_i$, say for $S_1$, we have that 
$$
{{N_{\tau(S_{1})}(re^{-r})}\over{|\lambda(S_1)|}} \le P(r)e^{-r}.
$$
\noindent
where $|\lambda(S_1)|$ is the total number of edges in $\lambda(S_1)$. 

\vskip .1cm
Let $r \to \infty$. From Theorem 2.2 we have that the Weil-Petersson distance between $F_{\tau(S_i)}(\shc(i))$ and $F_{\tau(S_i)}(0)$, tends to $0$ when $r \to \infty$. Note that by Proposition 2.2, the Riemann surface that corresponds to $F_{\tau(S_i)}(0)$ is isomorphic to the quotient of $\Ha$ by a finite index subgroup of $\PSL$.
\vskip .1cm
Theorem 1.1 states that for any two punctured Riemann surfaces $S$ and $R$ of finite type, and for every $\epsilon>0$ we can find finite covers  
$S_{\epsilon}$ and $R_{\epsilon}$ of $S$ and $R$ respectively, so that the Weil-Petersson distance between them is less than $\epsilon$. 
We first find $S'_{\epsilon}$ and $R'_{\epsilon}$ finite covers of $S$ and $R$ respectively, so that  $S'_{\epsilon}$ and $R'_{\epsilon}$ are
${{\epsilon}\over{2}}$-close (in the Weil-Petersson sense) to two Riemann surfaces $S''_{\epsilon}$ and $R''_{\epsilon}$ where 
$S''_{\epsilon}$ and $R''_{\epsilon}$ are isomorphic to $\Ha/G_1$ and $\Ha/G_2$ respectively, where $G_1,G_2$ are finite index subgroups of $\PSL$.
Set $G_3=G_1 \cap G_2$ and let $M_{\epsilon}$ be the corresponding Riemann surface. Then there are covers $S_{\epsilon}$ and $R_{\epsilon}$ of $S$ and $R$ respectively, so that $S_{\epsilon}$ and $R_{\epsilon}$ are ${{\epsilon}\over{2}}$-close (in the Weil-Petersson sense) to $M$. Moreover,
the Weil-Petersson distance between $S_{\epsilon}$ and $R_{\epsilon}$ is at most $\epsilon$. This proves the theorem.

\section{The absorption maps and the proof of Theorem 4.1}

\subsection{Preliminary results from hyperbolic geometry and four operations on the unit tangent bundle}
Let $\Mob$ denote the group of orientation preserving  M\"obius transformations of $\Ha$. By $\Mobi$ we denote the subgroup of $\Mob$ whose elements preserve  $\infty$. By $\Gamma(\Ha)$ we denote the set of all geodesics in $\Ha$. Also, let $\Tr(\Ha)$ denote the set of all ideal triangles in $\Ha$.
\vskip .1cm
Denote by $\TB\Ha$ the unit tangent bundle of $\Ha$. Elements of $\TB\Ha$ are pairs $(z,v)$ where $z \in \Ha$ and $v$ is a unit vector at $z$.
If $f \in \Mob$ then $f(z,v) \in \TB\Ha$ is a well defined element. The quotient $\TB\Ha/G$ is isomorphic to the unit tangent bundle $\TB{S}$.
Also, if $E \subset \Ha$ or $E \subset S$ then $\TB{E}$ denotes the restriction of the corresponding unit tangent bundle over $E$.
\vskip .1cm
We parametrise $\TB\Ha$ by $(z,v)=(x,y,\theta)$. Here  $z=x+iy$, and $\theta \in [0,2\pi)$ is the positively oriented angle that $v$ makes with the positive part of the $x$-axis. The Liouville volume form on $\TB\Ha$ is given by $d\LM=y^{-2}\, dx \wedge dy \wedge d\theta$. The corresponding measure on $\TB\Ha$ is called the Liouville measure. The total measure $\LM(\TB{S})$ is finite.
\vskip .1cm
Let $\omega=e^{2\pi i/3}$. By $\omega:\TB\Ha \to \TB\Ha$ we also denote the map $\omega(z,v)=(z,\omega v)$. Clearly the map $\omega$ is a diffeomorphism
of $\TB\Ha$ of order three (this is the first operation on $\TB\Ha$ we define in this subsection). Also, the map $\omega$ commutes with every element from $\Mob$ (and in particular $\omega$ commutes with every element of the group $G$). We have that $\omega:\TB{S} \to \TB{S}$ is a well defined diffeomorphism. Moreover,  $\omega$ is measure preserving (it preserves the Liouville measure on $\TB\Ha$), that is $\Jac(\omega)=1$ where $\Jac(\omega)$ denotes  the Jacobian. 
\vskip .1cm
Fix $(z,v) \in \TB\Ha$. Let $\gamma_{(z,v)}:[0,\infty) \to \Ha$ be the natural parametrisation of the geodesic ray that starts at $z$ and that is tangent to the vector $v$ at $z$ that is $\gamma'_{(z,v)}(0)=v$. We use the same notation for the induced map $\gamma_{(z,v)}:\R^{+}\cup\{0\} \to S$.
By $\gamma_{(z,v)}$ we also denote the corresponding geodesic ray $\gamma_{(z,v)}(\R^{+}\cup \{0\})$. 
\vskip .1cm
As usual, for $t \in \R^{+} \cup \{0\}$ the geodesic flow $\flow_t:\TB\Ha \to \TB\Ha$ is given by $\flow_t(z,v)=(\gamma_{(z,v)}(t),\gamma'_{(z,v)}(t))$. 
Moreover, for $f \in \Mob$ we have $\flow_t \circ f=f \circ \flow_t$. Therefore, the flow is well defined on $\TB\Ha/G$. We use the same notation for the induced flow $\flow_t:\TB{S} \to \TB{S}$. The geodesic flow $\flow_t$ is  measure preserving.

\begin{definition} Let $\p:\TB\Ha \to \partial{\Ha}$ denote the map such that $\p(z,v) \in \partial{\Ha}$ is the end point of the geodesic ray $\gamma_{(z,v)}$. Let $\p^{1}:\TB\Ha \to \Gamma(\Ha)$ denote the map such that  $\p^{1}(z,v) \in \Gamma(\Ha)$ is the geodesic that connects the points $\p(z,v)$ and  $\p(z,\omega v)$. Let $\p^{2}:\TB\Ha \to \Tr(\Ha)$ denote the map such that  $\p^{2}(z,v) \in \Tr(\Ha)$ is the triangle with the vertices  $\p(z,v)$ $\p(\omega(\z,v))$ and  $\p(\omega^{2}(\z,v))$.
\end{definition}

Since the angle between $v$ and $\omega v$ is $2\pi/3$ we find that the point $z$ is the centre of the triangle $\p^{2}(z,v)=\p^{2}(\omega(z,v))=\p^{2}(\omega^{2}(z,v))$. 
The triangle $\p^{2}(z,v)$  is bounded by the geodesics $\p^{1}(z,v)$, $\p^{1}(\omega(z,v))$ and  $\p^{1}(\omega^{2}(z,v))$. Moreover, we endow the geodesic $p^{1}(z,v)$ with the induced orientation, so that the endpoints $p(z,v)$ and $\p(z,\omega v)$ correspond to $-\infty$ and $+\infty$, respectively.
\vskip .1cm
We now define the other three operations on $\TB\Ha$. Let $(z,v) \in \TB\Ha$. Let $f_{(z,v)} \in \Mob$ be the rotation of order two about the point $\ft_{\p^{1}(z,v)}(z) \in \p^{1}(z,v)$. Set $\refl(z,v)=f(z,v)$. Clearly the map $\refl:\TB\Ha \to \TB\Ha$ is a diffeomorphism of order two.
Also the map $\refl$ commutes with every element from $\Mob$ and the induced diffeomorphism $\refl:\TB{S} \to \TB{S}$ is well defined. Note that $\p(z,v)=\p(\omega(\refl(z,v)))$ and $\p^{1}(z,v)=\p^{1}(\refl(z,v))$.

\begin{remark} The group $\left< \omega,\refl, \right>$ generated by $\omega,\refl:\TB{S} \to \TB{S}$ is isomorphic to $\Z_2 \star \Z_3$.
It is easy to see that there exists  a finite orbit under the action of this group if and only if the surface $S$ is modular, that is $S$ is isomorphic to
$\Ha/G$ where $G$ is a finite index subgroup of $\PSL$. This indicates the relevance of this group action to the Ehrenpreis conjecture.
\end{remark}

\begin{figure}
  \centering
  {
    \def\I{{\mathcal I}}
    \def\R{{\mathcal R}}
    \def\S{{\mathcal S}}
    \def\o{\omega}
    \def\p{{\mathbf p}}
    \input{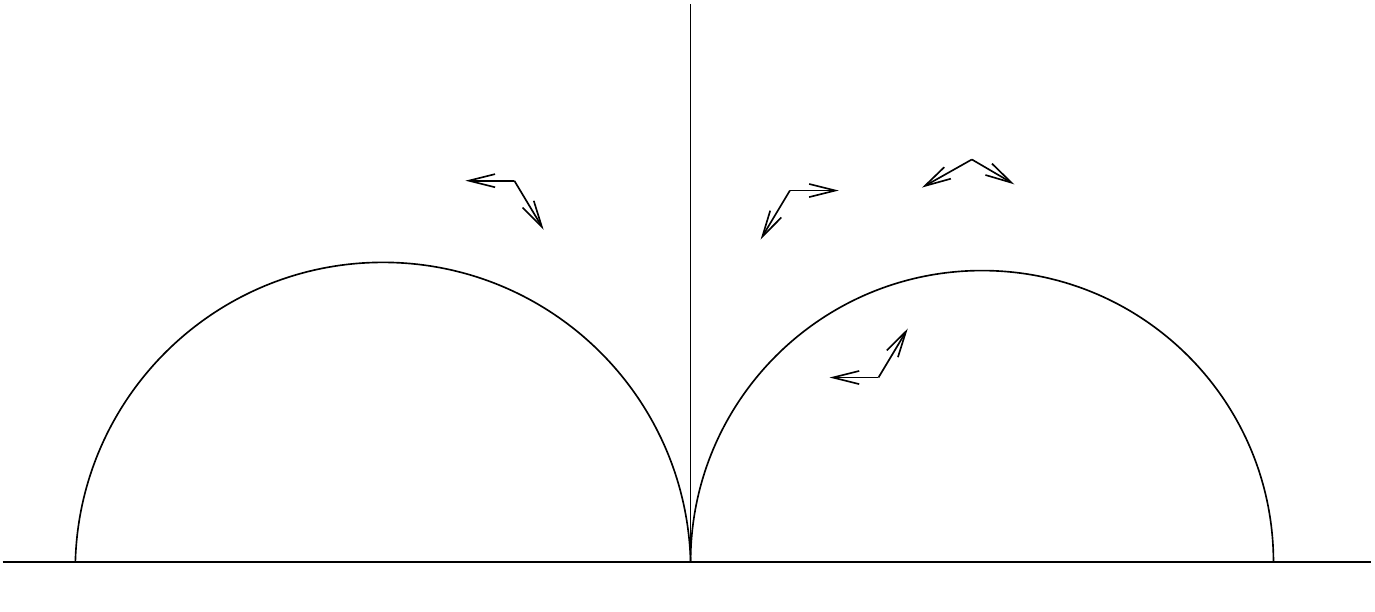_t}
  }
\caption{The four operations on the $\TB\Ha$}
\label{operations}
  \end{figure}

Let $(z,v) \in \TB\Ha$, $w \in \p^{1}(z,v)$, and $t \in \R$. Let $f_{(z,v)}(t) \in \Mob$ be the hyperbolic transformation which fixes the points $\p(z,v)$ and $\p(z,\omega v)$, and so that the signed hyperbolic distance between the points $w$ and $f_{(z,v)}(t)(w)$, is equal to $t$. Here the signed distance between  $w$ and $f_{(z,v)}(t)(w)$ is positive if and only if the points $\p(z,v)$, $w$, $f_{(z,v)}(t)(w)$, and  $\p(z,\omega v)$, sit on the geodesic $\p^{1}(z,v)$, in this order, with respect to the induced orientation on $\p^{1}(z,v)$. The definition of the map  $f_{(z,v)}(t)$ does not depend on the choice of $w \in \p^{1}(z,v)$.  Clearly, the collection of transformations $f_{(z,v)}(t)$, $t \in \R$, is an one-parameter Abelian group.  Set $\tra_t(z,v)= f_{(z,v)}(t)(z,v)$. The collections of maps $\tra_t:\TB\Ha \to \TB\Ha$, $t \in \R$, is an one-parameter Abelian group of diffeomorphisms, and  $\tra_t$ commutes with every element from $\Mob$.
Note that $\p(z,v)=\p(\tra_t(z,v))$, and $\p^{1}(z,v)=\p^{1}(\tra_t(z,v))$.
\begin{remark} In fact, $\tra_t:\TB\Ha \to \TB\Ha$, is the ''equidistant" flow. This means that for $(z_t,v_t)=\tra_t(z,v)$, the points $z_t$ move  along the  line that is equidistant from the geodesic $\p^{1}(z,v)$ (the distance between $z_t$ and  $\p^{1}(z,v)$ is $\log \sqrt{3}$).
\end{remark}
\vskip .1cm 
Let $(z,v) \in \TB\Ha$ so that  $\p(z,v) \ne \infty$. 
Let $\gamma \in \Gamma(\Ha)$, be the geodesic that connects $\p(z,v)$ and $\infty$, and let $f_{\gamma}:\Ha \to \Ha$, be the reflection through $\gamma$. 
Set $\inv^{L}_1(z,v)=f_{\gamma}(z,v)$, and  $\inv^{L}=\omega^{2} \circ \inv^{L}_1$. The maps $\inv^{L}_1, \inv^{L}:\TB\Ha \to  \TB\Ha$ are defined almost everywhere on $\TB\Ha$, because the subset of $\TB\Ha$ on which  $\p(z,v) = \infty$ has zero measure in $\TB\Ha$. The maps $\inv^{L}$ and  $\inv^{L}_1$ commute with every element of $\Mobi$. 
Note that $\inv^{L}_1$ is of order two. Also, $\p(z,v)=\p(\omega(\inv^{L}(z,v)))$. We set $\inv^{R}=(\inv^{L})^{-1}$.

\begin{remark} The map $\inv^{L}:\TB\Ha \to \TB\Ha$  does not commute with the entire group $\Mob$ and the map $\inv^{L}$ does not give 
rise to a map on $\TB{S}$. Let $c \in \Cus(G)$, and   let $G=G_c$ be normalised. Then  $\inv^{L}$  commutes with the translation for $1$, so the maps
$\inv^{L}$ and $\inv^{L}_1$, are well defined almost everywhere  on $\TB\Hor_c(0)$. We have the induced maps $\inv^{L}, \inv^{L}_1:\TB\Ha \setminus \TB\Thick_G(0) \to \TB\Ha \setminus \TB\Thick_G(0)$, and $\inv^{L}, \inv^{L}_1:\TB{S} \setminus \TB\Thick_S(0) \to \TB{S} \setminus \TB\Thick_S(0)$, that are defined on each $0$-horoball in the normalised setting. Also, note that $\inv^{L} \ne \inv^{R}$. 
\end{remark}

\begin{proposition} We have $\Jac(\refl)=\Jac(\tra_t)=1$ everywhere on $\TB\Ha$ and $\Jac(\inv^{L})=1$ almost everywhere on $\TB\Ha$. Moreover we have
that the relations $\refl \circ \tra_t=\tra_{(-t)} \circ \refl$  and $\inv^{L} \circ \tra_t=\tra_{(-t)} \circ \inv^{L}$ 
hold for every $t \in \R$. Also $\refl^{2}$ is the identity mapping on $\TB\Ha$.
\end{proposition}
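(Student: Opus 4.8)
The plan is to separate the three assertions and reduce each to either the Lie-group structure of $\TB\Ha$ or an explicit normalised computation.

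First I would dispose of $\refl^{2}=\mathrm{id}$. Write $f_{(z,v)}$ for the half-turn about $\ft_{\p^{1}(z,v)}(z)$ used to define $\refl$, and put $(z',v')=\refl(z,v)=f_{(z,v)}(z,v)$. Since $\p^{1}(\refl(z,v))=\p^{1}(z,v)$ (noted just before the proposition) and $f_{(z,v)}$ is an isometry fixing that geodesic, $f_{(z,v)}$ carries $\ft_{\p^{1}(z,v)}(z)$ to $\ft_{\p^{1}(z',v')}(z')$, so the two feet coincide and $f_{(z',v')}=f_{(z,v)}$; a half-turn being an involution, $\refl^{2}(z,v)=f_{(z,v)}^{2}(z,v)=(z,v)$. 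For $\Jac(\refl)$ and $\Jac(\tra_{t})$ I would use that the orbit map $g\mapsto g(z_{0},v_{0})$ of a fixed $(z_{0},v_{0})$ identifies $\TB\Ha$ with $\Mob\cong\mathrm{PSL}(2,\R)$, taking $\LM$ to a Haar measure and the $\Mob$-action to left translation. A diffeomorphism commuting with every left translation is, upon evaluation at the identity, a right translation; and since $\mathrm{PSL}(2,\R)$ is unimodular, a right translation preserves Haar measure and, being a group translation, has Jacobian identically $1$. As the excerpt asserts that $\refl$ and $\tra_{t}$ commute with every element of $\Mob$, we get $\Jac(\refl)=\Jac(\tra_{t})=1$ everywhere on $\TB\Ha$ (together with $\Jac(\omega)=1$, already established).

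Next, the relation $\refl\circ\tra_{t}=\tra_{-t}\circ\refl$. Applying an element of $\Mob$ (which commutes with all the maps in play and preserves $\LM$), I may assume that $\p^{1}(z,v)$ is the imaginary axis with its induced orientation running from $0$ to $\infty$, so that $\tra_{t}$ is left translation by $h_{t}\colon w\mapsto e^{t}w$ and $f_{(z,v)}$ is the half-turn $w\mapsto -y_{0}^{2}/w$ about $iy_{0}=\ft_{\p^{1}(z,v)}(z)$. A short computation gives $f_{\tra_{t}(z,v)}=h_{t}\,f_{(z,v)}\,h_{-t}$, hence $\refl(\tra_{t}(z,v))=h_{t}f_{(z,v)}h_{-t}h_{t}(z,v)=h_{t}(\refl(z,v))$; on the other hand $\p(\refl(z,v))=f_{(z,v)}(0)=\infty$ while $\p(\omega(\refl(z,v)))=\p(z,v)=0$, so $\p^{1}(\refl(z,v))$ is the imaginary axis oriented from $\infty$ to $0$ and $\tra_{-t}$ at $\refl(z,v)$ is again left translation by $h_{t}$. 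Both sides therefore equal $h_{t}(\refl(z,v))$. For $\inv^{L}\circ\tra_{t}=\tra_{-t}\circ\inv^{L}$ I would first record the simplification $\inv^{L}=\omega^{2}\circ\inv^{L}_{1}=f_{\gamma}\circ\omega$, where $\gamma=\gamma(z,v)$ is the geodesic from $\p(z,v)$ to $\infty$ and one uses that a reflection conjugates $\omega$ to $\omega^{-1}$. Normalising by $\Mobi$ so that $\p(z,v)=0$, whence $\gamma$ is the imaginary axis, and writing $\tra_{t}$ as left translation by the hyperbolic $h_{t}$ with axis $\p^{1}(z,v)$, one has $\p(\tra_{t}(z,v))=0$, so the same $\gamma$ serves and $\inv^{L}(\tra_{t}(z,v))=f_{\gamma}(\omega(h_{t}(z,v)))=f_{\gamma}(h_{t}(z,\omega v))$ because $\omega$ commutes with $h_{t}$. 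Since $f_{\gamma}$ reverses the induced orientation of $\p^{1}$, conjugation by $f_{\gamma}$ turns $h_{t}$ into the isometry realising $\tra_{-t}$ at $\inv^{L}(z,v)$, so $\tra_{-t}(\inv^{L}(z,v))=f_{\gamma}h_{t}f_{\gamma}^{-1}(f_{\gamma}(z,\omega v))=f_{\gamma}(h_{t}(z,\omega v))$ as well. This is the step I expect to be the main obstacle, since it needs careful bookkeeping of the induced orientations on the several geodesics and of the fact that $\inv^{L}$ commutes only with $\Mobi$, not with all of $\Mob$.

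Finally, $\Jac(\inv^{L})=1$ almost everywhere. Here the Lie-group argument is unavailable because $\Mobi$ is not unimodular, so I would compute. Using $\inv^{L}=\omega^{2}\circ\inv^{L}_{1}$ and $\Jac(\omega^{2})\equiv1$, it suffices to treat $\inv^{L}_{1}$, which in the coordinates $(x,y,\theta)$ with $d\LM=y^{-2}\,dx\wedge dy\wedge d\theta$ is the map $(x,y,\theta)\mapsto(2p-x,\,y,\,\pi-\theta)$, where $p=\p(x,y,\theta)$ is the forward endpoint. Invariance of $\p$ under the horizontal translations $z\mapsto z+\varepsilon$ in $\Mobi$ forces $\partial p/\partial x\equiv1$, so the Jacobian determinant of this map equals $(2\cdot1-1)\cdot1\cdot(-1)=-1$, of absolute value $1$; since the $y$-coordinate is unchanged the weight $y^{-2}$ is preserved, and therefore $\Jac(\inv^{L}_{1})=1$ wherever defined, whence $\Jac(\inv^{L})=1$ almost everywhere. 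This completes the proof.
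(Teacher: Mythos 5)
Your proof is correct, but it follows a genuinely different route from the paper's. The paper argues abstractly: since all four maps commute with $\Mobi$ (which acts transitively on $\Ha$), the Jacobians are a.e.\ constant, and the constants are then forced to be $1$ by purely algebraic identities --- $\refl^{2}=\mathrm{id}$, $(\inv^{L}_1)^{2}=\mathrm{id}$, $\Jac(\omega)=1$, and $\Jac(\tra_t)=\Jac(\tra_{-t})$ (which is extracted from the commutation relation $\refl\circ\tra_t=\tra_{-t}\circ\refl$) combined with $(\tra_t)^{-1}=\tra_{-t}$; the commutation relations themselves are deduced in one line from the invariance of $\p$ and $\p^{1}$ under $\tra_t$ and the way $\refl$ and $\inv^{L}$ transform $\p^{1}$. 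You instead prove $\Jac(\refl)=\Jac(\tra_t)=1$ by identifying $\TB\Ha$ with ${\bf{PSL}}(2,\R)$, Liouville with Haar, and using that a map commuting with all left translations is a right translation on a unimodular group; you handle $\Jac(\inv^{L})$ by the explicit coordinate formula $\inv^{L}_1(x,y,\theta)=(2p-x,\,y,\,\pi-\theta)$ with $\partial p/\partial x\equiv 1$; and you verify both commutation relations by normalised computations, including the useful simplification $\inv^{L}=f_{\gamma}\circ\omega$ and the careful matching of induced orientations (conjugation by $f_\gamma$ reverses the orientation of $\p^{1}$, which is exactly why the time parameter flips sign). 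What your approach buys: the Haar-measure argument yields $\Jac(\tra_t)=1$ without first needing the relation with $\refl$, the coordinate computation for $\inv^{L}_1$ sidesteps the slightly delicate point that $\inv^{L}_1$ commutes only with $\Mobi$ (whose orbits in $\TB\Ha$ are two-dimensional, so "constant Jacobian" needs a little care there), and your explicit checks supply the orientation bookkeeping that the paper leaves implicit. What the paper's route buys is brevity and freedom from coordinates. Both are sound.
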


\begin{proof} We have already observed that $\refl$ is of order two, that is $\refl^{2}$ is the identity mapping on $\TB\Ha$. 
If $(z,v),(z_1,v_1) \in \TB\Ha$, and $(z_1,v_1)=\tra_t(z,v)$, for some $t \in \R$, then $\p^{1}(z,v)=\p^{1}(z_1,v_1)$, and $\p^{1}(\inv^{L}(z,v))=\p^{1}(\inv^{L}(z_1,v_1))$. This yields the relation $\inv^{L} \circ \tra_t=\tra_{(-t)} \circ \inv^{L}$. The relation $\refl \circ \tra_t=\tra_{(-t)} \circ \refl$, is proved similarly. 
\vskip .1cm
Since all four maps $\refl$, $\tra_t$, $\inv^{L}_1$, and $\inv^{L}$ commute with $\Mobi$, and  
$\Mobi$ acts transitively on $\Ha$, we have that the functions $\Jac(\refl)$, $\Jac(\tra_t)$, $\Jac(\inv^{L}_1)$, and $\Jac(\inv^{L})$, are constant functions almost everywhere on $\TB\Ha$. Since $\refl=\refl^{-1}$ we have that $\Jac(\refl)=1$ everywhere on $\TB\Ha$. It follows from $\refl \circ \tra_t=\tra_{(-t)} \circ \refl$, that $\Jac(\tra_t)=\Jac(\tra_{(-t)})$. Since $(\tra_t)^{-1}=\tra_{(-t)}$, we conclude that $\Jac(\tra_t)=1$ everywhere on $\TB\Ha$. From $(\inv^{L}_1)^{-1}=\inv^{L}_1$, we find that $\Jac(\inv^{L}_1)=1$. Since $\Jac(\omega)=1$ it follows from the definition of $\inv^{L}$  that $\Jac(\inv^{L})=1$.
\end{proof}

\begin{definition} Let $(z,v) \in \TB\Ha$, and let $p \in \partial{\Ha}$. 
\begin{itemize} 
\item  Denote by  $\ang_p(z,v)$  the unique number in $[-\pi,\pi)$ that is equal to the positively oriented angle between the vector $v$ and the geodesic ray $\gamma^{p}_{z}$, where $\gamma^{p}_{z}$  denotes  the geodesic ray that starts at $z$ and  ends at $p$. 
\item By $\Hor^{p}_z$ we denote the unique horoball that contains $z$ and  meets $\partial{\Ha}$ at $p$. 
\item Let $z,z' \in \Ha$. Denote by $\Delta_p[z,z']$ the signed hyperbolic distance between the horocircles $\partial{\Hor^{p}_{z}}$ and $\partial{\Hor^{p}_{z'}}$. That is, $\Delta_p[z,z']$ is non-negative if and only if $\Hor^{p}_{z}$ is contained in $\Hor^{p}_{z'}$ (in this case we say that $z$ is closer to $p$ than $z'$ is). 
\item Suppose that $p \ne \p(z,v)$. Then by $\z_{\max}(\gamma_{(z,v)},p)$ we denote the point on $\gamma_{(z,v)}$ that is the closest to $p$. 

\end{itemize}
\end{definition}

Also, if $p \ne \p(z,v)$, then there exists a unique point on $\gamma_{(z,v)}$ that is the closest to $p$. This shows that  $\z_{\max}(\gamma_{(z,v)},p)$ is well defined. If $p=\p(z,v)$, then such a point does not exist.
\vskip .1cm
Note that for $z,w \in \Ha$, we have $\Delta_p[z,w]=-\Delta_p[w,z]$, and $|\Delta_p[z,w]| \le \dis(z,w)$. For points $z,w_1,w_2 \in \Ha$, we have  $\Delta_p[z,w_2]-\Delta_p[z,w_1]=\Delta_p[w_1,w_2]$.

\begin{remark} Let $c \in \Cus(G)$, and $z,z' \in \Ha$. Then $\Delta_c[z,z']$ measures the difference in heights, that is
$$
\h_c(z)-\h_c(z')=\Delta_c[z,z'].
$$
\end{remark}

It follows from the definition that $\z_{\max}(\gamma_{(z,v)},p) \ne z$, if and only if $0<|\ang_p(z,v)|< {{\pi}\over{2}}$. If $\ang_p(z,v)=0$, then $\gamma^{p}_{z}=\gamma_{(z,v)}$. If $0<|\ang_p(z,v)|< {{\pi}\over{2}}$, then there exists $t_0>0$, so that $\gamma_{(z,v)}(t_0)=\z_{\max}(\gamma_{(z,v)},p)$. Most calculations in the remainder of this paper are based on the following two 
elementary identities

\begin{equation}\label{ang-estimate-1}
\Delta_p[\z_{\max}(\gamma_{(z,v)},p),z]=\log(\csc(|\ang_p(z,v)|)),
\end{equation}
\noindent
and
\begin{equation}\label{ang-estimate-2}
t_0=\log \big( \csc(|\ang_p(z,v)|)+\cot(|\ang_p(z,v)|) \big).
\end{equation}
\noindent
It follows that

\begin{equation}\label{ang-estimate-2'}
0<t_0-\Delta_p[\z_{\max}(\gamma_{(z,v)},p),z]<\log 2,
\end{equation}
\noindent
and when  $|\ang_p(z,v)|$ is small, we have 
\begin{equation}\label{ang-estimate-3}
0<t_0-\Delta_p[\z_{\max}(\gamma_{(z,v)},p),z]=\log 2- O(|\ang_p(z,v)|^{2}).
\end{equation}
\noindent

\begin{proposition} Let  $(z,v) \in \TB\Ha$, and $p \in \partial{\Ha}$,  such that  $0<|\ang_p(z,v)|< {{\pi}\over{2}}$. Let
$t_0>0$ be the number so that  $\z_{\max}(\gamma_{(z,v)},p)=\gamma_{(z,v)}(t_0)$. Then for every $0 \le t \le t_0$, we have 

$$
t-\log 2<\Delta_p[\gamma_{(z,v)}(t)),z]< t.
$$
\noindent
Also,
$$
e^{-t_{0}} <  |\ang_p(z,v)|<\pi e^{-t_{0}}.
$$

\end{proposition}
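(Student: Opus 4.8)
The plan is to derive both assertions from the identities (\ref{ang-estimate-2}) and (\ref{ang-estimate-2'}), together with two elementary facts about the sine function. Throughout I write $\theta=|\ang_p(z,v)|\in(0,\pi/2)$ and abbreviate $\z_{\max}=\z_{\max}(\gamma_{(z,v)},p)=\gamma_{(z,v)}(t_0)$.

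First I would dispose of the estimate on $\theta$. By (\ref{ang-estimate-2}) we have $e^{t_0}=\csc\theta+\cot\theta=\dfrac{1+\cos\theta}{\sin\theta}$, and since $1<1+\cos\theta<2$ for $\theta\in(0,\pi/2)$ this yields $\dfrac{\sin\theta}{2}<e^{-t_0}<\sin\theta$. From $\sin\theta<\theta$ we get $e^{-t_0}<\theta$; from Jordan's inequality $\sin\theta>\dfrac{2\theta}{\pi}$ on $(0,\pi/2)$ we get $e^{-t_0}>\dfrac{\sin\theta}{2}>\dfrac{\theta}{\pi}$, that is $\theta<\pi e^{-t_0}$. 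This proves the second displayed inequality.

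Next I would prove $t-\log 2<\Delta_p[\gamma_{(z,v)}(t),z]<t$ for $0<t\le t_0$ (at $t=0$ both quantities in the right inequality vanish, so this is the range where the statement has content). For $t=t_0$ this is precisely (\ref{ang-estimate-2'}). For $0<t<t_0$, observe that the geodesic ray emanating from $(\gamma_{(z,v)}(t),\gamma'_{(z,v)}(t))$ is the tail $s\mapsto\gamma_{(z,v)}(t+s)$ of $\gamma_{(z,v)}$; since $t\le t_0$, the point of this tail closest to $p$ is again $\z_{\max}$, now reached at parameter $t_0-t>0$, whence $|\ang_p(\gamma_{(z,v)}(t),\gamma'_{(z,v)}(t))|\in(0,\pi/2)$. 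Applying (\ref{ang-estimate-2'}) at $(z,v)$ and at $(\gamma_{(z,v)}(t),\gamma'_{(z,v)}(t))$ gives
$$
0<t_0-\Delta_p[\z_{\max},z]<\log 2,\qquad 0<(t_0-t)-\Delta_p[\z_{\max},\gamma_{(z,v)}(t)]<\log 2 .
$$
Subtracting these, and using the additivity relation $\Delta_p[\z_{\max},z]-\Delta_p[\z_{\max},\gamma_{(z,v)}(t)]=\Delta_p[\gamma_{(z,v)}(t),z]$, one obtains $t-\Delta_p[\gamma_{(z,v)}(t),z]\in(-\log 2,\log 2)$, which yields the lower bound $t-\log 2<\Delta_p[\gamma_{(z,v)}(t),z]$. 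For the upper bound $\Delta_p[\gamma_{(z,v)}(t),z]<t$ I would instead invoke $|\Delta_p[z,w]|\le\dis(z,w)$: since $\gamma_{(z,v)}$ is parametrized by arc length, $\dis(z,\gamma_{(z,v)}(t))=t$, and the inequality is strict because $p$ is not an endpoint of the (bi-infinite) geodesic carrying $\gamma_{(z,v)}$ — indeed $p\ne\p(z,v)$ since $\theta>0$, while $p=\p(z,-v)$ would force $|\ang_p(z,v)|=\pi$, which is excluded.

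I expect no genuine difficulty: the argument is bookkeeping with the foot--angle identities already established. The two points that deserve attention are (i) the passage to the tail ray — one must note that its closest-to-$p$ point does not move, which is what makes (\ref{ang-estimate-2'}) applicable there with the same $\z_{\max}$, and this is exactly where $t\le t_0$ is used — and (ii) the sharpening of the right-hand bound from the crude $t+\log 2$ produced by the subtraction down to $t$, which is precisely where the standing hypothesis $|\ang_p(z,v)|<\pi/2$ enters, guaranteeing that $p$ is not an endpoint of $\gamma_{(z,v)}$.
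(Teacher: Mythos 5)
Your proof is correct and follows essentially the same route as the paper, which simply cites the identities (\ref{ang-estimate-1})--(\ref{ang-estimate-2'}) for both inequalities. You merely fill in the details the paper leaves implicit -- applying (\ref{ang-estimate-2'}) again at the tail point $\gamma_{(z,v)}(t)$ for intermediate $t$, invoking $|\Delta_p[z,w]|\le\dis(z,w)$ for the strict upper bound, and noting the harmless $t=0$ edge case -- so no further comment is needed.
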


\begin{proof} The first inequality  follows from  (\ref{ang-estimate-2'}). The second inequality follows from (\ref{ang-estimate-1}), and the fact that for $|\theta| \le {{\pi}\over{2}}$, we have $|\sin \theta|\le |\theta| \le {{\pi}\over{2}}|\sin \theta|$.
\end{proof}

\begin{proposition} Let $t \in \R$, $p \in \partial{\Ha}$, and $(z,v) \in \TB\Ha$, such that $\ang_p(z,\omega^{2}v)=0$. Let $(z_t,v_t)=\tra_t(z,v)$. Then
$$
|\Delta_p[z,z_t]| \ge |t|-\log 6.
$$

\end{proposition}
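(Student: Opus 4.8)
The plan is to unwind the hypothesis into a concrete picture, reduce to a single normalised configuration by an isometry, and finish with an explicit one--variable computation. Saying $\ang_p(z,\omega^{2}v)=0$ means precisely that the geodesic ray $\gamma_{(z,\omega^{2}v)}$ ends at $p$, i.e. $p=\p(z,\omega^{2}v)$. Thus $p$ is the third vertex of the ideal triangle $\p^{2}(z,v)$, whose other two vertices are $a:=\p(z,v)$ and $b:=\p(z,\omega v)$, and these are exactly the two endpoints of the geodesic $\p^{1}(z,v)$. Recall also that $z=\ct(\p^{2}(z,v))$ and that $z_{t}=f_{(z,v)}(t)(z)$, where $f_{(z,v)}(t)$ is the hyperbolic translation with axis $\p^{1}(z,v)=[a,b]$ and translation length $t$. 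So the configuration is: $z$ is the incentre of an ideal triangle, $p$ is one of its vertices, and $z_{t}$ is the image of $z$ under the hyperbolic translation along the geodesic joining the other two vertices.

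Since horoballs are carried to horoballs by isometries of $\Ha$, the quantity $\Delta$ is isometry invariant, $\Delta_{q}[w,w']=\Delta_{\phi(q)}[\phi(w),\phi(w')]$ for every isometry $\phi$. I would choose $\phi$ with $\phi(p)=\infty$, $\phi(a)=0$, $\phi(b)=1$ (orientation is irrelevant here, though in fact such a $\phi$ may be taken in $\Mob$, since $(p,a,b)$ and $(\infty,0,1)$ have the same cyclic order on $\partial\Ha$). Then $\phi$ carries $\p^{2}(z,v)$ to the ideal triangle with vertices $0,1,\infty$, so $\phi(z)$ is the incentre of the latter, which is the point $z_{0}:=e^{i\pi/3}=\tfrac12+\tfrac{\sqrt3}{2}i$ (it lies at hyperbolic distance $\log\sqrt3$ from each of its three sides). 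Moreover $\widetilde g:=\phi\circ f_{(z,v)}(t)\circ\phi^{-1}$ is a hyperbolic translation with axis $[0,1]$ and translation length $|t|$; conjugating $[0,1]$ to the imaginary axis by $\chi(w)=w/(1-w)$ turns $\widetilde g$ into $u\mapsto\lambda u$ with $\lambda=e^{\pm t}$, hence $\widetilde g(w)=\lambda w/\bigl(1+(\lambda-1)w\bigr)$.

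Finally I would compute $\IM\phi(z_{t})=\IM\widetilde g(z_{0})$. Since $|z_{0}|=1$ and $\IM z_{0}=\tfrac{\sqrt3}{2}$, one has $1+(\lambda-1)z_{0}=\tfrac{\lambda+1}{2}+\tfrac{(\lambda-1)\sqrt3}{2}i$, so $|1+(\lambda-1)z_{0}|^{2}=\lambda^{2}-\lambda+1$, and a short calculation gives $\IM\widetilde g(z_{0})=\dfrac{\lambda\,\IM z_{0}}{\lambda^{2}-\lambda+1}$. Therefore, using $\Delta_{p}[z,z_{t}]=\Delta_{\infty}[\phi(z),\phi(z_{t})]=\log\IM\phi(z)-\log\IM\phi(z_{t})$,
\[
\Delta_{p}[z,z_{t}]=\log\frac{\lambda^{2}-\lambda+1}{\lambda}=\log(\lambda+\lambda^{-1}-1)=\log(2\cosh t-1)\ge 0,
\]
which is manifestly even in $t$, so the sign ambiguity $\lambda=e^{\pm t}$ is harmless. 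It then remains to verify $\log(2\cosh t-1)\ge|t|-\log6$, i.e. $2\cosh t-1\ge\tfrac16 e^{|t|}$; writing $s=e^{|t|}\ge1$ this becomes $5s^{2}-6s+6\ge0$, which holds for all real $s$ because the discriminant $36-120$ is negative. This proves the proposition.

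I expect no real obstacle. The only delicate point is the bookkeeping in the normalisation: the isometry $\phi$ moves the vertex $p$, so one must track where $p$ lands and must not fold the auxiliary conjugating map $\chi$ (which does not fix $\infty$) into the normalisation; once $p$ is correctly sent to $\infty$ the whole statement collapses to the displayed identity $\Delta_{p}[z,z_{t}]=\log(2\cosh t-1)$ together with an elementary one--variable inequality, so the constant $\log6$ is far from sharp.
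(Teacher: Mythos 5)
Your proof is correct, but it follows a different route from the paper's. The paper argues synthetically: it projects $z$ and $z_t$ to the axis $\p^{1}(z,v)$, observes that the hypothesis $\ang_p(z,\omega^{2}v)=0$ forces the foot $w=\ft_{\p^{1}(z,v)}(z)$ to be $\z_{\max}(\p^{1}(z,v),p)$, applies the first inequality of Proposition 5.2 along the axis to get $|\Delta_p[w,w_t]|\ge |t|-\log 2$, and then transfers this back to $z,z_t$ using $\dis(z,w)=\dis(z_t,w_t)=\log\sqrt3$ together with the additivity of $\Delta_p$ and $|\Delta_p[\cdot,\cdot]|\le\dis(\cdot,\cdot)$, losing $\log 3$ and arriving at $\log 6=\log 2+\log 3$. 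You instead exploit that the hypothesis pins down the whole configuration up to isometry ($p$ is the vertex of $\p^{2}(z,v)$ opposite the axis, $z$ its incentre), normalise so that $(p,a,b)\mapsto(\infty,0,1)$, and compute the exact value $\Delta_p[z,z_t]=\log(2\cosh t-1)$, from which the stated bound follows by an elementary quadratic inequality; your cyclic-order remark justifying the normalisation is fine, and in any case an orientation-reversing isometry would serve equally well. What your computation buys is the sharp picture: the optimal constant is $\log(4/3)$ rather than $\log 6$, and the sign issue disappears since the formula is even in $t$. What the paper's argument buys is reuse of machinery (Proposition 5.2 and the equidistant-flow description of $\tra_t$) in a style that recurs throughout Section 5, at the cost of a non-sharp constant, and it also covers the degenerate case ``$p$ an endpoint of $\p^{1}(z,v)$'', which under the stated hypothesis cannot actually occur (the three points $\p(z,\omega^{j}v)$ are distinct), so nothing is lost in your treatment.
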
 

\begin{proof} Let $w=\ft_{\p^{1}(z,v)}(z)$, and $w_t=\ft_{\p^{1}(z,v)}(z_t)$. Since $\ang_p(z,\omega^{2}v)=0$, we have $w=\z_{\max}(\p^{1}(z,v),p)$.
It follows from the definition of $\tra_t(z,v)$, that the hyperbolic distance between the points $w$ and $w_t$, is equal to $|t|$. From the first inequality in the previous proposition we have
$$
|\Delta_p[w,w_t]| \ge |t|-\log 2.
$$
\noindent
Since $\dis(z,w)=\dis(z_t,w_t)=\log \sqrt{3}$, we have
$$
|\Delta_p[z,z_t]| \ge |\Delta_p[w,w_t]|- \log 3 \ge |t|-\log 6.
$$
\noindent
If $p$ is an endpoint of $\p^{1}(z,v)$, then $|\Delta_p[z,z_t]|=|t|>|t|-\log 6$.
\end{proof}

\begin{proposition} Let $(z,v) \in \TB\Ha$, and let $p_i \in \partial{\Ha}$, $i=0,1,2$, so that $|\ang_{p_{i}}(z,\omega^{i} v)|<{{\pi}\over{2}}\delta$, for some $0 \le \delta$. Let $T \in \Tr(\Ha)$ be the triangle with the vertices $p_i$. 
There exists a universal constant $C>0$, so that for $\delta$ small enough, we have $\dis(\ct(T),z) \le C\delta$.
\end{proposition}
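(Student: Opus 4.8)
\noindent\emph{Proof sketch.} The plan is to normalise the configuration by the transitivity of $\Mob$ on $\TB\Ha$, reducing to a single reference pair, and then to use that the centre of an ideal triangle depends real-analytically --- in particular, locally Lipschitz --- on its three ideal vertices. Note first that the whole statement is $\Mob$-invariant: an element $f\in\Mob$ acts on $\TB\Ha$ by the conformal isometry $(z,v)\mapsto(f(z),df(v))$, it commutes with $\omega$, it carries the triangle with vertices $p_i$ to the triangle with vertices $f(p_i)$ (and $\ct$ to $f(\ct)$), and since $\Mob$ preserves angles and the horoballs $\Hor^{p}_{z}$ of Definition $5.2$, it carries $\ang_{p_i}(z,\omega^{i}v)$ to $\ang_{f(p_i)}(f(z),\omega^{i}df(v))$; as $\dis$ is isometry-invariant it therefore suffices to prove the statement for a single fixed $(z_0,v_0)\in\TB\Ha$.

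Take $z_0$ to be the centre of the disc model $\D$ of $\Ha$. Then $T_0=\p^{2}(z_0,v_0)$ is a fixed ideal triangle with $\ct(T_0)=z_0$ and ordered, pairwise distinct ideal vertices $q_0=\p(z_0,v_0)$, $q_1=\p(\omega(z_0,v_0))$, $q_2=\p(\omega^{2}(z_0,v_0))\in\partial\D$. In the disc model centred at $z_0$ the geodesic ray from $z_0$ in a direction $w$ is a Euclidean radius, so $|\ang_{p}(z_0,w)|$ equals the boundary-arclength between $p$ and the endpoint of that ray; hence the hypothesis $|\ang_{p_i}(z_0,\omega^{i}v_0)|<\frac{\pi}{2}\delta$ says precisely that, for each $i$, the point $p_i$ lies within boundary-arclength $\frac{\pi}{2}\delta$ of $q_i$.

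Now let $\Phi$ be the map sending an ordered triple of pairwise distinct boundary points to the centre of the ideal triangle they span; $\Phi$ is real-analytic on the open set of such triples, and $\Phi(q_0,q_1,q_2)=z_0$. Fix, once and for all, a neighbourhood $U$ of $(q_0,q_1,q_2)$ of pairwise distinct triples on which $\Phi$ is Lipschitz (with respect to the arclength metric on $(\partial\D)^{3}$) with some constant $C_0$; since $(q_0,q_1,q_2)$ and $U$ are fixed absolutely after the normalisation, $C_0$ is an absolute constant. Choose $\delta_0>0$ so small that every triple whose $i$-th entry is within boundary-arclength $\frac{\pi}{2}\delta_0$ of $q_i$ lies in $U$. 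Then for $\delta<\delta_0$ the triple $(p_0,p_1,p_2)$ lies in $U$, the triangle $T$ with vertices $p_i$ is non-degenerate, and
$$
\dis(\ct(T),z_0)=\dis\big(\Phi(p_0,p_1,p_2),\Phi(q_0,q_1,q_2)\big)\le C_0\max_{i}\operatorname{arc}(p_i,q_i)\le \tfrac{\pi}{2}C_0\,\delta .
$$
Setting $C=\frac{\pi}{2}C_0$ gives the statement for $(z_0,v_0)$, and by the $\Mob$-equivariance above the same $C$ and $\delta_0$ work for every $(z,v)\in\TB\Ha$.

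The only point needing care is the claim that $C_0$ is genuinely universal, i.e.\ that it involves no hidden dependence on $S$ or on anything else. This is immediate once the reference pair $(z_0,v_0)$, the triple $(q_0,q_1,q_2)$, and the neighbourhood $U$ are fixed absolutely: $\Phi$ is then a fixed real-analytic map restricted to a fixed relatively compact neighbourhood, so its Lipschitz constant there is an absolute number. (If one prefers to avoid invoking $\Phi$ abstractly, one can compute $\ct(T)$ explicitly as the incentre of the ideal triangle with vertices $p_0,p_1,p_2$ and differentiate at $(q_0,q_1,q_2)$; this yields the same bound but is more computational.) Everything else --- the $\Mob$-equivariance of $\ang$ and the identification of $|\ang_{p}(z_0,\cdot)|$ with boundary arclength --- is routine and follows directly from the definitions.
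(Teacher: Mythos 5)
Your proof is correct and follows essentially the same route as the paper: normalise to the disc model centred at $z$, note that the hypothesis forces each $p_i$ to lie within distance $O(\delta)$ on $\partial\D$ of the corresponding vertex of $\p^{2}(z,v)$, and invoke smoothness (hence local Lipschitz continuity) of the centre of an ideal triangle as a function of its three boundary vertices near the symmetric triple. The extra detail you supply on $\Mob$-equivariance and the universality of the Lipschitz constant is a fuller write-up of what the paper leaves implicit.
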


\begin{proof} We consider the unit disc model $\D$ for $\Ha$. We may assume that $z=0$, and that $v={{ \partial}\over{\partial{x}}}(0)$. Then 
$\p(z, \omega^{i} v)=\omega^{i}=e^{2\pi i/3} \in \partial{\D}$, $i=0,1,2$, where $\partial{\D}$ is the unit circle. Since 
$|\ang_{p_{i}}(z,\omega^{i} v)|<{{\pi}\over{2}} \delta$, we have that $|p_i-\p(z,\omega^{i} v)| < {{\delta}\over{4}}$, where  $|p_i-\p(z,\omega^{i} v)|$ is the Euclidean distance.  Recall that $z$ is the centre of the triangle $\p^{2}(z,v)$. The centre of an ideal triangle, as the function of triples of points on $\partial{\D}$, is smooth in some   neighbourhood the triple $(1,e^{2\pi/3},e^{4\pi /3})$. Therefore, there exists a universal constant $C>0$, so that $\dis(\ct(T),z) \le C \delta$, for $\delta$ small.

\end{proof}

\begin{proposition} Let $(z,v) \in \TB\Ha$, and set $\refl(z,v)=(z_1,v_1)$. 
Let $p_i \in \partial{\Ha}$, $i=0,1$, so that $|\ang_{p_{i}}(z,\omega^{i} v)|<{{\pi}\over{2}}\delta$, for some $0 \le \delta$. By $\gamma \in \Gamma(\Ha)$, we denote the geodesic with the endpoints $p_0$ and $p_1$. There exists a universal constant $C>0$, so that 
for $\delta$ small enough we have $\dis(\ft_{\p^{1}(z,v) }(z),\ft_{\gamma}(z)) \le C\delta$,  $\dis(\ft_{\p^{1}(z,v)}(z_1),\ft_{\gamma}(z_1)) \le C\delta$, and $\dis(\ft_{\gamma}(z),\ft_{\gamma}(z_1)) \le C\delta$.

\end{proposition}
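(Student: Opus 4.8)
The plan is to reduce everything to a single smooth-dependence statement and then feed it the hypothesis $|\ang_{p_i}(z,\omega^i v)| < \tfrac{\pi}{2}\delta$. Work in the unit disc model $\D$, normalized so that $z = 0$ and $v = \tfrac{\partial}{\partial x}(0)$, exactly as in the proof of Proposition 5.5. Then $\p(z,\omega^i v) = \omega^i = e^{2\pi i/3}$ for $i = 0,1$, and the smallness of the angles $\ang_{p_i}$ translates (as in that proof, via the elementary relation between angle at the centre and Euclidean displacement of the endpoint) into $|p_i - \omega^i| < C'\delta$ in the Euclidean metric on $\partial\D$, for some universal $C'$ and all $\delta$ small. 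Also $\refl(z,v) = (z_1,v_1)$ is the rotation of order two about $\ft_{\p^1(z,v)}(z)$; since $z = 0$ is at distance $\log\sqrt 3$ from $\p^1(z,v)$ (the geodesic through $\omega^0$ and $\omega^1$), the point $z_1$ and the whole configuration $(z,z_1,\ft_{\p^1(z,v)}(z))$ depends smoothly on the pair of endpoints $(\p(z,v),\p(z,\omega v)) = (\omega^0,\omega^1)$ in a fixed neighbourhood of $(1, e^{2\pi i/3})$.

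The three quantities to be bounded are all values of a single smooth function of the two pairs of endpoints: the foot $\ft_{\p^1(z,v)}(z)$ of $z$ on the geodesic $\p^1(z,v)$ (joining $\p(z,v)$ and $\p(z,\omega v)$), the foot $\ft_\gamma(z)$ of $z$ on the geodesic $\gamma$ (joining $p_0$ and $p_1$), and likewise for $z_1$ in place of $z$. Foot-of-perpendicular is a smooth map of $(\Ha\setminus\gamma)\times(\text{geodesics})$ into $\Ha$, and $z, z_1$ themselves depend smoothly on the endpoints as noted; so each of $\ft_{\p^1(z,v)}(z)$, $\ft_\gamma(z)$, $\ft_{\p^1(z,v)}(z_1)$, $\ft_\gamma(z_1)$ is a smooth function of $(p_0,p_1)$ near $(\omega^0,\omega^1)$, taking the common value $\ft_{\p^1(z,v)}(z)$ (resp. $\ft_{\p^1(z,v)}(z_1)$) when $(p_0,p_1) = (\omega^0,\omega^1)$. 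Therefore, by the mean value inequality applied to these smooth functions on a fixed compact neighbourhood, there is a universal constant $C$ with
$$
\dis\big(\ft_{\p^1(z,v)}(z),\ft_\gamma(z)\big) \le C\,|(p_0,p_1) - (\omega^0,\omega^1)| \le C\delta,
$$
and identically for the $z_1$ version and for $\dis(\ft_\gamma(z),\ft_\gamma(z_1))$ (here one also uses that $\ft_{\p^1(z,v)}(z) = \ft_{\p^1(z,v)}(z_1)$ since $z_1$ is obtained from $z$ by the order-two rotation fixing that foot, so the $\gamma$-feet of $z$ and $z_1$ both lie within $C\delta$ of this common point and hence within $2C\delta$, i.e. $\le C\delta$ after adjusting $C$). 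Absorbing the finitely many universal constants into one finishes the estimate; the restriction "$\delta$ small enough" is exactly the size of the neighbourhood on which the smoothness and the compactness argument are valid, matching the hypothesis in the statement.

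The only mildly delicate point — the analogue of "the centre of an ideal triangle is a smooth function of its vertices near a nondegenerate triple" from Proposition 5.5 — is verifying that the foot-of-perpendicular map and the order-two rotation $\refl$ genuinely depend smoothly (with bounded derivatives on a fixed neighbourhood) on the endpoint data, i.e. that no degeneration occurs: the geodesics $\p^1(z,v)$ and $\gamma$ stay uniformly bounded away from passing through $z$ (and through $z_1$), which holds because at $\delta = 0$ the distance from $z$ to $\p^1(z,v)$ is $\log\sqrt 3 > 0$ and this is an open condition. Once that is in hand, the estimate is just the mean value theorem on a compact set, exactly parallel to Propositions 5.5 and the surrounding ones. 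I expect the main obstacle to be purely bookkeeping: keeping track of which foot is being taken on which geodesic, and checking the identity $\ft_{\p^1(z,v)}(z) = \ft_{\p^1(z,v)}(z_1)$ that lets the third inequality follow from the first two.
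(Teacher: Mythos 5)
Your argument is correct and is essentially the paper's proof: both pass to the disc model, convert the hypothesis $|\ang_{p_i}(z,\omega^i v)|<\tfrac{\pi}{2}\delta$ into a Euclidean bound $|p_i-\p(z,\omega^i v)|\le C'\delta$ on the displacement of the endpoints, and then conclude by smooth dependence of the foot-of-perpendicular on the endpoint data (no degeneration, since $z,z_1$ stay at distance about $\log\sqrt3$ from the geodesics) together with the mean value inequality on a fixed compact neighbourhood. The only difference is cosmetic: you normalise $z=0$, $v=\partial/\partial x$ as in the preceding proposition, while the paper puts $\p^1(z,v)$ on the real diameter with $z,z_1$ on the imaginary axis; the identity $\ft_{\p^1(z,v)}(z)=\ft_{\p^1(z,v)}(z_1)$ you invoke for the third inequality is exactly what that normalisation makes explicit.
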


\begin{proof}  We consider the unit disc model $\D$ for $\Ha$. We may assume that the geodesic $\p^{1}(z,v)$ connects the points $-1$ and $1$, on $\partial{\D}$, and that $\p(z,v)=-1$. Also, we may assume that the $x$-coordinate of $z$ is equal to zero. Then 
$z=i{{\sqrt{3}-1}\over{\sqrt {3}+1}}$, and $z_1=-i{{\sqrt{3}-1}\over{\sqrt{3}+1}}$.
Since $|\ang_{p_{i}}(z,\omega^{i} v)|<{{\pi}\over{2}}\delta$, $i=0,1$,  we have that for some universal constant $D>0$, and for $\delta$ small enough, the inequalities $|p_0+1| < D\delta$, and $|p_1-1| < D\delta$, hold. Here   $|p_0+1|$, and $|p_1-1| $, are the Euclidean distances. The function
$\ft_{\gamma}(z)$, as a function of $p_0$, and $p_1$, is smooth in some neighbourhood of the pair $(1,-1)$. This shows that there is a universal constant $C>0$, so that for $\delta$ small enough, we have  $\dis(\ft_{\p^{1}(z,v) }(z),\ft_{\gamma}(z)),\dis(\ft_{\p^{1}(z,v)}(z_1),\ft_{\gamma}(z_1)), \dis(\ft_{\gamma}(z),\ft_{\gamma}(z_1)) \le C\delta$.
\end{proof}

\subsection{The Absorption maps}

First we give a short overview of the construction that follows. In order to construct the measures from the statement of Theorem 4.1, for every $r>2$ we  define a map (almost everywhere on $\TB\Ha$ with respect to the Liouville measure $\LM$ on $\TB\Ha$)
$$
\abs_r:\TB\Ha \to \Cus(G),
$$
\noindent
such that for almost every $(z,v) \in \TB\Ha$ we have 
\begin{itemize}
\item $\abs_r(g(z,v))=g(\abs_r(z,v))$, for every $g \in G$.
\item $\ang_{\abs_{r}(z,v)}(z,v) \le Ce^{-r}$.
\end{itemize}

Moreover we have the induced map $\abs^{1}_r:\TB\Ha \to \Gamma(G)$ where $\abs^{1}_r(z,v)$ is the geodesic with the endpoints $\abs_r(z,v)$ and $\abs_r(z,\omega v)$. Also we have the map  $\abs^{2}_r:\TB\Ha \to \Tr(G)$ where $\abs^{2}_r(z,v)$ is the triangle with vertices $\abs_r(z,\omega^{j} v$, $j=0,1,2$. The maps $\abs_r$, $\abs^{1}_r$ and $\abs^{2}_r$ are $G$ equivariant so we have the induced maps from $\TB{S}$ to  $\Cus(G)$, $\Gamma(S)$ and $\Tr(S)$, respectively. Note that if $\varphi:S \to \R$ is a non-negative integrable function on $S$, then $(\abs^{2}_r)_{*}(\varphi \, d\LM) \in \Mes(\Tr(S))$. The measure $\mu \in \Mes(\Tr(S))$ from Theorem 4.1 will be constructed in this way for a suitable choice of $\varphi$.
\vskip .1cm
Our goal is to let $r$ be large and show that for ''most" points $(z,v) \in \TB\Ha$ we have $\abs^{1}_r(z,v)=\abs^{1}_r(\refl(z,v))$. This would imply that $\ph (\abs^{2}_r)_{*}(d\LM)=\alpha+\beta$, where $\alpha$ is $Ce^{-r}$ symmetric and $\beta$ is small. This seems to be a good candidate for the choice of measure $\mu$  in Theorem 4.1. However with this choice we have
$$
\int\limits_{\NB\Gamma(S)} (\cl(\gamma^{*},z)+1) d \ph (\abs^{2}_r)_{*}(d\LM)=\infty,
$$
\noindent
because of the thin part of $S$. In order to overcome this problem we set $\varphi_r(z)=e^{ {{1}\over{2}}(2r-\h(z))}$, and consider the measure
$(\abs^{2}_r)_{*}(\varphi_r \, d\LM) \in \Mes(\Tr(S))$ (note that $\varphi_r(z)=1$ for $z \in \Thick_S(2r)$). But this introduces an imbalance, that is the measure $\ph (\abs^{2}_r)_{*}(\varphi_r \, d\LM)$ is no longer almost symmetric outside $\Thick_S(2r)$ because $\varphi_r(z,v) \ne \varphi_r(\refl(z,v))$ for $z$ that is is outside $\Thick_S(2r)$.  That is why we introduce the  map 
$$
\taso^{2}_r:\Col_r(S) \to \Tr(S), 
$$
\noindent
where $\Col_r(S) \subset (\TB{S} \setminus \Thick_S(2r))$ is the ''correctable" part. Then we let our measure $\mu \in \Mes(\Tr(S))$ be given by
$$
\mu= (\abs^{2}_r)_{*}(\varphi_r \, d\LM)+3(\taso^{2}_r)_{*}(\vartheta_r \, d\LM),
$$
\noindent
where $\vartheta_r(z,v)=\varphi_r(\refl(z,v))-\varphi_r(z,v)$, for $(z,v) \in \Col_r(S)$.  Then we show that the measure $\ph \mu$ has the desired decomposition. 
\vskip .3cm
We now return to the construction. Recall that $S$ is a fixed surface of type $(\gen,\numb)$, and $\Cus(S)=\{c_1(S),...,c_n(S)\}$.
As always, $G$ is one of the $n$ normalised Fuchsian  groups $G_{c_{i}(S)}$, $i=1,...,n$, such that $\Ha/G$ is isomorphic to $S$.
Recall that the interiors of the different $0$-horoballs on $S$ are disjoint. This implies that different $1$-horoball are disjoint on $S$
\vskip .1cm
Let $(z,v) \in \TB\Ha$.  For any $t \ge 0$, the point
$\gamma_{(z,v)}(t)$ is either in the interior of  $\Thick_G(1)$, or in one of the $1$-horoballs. Let $(c_1,c_2,...)$, be the ordered set of cusps from $\Cus(G)$, so that the ray $\gamma_{(z,v)}$ intersects  each horoball $\Hor_{c_{i}}(1)$, and which are ordered so that the ray $\gamma_{(z,v)}$ visits the horoball $\Hor_{c_{i}}(1)$ before it visits the horoball $\Hor_{c_{j}}(1)$ if and only if $i<j$. In other words, let $t_i \ge 0$, so that $\gamma_{(z,v)}(t_i)=\z_{\max}(\gamma_{(z,v)},c_i)$. Then $i <j$, if and only if $t_i<t_j$.
Since the $1$-horoballs are disjoint, and each horoball is a convex subset of $\Ha$, we have that $c_i \ne c_j$, for $i \ne j$. 

\begin{remark} For a point $(z,v) \in \TB\Ha$, the set $(c_1,c_2...)$ is either: empty, finite but non-empty, or infinite. It can be show that the set of points in $\TB\Ha$, for which the corresponding set $(c_1,c_2...)$ is infinite, has the full measure in $\TB\Ha$ (we do not use this result).
\end{remark} 

We now define the maps $\abs_r$, and  $\abs^{i}_r$, $i=1,2$, described above. Given $r>0$, to every point  $(z,v) \in \TB{S}$, we associate a triangle from $\Tr(S)$.

\begin{definition} Fix $r>2$. Let $(z,v) \in \TB\Ha$, and let $(c_1,c_2,...)$, be the corresponding ordered set of cusps that $\gamma_{(z,v)}$ intersects.
\begin{itemize}
\item Define $\abs_r(z,v)=c_i$, if
 
\begin{equation}\label{h-difference}
\h_{c_{i}}(\z_{\max}(\gamma_{(z,v)},c_i))-\h_{c_{i}}(z) \ge r,
\end{equation}
and if this inequality does not hold for any cusp $c_j$, where $j<i$.

\item Let $0 \le \at_r(z,v) \le \infty$, be such that $\gamma_{(z,v)}(\at_r(z,v))$ is the first point of entry of the ray $\gamma_{(z,v)}$
in the horoball $\Hor_{\abs_{r}(z,v)}(1)$. If $(z,v)$ does not get absorbed, then $\at_r(z,v)=\infty$. 

\item If both $\abs_r(z,v)$ and $\abs_r(z,\omega v)$ are well defined, then  by $\abs^{1}_r(z,v) \in \Gamma(G)$, we denote the geodesic with the endpoints $\abs_r(z,v)$ and $\abs_r(z,\omega v)$. If all three cusps $\abs_r(z,v)$, $\abs_r(z,\omega v)$, and  $\abs_r(z,\omega^{2} v)$, are well defined, then by $\abs^{2}_r(z,v) \in \Tr(G)$, we denote the triangle with the vertices $\abs_r(z,v)$,  $\abs_r(z,\omega v)$,
and $\abs_r(z,\omega^{2} v)$. 

\end{itemize}

\end{definition}

Since $r>2$, we have that $|\ang_{\abs_r(z,v)}(z,v)| \le \pi e^{-2} < {{\pi}\over{6}}$. This implies that  $|\ang_{\abs_r(z,\omega v)}(z,v)|>{{\pi}\over{2}}$, so $\abs_r(z,v) \ne \abs_r(z,\omega v) \ne \abs_r(z,\omega^{2} v)\ne \abs_r(z,v)$. This shows that $\abs^{1}_r(z,v)$ and $\abs^{2}_r(z,v)$ are well defined. 

\begin{remark} It is useful to pause here and observe that if $c \in \Cus(G)$, is such that $\abs_r(z,v)=c$, then $|\ang_c(z,v)| \le \pi e^{-r}$. This follows from Proposition 5.2. We will often use this observation in the arguments that follow.
\end{remark}

We will see below that the function $\abs_r:\TB\Ha \to \Cus(G)$ is defined almost everywhere on $\TB\Ha$. Let $g \in G$. We have $g(\abs_r(z,v))=\abs_r(g(z,v))$. This shows that the set where $\abs_r$ is defined is invariant under $G$. 
The induced  map $\abs_r:\TB{S} \to \Cus(S)$, is also denoted by $\abs_r$. The absorption time $\at_r(z,v)$ is defined in the same way.
The maps $\abs^{1}_r:\TB{S} \to \Gamma(S)$, and $\abs^{2}_r:\TB{S} \to \Tr(S)$, are defined accordingly.
\vskip .1cm

\begin{proposition} Fix $r>2$, and $(z,v) \in \TB\Ha$. Then for every
$3r+ \max\{0,\h(z)\} \le t <\at_r(z,v)$, we have $\gamma_{(z,v)}(t) \in \Thick_G(1)$. Moreover, suppose that $z \in \Hor_{c_{1}}(r+2)$, for some $c_1 \in \Cus(G)$, and that $\abs_r(z,v) \ne c_1$. If $c_2  \in \Cus(G)$ is the the second cusp (after $c_1$) so that $\gamma_{(z,v)}$ visits the horoball $\Hor_{c_{2}}(1)$, then $\abs_r(z,v)=c_2$. 
\end{proposition}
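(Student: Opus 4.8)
The plan is to prove both assertions by controlling how long the geodesic ray $\gamma_{(z,v)}$ can stay inside a $1$-horoball that fails to absorb it, using the angle--height identities $(\ref{ang-estimate-1})$ and $(\ref{ang-estimate-2'})$, the translation $\h_c(z)-\h_c(z')=\Delta_c[z,z']$ of the Remark after Definition~5.6, and the symmetry of a geodesic about its point closest to a given boundary point.

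For the first assertion I would argue by contradiction. Suppose $\gamma_{(z,v)}(t)\notin\Thick_G(1)$ for some $t$ with $3r+\max\{0,\h(z)\}\le t<\at_r(z,v)$. Since the $1$-horoballs are pairwise disjoint, $\gamma_{(z,v)}(t)$ lies in a unique $\Hor_c(1)$, and $c$ occurs in the ordered list $(c_1,c_2,\dots)$ of cusps whose $1$-horoballs the ray visits, say $c=c_i$. Writing $c_k=\abs_r(z,v)$ (with $k$ larger than every index if the ray is never absorbed), the inequality $t<\at_r(z,v)$ — the time the ray first enters $\Hor_{c_k}(1)$ — together with the fact that the horoballs are entered in increasing order of their indices forces $i<k$; hence by the definition of $\abs_r$ the absorption condition $(\ref{h-difference})$ fails at $c_i$, i.e. $\h_{c_i}(\z_{\max}(\gamma_{(z,v)},c_i))-\h_{c_i}(z)<r$. (One checks separately that the degenerate subcases $\ang_{c_i}(z,v)=0$ and $|\ang_{c_i}(z,v)|\ge\pi/2$ cannot arise, since in them the ray is either absorbed by $c_i$, contradicting $i<k$, or never enters $\Hor_{c_i}(1)$ at all.)

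Next, normalise so that $c_i=\infty$, let $t_i$ be the time at which $\gamma_{(z,v)}$ is deepest in $\Hor_{c_i}(1)$, and let $a_i\le t_i\le b_i$ be the times at which the ray enters, is deepest in, and leaves $\Hor_{c_i}(1)$; thus $\gamma_{(z,v)}(s)\in\Hor_{c_i}(1)$ precisely for $s\in[a_i,b_i]$, so $t\le b_i$. Applying $(\ref{ang-estimate-2'})$ to $(z,v)$ with $p=c_i$ (so $t_0=t_i$) and using the failure of $(\ref{h-difference})$ gives $t_i<r+\log 2$. For the descent, apply $(\ref{ang-estimate-2'})$ to the sub-ray running from the point of $\gamma_{(z,v)}$ of height $1$ that precedes $\z_{\max}(\gamma_{(z,v)},c_i)$ up to $\z_{\max}(\gamma_{(z,v)},c_i)$ (whose closest point to $c_i$ is $\z_{\max}(\gamma_{(z,v)},c_i)$ itself) and invoke the mirror symmetry of the geodesic about $\z_{\max}(\gamma_{(z,v)},c_i)$; this bounds the time from $\z_{\max}(\gamma_{(z,v)},c_i)$ back down to height $1$ by $\h_{c_i}(\z_{\max}(\gamma_{(z,v)},c_i))-1+\log 2\le \h(z)+r-1+\log 2$. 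When $z\in\Hor_{c_i}(1)$ adding the two bounds gives $b_i<\h(z)+2r+2\log 2-1<3r+\max\{0,\h(z)\}$ for $r>2$ (here $\max\{0,\h(z)\}=\h(z)$ since $\h(z)\ge\h_{c_i}(z)\ge 1$); when $z\notin\Hor_{c_i}(1)$ one has $b_i=2t_i-a_i\le 2t_i<2r+2\log 2<3r$. Either way $b_i<3r+\max\{0,\h(z)\}\le t$, contradicting $t\le b_i$; this proves $\gamma_{(z,v)}(t)\in\Thick_G(1)$.

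For the second ("moreover") assertion the key point is that $z$ is so deep in $\Hor_{c_1}$ that its height with respect to every other cusp is very negative. Normalising $G=G_{c_1}$ with $c_1=\infty$, we have $\Hor_{c_1}(0)=\{\IM\ge 1\}$, and since $0$-horoballs are disjoint every other $0$-horoball $\Hor_c(0)$ is a Euclidean disc tangent to $\R$ at $c$ of Euclidean diameter $D_c\le 1$; the horocyclic height in these coordinates is $\h_c(w)=\log\big(D_c\IM(w)/|w-c|^2\big)$, so $\h_c(z)\le\log D_c-\h_{c_1}(z)\le-\h_{c_1}(z)$ for every $c\ne c_1$. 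Hence $z\in\Hor_{c_1}(r+2)$ gives $\h_{c_2}(z)\le-(r+2)$, while the hypothesis that $\gamma_{(z,v)}$ visits $\Hor_{c_2}(1)$ gives $\h_{c_2}(\z_{\max}(\gamma_{(z,v)},c_2))\ge 1$; therefore $\h_{c_2}(\z_{\max}(\gamma_{(z,v)},c_2))-\h_{c_2}(z)\ge r+3\ge r$, so $(\ref{h-difference})$ holds at $c_2$. Since $\abs_r(z,v)\ne c_1$ means $(\ref{h-difference})$ fails at $c_1$, and $c_2$ is the very next cusp after $c_1$ in the list, we conclude $\abs_r(z,v)=c_2$.

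The hard part will be the first assertion in the case where $z$ lies deep inside the non-absorbing horoball: the crude bound on the time spent there is about $2\h(z)+2r$, which exceeds $3r+\h(z)$. The remedy is to split the excursion into an ascent, of length $\lesssim r$ (controlled by the failure of the absorption condition, not by the depth of $z$), and a descent, of length $\lesssim \h(z)+r$; it is only their sum, and not twice the maximal depth, that stays below $3r+\h(z)$ once $r>2$. I expect the enumeration of the geometric subcases (ray vertical or semicircular, $z$ inside or outside $\Hor_{c_i}(1)$, and the degenerate angles) to be the most tedious part, but each reduces to a single application of $(\ref{ang-estimate-2'})$.
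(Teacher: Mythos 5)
Your treatment of the first assertion follows essentially the same route as the paper: in both arguments the excursion of the ray through a non-absorbing $1$-horoball is split into an ascent, bounded by $r+\log 2$ via Proposition 5.2 (equivalently $(\ref{ang-estimate-2'})$) together with the failure of $(\ref{h-difference})$, and a descent, bounded by $\h_{c_i}(\z_{\max})-1+\log 2\le \h(z)+r-1+\log 2$, after which one compares with $3r+\max\{0,\h(z)\}$; the paper packages this through the entry and exit times $t_{\ent}(i),t_{\ex}(i)$ rather than through the angle identities, but the content is identical. For the ``moreover'' part you genuinely diverge from the paper: the paper bounds the entry time into $\Hor_{c_{2}}(1)$ below by $\h_{c_{1}}(z)-1$ and feeds this into the ascent estimate, whereas you normalise $c_1=\infty$ and compute directly that $\h_c(z)\le\log D_c-\h_{c_{1}}(z)\le-\h_{c_{1}}(z)$ for every cusp $c\ne c_1$ (the formula $\h_c(w)=\log\bigl(D_c\IM(w)/|w-c|^{2}\bigr)$ and the disjointness bound $D_c\le 1$ are correct), so that merely visiting $\Hor_{c_{2}}(1)$ forces a height gain of at least $\h_{c_{1}}(z)+1\ge r+3$. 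This alternative is correct and arguably more transparent than the paper's time-based estimate.

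There is, however, one step that fails as written. You dispose of the subcase $|\ang_{c_{i}}(z,v)|\ge{{\pi}\over{2}}$ by asserting that the ray then ``never enters $\Hor_{c_{i}}(1)$ at all''. That is false when $z$ itself lies in $\Hor_{c_{i}}(1)$: if $z$ is deep in the cusp and $v$ points away from it, then $\z_{\max}(\gamma_{(z,v)},c_i)=z$, the angle is at least ${{\pi}\over{2}}$, and the ray remains in $\Hor_{c_{i}}(1)$ for a time comparable to $\h_{c_{i}}(z)$ --- this is precisely the configuration responsible for the $\max\{0,\h(z)\}$ term, so it cannot be excluded, and your main estimate does not formally apply to it since $(\ref{ang-estimate-2'})$ presupposes $0<|\ang_{c_i}(z,v)|<{{\pi}\over{2}}$. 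The repair is short: in this configuration the excursion is pure descent, and applying Proposition 5.2 to the reversed ray based at the exit point (whose closest point to $c_i$ lies at or beyond $z$) gives exit time $b_i<\h_{c_{i}}(z)-1+\log 2\le\h(z)+\log 2<3r+\max\{0,\h(z)\}$, so the contradiction goes through; note that the paper's formulation via entry and exit times covers this case without any restriction on the angle. The other degenerate case $\ang_{c_{i}}(z,v)=0$, i.e.\ $\p(z,v)=c_i$, is a measure-zero definitional matter that the paper itself does not address, so your brief remark there is acceptable. With the $|\ang_{c_i}(z,v)|\ge{{\pi}\over{2}}$ subcase fixed as above, your proof is complete.
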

\begin{proof} For each cusp $c_i \in \Cus(G)$, such that $\gamma_{(z,v)}$ enters this cusp, we let $t_{\ent}(i) \ge 0$, so that $\gamma_{(z,v)}(t_{\ent}(i))$ is the point of entry of the ray $\gamma_{(z,v)}$ in $\Hor_{c_{i}}(1)$. By $t_{\ex}(i) >0$, we denote the exit time, that is $\gamma_{(z,v)}(t_{\ex}(i))$ is the exit point  of the ray $\gamma_{(z,v)}$ from $\Hor_{c_{i}}(1)$. If $z \in \Hor_{c_{1}}(1)$, then $t_{\ent}(1)=0$. If $\gamma_{(z,v)}$ never leaves some $\Hor_{c_{i}}(1)$, then $t_{\ex}(i)=\infty$.
\vskip .1cm
It follows from Proposition 5.2 that if $t_{\ent}(i)>0$, then 
\begin{equation}\label{diff-1}
\Delta_{c_{i}}[\z_{\max}(\gamma_{(z,v)},c_i),z] \ge {{t_{\ex}(i)+t_{\ent}(i)}\over{2}}-\log 2.
\end{equation}
\noindent
If $z \in \Hor_{c_{1}}(1)$, then 
\begin{equation}\label{diff-2}
\Delta_{c_{i}}[\z_{\max}(\gamma_{(z,v)},c_i),z] \ge {{t_{\ex}(i)+\h_{c_{i}}(z)}\over{2}}-\log 2.
\end{equation}
\noindent
If $t<\at_r(z,v)$, and $\gamma_{(z,v)}(t) \in \Hor_{c_{i}}(1)$, then either $t_{\ent}(i)>0$, or $i=1$ and $z \in \Hor_{c_{1}}(1)$. In either case, we have $\at_r(z,v)>t_{\ex}(i)$, since $\abs_r(z,v) \ne c_i$ (the identity  $\abs_r(z,v)= c_i$ would contradict the assumption $t<\at_r(z,v)$).
In the first case, from (\ref{diff-1}) we obtain   $t \le t_{\ex} \le 2r+2\log 2<3r$. In the second case, from (\ref{diff-2}) we have $t \le t_{\ex}(1) \le 2r+\h_{c_{1}}(z)+2\log 2<3r+\h_{c_{1}}(z)$. This proves the first part of the proposition.
\vskip .1cm
Assume that $\h_{c_{1}}(z) \ge r+2$. Let $c_2 \in \Cus(G)$, be the cusp so that the horoball $\Hor_{c_{2}}(1)$ is the second $1$-horoball that the ray $\gamma_{(z,v)}$ enters. By (\ref{diff-1}) we have  
$$
\h_{c_{2}}(\z_{\max}(\gamma_{(z,v)},c_2))-\h_{c_{2}}(z) \ge t_{\ent}(2)-\log 2 \ge \h_{c_{1}}(z)-1-\log. 
$$
\noindent
Since $\h_{c_{1}}(z) \ge r+2$, and since $\abs_r(z,v) \ne c_1$, we conclude $\abs_r(z,v)=c_2$. 

\end{proof}

We want to show that the absorption map is well defined almost everywhere on $\TB\Ha$ (and on $\TB{S}$). We saw in the previous proposition that the ray $\gamma_{(z,v)}$, will get absorbed if it leaves $\Thick_S(1)$ after the time $3r+|\h(z)|$. In order to show that $\abs_r$ is defined almost everywhere, we need to estimate the Liouville measure of the geodesics segments of a given length that stay in $\Thick_S(1)$. The following lemma is probably known, but we prove it in the appendix.

\begin{lemma} Let $t>0$, and let $A(t) \subset \TB\Thick_S(1)$ be such that $(z,v) \in A(t)$  if the segment $\gamma_{(z,v)}[0,t]$ is contained in $\Thick_S(1)$. Then there are  constants $C(S),q(S)>0$, that depend only on $S$, such that $\LM(A(t))<C(S)e^{-q(S) t}$.
\end{lemma}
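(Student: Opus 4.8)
The plan is to reduce the statement to a classical fact about the exponential decay of the volume of long geodesic segments that remain in a fixed compact part of a finite-volume hyperbolic surface. The set $A(t) \subset \TB\Thick_S(1)$ is exactly the set of unit tangent vectors whose forward geodesic trajectory of length $t$ does not enter any $1$-horoball, i.e. stays in the compact core $\Thick_S(1)$ of $S$ (which is compact since $S$ has finite type). First I would observe the semigroup property: if $\gamma_{(z,v)}[0,t_1+t_2] \subset \Thick_S(1)$, then $(z,v) \in A(t_1)$ and $\flow_{t_1}(z,v) \in A(t_2)$, so $A(t_1+t_2) \subset A(t_1) \cap \flow_{-t_1}(A(t_2))$. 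Since $\flow_t$ preserves the Liouville measure $\LM$, this gives a submultiplicativity estimate $\LM(A(t_1+t_2)) \le \LM(\flow_{-t_1}(A(t_2)))$ but more usefully, iterating the transfer operator associated to the flow restricted to $\Thick_S(1)$, one gets $\LM(A(nt_0)) \le c\,\rho^{n}$ for some $\rho < 1$ and fixed step $t_0$, from which the claimed bound $\LM(A(t)) < C(S)e^{-q(S)t}$ follows by monotonicity of $A(t)$ in $t$.

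The cleanest route to the decay rate $\rho<1$ is via mixing (or merely ergodicity plus positivity) of the geodesic flow on $\TB{S}$ with respect to the Liouville measure, which holds because $S$ has finite hyperbolic area. Concretely, I would fix $t_0 > 0$ large enough that the flow $\flow_{t_0}$ is sufficiently mixing, and let $B = \TB\Thin_S(1)$ be the (positive-measure) open set of vectors sitting over the cusp neighbourhoods. Then $A(nt_0) \subset \bigcap_{j=0}^{n-1}\flow_{-jt_0}(\TB\Thick_S(1))$, so $A(nt_0)$ avoids $\flow_{-jt_0}(B)$ for every $j=0,\dots,n-1$. By mixing, $\LM\big(\TB\Thick_S(1) \cap \flow_{-jt_0}(\TB\Thick_S(1))\big) \to \LM(\TB\Thick_S(1))^2/\LM(\TB{S})$, and more to the point there is a uniform $\epsilon_0 > 0$ with $\LM\big(E \cap \flow_{-t_0}(\TB\Thick_S(1))\big) \le (1-\epsilon_0)\LM(E)$ for every $\LM$-measurable $E \subset \TB\Thick_S(1)$ of positive measure — this is where one genuinely uses that $B$ has positive measure and is "seen" by the flow in bounded time. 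Applying this inductively to $E = A(jt_0)$ yields $\LM(A(nt_0)) \le (1-\epsilon_0)^n \LM(\TB\Thick_S(1))$, hence $\LM(A(t)) \le \LM(A(\lfloor t/t_0\rfloor t_0)) \le \tfrac{1}{1-\epsilon_0}\LM(\TB{S})\,(1-\epsilon_0)^{t/t_0}$, giving $C(S)$ and $q(S) = -t_0^{-1}\log(1-\epsilon_0)$.

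An alternative, more self-contained argument avoids invoking mixing and instead works directly in the cusp: every geodesic segment of length $t$ in $\TB\Thick_S(1)$ can be sliced at its successive closest-approach times to the finitely many cusps; using the elementary identities \eqref{ang-estimate-1}–\eqref{ang-estimate-2} and the fact that the horoballs $\Hor_c(1)$ are disjoint, each excursion toward a cusp that fails to penetrate $\Hor_c(1)$ forces the angle $\ang_c$ to be bounded below, which cuts down the available measure by a definite factor per unit of elapsed time; a packing/counting argument over the cusps then produces the geometric decay. I would likely present the flow-mixing version as the main proof (with the ergodic input stated precisely), since it is shortest, and this is presumably the content deferred to the appendix.

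\textbf{Main obstacle.} The crux is establishing the uniform contraction factor $1-\epsilon_0$: one must know not just that $\LM$-almost every trajectory eventually leaves $\Thick_S(1)$, but that it does so within a bounded time window with probability bounded away from zero, uniformly over starting configurations. This is a quantitative ergodicity/mixing statement for the geodesic flow on a cusped finite-area surface, and making it both correct and elementary (without citing heavy machinery) is the delicate point — which is exactly why the authors relegate it to an appendix and remark that "this theorem is most likely known."
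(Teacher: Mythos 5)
There is a genuine gap, and it sits exactly at the step you flag as the crux. The claimed uniform contraction --- that for some fixed $t_0$ there is $\epsilon_0>0$ with $\LM\big(E \cap \flow_{-t_0}(\TB\Thick_S(1))\big) \le (1-\epsilon_0)\LM(E)$ for \emph{every} positive-measure $E \subset \TB\Thick_S(1)$ --- is false. Take a closed geodesic contained in the interior of $\Thick_S(1)$ (these exist) and let $E$ be a sufficiently small neighbourhood in $\TB{S}$ of a unit vector tangent to it; by continuity of the flow, every trajectory starting in $E$ stays in $\Thick_S(1)$ for time $t_0$, so $E \subset \flow_{-t_0}(\TB\Thick_S(1))$ and no mass is lost at all. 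Qualitative mixing cannot rescue this: mixing is an asymptotic statement about a \emph{fixed} pair of sets, with no rate and no uniformity over the set $E$, whereas your induction applies the inequality to $E=A(jt_0)$, i.e.\ precisely to sets that concentrate near the trajectories which do not escape. So the inductive step $\LM(A((j+1)t_0))\le(1-\epsilon_0)\LM(A(jt_0))$ is essentially what the lemma asserts, and the argument is circular. (Exponential decay of such multi-time correlations can be extracted from a spectral gap or exponential mixing, but that is heavier machinery than "ergodicity plus positivity" and is not what your write-up invokes.) Your alternative cusp-excursion sketch points in a better direction but, as written, "cuts down the measure by a definite factor per unit time" has no iteration structure behind it: to multiply the losses over successive excursions you need a Markov-type coding, which is the missing ingredient.

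That coding is exactly what the paper's appendix proof supplies, and it is entirely elementary. One fixes the ideal triangulation $\tau$, records the cutting sequence $R^{a_1}L^{a_2}\cdots$ of the ray $\gamma_{(z,v)}$ against the lifted edges, and observes two quantitative facts forced by staying in $\Thick_S(1)$: consecutive edge-crossing times are pinched between constants $C_0<C_1$, and all exponents satisfy $a_i\le B$ for some $B=B(S)$. The set of endpoints on $\partial\Ha$ with prescribed first $k$ exponents is an interval, and the set with all exponents bounded by $B$ is a union of $B^n$ intervals whose two-step refinements are compactly nested with cross-ratio taking finitely many values; Möbius invariance then gives a uniform length contraction $\eta<1$ per two steps, hence geometric decay of the boundary measure, which converts to the Liouville bound over a fixed triangle. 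If you want to salvage your approach, you would either have to import a genuine spectral-gap/exponential-mixing input, or build the Markov structure yourself --- at which point you have reproduced the triangulation argument.
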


We have

\begin{proposition} Fix $r>2$. The map $\abs_r:\TB{S} \to \Cus(S)$, is defined almost everywhere. 
\end{proposition}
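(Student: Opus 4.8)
The plan is to show that the set of points $(z,v) \in \TB{S}$ for which $\abs_r(z,v)$ fails to be defined has zero Liouville measure, by combining Proposition 5.5 with Lemma 5.1. First I would unwind the definition: $\abs_r(z,v)$ fails to be defined precisely when the ray $\gamma_{(z,v)}$ never gets absorbed, i.e. when $\at_r(z,v) = \infty$. By Proposition 5.5, for every $t$ with $3r + \max\{0,\h(z)\} \le t < \at_r(z,v)$ we have $\gamma_{(z,v)}(t) \in \Thick_G(1)$. Hence if $(z,v)$ is never absorbed, then the entire ray $\gamma_{(z,v)}(t)$, $t \ge 3r + \max\{0,\h(z)\}$, stays inside $\Thick_S(1)$.

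Next I would reduce to bounded height. The set $\TB{S}$ decomposes (up to measure zero) into the ``thick'' part $\TB\Thick_S(3r)$ and the countably many ``cuspidal'' pieces. On the thick part, $\max\{0,\h(z)\} \le 3r$, so a never-absorbed point $(z,v)$ gives a ray with $\gamma_{(z,v)}[6r, 6r+t] \subset \Thick_S(1)$ for every $t > 0$; applying the geodesic flow $\flow_{6r}$ (which preserves $\LM$) we see $\flow_{6r}(z,v) \in A(t)$ for all $t$, where $A(t)$ is the set from Lemma 5.1. Therefore the never-absorbed points in $\TB\Thick_S(3r)$, after flowing by $6r$, lie in $\bigcap_{t>0} A(t)$, which has $\LM$-measure at most $\inf_{t>0} C(S) e^{-q(S)t} = 0$. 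Since $\flow_{6r}$ is a measure-preserving bijection, the never-absorbed subset of $\TB\Thick_S(3r)$ has measure zero. For points in the cuspidal region with height $\h(z)$ large, I would use the second part of Proposition 5.5: if $z \in \Hor_{c_1}(r+2)$ and $\abs_r(z,v) \ne c_1$, then $\abs_r(z,v) = c_2$, so such a point is automatically absorbed unless $\abs_r(z,v) = c_1$ holds (which is the defined case anyway). The only remaining possibility for a never-absorbed point of large height is that the ray, after exiting a neighbourhood of its initial cusp, enters $\Thick_S(1)$ and stays there forever — which is again handled by the bounded-height argument after flowing past time $3r + \h(z)$ and then covering by countably many measure-zero sets (one per level of $\h$, or directly by a single application of Lemma 5.1 to the tail of the ray).

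More carefully, to avoid book-keeping over the cusps, I would argue as follows. Let $B \subset \TB{S}$ be the never-absorbed set. For $(z,v) \in B$, Proposition 5.5 gives that $\gamma_{(z,v)}(t) \in \Thick_S(1)$ for all $t \ge 3r + \max\{0,\h(z)\}$. Pick an integer $N$ with $N \ge 3r + \max\{0,\h(z)\}$; then $\flow_N(z,v) \in \bigcap_{t>0} A(t)$. Thus $B \subset \bigcup_{N \in \N} \flow_N^{-1}\!\left(\bigcap_{t>0} A(t)\right)$, a countable union of measure-zero sets, hence $\LM(B) = 0$. Since $\abs_r$ is $G$-equivariant, the corresponding statement descends to $\TB{S}$, which is the claim. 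I expect the only genuine content to be the correct invocation of Proposition 5.5 to pin down that non-absorption forces the ray into the compact core $\Thick_S(1)$ for all large time; the rest is a routine measure-theoretic argument using the $\flow_t$-invariance of $\LM$ and the exponential decay estimate of Lemma 5.1, whose proof is deferred to the appendix.
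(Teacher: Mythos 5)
Your proof is correct and takes essentially the same route as the paper: non-absorption forces the ray into $\Thick_S(1)$ for all $t \ge 3r+\max\{0,\h(z)\}$ (this is the paper's Proposition 5.6, not 5.5), and then the $\LM$-preserving geodesic flow, Lemma 5.1, and a countable exhaustion (the paper slices by height level $s$, you by integer flow time $N$) show the never-absorbed set is null. The cuspidal-case discussion in your middle paragraph is superfluous, as your own final paragraph already makes clear.
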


\begin{remark} The proof below uses Lemma 5.1. However one does not need Lemma 5.1 to prove that the absorptions map is defined almost everywhere on $\TB\Ha$. The fact that the geodesic flow $\flow_t$ is ergodic implies that the geodesic ray $\gamma_{(z,v)}$ leaves the thick part $\Thick_S(1)$, after the time $3r+|\h(z)|$, for almost every $(z,v) \in \TB{S}$. Then it follows from Proposition 5.6 that almost every $(z,v)$ will be absorbed. However  Lemma 5.1 will be used later in a similar manner, so we decide to state it here and present its first application.
\end{remark}

\begin{proof}
Let $s>0$, and let $F_{r,s} \subset \TB\Thick_G(s)$ be such that $(z,v) \in F_{r,s}$ if $\at_r(z,v)=\infty$, and $|\h(z)| \le s$. 
By Proposition 5.6 we have that  $\flow_{3r+s}(F_{r,s}) \subset A(t)$, for every $t>0$ (here $A(t)$ is the set defined in the statement of Lemma 5.1). Since $\LM(A(t)) \to 0$ when $t \to \infty$, we find that $\LM(\flow_{3r+s}(F_{r,s}))=\LM(F_{r,s})=0$, so the map $\abs_r$ is defined almost everywhere on $\TB\Thick_G(S)$, for every $s>0$. 

\end{proof}

\begin{definition} For every $r>2$, we define the map $\fta_r:\TB\Ha \to \NB\Gamma(G)$, as $\fta_r(z,v)=\ft_{\abs^{1}_{r}(z,v) }(\ct(\abs^{2}_r(z,v) ))$.
That is, $\fta_r \NB\Gamma(G)$ is the foot of the centre of the triangle $\abs^{2}_r(z,v) \in \Tr(G)$.
\end{definition}
The map $\fta_r$ commutes with the action of the group $G$, and we have that the induced map $\fta_r:\TB{S} \to \NB\Gamma(S)$, is well defined.
\begin{proposition} There exists a universal constant $C>0$, and  $r_0>0$, so that for $r>r_0$, we have that 
$\dis(\fta_r(z,v),\ft_{\abs^{1}_{r}(z,v)}(z)) \le Ce^{-r}$.
\end{proposition}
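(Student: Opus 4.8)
The plan is to combine Proposition 5.4, which bounds the hyperbolic distance between $z$ and the centre of the triangle $\abs^{2}_r(z,v)$, with the elementary fact that nearest--point projection onto a geodesic is $1$--Lipschitz. By definition $p_j=\abs_r(z,\omega^{j}v)$, $j=0,1,2$, are the vertices of $T=\abs^{2}_r(z,v)$, and $\gamma=\abs^{1}_r(z,v)$ is the geodesic joining $p_0$ and $p_1$. By the Remark following Definition 5.6 (a consequence of Proposition 5.2), for every $r>2$ we have $|\ang_{p_j}(z,\omega^{j}v)|\le \pi e^{-r}$ for $j=0,1,2$. Taking $\delta=2e^{-r}$, so that $|\ang_{p_j}(z,\omega^{j}v)|<\tfrac{\pi}{2}\delta$, Proposition 5.4 gives a universal constant $C_0>0$ and $r_0>0$ so that for $r>r_0$ (hence $\delta$ small)
$$
\dis\bigl(\ct(\abs^{2}_r(z,v)),z\bigr)\le C_0\delta=2C_0 e^{-r}.
$$

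Next I would check that $z$ and $\ct(T)$ lie on the same side of $\gamma$. The centre $\ct(T)$ of an ideal triangle is at distance $\log\sqrt{3}$ from each of its edges (this is the value used in the Remark after the definition of $\tra_t$), so $\dis(\ct(T),\gamma)=\log\sqrt{3}$. If $z$ were on the opposite side of $\gamma$ from $\ct(T)$, the geodesic segment from $z$ to $\ct(T)$ would cross $\gamma$, forcing $\dis(z,\ct(T))\ge \dis(\ct(T),\gamma)=\log\sqrt{3}$, which contradicts the bound $\dis(z,\ct(T))\le 2C_0e^{-r}$ once $r_0$ is enlarged so that $2C_0 e^{-r_0}<\log\sqrt{3}$. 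Hence for $r>r_0$ the points $z$ and $\ct(T)$ are on the same side of $\gamma$.

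Finally, since $z$ and $\ct(T)$ are on the same side of $\gamma$, both $\ft_{\gamma}(z)$ and $\fta_r(z,v)=\ft_{\gamma}(\ct(T))$ lie in the component $\NB\gamma^{*}$ for a single orientation $\gamma^{*}$, and under the canonical identification of $\NB\gamma^{*}$ with $\gamma$ they correspond to the nearest--point projections of $z$ and $\ct(T)$ onto $\gamma$. As $\gamma$ is convex in $\Ha$, nearest--point projection onto $\gamma$ is $1$--Lipschitz, so
$$
\dis\bigl(\fta_r(z,v),\ft_{\abs^{1}_{r}(z,v)}(z)\bigr)\le \dis\bigl(\ct(\abs^{2}_r(z,v)),z\bigr)\le 2C_0 e^{-r},
$$
and the claim follows with $C=2C_0$. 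The only point requiring a little care is confirming that Proposition 5.4 genuinely applies, i.e. that all three angles $\ang_{p_j}(z,\omega^{j}v)$ are simultaneously controlled by $\pi e^{-r}$, and that the two feet are comparable, which is the ``same side'' observation above; neither is a serious obstacle, so the proof is essentially bookkeeping with the constants.
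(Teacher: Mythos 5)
Your proof is correct and follows essentially the same route as the paper: Proposition 5.2 gives $|\ang_{\abs_r(z,\omega^j v)}(z,\omega^j v)|\le \pi e^{-r}$, Proposition 5.4 then yields $\dis(z,\ct(\abs^2_r(z,v)))\le C e^{-r}$, and the distance-nonincreasing property of orthogonal projection onto the geodesic $\abs^1_r(z,v)$ finishes the argument. The same-side verification via $\dis(\ct(T),\gamma)=\log\sqrt{3}$ is a detail the paper leaves implicit, but it is only bookkeeping and does not change the approach.
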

\begin{proof} It follows from Proposition 5.2 that $|\ang_{\abs_{r}(z,\omega^{j} v)}(z,\omega^{j} v)|\le \pi e^{-r}$. 
By Proposition 5.4, for $r$ large enough  we have $\dis(z,\ct(\abs^{2}_r(z,v))) \le C \pi e^{-r}$, for some universal constant $C>0$. This proves the proposition.

\end{proof}

Denote by $\Mes_{\LM}(\TB{S})$, the space of measures from $\Mes(\TB{S})$, that are absolutely continuous with respect to the Liouville measure $\LM$.
Let $\nu \in \Mes_{\LM}(\TB{S})$. Then $(\abs^{2}_r)_{*}(\nu) \in \Mes(\Tr(S))$, and $(\fta_r)_{*} (\nu) \in  \Mes(\NB\Gamma(S))$, are well defined since 
$\abs^{1}_r$ and $\abs^{2}_r$ are defined almost everywhere on $\TB{S}$. 
\vskip .1cm
Let $\varphi:S \to \R$, be an integrable,  non-negative function. We have the induced map $\varphi:\TB{S} \to \R$, given by $\varphi(z,v)=\varphi(z)$.
Then   $\varphi \, d\LM \in \Mes_{\LM}(\TB{S})$. Since $\varphi(z,v)=\varphi(z,\omega v)=\varphi(z,\omega^{2} v)$, and since $\abs^{2}_r(z,v)=\abs^{2}_r(z,\omega v)=\abs^{2}_r(z,\omega^{2} v)$, we have
\begin{equation}\label{boundary-measure-0}
3(\fta_r)_{*}(\varphi \, d\LM)=\ph (\abs^{2}_r)_{*}(\varphi \, d\LM),
\end{equation}
\noindent
where $\ph:\Mes(\Tr(S)) \to \Mes(\NB\Gamma(S))$, is the operator defined at the beginning of Section 4. 
\vskip .1cm
Let $(z,v) \in \TB\Ha$, and set $(z_1,v_1)=\refl(z,v)$. We say that  $(z,v) \in (\TB\Ha)^{+}$ (or that $(z,v)$ is above the geodesic $\p^{1}(z,v)$)  if $\h(z)> \h(z_1)$ and $(z,v) \in (\TB\Ha)^{-}$ (or that $(z,v)$ is below the geodesic $(\p^{1}(z,v)$) if  $\h(z)<\h(z_1)$. Since for almost every $(z,v) \in \TB\Ha$ we have that either  $\h(z)>\h(z_1)$ or $\h(z)<\h(z_1)$ we see that $(\TB\Ha)^{+} \cup (\TB\Ha)^{-}$ has  full measure in $\TB\Ha$. The sets $(\TB\Ha)^{+}$ and $(\TB\Ha)^{-}$
are disjoint and $\refl((\TB\Ha)^{-})=(\TB\Ha)^{+}$. The sets  $(\TB\Ha)^{+}$ and $(\TB\Ha)^{-}$ are invariant under $G$ and the sets 
$(\TB{S})^{+}$ and $(\TB{S})^{-}$ are defined accordingly (recall that $\h(z)$ is well defined for $z \in S$).

\begin{definition} Fix $r>2$, and let $(z,v) \in \TB\Ha$. We say that $(z,v) \in \A_r(G)$ if $(z,v) \in \TB\Thick_G(2r) \cap \refl\big(  \TB\Thick_G(2r) \big)$, and if $\abs^{1}_r(z,v)=\abs^{1}_r(\refl(z,v))$ 
(here we assume that both $\abs^{1}_r(z,v)$ and $\abs^{1}_r(\refl(z,v))$ are well defined). 
\end{definition}
The set $\A_r(G)$ is  invariant under $G$  and by $\A_r(S)$  we denote the corresponding subset of $\TB{S}$.

\begin{definition}  Fix $r>2$ and let $c \in \Cus(G)$. Let 

$$
Q_c=\{ (z,v) \in \TB\Hor_c(2r) \cup \refl\big(\Hor_c(2r)\big): \, c \quad \text{is not an endpoint of } \quad \abs^{1}_r(z,v) \}.
$$
\noindent
Set $\wt{Q}_c=Q_c \cap \refl(Q_c)$. Let

$$
\B_r(G)=\bigcup \{(z,v) \in \wt{Q}_c: \abs^{1}_r(z,v)=\abs^{1}_r(z',v'), \,\, \text{whenever}
\,\, (z',v') \in \wt{Q}_c \,\, \text{and} \,\, \p^{1}(z,v)=\p^{1}(z',v') \},
$$
\noindent
where the union is taken over all $c \in \Cus(G)$.
\end{definition}
The set $\B_r(G)$ is  invariant under $G$  and by $\B_r(S)$  we denote the corresponding subset of $\TB{S}$.
Note that 
$$
\B_r(G) \subset \TB\Thin_G(2r) \cup \refl\big(\TB\Thin_G(2r) \big),
$$
\noindent
so $\A_r(G) \cap \B_r(G)=\emptyset$.

\begin{definition} Let $r>2$ and let $c \in \Cus(G)$. 
Let 
$$
Q'_c=\{ (z,v) \in \TB\Hor_c(2r) \cap (\TB\Ha)^{+}: \, c \quad \text{is not an endpoint of } \quad \abs^{1}_r(z,v) \}.
$$
\noindent
Set
$$
\Col_r(G)=\bigcup_{c \in \Cus(G)}  Q'_c .
$$
\noindent
Let $(z,v) \in \Col_r(G)$. We say that $(z,v) \in \Col^{L}_r(G)$, if $\abs_r(z,v)=\abs_r(\omega(\inv^{L}(z,v)))$, and $(z,v) \in \Col^{R}_r(G)$, if  $\abs_r(z,\omega v)=\abs_r(\inv^{R}(z,v))$.
\end{definition}
The sets $\Col_r(G)$ and $\Col^{j}_r(G)$ are invariant under $G$ and the corresponding quotients are denoted by
$\Col_r(S)$ and $\Col^{j}_r(S)$.  In the definition of $\Col^{j}_r(G)$, we consider the maps $\inv^{L}, \inv^{R}$, as the maps of $\TB\Ha \setminus \TB\Thick_G(0)$. Since  $(z,v) \in (\TB\Ha)^{+}$, we have that $\inv^{L}(z,v)$, and $\inv^{R}(z,v)$, are well defined. Assume that $(z,v) \in \B_r(G)\cap (\TB\Ha)^{+}$. Then $z \in \Hor_c(1)$ for some $c \in \Cus(G)$ and $\h(z) \ge 2r$ (by definition of $\B_r(G)$). Moreover, neither $\gamma_{(z,v)}$ or $\gamma_{(z,\omega v)}$ gets absorbed by the cusp $c$. This shows that every such $(z,v)$ belongs to  $\Col_r(G)$, that is 
\begin{equation}\label{B-subset-C}
\B_r(G) \cap (\TB\Ha)^{+} \subset \Col_r(G).
\end{equation}

\begin{proposition} We have $\inv^{L}(\Col_r(G))=\Col_r(G)$, and $\inv^{L}(\Col^{L}_r(G) )=\Col^{R}_r(G)$.
\end{proposition}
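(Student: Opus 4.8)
The plan is to unwind the definitions of $\Col_r(G)$, $\Col^L_r(G)$, $\Col^R_r(G)$, together with the basic algebraic identities satisfied by the four operations on $\TB\Ha$, in particular $\inv^R = (\inv^L)^{-1}$ and the fact that $\inv^L = \omega^2 \circ \inv^L_1$ with $\inv^L_1$ of order two. First I would prove $\inv^L(\Col_r(G)) = \Col_r(G)$. Recall $\Col_r(G) = \bigcup_{c \in \Cus(G)} Q'_c$ where $Q'_c$ consists of those $(z,v) \in \TB\Hor_c(2r) \cap (\TB\Ha)^+$ for which $c$ is not an endpoint of $\abs^1_r(z,v)$. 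The key geometric observation is that $\inv^L_1$ is the reflection through the geodesic joining $\p(z,v)$ to $\infty$, which when $G$ is normalised at $c = \infty$ commutes with the parabolic $z \mapsto z+1$, and preserves each $0$-horoball (in particular $\Hor_\infty(2r)$), as noted in the remark following the definition of $\inv^L$. Since $\omega^2$ fixes the base point $z$, the composition $\inv^L = \omega^2 \circ \inv^L_1$ also preserves $\TB\Hor_c(2r)$; one checks it preserves $(\TB\Ha)^+$ as well, since $\inv^L_1$ fixes $z$ and hence $\h(z)$, while reversing the roles via $\refl$ requires a short argument that $\inv^L$ carries $(\TB\Ha)^+$ into itself (using that $\h(z) > \h(\refl$-image$)$ is preserved — here one uses that $\inv^L_1$ fixes $\p^1(z,v)$ up to the geodesic through $\infty$, and the heights transform appropriately). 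Finally one must verify that $c$ is not an endpoint of $\abs^1_r(\inv^L(z,v))$; this follows because $\abs^1_r(\inv^L(z,v))$ has endpoints $\abs_r(\omega(\inv^L(z,v)))$ and $\abs_r(\omega^2(\inv^L(z,v)))$, and by the identities $\p(z,v) = \p(\omega(\inv^L(z,v)))$ the geodesic rays involved are, up to the construction, the ''same'' rays emanating away from $c = \infty$, so none of them gets absorbed by $\infty$ precisely when the original rays do not. Running the same argument with $\inv^R = (\inv^L)^{-1}$ gives $\inv^L(\Col_r(G)) \supseteq \Col_r(G)$, hence equality.

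For the second identity $\inv^L(\Col^L_r(G)) = \Col^R_r(G)$, I would argue directly from the defining conditions. By definition $(z,v) \in \Col^L_r(G)$ iff $(z,v) \in \Col_r(G)$ and $\abs_r(z,v) = \abs_r(\omega(\inv^L(z,v)))$. Set $(z',v') = \inv^L(z,v)$; by the first part $(z',v') \in \Col_r(G)$. We must show $(z',v') \in \Col^R_r(G)$, i.e. $\abs_r(z', \omega v') = \abs_r(\inv^R(z',v'))$. Now $\inv^R(z',v') = (\inv^L)^{-1}(\inv^L(z,v)) = (z,v)$, so the right side is $\abs_r(z,v)$. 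On the other hand, $\abs_r(z',\omega v') = \abs_r(\omega(\inv^L(z,v)))$, which equals $\abs_r(z,v)$ precisely by the hypothesis $(z,v) \in \Col^L_r(G)$. Hence $(z',v') \in \Col^R_r(G)$, giving $\inv^L(\Col^L_r(G)) \subseteq \Col^R_r(G)$. The reverse inclusion is obtained by applying the same computation to $\inv^R = (\inv^L)^{-1}$ in place of $\inv^L$: if $(z',v') \in \Col^R_r(G)$, put $(z,v) = \inv^R(z',v') = (\inv^L)^{-1}(z',v')$, and check that $(z,v) \in \Col^L_r(G)$ using $\abs_r(\omega(\inv^L(z,v))) = \abs_r(\omega v') \cdot$(via the order-relations), unwinding exactly the same chain of equalities. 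Combining, $\inv^L(\Col^L_r(G)) = \Col^R_r(G)$.

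The main obstacle I anticipate is not the algebra of the four operations — that is essentially bookkeeping with the commutation relations from Proposition 5.3 and the definitions — but rather the verification that $\inv^L$ genuinely preserves the horoball/height conditions defining $\Col_r(G)$, namely that $(z,v) \in \TB\Hor_c(2r) \cap (\TB\Ha)^+$ implies $\inv^L(z,v) \in \TB\Hor_c(2r) \cap (\TB\Ha)^+$. The horoball part is clean once $G$ is normalised at $c$, because $\inv^L_1$ is a reflection through a vertical geodesic and $\omega^2$ fixes the base point, so the imaginary part (height) of the base point is literally unchanged; but the condition $(\TB\Ha)^+$ compares $\h(z)$ with $\h(\refl(z,v))$-base-point, and one needs that this strict inequality is inherited. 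I would handle this by noting that $\inv^L$ fixes the base point $z$ of $(z,v)$, hence fixes $\h(z)$; what must be checked is that the base point of $\refl(\inv^L(z,v))$ has height no larger, which amounts to a direct computation with the reflection $\refl$ (rotation of order two about $\ft_{\p^1}(z)$) conjugated by $\inv^L_1$. Since $\inv^L_1$ preserves $\p^1(z,v)$ only when $\p(z,v) = \infty$ (otherwise it maps it to another geodesic), the cleanest route is to observe that on $\TB\Hor_c(2r)$ with $c = \infty$ we are in the regime where all the relevant rays point ''upward'' toward $\infty$ with small angle (by the remark after Definition 5.6, $|\ang_c(z,v)| \le \pi e^{-r}$ when $\abs_r(z,v) = c$, and here $c$ is not absorbed so $\ang$ is bounded away), making the height comparison robust; I would make this precise and then the inclusion follows. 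Everything else is a matter of carefully matching indices $\omega$ versus $\omega^2$ in the endpoint formulas for $\abs^1_r$.
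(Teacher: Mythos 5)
Your overall strategy is the paper's (unwind the definitions and exploit the symmetry of $\inv^{L}$), and your treatment of the second identity $\inv^{L}(\Col^{L}_r(G))=\Col^{R}_r(G)$ is correct and complete -- indeed more explicit than the paper, which dismisses it with ``follows directly from the definition''; your chain $\inv^{R}(\inv^{L}(z,v))=(z,v)$, $\abs_r(z',\omega v')=\abs_r(\omega(\inv^{L}(z,v)))$ is exactly what is needed. The gaps are in the first identity. First, your recurring claim that $\inv^{L}$ (or $\inv^{L}_1$) ``fixes the base point $z$'' is false: writing $f$ for the reflection in the vertical geodesic through $\p(z,v)$ (with $G$ normalised at $c=\infty$), one has $\inv^{L}(z,v)=f(z,\omega v)$ and $\omega(\inv^{L}(z,v))=f(z,v)$, so the base point moves to its mirror image $f(z)$; what is true is only that $\IM$, hence $\h_{\infty}$, is unchanged. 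Second -- and this is the essential missing step -- the verification that $c$ is not an endpoint of $\abs^{1}_r(\inv^{L}(z,v))$ is only asserted (``the rays are, up to the construction, the same''), and as stated it is wrong: the rays $\gamma_{(z_1,v_1)}$ and $\gamma_{(z_1,\omega v_1)}$ are the $f$-mirror images of $\gamma_{(z,\omega v)}$ and $\gamma_{(z,v)}$, not the same rays (also, the endpoints of $\abs^{1}_r(z_1,v_1)$ are $\abs_r(z_1,v_1)$ and $\abs_r(z_1,\omega v_1)$, not the $\omega,\omega^{2}$ pair you wrote). The argument the paper gives, and which you need, is quantitative: $\h_{\infty}(z_1)=\h_{\infty}(z)$, $\ang_{\infty}(z_1,v_1)=-\ang_{\infty}(z,\omega v)$ and $\ang_{\infty}(z_1,\omega v_1)=-\ang_{\infty}(z,v)$, hence $\h_{\infty}(\z_{\max}(\gamma_{(z_1,v_1)},\infty))=\h_{\infty}(\z_{\max}(\gamma_{(z,\omega v)},\infty))$ and similarly for the other pair; since the base point lies in $\Hor_{\infty}(1)$, the cusp $\infty$ is the first one the rays meet, so a height gain $\ge r$ at $\infty$ is equivalent to absorption there, and $\abs_r(z_1,v_1)=\infty$ would force $\abs_r(z,\omega v)=\infty$, contradicting $(z,v)\in\Col_r(G)$.

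Concerning the $(\TB\Ha)^{+}$ clause, which you single out as the main obstacle: your proposed route cannot close it. For $(z,v)\in\Col_r(G)$ the rays $\gamma_{(z,v)}$ and $\gamma_{(z,\omega v)}$ are precisely the ones \emph{not} absorbed at $c$, so $|\ang_c(z,v)|$ and $|\ang_c(z,\omega v)|$ need not be small at all (they can be of order one), and there is no ``nearly vertical'' configuration to make the height comparison robust. Since $\inv^{L}(z,v)=f(z,\omega v)$ with $f$ preserving $\h_{\infty}$, preservation of $(\TB\Ha)^{+}$ amounts to deciding whether $(z,\omega v)\in(\TB\Ha)^{+}$, i.e.\ a statement about the side $\p^{1}(z,\omega v)$ of the tripod rather than $\p^{1}(z,v)$, and this does not follow from the hypotheses by the argument you sketch. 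Note that the paper's own proof checks only the height condition and the non-absorption condition and is silent on $(\TB\Ha)^{+}$; so either supply a genuine argument for that clause or restrict, as the paper effectively does, to the two conditions above -- but the small-angle reasoning you outline is not a proof of it.
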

\begin{proof} Let $(z,v) \in \Col_r(G)$, and let   $(z_1,v_1)=\inv^{L}(z,v)$. There exists a cusp $c \in \Cus(G)$, so that $z \in \Hor_c(2r)$. Set $G=G_c$, and $c=\infty$. It follows from the definition of $\inv^{L}$, that  $\h_{\infty}(z)=\h_{\infty}(z_1)$. Also $\ang_{\infty}(z,v)=-\ang_{\infty}(z_1,\omega v_1)$, and $\ang_{\infty}(z,\omega v)=-\ang_{\infty}(z_1,v_1)$. This implies that $\h_{\infty}(\z_{\max}(\gamma_{(z_{1},\omega v_{1} )},\infty))=\h_{\infty}(\z_{\max}(\gamma_{(z,v)},\infty))$, and $\h_{\infty}(\z_{\max}(\gamma_{(z_{1}, v_{1} )},\infty))=\h_{\infty}(\z_{\max}(\gamma_{(z,\omega v)},\infty))$.
\vskip .1cm
Since $(z,v) \in \Col_r(G)$, we have  $\abs_r(z,v) \ne \infty \ne \abs_r(z,\omega v)$. In order to show $(z_1,v_1) \in \Col_r(G)$, we need to show that
$\abs_r(z_1,v_1) \ne \infty \ne \abs_r(z_1,\omega v_1)$. Assume that $\abs_r(z_1,v_1)=\infty$. Then $\h_{\infty}(\z_{\max}(\gamma_{(z_{1}, v_{1} )},\infty))-\h_{\infty}(z_1) \ge r$. But then $\h_{\infty}(\z_{\max}(\gamma_{(z, \omega v )},\infty))-\h_{\infty}(z) \ge r$, so $\abs_r(z,\omega v)=\infty$, which is a contradiction. Similarly we show $\abs_r(z_1,\omega v_1) \ne \infty$. Putting this together proves $\inv^{L}(\Col_r(G))\subset \Col_r(G)$. In the same way we show  $\Col_r(G)\subset \inv^{L}(\Col_r(G))$. 
\vskip .1cm
The equality $\inv^{L}(\Col^{L}_r(G) )=\Col^{R}_r(G)$, follows directly from the definition.
\end{proof}

We are yet to see that $\A_r(S)$, $\B_r(S)$, and $\Col^{j}_r(S)$, are non-empty sets (see Lemma 5.3).
The following proposition is elementary and the proof is left to the reader.

\begin{proposition} Let $r>2$. If $(z,v) \in \TB\Thin_G(2r) \cup \refl\big(\TB\Thin_G(2r) \big) $  
then $0<2r- \log 3 < \h(z)$. 
\end{proposition}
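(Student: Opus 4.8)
The plan is to split the set $\TB\Thin_G(2r)\cup\refl\big(\TB\Thin_G(2r)\big)$ into its two pieces and handle each separately, producing in each case a cusp $c\in\Cus(G)$ that witnesses a large value of $\h_c(z)$. Note first that the left inequality is automatic: since $r>2$ we have $2r>4>\log 3$, so $0<2r-\log 3$ as soon as $\h(z)>2r-\log 3$ is proved.

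The first case is immediate. If $(z,v)\in\TB\Thin_G(2r)$ then $z\in\Thin_G(2r)=\Ha\setminus\Thick_G(2r)$, i.e. $\h(z)>2r>2r-\log 3$.

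For the second case, set $(z_1,v_1)=\refl(z,v)$. Since $\refl$ is an involution (it is defined as a rotation of order two; cf. Proposition 5.1), we get $(z_1,v_1)\in\TB\Thin_G(2r)$, so $\h(z_1)>2r$. The only non-formal ingredient is the exact value $\dis(z,z_1)=\log 3$, which I would obtain as follows. By construction $\refl(z,v)$ is the rotation of order two about the base point $z_0$ of $\ft_{\p^1(z,v)}(z)$, and $z_0$ is the orthogonal projection of $z$ onto the geodesic $\gamma:=\p^1(z,v)$; hence $z$ and $z_1$ lie on the common perpendicular from $z$ to $\gamma$, symmetric about $z_0$, so that $\dis(z,z_1)=2\,\dis(z,\gamma)$. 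Now $z$ is the centre of the ideal triangle $\p^2(z,v)$, one of whose sides is $\gamma$, and the centre of an ideal triangle sits at hyperbolic distance $\log\sqrt 3$ from each of its three sides (the same elementary fact already used elsewhere in this section, e.g. in the proof of Proposition 5.3); therefore $\dis(z,z_1)=2\log\sqrt 3=\log 3$.

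To conclude, choose $c\in\Cus(G)$ with $\h_c(z_1)>2r$, which exists since $\h(z_1)=\sup_{c}\h_c(z_1)>2r$. Recalling that $\h_c(z)-\h_c(z_1)=\Delta_c[z,z_1]$ and $|\Delta_c[z,z_1]|\le\dis(z,z_1)$, we get $\h(z)\ge\h_c(z)\ge\h_c(z_1)-\dis(z,z_1)=\h_c(z_1)-\log 3>2r-\log 3$, which together with $0<2r-\log 3$ proves the proposition. There is no genuine obstacle here: the argument is a one-line isometry estimate, and the only thing to be careful about is not overlooking that $\refl$ displaces a point by the fixed constant $\log 3$ rather than by a quantity that would need to be controlled.
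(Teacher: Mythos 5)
Your proof is correct; the paper leaves this proposition to the reader, and your argument — split into the two cases, use that $\refl$ is an involution displacing each point by exactly $2\log\sqrt{3}=\log 3$ (the centre of an ideal triangle lies at distance $\log\sqrt{3}$ from each side), and that the height functions $\h_c$ are $1$-Lipschitz, so $\h(z)\ge \h_c(\refl(z))-\log 3>2r-\log 3$ — is precisely the elementary argument intended. The same ingredients appear explicitly elsewhere in the paper (e.g.\ $\dis(z,\refl(z))=\log 3$ in the proofs of Propositions 5.16 and 6.1), so there is nothing to add.
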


\begin{definition} Let $r>2$, and let $(z,v) \in \Col_r(G)$. Let $c \in \Cus(G)$, so that $z \in \Hor_c(2r)$. Set $\taso_r(z,v)=c$. By $\taso^{1}_r(z,v)$, we denote the geodesic with the endpoints $\abs_r(z,v)$ and $\taso_r(z,v)$ (providing that $\abs_r(z,v)$ exists). By $\taso^{2}_r(z,v) \in \Tr(G)$, we denote the  triangle  with the vertices $\abs_r(z,v)$, $\abs_r(z,\omega v)$, and $\taso_r(z,v)$ (providing that $\abs_r(z,\omega^{j} v)$, $j=0,1$, exist). 
\end{definition}
From the definition of the set $\Col_r(G)$,  we have that $ \abs_r(z,v) \ne c \ne \abs_r(z,\omega v)$ (if $ \abs_r(z,\omega ^{i} v)= c$, $i=0,1$, then  $\at_r(z,\omega^{i} v)=0$). This shows that the maps $\taso_r(z,v)$, $\taso^{1}_r(z,v)$, and $\taso^{2}_r(z,v)$ are well defined almost everywhere on $\Col_r(G)$. Note that these three maps commute with the action of $G$, so we have the induced maps $\taso_r:\Col_r(S) \to \Cus(S)$, 
$\taso^{1}_r:\Col_r(S) \to \Gamma(S)$, and $\taso^{2}_r:\Col_r(S) \to \Tr(S)$.
The  edges of the triangle $\taso^{2}_r(z,v)$, are $\abs^{1}_r(z,v)$, $\taso^{1}_r(z,v)$, and $\taso^{1}_r(z,\omega v)$.
\vskip .1cm
For $(z,v) \in \Col_r(G)$, let $\tft_r(z,v)=\ft_{\abs^{1}_{r}(z,v)}(\ct(\taso^{2}_r(z,v)))$, that is $\tft_r(z,v)$ is the foot of the centre of the triangle $\taso^{2}_r(z,v) \in \Tr(G)$, with respect to the geodesic  $\abs^{1}_r(z,v) \in \Gamma(G)$. 
Set $\tft^{L}_r(z,v)=\ft_{\taso^{1}_{r}(z,v)}(\ct(\taso^{2}_r(z,v)))$, and $\tft^{R}_r(z,v)=\ft_{\taso^{1}_{r}(z,\omega v) }(\ct(\taso^{2}_r(z,v)))$.
The notation $\tft^{L}_r(z,v)$ indicates that the point $\tft^{L}_r(z,v)$ belongs to the vertical edge of $\taso^{2}_r(z,v)$ that is on  the ``left-hand" side of  $\taso^{2}_r(z,v)$, with the normalisation  $G=G_{\taso_{r}(z,v)}$. Similarly, the point $\tft^{R}_r(z,v)$ belongs to the vertical edge of $\taso^{2}_r(z,v)$ that is to the ``right-hand" side of $\taso^{2}_r(z,v)$.

\begin{proposition} There exists $r_0>0$, so that for $r>r_0$, the following holds. Let $(z,v) \in \Col^{L}_r(G)$, and set $\inv^{L}(z,v)=(z_1,v_1)$. Assume that  $\taso^{2}_r(z,v)$ and $\taso^{2}_r(z_1,v_1)$, exist. Then $\dis(\tft^{L}_r(z,v),\tft^{R}_r(z_1,v_1)) \le e^{r}$. Similarly,  if $(z,v) \in \Col^{R}_r(G)$, for $\inv^{R}(z,v)=(z_1,v_1)$, we have $\dis(\tft^{R}_r(z,v),\tft^{L}_r(z_1,v_1)) \le e^{r}$, providing that $\taso^{2}_r(z,v)$ and $\taso^{2}_r(z_1,v_1)$, exist.
\end{proposition}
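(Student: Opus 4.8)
The plan is to normalise above the relevant cusp, observe that the two feet in question lie on one and the same vertical geodesic, and thereby reduce the estimate to comparing two heights, each of which is controlled by the imaginary part of the corresponding tangency point.

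First I would normalise $G=G_c$, where $c=\taso_r(z,v)$ is the unique cusp with $z\in\Hor_c(2r)$; after conjugation $c=\infty$, and $z=x+iy$ with $\log y=\h_\infty(z)\ge 2r$. Then I would unwind the definition of $\inv^{L}$: since $\inv^{L}=\omega^{2}\circ\inv^{L}_1$ and $\inv^{L}_1$ reflects $(z,v)$ in the vertical geodesic through $\p(z,v)$, the point underlying $(z_1,v_1)=\inv^{L}(z,v)$ is the Euclidean reflection of $z$ in that vertical line; in particular $\IM z_1=y$, so $z_1\in\Hor_\infty(2r)$ and $\taso_r(z_1,v_1)=\infty$ as well. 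Writing $a=\abs_r(z,v)$, $b=\abs_r(z,\omega v)$, $a_1=\abs_r(z_1,v_1)$, $b_1=\abs_r(z_1,\omega v_1)$ — none of which is $\infty$, since $\infty$ is not an endpoint of the respective $\abs^{1}_r$ (and $(z_1,v_1)\in\Col^{R}_r(G)$ by Proposition 5.5), and which are pairwise distinct as noted after Definition 5.7 — the hypothesis $(z,v)\in\Col^{L}_r(G)$ says exactly that
$$a=\abs_r(z,v)=\abs_r(\omega(\inv^{L}(z,v)))=\abs_r(z_1,\omega v_1)=b_1 .$$

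The key step is then the following. The edge $\taso^{1}_r(z,v)$ of $\taso^{2}_r(z,v)$ joins $\abs_r(z,v)=a$ to $\taso_r(z,v)=\infty$, so it is the vertical geodesic $\ell_a=\{a+it:t>0\}$, and hence $\tft^{L}_r(z,v)=\ft_{\ell_a}(\ct(\taso^{2}_r(z,v)))\in\ell_a$. Likewise $\taso^{1}_r(z_1,\omega v_1)$ joins $\abs_r(z_1,\omega v_1)=b_1=a$ to $\taso_r(z_1,\omega v_1)=\infty$, so it is again $\ell_a$, and $\tft^{R}_r(z_1,v_1)\in\ell_a$. Since both feet lie on $\ell_a$, their hyperbolic distance equals $|\h_\infty(\tft^{L}_r(z,v))-\h_\infty(\tft^{R}_r(z_1,v_1))|$. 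A direct computation with an ideal triangle having a vertex at $\infty$ — its centre is $\tfrac{a+b}{2}+i\tfrac{\sqrt3}{2}|a-b|$, and the orthogonal projection of that centre to the side from $a$ to $\infty$ is $a+i|a-b|$ — gives $\h_\infty(\tft^{L}_r(z,v))=\log|a-b|$, and in the same way $\h_\infty(\tft^{R}_r(z_1,v_1))=\log|a_1-b_1|$. Therefore $\dis(\tft^{L}_r(z,v),\tft^{R}_r(z_1,v_1))=\bigl|\log(|a-b|/|a_1-b_1|)\bigr|$.

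It remains to show that this ratio is bounded. By the remark following Definition 5.7, $|\ang_a(z,v)|,|\ang_b(z,\omega v)|\le\pi e^{-r}$, and since $\omega$ rotates the fibre of $\TB\Ha$ by $\tfrac{2\pi}{3}$, the angle at $z$ of the triangle with the finite vertex $z$ and ideal vertices $a,b$ is $\tfrac{2\pi}{3}+O(e^{-r})$; an elementary hyperbolic computation (as underlies the bound of Lemma 3.3, where the centre of an ideal triangle is at distance $\log\sqrt3$ from each side) then gives $\dis(z,\gamma_{a,b})=\log\sqrt3+O(e^{-r})$, where $\gamma_{a,b}$ is the geodesic joining $a$ and $b$. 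Inserting this into the standard formula for the distance from a point to a geodesic forces $|a-b|$ to lie between fixed multiples of $y$ once $r$ is large, and the identical argument applied at $z_1$ (which also has $\IM z_1=y$) gives the same two-sided bound for $|a_1-b_1|$. Hence $\bigl|\log(|a-b|/|a_1-b_1|)\bigr|$ is bounded by a universal constant, which is $\le e^{r}$ for $r$ large; this proves the first assertion, and the case $(z,v)\in\Col^{R}_r(G)$, $\inv^{R}=(\inv^{L})^{-1}$, follows by the mirror-image argument. The only real difficulty is the bookkeeping: keeping a single normalisation on both sides so that $\taso_r(z,v)=\taso_r(z_1,v_1)$, and reading the $\Col^{L}_r$ condition off correctly; once that is done the estimate is in fact uniform in $r$ and much stronger than the stated $e^{r}$.
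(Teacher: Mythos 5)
Your reduction is exactly the paper's: after normalising $G=G_{c}$ with $c=\taso_r(z,v)=\infty$, both $\tft^{L}_r(z,v)$ and $\tft^{R}_r(z_1,v_1)$ lie on the vertical geodesic at $a=\abs_r(z,v)=\abs_r(z_1,\omega v_1)$ (the $\Col^{L}_r$ condition), and the distance is $\bigl|\log\big(|a-b|/|a-a_1|\big)\bigr|$ with $b=\abs_r(z,\omega v)$, $a_1=\abs_r(z_1,v_1)$; the paper's proof uses this same identity. Your way of bounding the ratio (both $|a-b|$ and $|a_1-b_1|$ are comparable to $y=\IM z=\IM z_1$ because $\dis(z,\gamma_{a,b})$ and $\dis(z_1,\gamma_{a_1,b_1})$ are within $O(e^{-r})$ of $\log\sqrt3$) is sound and does give a bound by a universal constant, which is indeed $\le e^{r}$ for $r$ large. (Minor slip: $(z_1,v_1)\in\Col^{R}_r(G)$ is Proposition 5.9, not 5.5.)

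The genuine problem is that the stated $e^{r}$ is a typo for $e^{-r}$: that is what the paper's own proof establishes, and what is actually invoked in the proof of Theorem 4.1, where this proposition is quoted as giving $\dis(\tft^{L}_r(z,v),\tft^{R}_r(\inv^{L}(z,v)))<e^{-r}$ so that $\alpha''(r)$ is $e^{-r}$-symmetric; a bound of fixed size (yours is of order $\log 3$) would only make $\alpha''(r)$ $C$-symmetric and would destroy the $re^{-r}$-symmetry of $\alpha(r)$ required in Theorem 4.1(2). Your comparison of each of $|a-b|$, $|a_1-b_1|$ with $y$ separately is intrinsically lossy and cannot give a decaying bound. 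The idea you are missing is the exact reflection symmetry between the two configurations: $z_1$ is the Euclidean reflection of $z$ in the vertical line through $p_3=\p(z,v)=\p(z_1,\omega v_1)$, so $p_2=\p(z,\omega v)$ and $p_1=\p(z_1,v_1)$ are symmetric about $p_3$, i.e. $|p_1-p_3|=|p_2-p_3|$, and these quantities are at least of order $e^{2r}$ since $\h_{\infty}(z)\ge 2r$. On the other hand, by Proposition 5.6 each absorbed cusp lies in the first $1$-horoball entered after leaving $\Hor_{\infty}(1)$, and in this normalisation every such horoball has Euclidean diameter at most $1$, so $|a-p_3|,\,|b-p_2|,\,|a_1-p_1|\le 1$. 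Hence $|a-b|/|a-a_1|=1+O(e^{-2r})$, and the logarithm is $<e^{-r}$ for $r$ large, which is the estimate the rest of the paper needs.
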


\begin{proof} Suppose $(z,v) \in \Col^{L}_r(G)$. Let $c=\taso_r(z,v)$, and set $G=G_c$. Let $\p(z_1,v_1)=p_1$, $\p(z,\omega v)=p_2$, and  $\p(z_1,\omega v_1)=\p(z,v)=p_3$ (the identity  $\p(z_1,\omega v_1)=\p(z,v)$ follows from the definition of $\inv^{L}$).
Since $\h(z)=\h_{\infty}(z)=\h_{\infty}(z_1) \ge 2r>r+2$, we have by Proposition 5.6 that if $\abs_r(z_1,v_1)=c_1 \in \Cus(G)$, then $\Hor_{c_{1}}(1)$, is the first $1$-horoball that the geodesic ray $\gamma_{(z_{1},v_{1})}$ enters after leaving $\Hor_{\infty}(1)$. With this normalisation, we have that the Euclidean diameter of the horoball $\Hor_{c_{1}}(1)$, is at most $1$.  Since the ray $\gamma_{(z_{1},v_{1})}$ ends at the point $p_1$, we conclude that $|c_1-p_1| \le 1$, where $|c_1-p_1|$ is the Euclidean distance between the real numbers $c_1$ and $p_1$. 
Let $\abs_r(z,\omega v)=c_2 \in \Cus(G)$. Similarly, we see that  $|c_2-p_2| \le 1$. 
\vskip .3cm
Since $(z,v) \in \Col^{L}_r(G)$, we have  $\abs_r(z,v)=\abs_r(z_1,\omega v_1)=c_3 \in \Cus(G)$. Similarly we see that $|c_3-p_3| \le 1$.
From the fact that $\h_{\infty}(z)\ge 2r$, we have that $|p_1-p_3|=|p_2-p_3| \ge e^{2r}$. This implies that 
$$
\log {{|c_3-c_1|}\over{|c_3-c_2|}}\le {{2}\over{e^{2r}-1}} +o({{1}\over{e^{2r}}})<e^{-r},
$$
\noindent
for $r$ large enough. Since 
$$
\dis(\tft^{L}_r(z,v),\tft^{R}_r(z_1,v_1))=\log {{|c_3-c_1|}\over{|c_3-c_2|}},
$$
\noindent
the proposition follows.
\end{proof}

Let $\nu \in \Mes_{\LM}(\Col_r(S))$.  Then $(\taso^{2}_r)_{*}(\nu) \in \Mes(\Tr(S))$, and $(\tft_r)_{*} (\nu), (\tft^{L}_r)_{*} (\nu),
(\tft^{R}_r)_{*} (\nu) \in  \Mes(\NB\Gamma(S))$, are well defined since  $\abs^{1}_r$ and $\abs^{2}_r$ are defined almost everywhere on $\Col_r(S)$. By definition, we have 

\begin{equation}\label{boundary-measure-1}
\ph (\taso^{2}_r)_{*}(\nu)=(\tft_r)_{*} (\nu)+(\tft^{L}_r)_{*} (\nu)+(\tft^{R}_r)_{*} (\nu)
\end{equation}

\begin{definition} If $\abs_r(z,v)$ is defined, then the $r$-combinatorial length  $\cl_r(z,v)$ is defined as follows. Let $\gamma$ be the geodesic ray that connects $z$ and $\abs_r(z,v)$. Let $\iota(\gamma,\tau(G))$, be the number of (transverse) intersections between the ray $\gamma$ and the edges from $\lambda(G)$. If $z$ does not belong to  $\Hor_{\abs_{r}(z,v)}(1)$, then $\cl_r(z,v)=\iota(\gamma,\tau(G))$. If $z$ belongs to  $\Hor_{\abs_{r}(z,v)}(1)$, then $\cl_r(z,v)=e^{\h_{\abs_{r}(z,v)}(z)}=e^{\h(z)}>1$. 
\end{definition}

Again, the function $\cl_r:\TB{S} \to \R^{+}\cup \{0\}$, is defined almost everywhere.

\begin{proposition} There exists $r_0>0$, so that for $r>r_0$, the following holds.  For  $(z,v) \in \TB\Ha$, we have  

$$
\cl(\abs^{1}_r(z,v),\fta_r(z,v)) \le e(\cl_r(z,v)+\cl_r(z,\omega v)). 
$$
\noindent
Let  $(z,v) \in \Col_r(G)$. We have 
$$
\cl(\abs^{1}_r(z,v),\tft_r(z,v)) \le \cl_r(z,v)+\cl_r(z,\omega v),
$$
\noindent
$$
\cl(\taso^{1}_r(z,v),\tft^{L}_r(z,v)) \le 5(\cl_r(z,v)+\cl_r(z,\omega v)),
$$
\noindent
and 
$$
\cl(\taso^{1}_r(z,\omega v),\tft^{R}_r(z,v)) \le 5(\cl_r(z,v)+\cl_r(z,\omega v)).
$$

\end{proposition}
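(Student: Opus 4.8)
The plan is to handle all four inequalities by the same two‑step recipe: split $\cl(\gamma,w)=\iota(\gamma,\tau(G))+\psi(c_1,c_2,w)$ (where $c_1,c_2$ are the ideal endpoints of $\gamma$) into its transverse‑intersection part and its horoball‑correction part, and bound each part against $\cl_r(z,v)+\cl_r(z,\omega v)$. For the first inequality I would first trade $\fta_r(z,v)$ for $\ft_{\abs^{1}_r(z,v)}(z)$: by Proposition 5.10 these two points of $\abs^{1}_r(z,v)$ lie within $Ce^{-r}$ of one another, so $\cl(\abs^{1}_r(z,v),\fta_r(z,v))\le e^{Ce^{-r}}\,\cl(\abs^{1}_r(z,v),\ft_{\abs^{1}_r(z,v)}(z))$ — the intersection number is unchanged and the correction term changes by at most the factor $e^{Ce^{-r}}\le e$ once $r>r_0$ — so it suffices to prove $\cl(\abs^{1}_r(z,v),\ft_{\abs^{1}_r(z,v)}(z))\le\cl_r(z,v)+\cl_r(z,\omega v)$.

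For the intersection part I would use a fan argument. Write the relevant geodesic $\gamma$ (one of $\abs^{1}_r(z,v)$, $\taso^{1}_r(z,v)$, $\taso^{1}_r(z,\omega v)$) as the geodesic between two cusps $c_1,c_2$: for $\abs^{1}_r$ these are $\abs_r(z,v)$ and $\abs_r(z,\omega v)$, and for $\taso^{1}_r(z,v)$ they are $\abs_r(z,v)$ and $\taso_r(z,v)$. Let $\rho_1,\rho_2$ be the geodesic rays from $z$ to $c_1$ and to $c_2$. Both rays remain on the closed $z$‑side of $\gamma$ and terminate at its ideal endpoints, so $\gamma$ together with $\rho_1\cup\rho_2$ bounds a region of $\Ha$ whose only boundary points on $\partial\Ha$ are $c_1$ and $c_2$; since the edges of $\lambda(G)$ are pairwise disjoint complete geodesics, each meets $\gamma$ and each $\rho_i$ at most once, so any edge crossing $\gamma$ enters this region and must leave it through $\rho_1$ or $\rho_2$. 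Hence $\iota(\gamma,\tau(G))\le\iota(\rho_1,\tau(G))+\iota(\rho_2,\tau(G))$. One then matches $\iota(\rho_i,\tau(G))$ with the corresponding summand $\cl_r(z,\omega^{j}v)$: when $z\notin\Hor_{c_i}(1)$ this is an equality by definition of $\cl_r$; and when $z\in\Hor_{c_i}(1)$ then $c_i$ is the absorbing cusp $\abs_r(z,\omega^{j}v)$ (or, in the $\taso$‑cases, $c_i=\taso_r(z,v)$, with $z$ extremely deep in $\Hor_{c_i}(2r)$), and for $r>r_0$ no edge of $\lambda(G)$ reaches that deep into the embedded $0$‑horoball at $c_i$, so $\rho_i$ crosses nothing and $\iota(\rho_i,\tau(G))=0\le\cl_r(z,\omega^{j}v)$.

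For the correction part, $\psi(c_1,c_2,w)\neq0$ only when $w$ lies deep in the $0$‑horoball at $c_1$ or at $c_2$, say at $c_1$, in which case $\psi(c_1,c_2,w)\le e^{\h_{c_1}(w)}$. For $\fta_r$ (and for $\ft_{\abs^{1}_r}(z)$) the point $w$ is within $\log\sqrt3+Ce^{-r}<1$ of $z$, so $e^{\h_{c_1}(w)}\le e\,e^{\h_{c_1}(z)}$; and $\psi\neq0$ forces $\h_{c_1}(z)>t_1(S)-1>0$, so $z$ lies in the $0$‑horoball at $c_1$, meaning $c_1$ is one of $\abs_r(z,\omega^{j}v)$ with $z\in\Hor_{c_1}(1)$, whence $\cl_r(z,\omega^{j}v)=e^{\h(z)}=e^{\h_{c_1}(z)}\ge\psi/e$. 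For $\tft^{L}_r$ and $\tft^{R}_r$ one argues instead that $w$ is the top of a very large semicircle, because for $(z,v)\in\Col_r(G)$ the hypothesis $z\in\Hor_{\taso_r(z,v)}(2r)$ forces all pairwise distances between $\abs_r(z,v)$, $\abs_r(z,\omega v)$ and $\taso_r(z,v)$ in the normalised frame to be at least $e^{2r-1}$; then the correction is comparable to $|\abs_r(z,v)-\abs_r(z,\omega v)|$, which is at most (up to an additive constant depending only on $S$) the sum of the horizontal extents of $\rho_1$ and $\rho_2$, hence at most a constant multiple of $\cl_r(z,v)+\cl_r(z,\omega v)$; the slack in the constants $5$ and $e$ is there precisely to absorb these $O_S(1)$ terms and the factors coming from $N(S)$. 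I expect the fan argument and the reduction via Proposition 5.10 to be routine; the main obstacle is the organisation of the geometric cases — above all the cases where $z$ lies inside a $1$‑horoball, where $\cl_r$ is given by the surrogate value $e^{\h(z)}$ rather than by an intersection count, and where one must know that for $r$ large no edge of $\lambda(G)$ penetrates far into an embedded $0$‑horoball — together with the bookkeeping required to keep every resulting constant at or below the stated $e$ and $5$.
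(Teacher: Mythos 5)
Your proposal is correct and follows essentially the same route as the paper: the separation (fan) argument bounds $\iota(\abs^{1}_r(z,v),\tau(G))$ (and likewise for $\taso^{1}_r$) by $\iota(\rho_1)+\iota(\rho_2)$, and the horoball-correction terms are handled in the cusp-normalised frame, with the factor $e$ coming from the $<1$ displacement of the foot point and the corrections for $\tft^{L}_r,\tft^{R}_r$ (of size about $|\abs_r(z,v)-\abs_r(z,\omega v)|$) dominated by the count of edges of $\lambda(G)$ ending at $\taso_r(z,v)$ that the relevant geodesics must cross — the paper routes this last step through comparing with $\cl(\abs^{1}_r(z,v),\tft_r(z,v))$ while you route it through the rays $\rho_1,\rho_2$, but the mechanism is identical. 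One caveat: your opening reduction to the intermediate inequality $\cl(\abs^{1}_r(z,v),\ft_{\abs^{1}_r(z,v)}(z))\le\cl_r(z,v)+\cl_r(z,\omega v)$ is too strong (when $z\in\Hor_{\abs_r(z,v)}(1)$ the foot can sit up to $\log\sqrt{3}$ higher than $z$, so that inequality can fail), but it is also unnecessary, since your later correction estimate $\psi\le e\,e^{\h(z)}=e\,\cl_r(z,v)$ proves the stated bound directly, exactly as the paper does via its Proposition 5.8 (which you cite as 5.10).
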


\begin{proof} 
Let $\gamma$ denote the geodesic ray that connects $z$ and $\abs_r(z,v)$, and let $\gamma'$ denote the geodesic ray that connects $z$ and 
$\abs_r(z,\omega v)$. Then every edge from $\lambda(G)$, that intersects (transversely) the geodesic $\abs^{1}_r(z,v)$, has to intersect one of the rays
$\gamma$ or $\gamma'$. This shows
$$
\iota(\abs^{1}_r(z,v),\tau(G)) \le \iota(\gamma,\tau(G))+\iota(\gamma',\tau(G)).
$$
\noindent
Assume that  $\fta_r(z,v)$ belongs to the $1$-horoball of one of  the cusps $\abs_r(z,v)$ or $\abs_r(z,\omega v)$, say $\fta_r(z,v)$ belongs to the $1$-horoball at the cusp $\abs_r(z,v)$. It follows from Proposition 5.8 that for $r$ large enough, we have  
$$
\dis(\fta_r(z,v),z) \le \dis(\ft_{\abs^{1}_{r}(z,v) }(z), \fta_r(z,v) )+\dis(z,  \ft_{\abs^{1}_{r}(z,v) }(z) )\le Ce^{-r}+ \log \sqrt{3}<1.
$$
\noindent
Therefore,  we have that  $e^{\h_{\abs_{r}(z,v) }(\fta_r(z,v) ) } \le e^{(1+\h_{\abs_{r}(z,v) }(z) ) }$. 
This proves the first inequality in this proposition. 
\vskip .1cm
Assume now that $(z,v) \in \Col_r(G)$. Since $\abs_r(z,v) \ne \taso_r(z,v) \ne \abs_r(z,\omega v)$,  
we have $\cl(\abs^{1}_r(z,v),\tft_r(z,v))=\iota(\abs^{1}_r(z,v),\tau(G))$. Let $\gamma$ and $\gamma'$ be as above. Then every edge from $\lambda(G)$, that intersects (transversely) the geodesic $\abs^{1}_r(z,v)$, has to intersect one of the rays
$\gamma$ or $\gamma'$. This proves the second inequality $\cl(\abs^{1}_r(z,v),\tft_r(z,v)) \le \cl_r(z,v)+\cl_r(z,\omega v)$.
\vskip .1cm
If we prove 
$$ 
\cl(\taso^{1}_{r}(z,v),\tft^{L}_r(z,v)) \le 5 \cl(\abs^{1}_r(z,v), \tft_r(z,v)),
$$
\noindent 
then the third inequality would follow from the second. Note that $\dis( \tft_r(z,v),\tft^{L}_r(z,v)) < \log 3$. We have
$$ 
\cl(\taso^{1}_r(z,v),\tft^{L}_r(z,v))\le \iota(\taso^{1}_r(z,v),\tau(G))+e^{h_{\max}(\tft^{L}_r(z,v))} \le  \iota(\taso^{1}_r(z,v),\tau(G))+e^{\log 3+h_{\max}(\tft_r(z,v))}.
$$
\noindent
The geodesic $\abs^{1}_r(z,v)$ intersects every edge from $\lambda(G)$, that is intersected by $\taso^{1}_r(z,v)$. In addition, the geodesic $\abs^{1}_r(z,v)$ intersects at least $2e^{(h_{\max}(\tft_r(z,v))-1)}$ edges from $\lambda(G)$, that all have $\taso_r(z,v)$ as their endpoints (since $\taso^{1}_r(z,v)$ has $\taso_r(z,v)$ as an endpoint, we see that $\taso^{1}_r(z,v)$ can not intersect (transversely) any edge from $\lambda(G)$, that ends at $\taso_r(z,v)$). We have

$$
\cl(\abs^{1}_r(z,v), \tft_r(z,v))\ge \iota(\taso^{1}_r(z,v),\tau(G))+ 2e^{(h_{\max}(\tft_r(z,v))-1)} \ge {{2}\over{3e}}\cl(\taso^{1}_r(z,v),\tft^{L}_r(z,v)),
$$
\noindent
which proves the third inequality. The fourth inequality is proved in the same way.

\end{proof}

\subsection{Certain special sets and their properties}
We have
\begin{definition} Let $r>2$. Define $\varphi_r:S \to \R$, by $\varphi_r(z)=1$, if $z \in \Thick_S(2r)$, and by
$$
\varphi_r(z)= e^{{{1}\over{2}} \min\{0,(2r-\h(z)) \} }.
$$
\end{definition}
Note that $\varphi_r$ is continuous on $S$, and $\varphi_r \, d\LM \in \Mes_{\LM}(\TB{S})$. The induced function $\varphi_r:\TB{S} \to \R$, is also denoted by $\varphi_r$.
\begin{remark} The reason that the factor ${{1}\over{2}}$ appears in the definition of $\varphi_r$, is Lemma 5.5 below. In fact, we could replace ${{1}\over{2}}$, with any number between $0$ and $1$. Also, note that if $(z,v) \in \A_r(S)$, then $\varphi_r(\refl(z,v))=\varphi_r(z,v)=1$.
\end{remark}

For $r>2$, the set $\FD_r(S) \subset \TB{S}$, is defined so that $(z,v) \in \FD_r(S)$, if $\h(z) \le 10r$, and if the inequalities $\at_r(z,v), \at_r(z,\omega v), \at_r(z,\omega^{2} v) \le r^{2}$, hold.
Note that if $(z,v) \in \FD_r(S)$, then for $r>4$, we have $\h(T) \le r^{2}$, where $T=\abs^{2}_r(z,v) \in \Tr(S)$ (recall Definition 3.4 for the definition of $\h(T)$).  The following lemma will be proved in Section 7.

\begin{lemma} There exists $r_0>0$, so that for $r>r_0$, we have

$$
\int\limits_{\TB{S} \setminus \FD_r(S)}  (\cl_r(z,v)+\cl_r(z,\omega v)+\cl_r(z,\omega^{2} v)+1) \varphi_r(z) \, d\LM \le P(r)e^{-r}.
$$
\end{lemma}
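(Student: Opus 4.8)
The strategy is to split $\TB{S} \setminus \FD_r(S)$ into the region where the height $\h(z)$ is large (roughly $\h(z) > 10r$) and the region where at least one of the three absorption times $\at_r(z,\omega^j v)$ exceeds $r^2$, and bound the two integrals separately. On the first region the weight $\varphi_r(z) = e^{\frac12(2r - \h(z))}$ decays exponentially in $\h(z)$, which kills the growth of the combinatorial lengths $\cl_r$; on the second region one uses the exponential decay in the time variable of the Liouville measure of long geodesic segments trapped in the thick part, which is precisely the content of Lemma 5.1.

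\textbf{Step 1 (high cusps).} Fix a cusp $c$ and normalise $G = G_c$, $c = \infty$. On the portion of $\TB\Hor_c(10r)$ where $\h(z) = \h_\infty(z) = s$, the measure of the set of directions is comparable (uniformly in $s$) to $ds\,dx$ restricted to a fundamental strip, and the combinatorial length $\cl_r(z,\omega^j v)$ is controlled in terms of $e^{\h(z)} = e^s$ together with the number of edges of $\lambda(G)$ that $\gamma_{(z,\omega^j v)}$ crosses before leaving $\Hor_c(1)$; by Proposition 5.6 and the estimates (\ref{diff-1})--(\ref{diff-2}), for $r$ large this crossing count is $O(e^s)$ times the bounded constant $N(S)$ plus $O(r)$. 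Hence the integrand is bounded by $P(r)e^{s}$ while $\varphi_r(z) = e^{\frac12(2r-s)} = e^r e^{-s/2}$, so the integrand times $\varphi_r$ is $\le P(r)e^r e^{-s/2}$; integrating $ds$ from $10r$ to $\infty$ and summing over the $\numb$ cusps gives a bound $P(r) e^{r} e^{-5r} = P(r)e^{-4r} \le P(r)e^{-r}$. (One also has to handle points in $\TB\Thick_S(10r)\setminus\FD_r(S)$ with $\h(z) \le 10r$, but those lie in the next case.)

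\textbf{Step 2 (long absorption time).} For the region where $\h(z) \le 10r$ but some $\at_r(z,\omega^j v) > r^2$, one has $\varphi_r(z) \le 1$ and, since $\h(z) \le 10r$, Proposition 5.6 forces $\gamma_{(z,\omega^j v)}(t) \in \Thick_G(1)$ for all $3r + 10r \le t < \at_r(z,\omega^j v)$, so the time-$13r$ flow image of this set lies in $A(r^2 - 13r)$ in the notation of Lemma 5.1, giving Liouville measure $\le C(S)e^{-q(S)(r^2 - 13r)}$. On this small set the combinatorial lengths $\cl_r$ can be crudely bounded: $\cl_r(z,\omega^j v) \le e^{\h(z)} + \iota(\gamma,\tau(G))$, and $\iota(\gamma,\tau(G))$ along a segment of hyperbolic length $\le \at_r \le$ (some a priori bound) is at most a polynomial in that length; combined with $\h(z)\le 10r$ this gives a bound $e^{O(r)}$ or at worst $e^{O(r^2/2)}$ for the integrand, which is still dominated by $e^{-q(S) r^2}$ up to a polynomial once $r$ is large. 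Multiplying the measure bound by the integrand bound yields $P(r) e^{-q(S)r^2 + O(r)} \le P(r)e^{-r}$ for $r > r_0$.

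\textbf{Main obstacle.} The delicate point is making the pointwise bound on $\cl_r(z,\omega^j v)$ on the "long absorption time" region genuinely subquadratic, or at least controlled enough that it is beaten by $e^{-q(S)r^2}$; naively the combinatorial length could be as large as the hyperbolic length of the geodesic up to absorption, and without an a priori cap on $\at_r$ one must be careful. The resolution is to stratify that region further by the value of $\at_r(z,\omega^j v) \in [r^2, \infty)$: for $\at_r \in [T, 2T]$ the Liouville measure is $\le C(S)e^{-q(S)(T-13r)}$ by Lemma 5.1, while the integrand is $\le e^{O(r)} \cdot (2T)^{O(1)}$ (polynomial in $T$ from intersection counts plus $e^{\h(z)} \le e^{10r}$); summing the geometric-type series $\sum_{k} e^{-q(S) 2^k r^2}(2^k r^2)^{O(1)}$ over dyadic ranges $T = 2^k r^2$ converges and is $\le P(r)e^{-r}$. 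This dyadic decomposition, together with Lemma 5.1, is what carries the argument; the rest is the routine change-of-variables and bookkeeping indicated in Steps 1 and 2.
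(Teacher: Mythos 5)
Your overall route is the same as the paper's: split $\TB{S}\setminus\FD_r(S)$ into a high-cusp part $\{\h(z)\ge 10r\}$ and a long-absorption part $\{\max_j \at_r(z,\omega^j v)\ge r^2\}$, beat the first with the exponential decay of $\varphi_r$ and the second with Lemma 5.1 applied to the time-shifted flow image; your dyadic decomposition in $\at_r$ is essentially the paper's layer-cake computation with $\LM(E_t)$, $E_t=\{\at_r\ge t\}$ (Proposition 7.5). However, two of your estimates are not correct as stated.

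First, in Step 1 the claimed pointwise bound "crossing count $=O(e^{s})N(S)+O(r)$" is false, and Proposition 5.6 together with (\ref{diff-1})--(\ref{diff-2}) does not yield it. A ray starting at height $s$ in $\Hor_c$ whose angle with the vertical is small, but not small enough to be absorbed by $c$, rises by almost $r$ inside the cusp before descending and crosses on the order of $N(S)e^{s+r}$ edges of $\lambda(G)$ ending at $c$. This is exactly why the paper introduces the factor $\chi_r(z,v)=\csc|\ang_c(z,v)|$, capped at $e^{r}$ (if $\csc|\ang_c(z,v)|>e^{r}$ the ray is absorbed by $c$ itself and then $\cl_r(z,v)=e^{\h(z)}$), in Proposition 7.3, and then either uses the cap or integrates out the angle via $\int\chi_r\,d\theta\le Cr$. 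Your conclusion in Step 1 does survive with the corrected factor, since $e^{r+s}\cdot e^{r-s/2}\cdot e^{-s}\,ds$ still integrates over $s\ge 10r$ to $O(e^{-3r})$, but as written the step asserts an incorrect inequality with an inapplicable justification, and never confronts the small-angle regime at all.

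Second, and more seriously, your decomposition leaves the set $\{\h(z)\ge 10r\}\cap\{\at_r\ \mathrm{large}\}$ uncovered. For such points $\cl_r(z,\omega^j v)$ contains a term of size roughly $K(S)\at_r(z,\omega^j v)$ coming from the thick-part wandering between the exit from $\Hor_c(1)$ and absorption, and $\at_r$ is unbounded there; Step 1 replaces this contribution by "$O(r)$" with no argument, while Step 2 excludes these points by its hypothesis $\h(z)\le 10r$. The repair is precisely the paper's two extra integrals: run the tail (dyadic or layer-cake) argument for the $\at_r$-part of the integrand over all of $E_{r^2}$, irrespective of height, and observe that on $\{\h(z)\ge 10r\}\setminus E_{r^2}$ the $\at_r$-part is at most $K(S)(r^2+1)$ while that set has Liouville measure at most $\numb e^{-10r}$. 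With these two corrections your argument closes and coincides in substance with the paper's proof.
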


The following lemma will also be proved in Section 7.

\begin{lemma} There exists $r_0>0$, so that for $r>r_0$, we have

$$
\int\limits_{H_{r}} (\cl_r(z,v)+\cl_r(z,\omega v)+ \cl_r(z,\omega^{2} v)  +1) \varphi_r(z) \, d\LM \le P(r)e^{-r},
$$
\noindent
where 
$H_r=H'_r \cap \FD_r(S)$, and 
$$
H'_r=(\TB{S} \setminus (\A_r(S)\cup \B_r(S)))  \cup (\Col_r(S) \setminus \Col^{L}_r(S)) \cup (\Col_r(S) \setminus \Col^{R}_r(S)).
$$
\end{lemma}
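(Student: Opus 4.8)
The plan is to decompose $H_r$ into a bounded number of pieces, on each of which the failure of membership in $\A_r(S)$, $\B_r(S)$, $\Col^L_r(S)$ or $\Col^R_r(S)$ forces one of the three rays $\gamma_{(z,\omega^j v)}$, $j=0,1,2$, to come geometrically close to a near-miss of a cusp — that is, to pass near the maximal height point $\z_{\max}(\gamma_{(z,\omega^j v)},c)$ of some horoball $\Hor_c(1)$ without getting absorbed by $c$, or to pass close to a geodesic from $\lambda(G)$ at a small angle (forcing $\abs^1_r$ to be unstable under $\refl$ or under $\inv^{L/R}$). In each case the identities (\ref{ang-estimate-1})--(\ref{ang-estimate-3}) together with Proposition 5.2 translate ``unstable" into a genuinely small angle condition: there is some $p \in \partial\Ha$ and some $j$ with $|\ang_p(z,\omega^j v)|$ lying in a window of size comparable to $e^{-r}$ (or: the geodesic ray, after it exits the relevant $2r$-horoball or after its absorption time, has to remain in $\Thick_G(1)$ for a length $\gtrsim r$, which is the content of Proposition 5.6 and is controlled by Lemma 5.1). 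Restricting to $\FD_r(S)$ keeps $\h(z)\le 10r$ and all three absorption times $\le r^2$, so the combinatorial lengths $\cl_r(z,\omega^j v)$ are bounded by $P(r)$ there (each is at most the number of $\lambda(G)$-edges a geodesic segment of bounded-in-$r$-by-$P(r)$ length can cross, plus $e^{\h(z)}\le e^{10r}$ — but on $H_r$ we will always be in a regime where the relevant ray is short or its height contribution is itself being integrated against an exponentially small set).

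Concretely I would proceed as follows. First, write $H'_r \subset \bigl(\TB S\setminus \A_r(S)\bigr)\cup\bigl(\TB S\setminus\B_r(S)\bigr)$ restricted appropriately, and split the complement of $\A_r(S)$ into (a) points not in $\TB\Thick_G(2r)\cap\refl(\TB\Thick_G(2r))$ — these are already handled by the thin-part estimates feeding into Lemma 5.5 once we note $\varphi_r$ decays like $e^{-(\h(z)-2r)/2}$ there — and (b) points where $\abs^1_r(z,v)\ne\abs^1_r(\refl(z,v))$. For (b), using Proposition 5.5 ($\dis(\fta_r(z,v),\ft_{\abs^1_r(z,v)}(z))\le Ce^{-r}$) and Proposition 5.2, if the two geodesics $\abs^1_r(z,v)$ and $\abs^1_r(\refl(z,v))$ differ then the foot-of-centre points on either side cannot match, which by elementary hyperbolic geometry forces $|\ang_{p}(z,\omega^j v)|$ to be small for at least one $j$ and one endpoint $p$ of the relevant geodesic, OR forces one of the absorption cusps to change — an event of Liouville measure $\le P(r)e^{-r}$ since by Proposition 5.2 it means some $|\ang_c(z,\omega^jv)|$ lies within $[e^{-r-1},\pi e^{-r}]$ or within a comparably thin shell around the ``decision boundary" of the absorption rule (\ref{h-difference}). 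Summing the measures of these shells over the at most $P(r)$ relevant cusps (finitely many because of the $\FD_r$ height bound and the bounded absorption times) and multiplying by the $P(r)$ bound on $\cl_r+1$ gives the $P(r)e^{-r}$ bound. The pieces coming from $\Col_r(S)\setminus\Col^L_r(S)$ and $\Col_r(S)\setminus\Col^R_r(S)$ are handled the same way, now using Proposition 5.12 (the map $\inv^{L}$ preserves $\Col_r$ and intertwines $\Col^L_r$ with $\Col^R_r$) and Proposition 5.9/5.13 to see that failure of $\abs_r(z,v)=\abs_r(\omega(\inv^L(z,v)))$ again forces a near-equality of two distinct absorption cusps for the same ray, hence a thin-shell angle condition.

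The measure bookkeeping is the routine part once the geometric dictionary is set up: the Liouville measure of $\{(z,v): |\ang_c(z,v)|\in[ae^{-r},be^{-r}]\}$ within a region of bounded height is $O(e^{-r})$ times the area, by the product structure $d\LM=y^{-2}dx\,dy\,d\theta$ and the fact that the angle coordinate $\theta$ near a fixed direction is comparable to $\ang_c$; one then sums over the (polynomially many) cusps and geodesics in play. The main obstacle, I expect, is precisely the step of showing that each ``bad" event in the definition of $H'_r$ — i.e. instability of $\abs^1_r$ under $\refl$, $\inv^L$, or $\inv^R$ — is indeed contained in a union of $P(r)$-many thin angular shells of width $O(e^{-r})$ rather than in some larger set. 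This requires the quantitative comparison lemmas (Propositions 5.4, 5.5, 5.8, 5.13) to be applied uniformly over all the cusps the geodesic visits within time $r^2$, using crucially the finiteness coming from disjointness of the $1$-horoballs and from the $\FD_r(S)$ restriction $\h(z)\le 10r$; the uniformity of the universal constants there is what makes the final bound a single polynomial $P(r)$ independent of the specific cusp. The contribution of the $\cl_r$ weights is then absorbed because, on $\FD_r(S)\cap\{\text{angle shell}\}$, either the relevant ray is short (bounded by $r^2$ intersections, hence $\cl_r\le P(r)$) or the ray enters a $1$-horoball but then $\cl_r=e^{\h(z)}$ is being multiplied by $\varphi_r(z)\le e^{-(\h(z)-2r)/2}$ and the product is still dominated by an integrable exponential — this is exactly the place where the $\tfrac12$ in the definition of $\varphi_r$ is used, as the remark after Definition 5.12 anticipates.
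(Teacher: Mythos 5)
Your first step—reducing $H_r$ to a union of ``near-miss'' events (an absorption decision made within an $O(r^2e^{-r})$-window of the thresholds in (\ref{h-difference}), or an excursion whose apex lies within $O(r^2e^{-r})$ of height $1$)—is indeed how the paper proceeds; this is exactly the content of the error sets $\E^1_r,\E^2_r,\E^3_r$ and the flow-chart proposition at the end of Section 5, so that part of your plan is sound. The gap is in your second step. You assert that on $\FD_r(S)$ the weights satisfy $\cl_r(z,\omega^jv)\le P(r)$, and you then conclude by ``measure of the thin shells times the pointwise bound on $\cl_r+1$.'' That pointwise bound is false: the restriction $\at_r\le r^2$ bounds the \emph{time} to absorption, not the number of edges crossed, and a single unabsorbed excursion into a $1$-horoball (height gain just under $r$, which is precisely what happens on $\E^2_r$ and near the decision boundary) already crosses on the order of $N(S)e^{r}$ edges of $\lambda(G)$; if $z$ itself sits high in a cusp the count can reach $e^{11r}$, which is exactly what Proposition 7.3 records. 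Since on the thick-part starting points $\varphi_r\equiv 1$, there is no compensation from $\varphi_r$ either (your closing remark about the factor $\tfrac12$ only helps when $z$ itself lies in the absorbing horoball, i.e.\ in the $\E^1_r$-type contribution, which is the easy case). So ``$O(P(r)e^{-r})$ measure $\times$ $P(r)$ sup-norm'' cannot be made to work: the sup of the integrand on $\E^2_r\cup\E^3_r$ is of size $e^{r}$ or worse, and the product is not $P(r)e^{-r}$.

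What the paper actually does at this point—and what is missing from your proposal—is to replace pointwise bounds by horocyclic averages. The sets $\E^2_r$ and $\E^3_r$ are exhibited as thin flow boxes $\U(E(u_1^{*}),t_1,t_2)$ over cross-sections foliated by horocycles ($E=\Thick_S(12r)\setminus\Thick_S(0)$ with a time window of width $4r^2e^{-r}$ for $\E^2_r$; a height shell of width $2r^2e^{-r}$ around height $1$ with time window of length $\sim r^2$ for $\E^3_r$), and the integral of $\cl_r$ over such a box is bounded via (\ref{psi-1})--(\ref{psi-2}) by the averaged intersection counts $A_1,A_2$ and $I(t,y,c_i(S))$. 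These averages are polynomial in $t$ (Proposition 7.7 and (\ref{I-estimate})), and proving that rests on Proposition 7.6, i.e.\ on integrating the intersection number over the thick part using the measure-preserving geodesic flow. This averaging step is the real content of the lemma—only a small fraction of starting points on a given horocycle produce deep (exponentially expensive) excursions—and without it, or some substitute for it, your argument does not close.
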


\vskip .1cm
We now give a better description of the set $H_r$. The results that follow in this subsection will not be used in  the proof of Theorem 4.1 below, so the reader may skip the rest of this subsection and go to the next subsection and the proof of Theorem 4.1.
\vskip .1cm

\begin{proposition} Let $z_{-1},z_1 \in \Ha$, so that $\dis(z_{-1},z_1)\ge 2$. Let $\gamma$ be the geodesic that contains $z_{-1}$ and $z_1$, and assume that  $\z_{\max}(\gamma,\infty)=z_0$, where $z_0$ is the midpoint of the geodesic segment between $z_{-1}$ and $z_1$. Let $w_{-1},w_1 \in \Ha$, so that
$\dis(z_{-1},w_{-1}),\dis(z_1,w_1) \le \delta$, for some $0 \le \delta$, and let   $\gamma'$ be the geodesic that contains $w_{-1}$ and $w_1$.
There exists a universal constant $C>0$, so that for $\delta$ small enough, we have 
$\dis(\z_{\max}(\gamma,\infty),\z_{\max}(\gamma',\infty))\le C\delta$.
\end{proposition}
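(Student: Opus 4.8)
The plan is to reduce to a normalised configuration and then carry out a direct computation with Euclidean semicircles in the upper half plane. First I would observe that the statement is invariant under post--composition with any $\phi\in\Mobi$ of the special form $\phi(z)=az+b$ with $a>0$: such $\phi$ is a hyperbolic isometry, it maps geodesics to geodesics, and since $\IM\phi(z)=a\IM z$ it preserves heights, so $\z_{\max}(\phi(\eta),\infty)=\phi(\z_{\max}(\eta,\infty))$ for every geodesic $\eta$; the hypotheses $\dis(z_{-1},z_1)\ge 2$, ``$z_0$ is the midpoint of $[z_{-1},z_1]$'', ``$\z_{\max}(\gamma,\infty)=z_0$'' and $\dis(z_{\pm1},w_{\pm1})\le\delta$ are all preserved, as is the conclusion. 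Applying $z\mapsto(z-\RE z_0)/\IM z_0$ I may therefore assume $z_0=\z_{\max}(\gamma,\infty)=i$, so that $\gamma$ is the unit semicircle $\{|z|=1,\ \IM z>0\}$ with ideal endpoints $\pm1$. Since $i$ is also the midpoint of $[z_{-1},z_1]$, the points $z_{-1},z_1$ lie on $\gamma$ at signed hyperbolic arclengths $\mp s$ from $i$ for some $s>0$, and $\dis(z_{-1},z_1)=2s\ge 2$ gives $s\ge1$. A one--line computation on the unit semicircle then yields $z_{-1}=\tanh s+i\operatorname{sech}s$ and $z_{1}=-\tanh s+i\operatorname{sech}s$; in particular $|z_{\pm1}|=1$ and $\RE z_1-\RE z_{-1}=-2\tanh s$, which is bounded away from $0$ uniformly for $s\ge1$.

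Next I would write $\gamma'$ as the Euclidean semicircle of centre $c'\in\R$ and radius $R'>0$; it is genuinely a semicircle (not a vertical line) once $\delta$ is small, since $\RE w_1-\RE w_{-1}=-2\tanh s+O(\delta)\neq0$. Then $\z_{\max}(\gamma',\infty)=c'+iR'$, so it suffices to prove $|c'|\le C\delta$ and $|R'-1|\le C\delta$ for a universal $C$. The hyperbolic ball of radius $\delta$ about $z_{\pm1}=x_{\pm1}+iy_{\pm1}$ is the Euclidean disc centred at $x_{\pm1}+iy_{\pm1}\cosh\delta$ of Euclidean radius $y_{\pm1}\sinh\delta$, so, using $y_{\pm1}=\operatorname{sech}s\le1$, one gets $w_{\pm1}=z_{\pm1}+O(\delta\operatorname{sech}s)$, hence $|w_{\pm1}|^2=1+O(\delta\operatorname{sech}s)$ and $\RE w_1-\RE w_{-1}=-2\tanh s+O(\delta\operatorname{sech}s)$. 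Solving $|w_1-c'|^2=|w_{-1}-c'|^2=R'^{2}$ for $(c',R')$ gives
$$
c'=\frac{|w_1|^2-|w_{-1}|^2}{2(\RE w_1-\RE w_{-1})}=\frac{O(\delta\operatorname{sech}s)}{-2\tanh s+O(\delta\operatorname{sech}s)}=O(\delta),
$$
and then $R'^{2}=|w_1-c'|^2=|w_1|^2-2c'\RE w_1+c'^{2}=1+O(\delta)$, so $R'=1+O(\delta)$, all with constants independent of $s\ge1$. Finally, $\z_{\max}(\gamma',\infty)=c'+iR'$ lies at Euclidean distance $O(\delta)$ from $i$ and has imaginary part in $[1-C\delta,1+C\delta]$, so for $\delta$ small its hyperbolic distance to $i=\z_{\max}(\gamma,\infty)$ is $O(\delta)$; undoing the normalising isometry gives the claimed bound.

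I expect the only delicate point to be the uniformity of all the implied constants as $\dis(z_{-1},z_1)\to\infty$, i.e.\ as the base points $z_{\pm1}$ slide out toward the ideal endpoints $\pm1$ of $\gamma$ and the Euclidean size $y_{\pm1}\sinh\delta$ of the $\delta$--balls degenerates like $\operatorname{sech}s$. This is exactly neutralised by the two structural facts recorded above: $|z_{\pm1}|=1$ forces the numerator $|w_1|^2-|w_{-1}|^2$ to be only $O(\delta\operatorname{sech}s)$ rather than merely $O(\delta)$, which cancels the factor $\operatorname{sech}s$, while the denominator $\RE w_1-\RE w_{-1}=-2\tanh s+O(\delta)$ stays bounded away from $0$. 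If preferred, one can instead split off the compact range $s\le s_0$ and handle it via the smoothness of the map $(p,q)\mapsto\z_{\max}(\mathrm{geod}(p,q),\infty)$ on a compact set of pairs at distance $\ge1$, but the computation above disposes of all $s\ge1$ at once.
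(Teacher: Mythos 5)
Your proof is correct, and it reaches the conclusion by a more computational route than the paper. The paper passes to the unit disc model, normalises $z_0=0$ so that $\gamma$ becomes the diameter joining $-1$ and $1$, asserts (without computation) that the ideal endpoints $q,q_1$ of $\gamma'$ satisfy $|q+1|,|q_1-1|\le D\delta$ with $D$ universal, and then finishes softly: the M\"obius transformation $f$ fixing the boundary point $i$ (the image of $\infty$) with $f(-1)=q$, $f(1)=q_1$ depends smoothly on $(q,q_1)$ near $(-1,1)$, so $\dis(0,f(0))\le C\delta$, and since $f$ is an isometry fixing $i$ it carries the foot of the perpendicular from $i$ to $\gamma$ (namely $0=\z_{\max}(\gamma,i)$) to $\z_{\max}(\gamma',i)$. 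You instead stay in $\Ha$, normalise $\z_{\max}(\gamma,\infty)=i$, and compute the Euclidean centre $c'$ and radius $R'$ of $\gamma'$ directly from $w_{\pm1}$; the cancellation coming from $|z_{\pm1}|=1$ gives $|c'|=O(\delta/\cosh s)$ and $|R'-1|=O(\delta)$ uniformly in $s\ge 1$, which is precisely the uniformity (as $\dis(z_{-1},z_1)\to\infty$, i.e.\ as $z_{\pm1}$ slide toward the ideal endpoints) that the paper's ``we see that'' step leaves implicit. So your argument is slightly longer but more self-contained: it in effect proves the paper's endpoint estimate, and indeed the sharper bound $O(\delta e^{-s})$ for the displacement of the endpoints and of $c'$, whereas the paper's smoothness argument is shorter but rests on that asserted uniform bound.
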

\begin{proof}  We work in the unit disc $\D$. The point $\infty \in \partial{\Ha}$, corresponds to the point $i \in \partial{\D}$. 
We may assume that  $z_0=0$. Then $\gamma$ is the geodesic that connects $-1$ and $1$. Moreover, $z_{-1}=-x$, and $z_1=x$, for some $x>(e-1)/(e+1)$. 
Let $q,q_1$, be the endpoints of $\gamma'$. From the assumption $\dis(z_{-1},w_{-1}),\dis(z_1,w_1) \le \delta$, and since $\dis(z_{-1},z_1)>2$, we see that there exists a universal constant $D>0$, so that $|q+1|,|q_1-1| \le D\delta$. The M\"obius transformation  $f$ is uniquely determined by the conditions $f(i)=i$,  $f(-1)=q$, and $f(1)=q_1$. Moreover,  $f$ depends smoothly on $q$ near $-1$, and $q_1$ near $1$. Therefore, there exists a universal constant $C>0$, so that $\dis(z_0,f(z_0))=\dis(0,f(0)) \le C\delta$. Since $f(i)=i$, we have $f(z_0)=\z_{\max}(\gamma',i)$, and this proves the proposition.
\end{proof}

Let $z_1,z_2,w_1,w_2 \in \Ha$, such that $\dis(z_1,w_1)=\dis(z_2,w_2)$. Let $\zeta_j$, $j=1,2$,  be the point on the geodesic segment between $z_j$ and $w_j$, so that $\dis(z_1,\zeta_1)=\dis(z_2,\zeta_2)=d$.  Then $\dis(w_1,\zeta_1)=\dis(w_2,\zeta_2)=d'$, for some $d' \ge 0$. We have the following elementary inequality in hyperbolic geometry
\begin{equation}\label{long-range-estimate}
\dis(\zeta_1,\zeta_2) \le D(\dis(z_1,z_2)e^{-d}+\dis(w_1,w_2)e^{-d'}),
\end{equation}
\noindent
for some universal constant $D>0$.

\begin{proposition} There exists $r_0>0$, so that for $r>r_0$, the following holds. Let $(z,v),(z',v') \in \TB\Ha$, such that $\p(z,v)=\p(z',v')$, and  $\dis(z,z') <10$. Suppose that  $\Delta_{\p(z,v)}[z,z']=0$. Let $p \in \partial{\Ha}$, and suppose $\Delta_p[\z_{\max}(\gamma_{(z,v)},p),z] \ge r$. Then 
\begin{equation}\label{geom-estimate-1}
\dis(\z_{\max}(\gamma_{(z,v)},p),\z_{\max}(\gamma_{(z',v')},p)) \le re^{-r}.
\end{equation}
\noindent
Moreover, we have
\begin{equation}\label{geom-estimate-2}
|\Delta_p[\z_{\max}(\gamma_{(z,v)},p),z]-\Delta_p[\z_{\max}(\gamma_{(z',v')},p),z']|<2re^{-r}.
\end{equation}

\end{proposition}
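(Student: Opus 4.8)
The plan is to set up coordinates so that the two geodesic rays start from nearby points and point in nearly the same direction, then to control how the "closest point to $p$" varies. Normalise by an element of $\Mob$ so that $p = \infty$ (since all the quantities $\z_{\max}(\gamma,p)$, $\Delta_p[\cdot,\cdot]$, and hyperbolic distances are $\Mob$-invariant). Then $\h_\infty$ plays the role of $\Delta_\infty[\cdot, \textrm{reference}]$, and the hypothesis $\Delta_{\p(z,v)}[z,z']=0$ together with $\p(z,v)=\p(z',v')$ says that $z$ and $z'$ lie on the same horocircle based at $\p(z,v)$ and on two rays with the same endpoint; hence $z'$ is obtained from $z$ by flowing along the equidistant-type motion fixing $\p(z,v)$, and $\dis(z,z')<10$ bounds how far. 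Let $t_0 \ge 0$ be such that $\gamma_{(z,v)}(t_0) = \z_{\max}(\gamma_{(z,v)},\infty)$; by hypothesis and (\ref{ang-estimate-1}) we have $\log(\csc(|\ang_\infty(z,v)|)) = \Delta_\infty[\z_{\max}(\gamma_{(z,v)},\infty),z] \ge r$, so $|\ang_\infty(z,v)|$ is exponentially small, of size at most $\pi e^{-r}$ by Proposition 5.2.

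The key step is to compare the angles $\ang_\infty(z,v)$ and $\ang_\infty(z',v')$ and the base points $z, z'$, and then feed this into the explicit formulas (\ref{ang-estimate-1}) and (\ref{ang-estimate-2}). Since $z$ and $z'$ differ by a bounded hyperbolic distance and both rays terminate at the common point $\p(z,v) \ne \infty$, the direction vectors $v$ and $v'$ are forced to be close: the geodesic from $z$ to $\p(z,v)$ and the geodesic from $z'$ to $\p(z,v)$ make a bounded angle at a point, and because we are in the region where the ray to $\infty$ makes a very small angle with the ray to $\p(z,v)$, a smoothness/compactness argument (as in the proofs of Propositions 5.9, 5.11, 5.12) gives $|\ang_\infty(z,v) - \ang_\infty(z',v')| = O(e^{-r})$, hence both angles are $O(e^{-r})$ and their ratio is $1 + O(e^{-r})$. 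Using (\ref{ang-estimate-2}), the "time to reach the top" satisfies $t_0' = \log(\csc|\ang_\infty(z',v')| + \cot|\ang_\infty(z',v')|)$, and the difference $|t_0 - t_0'|$ is $O(e^{-r})$ because $\log \csc \theta$ and $\log\cot\theta$ both have derivative $O(1/\theta)$ and $|\Delta\theta| = O(e^{-r} \cdot \theta)$ when the angle perturbation is itself multiplicatively small. Then $\z_{\max}(\gamma_{(z,v)},\infty) = \gamma_{(z,v)}(t_0)$ and $\z_{\max}(\gamma_{(z',v')},\infty) = \gamma_{(z',v')}(t_0')$; these two points are compared by (\ref{long-range-estimate}) with $z_1 = z$, $z_2 = z'$, $w_j = \p(z,v)$, and $d = t_0$: the first term is $\dis(z,z') e^{-t_0} \le 10 e^{-r}$, and the second term is controlled because flowing toward the common endpoint $\p(z,v)$ contracts, plus the small correction $|t_0 - t_0'|$. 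Collecting constants and absorbing the factor $10$ and the universal constant $D$ into the polynomial slack gives $\dis(\z_{\max}(\gamma_{(z,v)},\infty),\z_{\max}(\gamma_{(z',v')},\infty)) \le re^{-r}$ for $r$ large, which is (\ref{geom-estimate-1}).

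For (\ref{geom-estimate-2}), I would write $\Delta_\infty[\z_{\max}(\gamma_{(z,v)},\infty),z] = \log\csc|\ang_\infty(z,v)|$ and similarly for the primed quantities, so the difference in question is $|\log\csc|\ang_\infty(z,v)| - \log\csc|\ang_\infty(z',v')||$; since the two angles are within a multiplicative factor $1 + O(e^{-r})$ of each other (established above), this difference is $O(e^{-r})$, and choosing $r_0$ large makes the implied constant beat the factor $2r$, giving the stated bound $2re^{-r}$. Alternatively, one can use $\Delta_\infty[\z_{\max}(\gamma_{(z,v)},\infty),z] = \h_\infty(\z_{\max}(\gamma_{(z,v)},\infty)) - \h_\infty(z)$, with $\h_\infty(z) = \h_\infty(z')$ forced by the hypothesis $\Delta_{\p(z,v)}[z,z']=0$ being preserved after normalising $\p(z,v)$ appropriately — wait, one must be slightly careful here since we normalised $p=\infty$, not $\p(z,v)=\infty$; instead one tracks $\h_\infty$ directly and uses that $|\h_\infty(\z_{\max}(\gamma_{(z,v)},\infty)) - \h_\infty(\z_{\max}(\gamma_{(z',v')},\infty))| \le \dis(\z_{\max}(\gamma_{(z,v)},\infty),\z_{\max}(\gamma_{(z',v')},\infty)) \le re^{-r}$ from (\ref{geom-estimate-1}), together with $|\h_\infty(z) - \h_\infty(z')| \le \dis(z,z')$ combined with the fact that $z,z'$ lie on a common horocircle at $\p(z,v)$ which (after the normalisation) translates into a small vertical displacement — this requires a short lemma but is routine.

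The main obstacle is the second, non-contracting term in the comparison: controlling $\z_{\max}(\gamma_{(z,v)},\infty)$ versus $\z_{\max}(\gamma_{(z',v')},\infty)$ is only delicate because the two geodesic rays, while starting close, travel a long distance (of order $r$) before reaching their highest point, so naive Lipschitz estimates give a bound that grows like $e^{r}\dis(z,z')$ rather than shrinking. The resolution is that the relevant perturbation is not the distance between $z$ and $z'$ but the difference of angles $\ang_\infty$, which is already $O(e^{-r})$ and moreover varies by a further multiplicative $O(e^{-r})$; equivalently, both rays are nearly asymptotic to the ray $\gamma^{\infty}_z$ going straight up, and the contraction toward $\p(z,v)$ (not toward $\infty$) is what makes (\ref{long-range-estimate}) give the correct exponential decay. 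Getting the bookkeeping of the two competing scales right — the $e^{-r}$ from small angles and the $e^{t_0} \approx e^r$ from the long travel time, which cancel — is the crux; the smoothness/compactness inputs are all already available from Propositions 5.9--5.12 and (\ref{ang-estimate-1})--(\ref{ang-estimate-3}).
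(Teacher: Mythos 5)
There is a genuine gap, and it sits at the exact point where all the difficulty of the statement is concentrated. Your argument (for both \eqref{geom-estimate-1} via the correction $|t_0-t_0'|$, and for \eqref{geom-estimate-2}) rests on the claim that the two angles $\ang_\infty(z,v)$ and $\ang_\infty(z',v')$ agree up to a multiplicative factor $1+O(e^{-r})$, equivalently that $|\Delta\theta|=O(e^{-r}\theta)=O(e^{-2r})$. But what you actually offer is an additive bound $|\ang_\infty(z,v)-\ang_\infty(z',v')|=O(e^{-r})$ from a "smoothness/compactness argument", followed by the inference "hence \ldots their ratio is $1+O(e^{-r})$". That inference is false: with $\theta=e^{-r}$ and $\theta'=2e^{-r}$ the difference is $O(e^{-r})$ while the ratio is $2$; since $\log\csc\theta$ has derivative of size $e^{r}$ at these angles, an additive $O(e^{-r})$ angle bound only gives $|\Delta_p[\z_{\max}(\gamma_{(z,v)},p),z]-\Delta_p[\z_{\max}(\gamma_{(z',v')},p),z']|=O(1)$, nowhere near $2re^{-r}$. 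Moreover, the multiplicative comparison is not something a generic compactness argument can produce, because the configuration degenerates as $r\to\infty$ ($p$ converges to $\p(z,v)$ in the visual metric at $z$); the multiplicative statement is, after applying \eqref{ang-estimate-1}, essentially a restatement of \eqref{geom-estimate-2} itself, so as written the proposal is circular at its core. The same problem infects your fallback route ("this requires a short lemma but is routine"): the bound $|\h_\infty(z)-\h_\infty(z')|\le\dis(z,z')<10$ is useless at the scale $re^{-r}$, and the "short lemma" asserting $O(e^{-r})$ closeness of the base heights is again exactly the quantitative crux, which must be proved using the hypothesis $\Delta_p[\z_{\max}(\gamma_{(z,v)},p),z]\ge r$, not just the common horocycle at $\p(z,v)$.

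For comparison, the paper's proof avoids comparing $t_0$ with $t_0'$ (or the two angles) altogether. For \eqref{geom-estimate-1} it uses the Busemann condition $\Delta_{\p(z,v)}[z,z']=0$ and \eqref{long-range-estimate} to get the equal-time contraction $\dis(\gamma_{(z,v)}(t),\gamma_{(z',v')}(t))\le 10De^{-t}$, applies this at the two times $t_0-1$ and $t_0+1$ (so the perturbed points straddle the top and are $2$ apart), and then invokes the preceding stability proposition for $\z_{\max}$ of a geodesic through two slightly moved points; this gives \eqref{geom-estimate-1} directly. For \eqref{geom-estimate-2} it does not try to compare $\h$-values of $z$ and $z'$; instead it replaces $z'$ by the point $z''$ on the geodesic through $\gamma_{(z',v')}$ with $\Delta_p[z,z'']=0$, shows $\dis(z',z'')=O(e^{-r})$ by comparing the horocycle at $p$ through $z$ with the horocycle at $\p(z,v)$ through $z$ (their tangents at $z$ differ by the angle $|\ang_p(z,v)|\le\pi e^{-r}$), and then deduces \eqref{geom-estimate-2} from \eqref{geom-estimate-1} plus this $O(e^{-r})$ displacement. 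If you want to salvage your outline, the honest fix is either to adopt that two-step structure, or to carry out the explicit computation behind your multiplicative claim (e.g.\ send $\p(z,v)\to\infty$ by an inversion and compute the horoball depths at $p$ along the two vertical rays), at which point the "compactness" language disappears and the $e^{-r}$ gain is visible; as it stands, the decisive estimate is asserted rather than proved.
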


\begin{proof} The rays $\gamma_{(z,v)}$ and $\gamma_{(z',v')}$  end at the same point at $\partial{\Ha}$.  Since  $\Delta_{\p(z,v)}[z,z']=0$,
and since $\dis(z,z') <10$, for any $t>0$, applying (\ref{long-range-estimate}) we obtain

\begin{equation}\label{distance-1}
\dis(\gamma_{(z,v)}(t),\gamma_{(z',v')}(t)) \le 10De^{-t}.
\end{equation}
\noindent
Let $t_0 >0$, so that $\gamma_{(z,v)}(t_0)=\z_{\max}(\gamma_{(z,v)},p)$. By Proposition 5.2 we have $t_0>\Delta_p[\z_{\max}(\gamma_{(z,v)},p),z] \ge r$.
Let $\zeta=\gamma_{(z,v)}(t_0-1)$, and $\zeta_1=\gamma_{(z,v)}(t_0+1)$. Also, let $w=\gamma_{(z',v')} (t_{0}-1)$, and $w_1=\gamma_{(z',v')}(t_0+1)$. By (\ref{distance-1}) we have
$\dis(\zeta,w),\dis(\zeta_1,w_1) \le 10e^{1-t_{0}}$, so it follows from the previous proposition that for $r$ large enough, we have
$$
\dis(\z_{\max}(\gamma_{(z,v)},p),\z_{\max}(\gamma_{(z',v')},p)) \le 10DCe^{1-t_{0}}<re^{-r}.
$$
\noindent
This shows that (\ref{geom-estimate-1}) holds.
\vskip .1cm 
We may assume that $\p(z,v)=\infty$. Let $\alpha:\R \to \Ha$, denote the naturally parametrised horocircle (with respect to $\infty$), so that the oriented angle between the vectors $\alpha'(0)$ and $v$, is ${{\pi}\over{2}}$. Then for every $s$, we have
$$
\Delta_{\infty}[z,\alpha(s)]=0.
$$
\noindent
Moreover, there exists  $s_0 \in \R$, so that $\alpha(s_0)=z'$. Assume $s_0 \ge 0$ (the other case is handled in the same way). The number $s_0$ depends only on the upper bound of the hyperbolic distance between $z$ and $z'$ which is bounded above by $10$.
\vskip .1cm
By $\alpha_r:\R \to \Ha$ denote the naturally parametrised horocircle (with respect to $p$), so that the oriented angle between the vectors
$\alpha'_r(0)$ and $v$ is positive.  It follows from Proposition 5.2  that $|\ang_p(z,v)|<\pi e^{-r}$. Therefore, the angle between the vectors $\alpha'(0)$ and $\alpha'_r(0)$ is at most $\pi e^{-r}$. We find that there exists  a constant $K>0$ that depend only on $s_0$ with the following properties. For $0\le s \le (s_0+1)$ we have 
\begin{equation}\label{hyp-estimate-1}
|\Delta_{\infty}[z,\alpha_r(s)]| \le Ke^{-r}, 
\end{equation}
\vskip .1cm
Let $\gamma:\R \to \Ha$ denote the naturally parametrised geodesic that contains the geodesic ray $\gamma_{(z',v')}$ such that $\gamma(0)=z'$.
For $r$ large enough, we have that $\alpha_r$ intersects the geodesic $\gamma$. This is true because when $r \to \infty$ we have that  the horoball at $p$ that contains $z$ converges on compact sets in $\Ha$ to the horoball at $\infty$ that contains $z$ and on the other hand we have that $\alpha$ intersect $\gamma$ orthogonally at $z'$. Let $z''=\alpha_r \cap \gamma$. Let $0 \le  s'$, be such that $\alpha_r(s')=z''$. Then for $r$ large enough, we have  $0 \le  s'  \le (s_0+1)$.    We have $\dis(z',z'')=|\Delta_{\infty}[z',z'']|=|\Delta_{\infty}[z,z'']|=|\Delta_{\infty}[z,\alpha_r(s')]|$ so it follows from (\ref{hyp-estimate-1})  that 
$$
\dis(z',z'') \le Ke^{-r}.
$$  
\vskip .1cm
Note $\Delta_p[z,z'']=0$. It follows from  (\ref{geom-estimate-1}) that
$$
|\Delta_p[\z_{\max}(\gamma_{(z,v)},p),z]-\Delta_p[\z_{\max}(\gamma,p),z'']| \le  \dis(\z_{\max}(\gamma_{(z,v)},p),\z_{\max}(\gamma,p)) \le re^{-r}.
$$
\noindent
Since $\dis(z',z'') \le Ke^{-r}$, we have for $r$ large, that
$$
|\Delta_p[\z_{\max}(\gamma_{(z,v)},p),z]-\Delta_p[\z_{\max}(\gamma_{(z',v')},p),z']|\le  re^{-r}+Ke^{-r}=2re^{-r},
$$
\noindent
which proves (\ref{geom-estimate-2}).

\end{proof}

The following proposition states that if two unit vectors in $\TB\Ha$ are nearby and related by the horocyclic flow then either they are absorbed by the same cusp or the first vector ''just misses" being absorbed by some cusp or it just barely gets absorbed by some cusp.

\begin{proposition} Let $r>4$ and let $(z,v), \, (z',v') \in \TB\Ha$ such that $(z,v) \in \FD_r(G)$ and 
such that
\begin{itemize}
\item $\p(z,v)=\p(z',v')$.  
\item $\dis(z,z') \le 10$ and $\Delta_{\p(z,v)}[z,z']=0$.
\end{itemize}
Assume that $\abs_r(z,v) \ne \abs_r(z',v')$. Then at least one of the following two conditions holds
\begin{enumerate}
\item There exists a cusp $c \in \Cus(G)$, such that 
$$
r-r^2e^{-r}<\h_{c}(\z_{\max}(\gamma_{(z,v)},c))-\h_c(z)<r+r^2e^{-r}, 
$$
\noindent
and  $\z_{\max}(\gamma_{(z,v)},c) \in \Hor_c(1)$.
\item There exists a cusp $c \in \Cus(G)$, such that  
$$
1-r^2e^{-r}<\h_c(\z_{\max}(\gamma_{(z,v)},c))<1+r^2e^{-r},
$$
\noindent
and  $\z_{\max}(\gamma_{(z,v)},c)=\gamma_{(z,v)}(t)$, for some $r-1<t<r^2+1$.
\end{enumerate}

\end{proposition}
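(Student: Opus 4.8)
The plan is to argue by contradiction. Assume $\abs_r(z,v)\neq\abs_r(z',v')$ but that neither (1) nor (2) holds, and deduce $\abs_r(z,v)=\abs_r(z',v')$. Write $\gamma=\gamma_{(z,v)}$, $\gamma'=\gamma_{(z',v')}$ and $c=\abs_r(z,v)$; since $\abs_r(z',v')$ exists, $\gamma'$ is absorbed as well. The tools will be: the exponential convergence $\dis(\gamma(t),\gamma'(t))\le 10De^{-t}$ from (\ref{distance-1}) (with $D$ the constant there); the transfer estimates (\ref{geom-estimate-1}) and (\ref{geom-estimate-2}) of the preceding proposition; the identities (\ref{ang-estimate-1}) and (\ref{ang-estimate-2}), which give that for a cusp $d$ not an endpoint of $\gamma_{(w,u)}$ the time $t_{\max}(\gamma_{(w,u)},d)$ at which $\gamma_{(w,u)}$ reaches $\z_{\max}(\gamma_{(w,u)},d)$ equals $\Delta_d[\z_{\max}(\gamma_{(w,u)},d),w]+\log(1+\cos|\ang_d(w,u)|)$, and that $|\ang_d(w,u)|\le\pi e^{-r}$ whenever $\abs_r(w,u)=d$, so for an absorbing cusp $t_{\max}$ exceeds $\Delta_d[\z_{\max},w]$ by more than $\tfrac12$; the fact that distinct $1$-horoballs in $\Ha$ lie at distance $\ge\epsilon(S)>0$ (the $0$-horoballs on $S$ being disjoint and non-touching); and the fact that along a geodesic the height toward a cusp is $\h_{\max}-\log\cosh s$ in arclength $s$ from the closest point, so a geodesic entering $\Hor_d(1)$ to depth $\eta$ spends time $\mathrm{arccosh}(e^{\eta})=\sqrt{2\eta}\,(1+o(1))$ passing from $\partial\Hor_d(1)$ to its top, and $t_{\max}(\gamma_{(w,u)},d)=\at_r(w,u)+\mathrm{arccosh}(e^{\h_d(\z_{\max}(\gamma_{(w,u)},d))-1})$ when $d$ is the absorbing cusp.

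\textbf{Step 1: $c$ absorbs $\gamma'$.} We have $\Delta_c[\z_{\max}(\gamma,c),z]\ge r$ and $\z_{\max}(\gamma,c)\in\Hor_c(1)$, and since (1) fails, $\Delta_c[\z_{\max}(\gamma,c),z]\ge r+r^2e^{-r}$. First I check $\h_c(\z_{\max}(\gamma,c))\ge 1+r^2e^{-r}$: otherwise $\h_c(\z_{\max}(\gamma,c))\in[1,1+r^2e^{-r})$, the entry-to-top time of $\gamma$ in $\Hor_c(1)$ is $<1$, hence $t_{\max}(\gamma,c)\le\at_r(z,v)+1\le r^2+1$, while $t_{\max}(\gamma,c)\ge\Delta_c[\z_{\max}(\gamma,c),z]+\tfrac12>r-1$, so (2) would hold with the cusp $c$, a contradiction. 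Now apply the preceding proposition with $p=c$ and $(z,v)$ as the first vector (legal since $\Delta_c[\z_{\max}(\gamma,c),z]\ge r$): (\ref{geom-estimate-2}) gives $\Delta_c[\z_{\max}(\gamma',c),z']\ge r+r^2e^{-r}-2re^{-r}>r$ and (\ref{geom-estimate-1}) gives $\dis(\z_{\max}(\gamma,c),\z_{\max}(\gamma',c))\le re^{-r}$, whence $\h_c(\z_{\max}(\gamma',c))>1$. So $\gamma'$ visits $\Hor_c(1)$ and $c$ absorbs it.

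\textbf{Step 2: $c$ is the first absorbing cusp of $\gamma'$.} Suppose not, and set $c'=\abs_r(z',v')\neq c$. Then $\gamma'$ reaches $\z_{\max}(\gamma',c')$ before $\z_{\max}(\gamma',c)$, so by convexity and disjointness of $1$-horoballs the whole visit of $\gamma'$ to $\Hor_{c'}(1)$ precedes its visit to $\Hor_c(1)$; thus $\at_r(z',v')$ is at most the time $\gamma'$ enters $\Hor_c(1)$, which by (\ref{distance-1}) and Step 1 is within $O(re^{-r})$ of $\at_r(z,v)\le r^2$. Also $t_{\max}(\gamma',c')>r-1$, and at time $t_{\max}(\gamma',c')>r$ the point $\gamma(t_{\max}(\gamma',c'))$ lies within $10De^{-r}$ of $\z_{\max}(\gamma',c')\in\Hor_{c'}(1)$, so $\gamma$ attains $c'$-height $\ge 1-10De^{-r}$, i.e. $\h_{c'}(\z_{\max}(\gamma,c'))\ge 1-10De^{-r}$ (if $\p(z,v)=c'$ this is read as: $\gamma$ passes within $10De^{-r}$ of $\Hor_{c'}(1)$). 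If $\h_{c'}(\z_{\max}(\gamma,c'))<1$, then this value lies in $(1-r^2e^{-r},1+r^2e^{-r})$, $\gamma'$ enters $\Hor_{c'}(1)$ to depth $<10De^{-r}$, hence $t_{\max}(\gamma',c')<\at_r(z',v')+\sqrt{20D}\,e^{-r/2}<r^2+1$, and since $t_{\max}(\gamma,c')$ differs from $t_{\max}(\gamma',c')$ by $<\sqrt{20D}\,e^{-r/2}$ (the $c'$-height profiles near the maxima are $\h_{\max}-\log\cosh s$), $t_{\max}(\gamma,c')\in(r-1,r^2+1)$, so (2) holds with $c'$, a contradiction. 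Otherwise $\h_{c'}(\z_{\max}(\gamma,c'))\ge 1$, so $c'$ occurs in the cusp list of $\gamma$; if $c'$ absorbs $\gamma$ and $\gamma$ reaches $\z_{\max}(\gamma,c')$ before $\z_{\max}(\gamma,c)$, then $c$ is not the first absorbing cusp of $\gamma$, contradicting $c=\abs_r(z,v)$; if $c'$ absorbs $\gamma$ but not before $c$, then since $t_{\max}(\gamma,c')$ and $t_{\max}(\gamma,c)$ differ from $t_{\max}(\gamma',c')$ and $t_{\max}(\gamma',c)$ by $O(re^{-r})$ (by (\ref{geom-estimate-1}) and (\ref{distance-1}), all times being $\ge r$) and $t_{\max}(\gamma',c')<t_{\max}(\gamma',c)$, we get $0\le t_{\max}(\gamma,c')-t_{\max}(\gamma,c)<\epsilon(S)$, impossible as $\gamma$ would pass between the disjoint horoballs $\Hor_{c'}(1)$ and $\Hor_c(1)$ in time $<\epsilon(S)$; and if $c'$ does not absorb $\gamma$, then (since (1) fails) $\Delta_{c'}[\z_{\max}(\gamma,c'),z]\le r-r^2e^{-r}$, while $\Delta_{c'}[\z_{\max}(\gamma',c'),z']\ge r$, so applying the preceding proposition with $p=c'$ and $(z',v')$ as the first vector (its hypotheses being symmetric in the two unit vectors) and using (\ref{geom-estimate-2}) gives $\Delta_{c'}[\z_{\max}(\gamma,c'),z]>r-2re^{-r}>r-r^2e^{-r}$, a contradiction. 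In every case we reach a contradiction, so $\abs_r(z',v')=c=\abs_r(z,v)$, against the hypothesis. The degenerate case $\p(z,v)=c$, where $\z_{\max}(\gamma,c)$ is undefined because $\gamma$ converges to $c$, is handled the same way with the role of $\z_{\max}(\gamma,c)$ played by the endpoint of $\gamma$.

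\textbf{Expected main difficulty.} The conceptually clean part is the two-sided transport of the absorption inequality by the preceding proposition; the real work is the borderline bookkeeping of Step 2 in the case $\h_{c'}(\z_{\max}(\gamma,c'))<1$, where one must pass the $\FD_r$-bound $\at_r(z,v)\le r^2$ to $\at_r(z',v')$ — for which no $\FD_r$ hypothesis is available — through the exponential closeness of the two rays, and then verify that all the accumulated $o(1)$ corrections ($re^{-r}$ from (\ref{geom-estimate-1})--(\ref{geom-estimate-2}), $10De^{-r}$ and $\sqrt{20D}\,e^{-r/2}$ from (\ref{distance-1}) and the $\log\cosh$ profile) stay within the slack built into the intervals $(r-1,r^2+1)$ of condition (2). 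The geometric observation that makes this possible is that the time needed to traverse an excursion into $\Hor_d(1)$ collapses precisely when the excursion depth approaches the critical value $1$; keeping the constants straight there is where care is required.
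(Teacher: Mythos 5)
Your overall architecture is the paper's argument written in contrapositive form: the same transfer estimates (\ref{geom-estimate-1})--(\ref{geom-estimate-2}), the same use of $\at_r(z,v)\le r^2$ coming from the $\FD_r$ hypothesis, and the same case split on the competing cusp $c'=\abs_r(z',v')$ according to whether $\z_{\max}(\gamma_{(z,v)},c')$ lies in $\Hor_{c'}(1)$. Step 1 and the last two bullets of Step 2 are sound and match the paper's cases.

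The gap is the step you yourself single out as the hard one: the shallow sub-case $\h_{c'}(\z_{\max}(\gamma_{(z,v)},c'))<1$. There you obtain the time bound in condition (2) through $\at_r(z',v')$, using the claim that the entry time of $\gamma_{(z',v')}$ into $\Hor_c(1)$ is within $O(re^{-r})$ of $\at_r(z,v)$. That claim is unproved and false in general: at time $\at_r(z,v)$ the two rays are only $10De^{-\at_r(z,v)}$ apart, which is of constant size when $\at_r(z,v)=O(1)$, and first-entry times into a horoball are not Lipschitz under such perturbations --- they obey only a square-root ($\operatorname{arccosh}$) modulus, and the lag can be of constant size exactly when the excursion of $\gamma_{(z',v')}$ into $\Hor_c(1)$ is shallow, which is the regime your Step 1 permits (its maximal $c$-height need only exceed $1$ by about $r^2e^{-r}$). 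With an honest constant-size lag your chain yields only $t<r^2+O(1)$, overshooting the window $t<r^2+1$, and repairing it requires a further dichotomy (small $\at_r(z,v)$ forces a deep $c$-excursion of $\gamma_{(z,v)}$ since its maximum occurs after time $r$, so the final bound is then far below $r^2$; while $\at_r(z,v)\ge r-O(1)$ makes the lag genuinely exponentially small). None of this bookkeeping is needed: bound the time along $\gamma_{(z,v)}$ directly, as the paper does. The point $\z_{\max}(\gamma_{(z,v)},c')$ lies within $re^{-r}$ of $\z_{\max}(\gamma_{(z',v')},c')\in\Hor_{c'}(1)$, hence outside $\Hor_c(1)$ (distinct $1$-horoballs are at distance at least $2$ because the $0$-horoballs are disjoint); and since for $\gamma_{(z',v')}$ the closest approach to $c'$ precedes the closest approach to $c$ by at least $2$, while the corresponding times for $\gamma_{(z,v)}$ differ from those for $\gamma_{(z',v')}$ by exponentially small amounts (by (\ref{geom-estimate-1}) applied at $c$ and at $c'$ together with (\ref{distance-1})), the same order holds for $\gamma_{(z,v)}$. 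Hence $\z_{\max}(\gamma_{(z,v)},c')=\gamma_{(z,v)}(t_0)$ with $t_0<\at_r(z,v)\le r^2$, and condition (2) follows with no reference to $\at_r(z',v')$ at all.
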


\begin{proof} 
Set $\abs_r(z,v)=c \in \Cus(G)$. We have  $c \ne \abs_r(z',v')$.
First consider the case when the geodesic ray $\gamma_{(z',v')}$ does not intersect 
$\Hor_c(1)$. In this case we show that the condition ($2$) holds.
It follows from (\ref{geom-estimate-1}) that 
$$
\dis(\z_{\max}(\gamma_{(z,v)},c), \z_{\max}(\gamma_{(z',v')},c)) \le re^{-r}.
$$
\noindent
Since $\z_{\max}(\gamma_{(z,v)},c) \in \Hor_c(1)$ and since in this case $\z_{\max}(\gamma_{(z',v')},c)$ does not belong to  $\Hor_c(1)$,
we conclude that  $1\le \h_c(\z_{\max}(\gamma_{(z,v)},c))<1+r^2e^{-r}$. 
Let $t_0>0$ be such that $\z_{\max}(\gamma_{(z,v)},c)=\gamma_{(z,v)}(t_0)$.
In order to show that ($2$) holds  it remains to prove that $r-1<t_0<r^2+1$. 
Let $t_1 \ge 0$ so that $\gamma_{(z,v)}(t_1)$ is the first point of entry of the geodesic ray $\gamma_{(z,v)}$ into $\Hor_c(1)$. Then $\at_r(z,v)=t_1$ and by the assumption $(z,v) \in \FD_r(G)$ we have $t_1 \le r^{2}$. Since $\h_c(\gamma_{(z,v)}(t_0))-\h_c(\gamma_{(z,v)}(t_1) \le r^{2}e^{-r}$ and since by Proposition 5.2 we have
$t_0-t_1 <\h_c(\gamma_{(z,v)}(t_0))-\h_c(\gamma_{(z,v)}(t_1) +\log 2$, we have  $\at_r(z,v) \le t_0< \at_r(z,v)+1 \le r^{2}+1$. 
On the other hand, since $\abs_r(z,v)=c$ we have  $\h_c(\gamma_{(z,v)}(t_0))-\h_c(z) \ge r$. Then from Proposition 5.2 we have $t_0 >r-1$. This shows that  the condition ($2$) holds.
\vskip .1cm
From now on we assume that  $\gamma_{(z',v')}$ enters the horoball $\Hor_c(1)$ but that $c \ne \abs_r(z',v')$. Then there are two possible reasons why 
$c \ne \abs_r(z',v')$. The first one  is that $\Delta_c[\z_{\max}(\gamma_{(z',v')},c),z']=\h_c(\z_{\max}(\gamma_{(z',v')},c))-\h_c(z')<r$. Since $\Delta_c[\z_{\max}(\gamma_{(z,v)},c),z]=\h_c(\z_{\max}(\gamma_{(z,v)},c))-\h_c(z) \ge r$, from (\ref{geom-estimate-2}) we find that 
$r \le \h_c(\z_{\max}(\gamma_{(z,v)},c))-\h_c(z) < r+r^2e^{-r}$. So in this case   the condition ($1$) holds 
because we already know that $\z_{\max}(\gamma_{(z,v)},c) \in \Hor_c(1)$. 
\vskip .1cm
The second reason is that the geodesic ray $\gamma_{(z',v')}$ gets absorbed before entering the horoball $\Hor_c(1)$. Set $\abs_r(z',v')=c' \in \Cus(G)$. If $t_0 \ge 0$ is such that $\z_{\max}(\gamma_{(z,v)},c')=\gamma_{(z,v)}(t_0)$ then $t_0<\at_r(z,v) \le r^2$. 
Assume that  $\z_{\max}(\gamma_{(z,v)},c')$ does 
not belong to $\Hor_{c'}(1)$. Then we show that the condition ($2$) holds (with respect to the cusp $c'$).
Since  $\z_{\max}(\gamma_{(z',v')},c') \in \Hor_{c'}(1)$ by (\ref{geom-estimate-1}) we have that
$\z_{\max}(\gamma_{(z,v)},c') \in \Hor_{c'}(1-r^2e^{-r})$. Since $\h_{c'}(\z_{\max}(\gamma_{(z',v')},c'))-\h_{c'}(z') \ge r$, from (\ref{geom-estimate-2})
we get that 
$$
\h_{c'}(\gamma_{(z,v)}(t_0) )-\h_{c'}(z)>\h_{c'}(\z_{\max}(\gamma_{(z',v')},c'))-\h_{c'}(z')-r^2e^{-r}>r-r^{2}e^{-r}.
$$
\noindent
This yields that $r-1 < t_0$. We have already seen that $t_0<\at_r(z,v) \le r^2$. This proves the statement.
\vskip .1cm
It remains to examine the case $\z_{\max}(\gamma_{(z,v)},c') \in \Hor_{c'}(1)$.  We show that in this case ($2$) holds.
Again let $t_0 \ge 0$  such that $\z_{\max}(\gamma_{(z,v)},c')=\gamma_{(z,v)}(t_0)$.
It follows from (\ref{geom-estimate-2}) that 
$$
\h_{c'}(\gamma_{(z,v)}(t_0) )-\h_{c'}(z)>\h_{c'}(\z_{\max}(\gamma_{(z',v')},c'))-\h_{c'}(z')-r^2e^{-r}>r-r^{2}e^{-r}.
$$
\noindent
On the other hand, we have that $\h_{c'}(\gamma_{(z,v)}(t_0) )-\h_{c'}(z) <r$ because otherwise we would have that $\abs_r(z,v)=c'$. The last two estimates put together give us that $r-r^{2}e^{-r}< \h_{c'}(\gamma_{(z,v)}(t_0) )-\h_{c'}(z) <r$.
This proves the proposition.

\end{proof}

The following proposition replaces the hypotheses $\dis(z,z') \le 10$ and $\Delta_{\p(z,v)}[z,z']=0$ of Proposition 5.15 with the condition that $z$ and $z'$ are in $\Thick_G(2r-\log 3)$.  

\begin{proposition} Let $r>4$ and $c \in \Cus(G)$. Let $(z,v), \, (z',v') \in \TB\Hor_c(2r-\log 3)$ where $(z,v) \in \FD_r(G)$  such that $\p(z,v)=\p(z',v')$ and such that $\abs_r(z,v) \ne c \ne \abs_r(z',v')$. If $\abs_r(z,v) \ne \abs_r(z',v')$ then  the following holds
\begin{itemize}
\item There exists a cusp $c \in \Cus(G)$, such that  
$$
1-r^2e^{-r}<\h_c(\z_{\max}(\gamma_{(z,v)},c))<1+r^2e^{-r},
$$
\noindent
and  $\z_{\max}(\gamma_{(z,v)},c)=\gamma_{(z,v)}(t)$ for some $r-1<t<r^2+1$.
\end{itemize}

\end{proposition}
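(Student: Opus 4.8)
The plan is to reduce to Proposition 5.15, which we cannot invoke directly since we are missing its hypotheses $\dis(z,z')\le 10$ and $\Delta_{\p(z,v)}[z,z']=0$. The point is that when two vectors sit deep inside a horoball, one can flow both forward — cheaply — to a pair that \emph{does} satisfy those hypotheses, and this forward flow alters neither the absorbing cusp nor the relevant part of the membership $(z,v)\in\FD_r(G)$. First I would normalise so that $G=G_c$ and the given cusp is $\infty$, so $\h_{\infty}(w)=\log\IM w$ and $\Hor_{\infty}(t)=\{\IM w\ge e^{t}\}$, whence $\IM z,\IM z'\ge e^{2r}/3$. Writing $p=\p(z,v)$, one checks $p\ne\infty$ (otherwise $\gamma_{(z,v)}$ would be absorbed at $\infty$), and since $z,z'\in\Hor_{\infty}(r+2)$ and $\abs_r(z,v),\abs_r(z',v')\ne\infty$, Proposition 5.6 shows $\abs_r(z,v),\abs_r(z',v')$ are the first non-$\infty$ cusps whose $1$-horoballs the two rays meet; also $\p(z',v')=p$ by hypothesis. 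The engine of everything below is the elementary inequality $\h_{c'}(w)\le -\h_{\infty}(w)$ for every cusp $c'\ne\infty$ and every $w\in\Ha$ (one cannot be deep in two disjoint $0$-horoballs at once; concretely $\h_{c'}(x+iy)=\log d+\log y-2\log|(x+iy)-c'|$ with $d\le 1$ the Euclidean diameter of $\Hor_{c'}(0)$, and $|(x+iy)-c'|\ge y$).

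Next I would flow $(z,v)$ and $(z',v')$ forward to the first exit points of their rays from $\Hor_{\infty}(2r-\log 3)$, obtaining $(z_1,v_1)=\flow_{s_0}(z,v)$ and $(z_1',v_1')=\flow_{s_0'}(z',v')$ at height $e^{2r}/3$. Each underlying geodesic semicircle has Euclidean radius $\ge e^{2r}/3$, so it exits the line $\{\IM=e^{2r}/3\}$ within Euclidean distance $e^{2r}/3$ of $p$; hence $|x_{z_1}-x_{z_1'}|\le 2e^{2r}/3$ at common height $e^{2r}/3$, which gives $\dis(z_1,z_1')\le 2\log(1+\sqrt 2)<2$ and $|\Delta_p[z_1,z_1']|<2$. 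Flowing whichever of $(z_1,v_1),(z_1',v_1')$ is the farther from $p$ forward by $|\Delta_p[z_1,z_1']|$, I obtain $(\hat z,\hat v)=\flow_{s}(z,v)$ and $(\hat z',\hat v')=\flow_{s'}(z',v')$ with $s,s'\ge 0$, $\p(\hat z,\hat v)=\p(\hat z',\hat v')=p$, $\Delta_p[\hat z,\hat z']=0$, $\dis(\hat z,\hat z')<4$, and $\hat z,\hat z'\in\Hor_{\infty}(2r-\log 3-2)$.

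The key step is to show this flow preserves the absorption data: $\abs_r(\hat z,\hat v)=\abs_r(z,v)=:c_2$, $\abs_r(\hat z',\hat v')=\abs_r(z',v')$, and $\at_r(\hat z,\hat v)=\at_r(z,v)-s$. Indeed the stretch of $\gamma_{(z,v)}$ from $z$ to $\hat z$ lies in $\Hor_{\infty}(2r-\log 3-2)\subset\Hor_{\infty}(1)$ and beyond $\z_{\max}(\gamma_{(z,v)},\infty)$, so $\gamma_{(\hat z,\hat v)}$ is a sub-ray meeting exactly the same $1$-horoballs, with $\z_{\max}(\gamma_{(\hat z,\hat v)},\infty)=\hat z$; thus the ordered cusp list is $(\infty,c_2,c_3,\dots)$ for both $z$ and $\hat z$, and for every non-$\infty$ cusp $c'$ met we have $\z_{\max}(\gamma_{(z,v)},c')\in\Hor_{c'}(1)$, so $\h_{c'}(\z_{\max}(\gamma_{(z,v)},c'))-\h_{c'}(\hat z)\ge 1+\h_{\infty}(\hat z)\ge 2r-1-\log 3>r$ for $r$ large, while at $\infty$ the difference is $0$. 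Hence $c_2$ is absorbing from $\hat z$ too, and the first entry into $\Hor_{c_2}(1)$ occurs at time $\at_r(z,v)-s$ from $\hat z$; the same argument (using only $\IM z'\ge e^{2r}/3$) handles the primed pair, so $\abs_r(\hat z,\hat v)\ne\abs_r(\hat z',\hat v')$. Now since the proof of Proposition 5.15 uses $(z,v)\in\FD_r(G)$ only through $\at_r(z,v)\le r^2$, and $\at_r(\hat z,\hat v)=\at_r(z,v)-s\le r^2$, I apply Proposition 5.15 to $(\hat z,\hat v),(\hat z',\hat v')$. Its conclusion (1) is vacuous for $(\hat z,\hat v)$: any cusp it would produce is $\ne\infty$ and has $\z_{\max}(\gamma_{(\hat z,\hat v)},\cdot)\in\Hor_{\cdot}(1)$, so the quantity it confines to $(r-r^2e^{-r},\,r+r^2e^{-r})$ is in fact $\ge 1+\h_{\infty}(\hat z)\ge 2r-1-\log 3$, a contradiction for $r$ large. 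So conclusion (2) holds for $(\hat z,\hat v)$; I then translate it back, using that the relevant cusp $c'$ has $\z_{\max}(\gamma_{(\hat z,\hat v)},c')\in\Hor_{c'}(1)$ and therefore lies after $\hat z$ (before $\hat z$ one has $\h_{c'}\le-\h_{\infty}\le-1<1$), hence equals $\z_{\max}(\gamma_{(z,v)},c')$, and that $r-1<t<\at_r(\hat z,\hat v)+1=\at_r(z,v)-s+1$ becomes $r-1<t+s<\at_r(z,v)+1\le r^2+1$, which is exactly the assertion.

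The main obstacle is the key step in the last paragraph: verifying that flowing the base point forward inside $\Hor_c(2r-\log 3)$ — which we are forced to do, since both vectors must stay deep enough that the threshold $r$ behaves the same way relative to the new base point — genuinely preserves $\abs_r$ and is additive on $\at_r$. Everything there (and also the vacuousness of conclusion (1) of Proposition 5.15) rests on the single inequality $\h_{c'}(w)\le -\h_{\infty}(w)$ for $w$ deep in $\Hor_c$; once that is in hand, the geometric estimate $\dis(z_1,z_1')<2$ and the final time-bookkeeping are routine.
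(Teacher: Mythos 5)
Your proof is correct and follows essentially the same route as the paper: the paper likewise replaces $(z,v),(z',v')$ by points further along the two rays lying on a common horocycle at $p$ (it takes the intersections with the length-one horocyclic arc at $p$ tangent to $\Hor_c(2r-\log 3)$, which yields $\dis<1$ and $\Delta_p=0$ in one stroke, where you use the exit points of $\Hor_c(2r-\log 3)$ plus a small forward flow), checks via Proposition 5.6 that the absorbing cusps are unchanged and the absorption time only shifts, applies Proposition 5.15, and excludes its alternative (1) because from the new, still very deep base points any cusp whose $1$-horoball is entered has height gain exceeding $r+r^2e^{-r}$. Your explicit bookkeeping when transferring the conclusion back to $(z,v)$ (using $t<\at_r+1$ from the proof of Proposition 5.15, and that Proposition 5.15 uses the $\FD_r$ hypothesis only through $\at_r\le r^2$) is exactly what the paper leaves implicit in ``and hence for $(z,v)$''.
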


\begin{proof} Since $\abs_r(z,v) \ne c \ne \abs_r(z',v')$ we have that $\p(z,v)=\p(z',v') \ne c$. 
Since $2r- \log 3>r+2$ (because $r>4$), it follows from  Proposition 5.6 that $\gamma_{(z,v)}$ gets absorbed by the 
first $1$-horoball it hits after leaving $\Hor_c(1)$. The same is true for the ray $\gamma_{(z',v')}$. 
\vskip .1cm
Let $\eta$ be the horocircle at $\p(z,v)$ that is tangent to $\Hor_c(2r-\log 3)$. Let $\gamma_j$, $j=1,2$,  be the two geodesics that start at $\p(z,v)$ and that are tangent to  $\Hor_c(2r-\log 3)$. Let $\eta_1$ be the subsegment of $\eta$ that is bounded by the points $\gamma_j \cap \eta$ (since $\gamma_j$ starts at the same point on $\R$ where $\eta$ touches $\R$, there exists a unique intersection point $\gamma_j \cap \eta$ in $\Ha$). We have that the hyperbolic length of $\eta_1$ is equal to $1$ and that $\eta_1$ is contained in $\Hor_c(r+2)$. Let $z_1$ and $z'_1$ be the points of intersection between  $\eta_1$ and the geodesic rays $\gamma_{(z,v)}$ and $\gamma_{(z',v')}$ respectively. Observe that $\Delta_{\p(z,v)}[z_1,z'_1]=0$ and $\dis(z_1,z'_1) <1$. Also $z_1,z'_1 \in \Hor_c(r+2)$. 
\vskip .1cm
Let $(z_1,v_1) \in \TB\Ha$ be such that the ray $\gamma_{(z_{1},v_{1})}$ is contained in the ray $\gamma_{(z,v)}$. Similarly let $(z'_1,v'_1) \in \TB\Ha$ be such that the ray $\gamma_{(z'_{1},v'_{1})}$ is contained in the ray $\gamma_{(z'v')}$. Since $z_1,z'_1 \in \Hor_c(r+2)$ from Proposition 5.6 we have that $\abs_r(z_{1},v_{1})=\abs_r(z,v)=c_1$ 
and $\abs_r(z'_{1},v'_1)=\abs_r(z',v')$.  Also $ r+2 \le \at_r(z_1,v_1)<\at_r(z,v) \le r^{2}$. We now apply the previous proposition to $(z_1,v_1)$ and $(z'_1,v'_1)$. Since $r+2 \le \at_r(z_1,v_1)$ we see that the condition ($1$) from the previous proposition can not hold so we have that the condition ($2$) holds for $(z_1,v_1)$ and hence for $(z,v)$.  This proves the proposition.

\end{proof}

We can now define three ''error sets " and show that any point of the ''bad set"  $H_r$ belongs to one of these three sets.

\begin{definition} Let $r>2$. Define the sets $\E^{i}_{r}(G) \subset \FD_r(G)$, $i=1,2,3$, as follows.

We say that $(z,v) \in \E^{1}_{r}(G)$  if 
\begin{itemize}
\item We have $(z,v) \in \FD_r(G)$.
\item  We have $\h(z) \ge 2r-\log 3$.
\item  Let $c \in \Cus(G)$ such that $z \in \Hor_c(2r-\log 3)$. Then $c$ is an endpoint of  $\abs^{1}_r(z,v)$. 
\end{itemize}
We say that $(z,v) \in \E^{2}_{r}(G)$ if
\begin{itemize}
\item There exists a cusp $c \in \Cus(G)$, such that $r-r^2e^{-r}<\h_{c}(\z_{\max}(\gamma_{(z,v)},c))-\h_c(z)<r+r^2e^{-r}$ and  $\z_{\max}(\gamma_{(z,v)},c) \in \Hor_c(1)$.
\item We have $(z,v) \in \FD_r(G)$ and $(z,v)$ does not belong to $\E^{1}_r(G)$.
\end{itemize}
We say that $(z,v) \in \E^{3}_{r}(G)$ if 
\begin{itemize}
\item There exists a cusp $c \in \Cus(G)$, so that  $1-r^2e^{-r}<\h_c(\z_{\max}(\gamma_{(z,v)},c))<1+r^2e^{-r}$
and so that $\z_{\max}(\gamma_{(z,v)},c)=\gamma_{(z,v)}(t)$ for some $r-1<t<r^2+1$. 
\item We have $(z,v) \in \FD_r(G)$ and  $(z,v)$ does not belong to $E^{1}_r(G)$.
\end{itemize}
\end{definition}
Note that the set $\E^{i}_{r}(G)$, $i=1,2,3$, is invariant under the action of $G$ and the corresponding quotient is denoted by $\E^{i}_{r}(S)$.

\begin{proposition} Suppose that $(z,v) \in H_r$. Then $(z,v) \in \E^{i}_r(G)$, for some $i=1,2,3$,  
or $(z,\omega v)\in \E^{i}_r(G)$ for some   $i=2,3$.
\end{proposition}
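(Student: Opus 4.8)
The plan is to unwind the definitions of $H_r$ and the three error sets and show that each disjunct in the definition of $H_r'$ forces one of the conditions defining $\E^i_r(G)$. Recall $H_r = H_r' \cap \FD_r(S)$, so throughout we may assume $(z,v) \in \FD_r(G)$; the membership in $\FD_r$ is part of the definition of every $\E^i_r(G)$, so that hypothesis is automatically available. The three pieces of $H_r'$ are: (a) $(z,v) \notin \A_r(S) \cup \B_r(S)$; (b) $(z,v) \in \Col_r(S) \setminus \Col^L_r(S)$; (c) $(z,v) \in \Col_r(S) \setminus \Col^R_r(S)$. I would handle (a) as the main case and treat (b), (c) as variants of the same argument applied to $(z,v)$ and $(z,\omega v)$ respectively (the role of $\inv^L$ versus $\inv^R$ is what swaps $v$ for $\omega v$).

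First I would treat case (a). Distinguish whether $(z,v)$ lies in the thick part $\Thick_G(2r-\log 3)$ or not. If $(z,v) \in \TB\Thick_G(2r)$ and also $\refl(z,v) \in \TB\Thick_G(2r)$, then failing to be in $\A_r(G)$ means $\abs^1_r(z,v) \neq \abs^1_r(\refl(z,v))$; since $\p^1(z,v) = \p^1(\refl(z,v))$ and $\refl$ fixes the foot point $\ft_{\p^1(z,v)}(z)$, we may apply Proposition 5.16 (with $c$ a cusp for which $z$ is deep enough, or after a small modification handling the thick case) — more precisely, one reduces to comparing $\abs_r$ on the two rays $\gamma_{(z,v)}$, $\gamma_{(\refl(z,v))}$ (which share one endpoint after applying $\omega$), both emanating from points at equal height on a common horocircle, and Proposition 5.15 or 5.16 gives that one of conditions $(1)$, $(2)$ there holds. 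Condition $(1)$ of Proposition 5.15 is exactly the defining inequality of $\E^2_r(G)$, and condition $(2)$ is exactly that of $\E^3_r(G)$ — so $(z,v) \in \E^2_r(G) \cup \E^3_r(G)$ unless $(z,v) \in \E^1_r(G)$ (that is the escape clause ``$(z,v)$ does not belong to $\E^1_r(G)$'' built into the definitions of $\E^2,\E^3$). If instead $(z,v) \notin \TB\Thick_G(2r-\log 3)$, then $\h(z) \geq 2r - \log 3$ by Proposition 5.11, and either the cusp $c$ with $z \in \Hor_c(2r-\log 3)$ is an endpoint of $\abs^1_r(z,v)$ — in which case $(z,v) \in \E^1_r(G)$ directly — or it is not, in which case $(z,v)$ satisfies the hypotheses of Proposition 5.16 applied with this $c$, and again we land in $\E^2_r(G) \cup \E^3_r(G)$ (and since $c$ is not an endpoint, $(z,v) \notin \E^1_r$, so the escape clause is consistent). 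The failure of $(z,v) \in \B_r(S)$ is absorbed into this: if $(z,v)$ is in the thin part but not in $\B_r(G)$ then by the definition of $\B_r$ there is some $(z',v')$ with $\p^1(z,v) = \p^1(z',v')$ and $\abs^1_r(z,v) \neq \abs^1_r(z',v')$, and $z, z'$ lie in a common $\Hor_c(2r)$ with $c$ not an endpoint of either — exactly the setup of Proposition 5.16.

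For cases (b) and (c): if $(z,v) \in \Col_r(G) \setminus \Col^L_r(G)$, then by Definition 5.12 we have $\abs_r(z,v) \neq \abs_r(\omega(\inv^L(z,v)))$. Write $c = \taso_r(z,v)$, normalise $G = G_c$, $c = \infty$, so $z \in \Hor_\infty(2r)$. The point $\omega(\inv^L(z,v))$ has the same $p$-image as $(z,v)$ (this is noted in the text, $\p(z,v) = \p(\omega(\inv^L(z,v)))$), lies at the same height $\h_\infty$, and on the same horocircle after the reflection $f_\gamma$; so $(z,v)$ and $\omega(\inv^L(z,v))$ again fit the hypotheses of Proposition 5.16 with the cusp $\infty$ (which, by the definition of $\Col_r$, is not an endpoint of $\abs^1_r(z,v)$, nor, by a parallel argument using Proposition 5.10, of the other). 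Thus one of the $\E^i_r(G)$ conditions holds for $(z,v)$. The case (c) $(z,v) \in \Col_r(G) \setminus \Col^R_r(G)$ is identical after replacing $v$ by $\omega v$: the defining failure is $\abs_r(z,\omega v) \neq \abs_r(\inv^R(z,v))$, which is a statement about the ray $\gamma_{(z,\omega v)}$, so the conclusion is that $(z,\omega v) \in \E^i_r(G)$ for $i = 2$ or $3$ (here $i=1$ cannot be asserted for $(z,\omega v)$, which is why the statement restricts to $i \in \{2,3\}$ in the $(z,\omega v)$ alternative).

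The main obstacle is the bookkeeping in reducing each of the several failure modes to a clean application of Proposition 5.16 — in particular verifying in each case that the two unit vectors being compared genuinely satisfy $\p(\cdot) $ equal (possibly after applying $\omega$), lie on a common horocircle at equal height, and are at bounded hyperbolic distance, and that the cusp in play is not an endpoint of the relevant geodesic. The thin/thick dichotomy and the normalisation $G = G_c$ must be tracked carefully, and one must check that the ``$(z,v)$ does not belong to $\E^1_r(G)$'' escape clauses are logically consistent with the case division (i.e., that whenever we invoke Proposition 5.15/5.16 to land in $\E^2$ or $\E^3$, we are indeed in a situation where either $(z,v) \in \E^1_r$ already or the $\E^1$-defining condition fails). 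None of the individual steps is deep — they are all immediate from Propositions 5.6, 5.10, 5.11, 5.15, 5.16 — but assembling them into an exhaustive case analysis is where the care is needed.
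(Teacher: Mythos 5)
Your overall route is the paper's: split according to which clause of $H'_r$ fails, handle the thick situation by comparing $(z,v)$ with $\omega(\refl(z,v))$ (same $\p$, $\Delta_{\p(z,v)}=0$, distance $\log 3$) via Proposition 5.15, handle the deep-horoball situations via Proposition 5.16, keep $\E^{1}_r$ as the escape clause, and use $\inv^{L}$, $\inv^{R}$ together with Proposition 5.9 for the two $\Col_r$ clauses. One small correction: in the thick case Proposition 5.16 cannot be invoked even ``after a small modification'', since it requires both vectors to lie in $\TB\Hor_c(2r-\log 3)$; the correct tool there is Proposition 5.15, which you do also cite.

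The genuine gap is in your treatment of the failure of $(z,v)\in\B_r(G)$. You claim that non-membership always produces a witness $(z',v')$ with $\p^{1}(z,v)=\p^{1}(z',v')$, $\abs^{1}_r(z,v)\ne\abs^{1}_r(z',v')$, and $c$ not an endpoint of either, so that Proposition 5.16 applies. That is only one of the two ways membership can fail: since $\B_r(G)$ is defined inside $\wt{Q}_c=Q_c\cap\refl(Q_c)$, the other way is that $(z,v)\notin\wt{Q}_c$, i.e.\ $c$ is an endpoint of $\abs^{1}_r(z,v)$ or of $\abs^{1}_r(\refl(z,v))$. The first alternative is your $\E^{1}_r$ branch, but the second --- $c$ an endpoint of $\abs^{1}_r(\refl(z,v))$ while not of $\abs^{1}_r(z,v)$ --- is not covered by your argument: no witness in $\wt{Q}_c$ need exist, and Proposition 5.16 has nothing to compare. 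The paper handles this sub-case with Proposition 5.15: if $\abs_r(\omega(\refl(z,v)))=c$ one compares $(z,v)$ with $\omega(\refl(z,v))$ (same $\p$, zero height difference, distance $\log 3$, different absorbing cusps since $\abs_r(z,v)\ne c$) and concludes $(z,v)\in\E^{2}_r(G)\cup\E^{3}_r(G)$; if instead $\abs_r(\refl(z,v))=c$ one compares $(z,\omega v)$ with $\refl(z,v)$ and concludes $(z,\omega v)\in\E^{2}_r(G)\cup\E^{3}_r(G)$. In particular this sub-case, and not only the $\Col_r\setminus\Col^{R}_r$ clause, is a source of the ``$(z,\omega v)$'' alternative in the statement, so your closing remark attributing that alternative solely to case (c) is also off. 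The repair uses only tools you already invoke, but as written the case analysis is not exhaustive and the asserted consequence of ``not in $\B_r$'' is false in this sub-case.
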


{
\newcommand{\Th}{{\mathbf Th}}
\renewcommand{\a}{{\mathbf a}}
\newcommand{\I}{{\mathcal I}}
\renewcommand{\H}{{\mathbf H}}
\newcommand{\HH}{{\mathcal H}}
\newcommand{\Q}{{\mathcal Q}}
\newcommand{\Cusp}{\operatorname{Cusp}}

\setlength{\unitlength}{2em}

\begin{figure}[p]
  \input{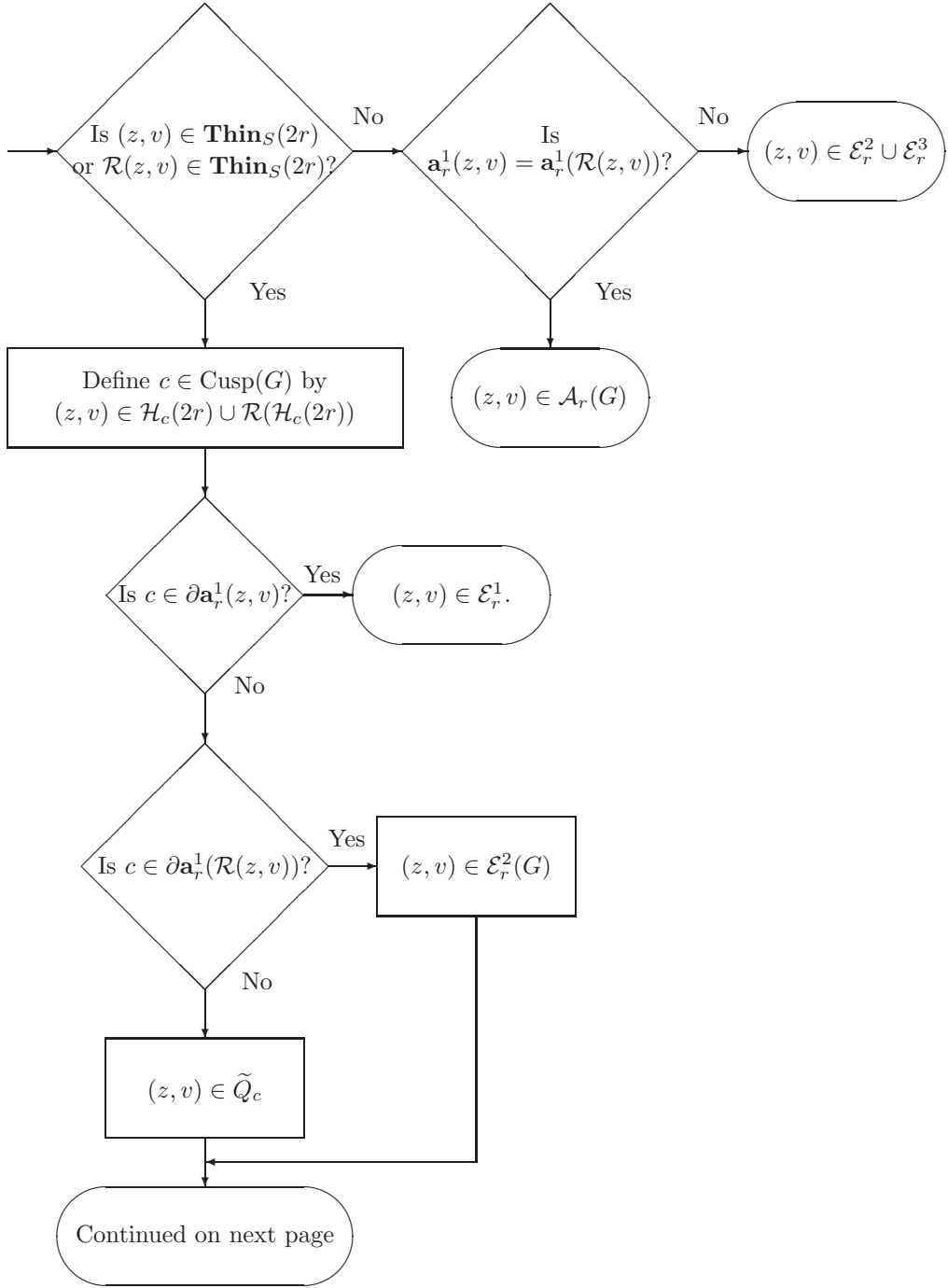}
\caption{Flow chart part 1}
\end{figure}

\begin{figure}
\addtocounter{figure}{-1}
  \centering
  \input{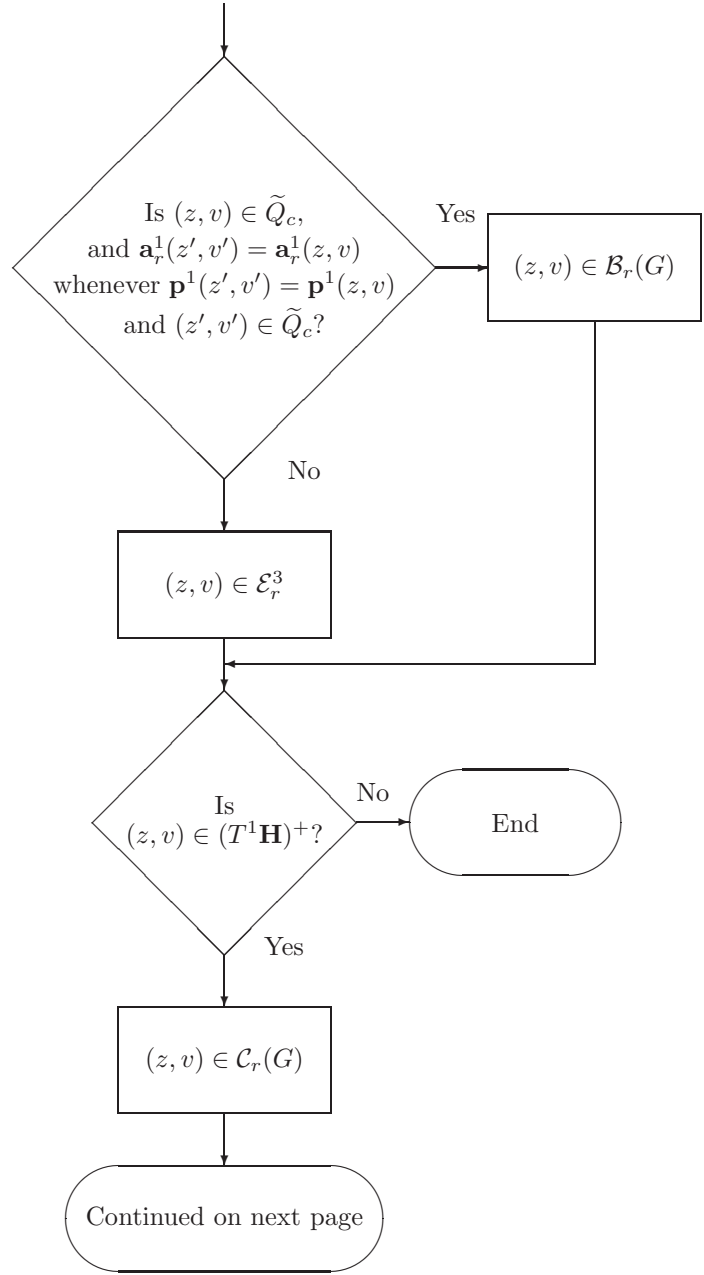}
  \caption{Flow chart part 2}
  \label{fig:part2}
\end{figure}

\begin{figure}
\addtocounter{figure}{-1}
  \centering
  \input{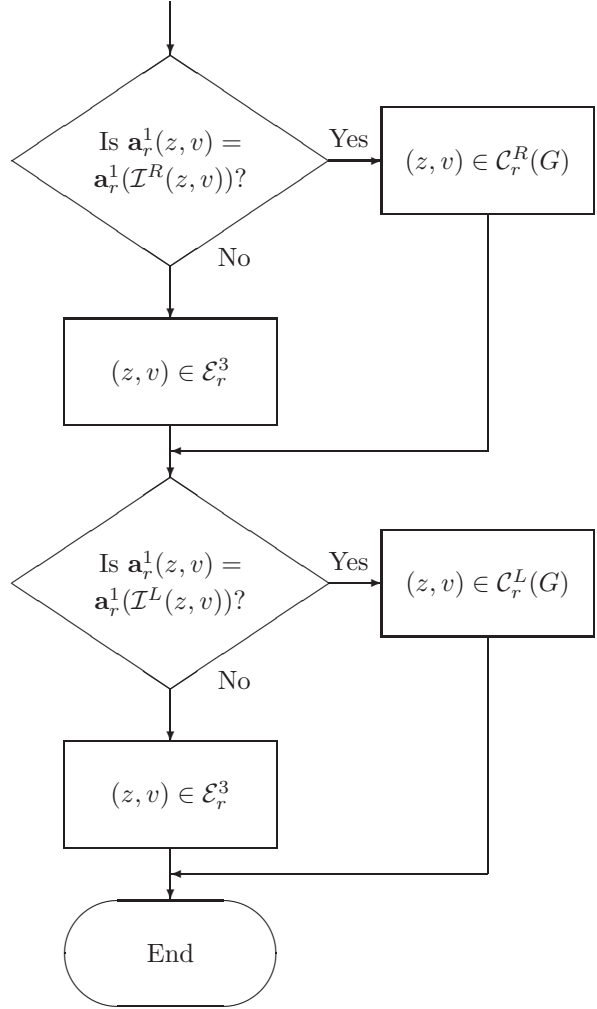}
  \caption{Flow chart part 3}
  \label{fig:part3}
\end{figure}
}

\begin{proof}   We refer the reader to the flow chart for the logic of the proof below. Assume that $(z,v) \in \big(\TB\Ha \setminus (\A_r(G) \cup \B_r(G)) \big)$ and that $(z,v) \in \FD_r(G)$. 
\vskip .1cm
If $(z,v) \in \TB\Thick_G(2r) \cap \refl\big(\TB\Thick_G(2r)\big)$ then 
by the definition of $\A_r(G)$ we find that either $\abs_r(z,v) \ne \abs_r(\omega(\refl(z,v)))$ or 
$\abs_r(z,\omega v) \ne \abs_r(\refl(z,v))$. Suppose that  $\abs_r(z,v) \ne \abs_r(\omega(\refl(z,v)))$ (the other case is handled in the same way), and set $(z',v')=\omega(\refl(z,v))$. Then $\p(z,v)=\p(z',v')$ and 
$\Delta_{\p(z,v)}[z,z']=0$. Also, $\dis(z,z')=\log 3<10$. Then by Proposition 5.15  we have that $(z,v)$ belongs to one of the sets $\E^{j}_r(G)$, $j=2,3$.
\vskip .1cm
If $(z,v) \in \TB\Thin_G(2r) \cup \refl\big(\TB\Thin_G(2r)\big)$ then  by Proposition 5.10 we have $z \in \Hor_{c}(2r-\log 3)$ for some 
$c \in \Cus(G)$. Suppose that $(z,v)$ does not belong to $\B_r(G)$. There are two reasons why this can happen. The first one is that
$(z,v)$ does not belong to the set $\wt{Q}_c=Q_c \cap \refl(Q_c)$ defined in Definition 5.6. Set $(z',v')=\refl(z,v)$. Then $\abs^{1}_r(z,v)$ or $\abs^{1}_r(z',v')$ has $c$ as its endpoint. If $\abs^{1}_r(z,v)$ has $c$ as its endpoint then $(z,v) \in \E^{1}_r(G)$. 
If $\abs_r(z',\omega v')=c$ then by Proposition 5.15 we have that $(z,v)$ belongs to one of the sets $\E^{j}_r(G)$, $j=2,3$. If 
$\abs_r(z',v')=c$  then $(z,\omega v)$ belongs to one of the sets $\E^{j}_r(G)$, $j=2,3$.
\vskip .1cm
Assume $(z,v) \in \wt{Q}_c$. If  $(z,v)$ does not belong to $\B_r(G)$ then there exists   
$(z',v') \in  \TB\Hor_c(2r) \cup \refl\big(\TB\Hor_c(2r)\big)$ such that 
\begin{itemize}
\item $\p^{1}(z,v)=\p^{1}(z',v')$. 
\item $\abs^{1}_r(z',v')$ does not have $c$ as its endpoint.
\item  $\abs^{1}_r(z,v) \ne \abs^{1}_r(z',v')$. 
\end{itemize}
Then $z,z' \in \Hor_{c}(2r-\log 3)$. The condition $\abs^{1}_r(z,v) \ne \abs^{1}_r(z',v')$ means that 
$\abs_r(z,v) \ne \abs_r(z',v')$ or $\abs_r(z,\omega v) \ne \abs_r(z',\omega v')$.
Applying the previous proposition to $(z,v)$ and $(z',v')$ (or  $(z,\omega v)$ and $(z',\omega v')$)
we conclude that $(z,v)$ or $(z,\omega v)$  belongs to the set $\E^{3}_r(G)$. 
\vskip .1cm
Assume that $(z,v) \in (\Col_r(G) \setminus \Col^{L}_r(G))$ and that $(z,v) \in \FD_r(G)$. Then 
$z \in \Hor_c(2r)$ for some cusp $c \in \Cus(G)$ and $c$ is not an endpoint of $\abs^{1}_r(z,v)$ (by the definition of $\Col_r(G)$). 
Then $\p(z,v) \ne c \ne \p(z,\omega v)$.  Set $(z',v')=\inv^{L}(z,v)$ (note that $\inv^{L}(z,v)$ is well defined since $\p(z,v) \ne c \ne \p(z,\omega v)$). Then  $\abs^{1}_r(z',v')$ since by Proposition 5.9 $(z',v') \in \Col_r(G)$. Suppose that $\abs_r(z,v) \ne \abs_r(z',\omega v')$. Then by the previous proposition we have that $(z,v) \in \E^{3}_r(G)$. The case $(z,v) \in (\Col_r(G) \setminus \Col^{R}_r(G))$ is treated in the same way.
\end{proof}

\addtocounter{figure}{-1}

\newpage
\subsection{The proof of Theorem 4.1} Recall the statement of Theorem 4.1.

\begin{theorem} There exist a constant $r_0(S)=r_0$ that depends only on $S$, so that for every $r>r_0$  there exists a finite measure $\mu(r) \in \Mes(\Tr(S))$ so that the total measure $|\mu(r)|$ satisfies the inequality

$$
{{\LM(\TB{S})}\over{2}} < |\mu(r)|<{{3\LM(\TB{S})}\over{2}},
$$
\noindent
and  with the following properties. There exist measures $\alpha(r),\alpha_1(r),\beta(r) \in \Mes(\NBG(S))$ so that the measure $\ph\mu(r) \in \Mes(\NBG(S))$ can be written  as $\ph\mu(r)=\alpha(r)+\alpha_1(r)+\beta(r)$ and  the following holds
\begin{enumerate}
\item Let $\wh{\mu}(r)$ denote the restriction of the measure $\mu(r)$ to the set of triangles $T \in \Tr(S)$ for which $\h(T) \ge r^2$. 
Then
$$
\int\limits_{\NBG (S) } (\cl(\gamma^{*},z)+1) d \ph \wh{\mu}(r) \le P(r)e^{-r}.
$$
\item The measure $\alpha(r)$ is $re^{-r}$-symmetric, and the measure $\alpha_1(r)$ is $Q$-symmetric, for any   $Q >200$.
\item We have
$$
\int\limits_{\NBG(S)}(\cl(\gamma^{*},z)+1) d\beta(r) \le P(r)e^{-r}.
$$
\noindent
\item We have
$$
\int\limits_{\NBG(S)}d\alpha_1(r) \le e^{-r}.
$$
\noindent

\end{enumerate}

\end{theorem}

\begin{proof}

Let $\vartheta_r:\TB{S} \to \R$ be defined as follows. If $(z,v)$ does not belong to $\Col_r(S)$ then $\vartheta_r(z,v)=0$.
For $(z,v) \in \Col_r(S)$ we have 
$$
\vartheta(z,v)=\varphi_r(\refl(z,v))-\varphi_r(z,v).
$$
\noindent
If $(z,v) \in \Col_r(G)$ we have that $z \in \Hor_{\taso_{r}(z,v)}(2r)$. Therefore, $\h(z)=\h_{\taso_{r}(z,v) }(z)$. It follows from the definition of $\varphi_r$ that $\varphi_r(z)$ depends only on that $\h_{\taso_{r}(z,v) }(z)$ and is decreasing in $\h_{\taso_{r}(z,v) }(z)$. 
Since $(z,v) \in \Col_r(S) \subset (\TB{S})^{+}$  we have that $\vartheta_r$ is a non-negative function. 
\vskip .1cm
We define the measure 
\begin{equation}\label{main-measure}
\mu(r)=(\abs^{2}_r)_{*}(\varphi_r \, d\LM)+3(\taso^{2}_r)_{*}(\vartheta_r \, d\LM).
\end{equation}
\noindent
Since $\varphi_r \le 1$ on $\TB{S}$ and since $\varphi_r(z,v)=1$ for every $z \in \Thick_S(r)$ we conclude that  
the total measure of $\varphi_r \, d\LM$ is approaching $\LM(\TB{S})$ when $r \to \infty$. Since $\vartheta_r \le 2$ on $\TB{S}$ and since $\vartheta_r(z)=0$ for every $(z,v) \in \TB\Thick_S(r)$, we have that the total measure of $\vartheta_r \, d\LM$ tends to zero when $r \to \infty$.
So for $r$ large enough we have that  the total measure $|\mu(r)|$ satisfies the inequality
$$
{{\LM(\TB{S})}\over{2}} < |\mu(r)|<{{3\LM(\TB{S})}\over{2}}.
$$
\noindent
Let $\wh{\mu}(r)$ be the restriction of the measure $\mu(r)$ to the set of triangles $T \in \Tr(S)$ for which $\h(T) \ge r^2$. Let
$(z,v) \in \TB{S}$ be such that the triangle $\abs^{2}_r(z,v)$ belongs to the support of $\wh{\mu}(r)$. Then at least one of the inequalities $\at_r(z,\omega^{j} v)> r^2$, $j=0,1,2$, is satisfied, so  $(z,v) \in \TB{S} \setminus \FD_r(S)$. From Proposition 5.12 and Lemma 5.2, we have that for $r$ large enough, the following holds,
\begin{align*}
\int\limits_{\NBG (S) } (\cl(\gamma^{*},z)+1) d \ph \wh{\mu}(r) & <  5\int\limits_{\TB{S} \setminus \FD_r(S)} (\cl_r(z,v)+\cl_r(z,\omega v)+\cl_r(z,\omega^{2} v)+1)\varphi_r(z) \, d\LM \\ 
&< P(r)e^{-r}.
\end{align*}
\vskip .1cm

Next, we define the measures $\alpha(r)$, $\alpha_1(r)$, and $\beta(r)$. Let $\varphi^{A}_r(z,v)=\varphi_r(z)$ if $(z,v) \in \A_r(S)$, and $0$ otherwise. Let $\varphi^{B}_r(z,v)=\varphi_r(z)$ if $(z,v) \in \B_r(S)$, and $0$ otherwise. Let $\vartheta^{B}_r(z,v)=\vartheta_r(z,v)$ if $(z,v) \in \B_r(S)$, and $0$ otherwise. Let $\vartheta^{1}_r(z,v)=\vartheta_r(z,v)$ if $(z,v) \in \Col^{L}_r(S)$, and $0$ otherwise, and let  
$\vartheta^{2}_r(z,v)=\vartheta_r(z,v)$ if $(z,v) \in \Col^{R}_r(S)$, and $0$ otherwise. 
\vskip .1cm
Set 
$$
\alpha'(r)=3(\fta_r)_{*}(\varphi^{A}_r \, d\LM), \, \, \alpha_1(r)=3(\fta_r)_{*}(\varphi^{B}_r\, d\LM)+3(\tft_r)_{*}(\vartheta^{B}_r\, d\LM),
$$
\noindent
and
$$
\alpha''(r)=3(\tft^{L}_r)_{*}(\vartheta^{1}_r\, d\LM)+3(\tft^{R}_r)_{*}(\vartheta^{2}_r\, d\LM).
$$
\noindent
Set $\alpha(r)=\alpha'(r)+\alpha''(r)$. In addition, let  
$$
\beta(r)=3(\fta_r)_{*}((\varphi_r-\varphi^{A}_r-\varphi^{B}_r)\, d\LM)+ 3(\tft_r)_{*}(\vartheta_r-\vartheta^{B}_r)+
$$
$$
+3(\tft^{L}_r)_{*}( (\vartheta_r-\vartheta^{1}_r)\, d\LM)+3(\tft^{R}_r)_{*}( (\vartheta_r-\vartheta^{2}_r)\, d\LM).
$$
\noindent
We have
$$
\alpha(r)+\alpha_1(r)+\beta(r)=3(\fta_r)_{*}(\varphi_r \, d\LM)+3(\tft_r)_{*}(\vartheta_r \, d\LM)+3(\tft^{L}_r)_{*}(\vartheta_r \, d\LM)+3(\tft^{R}_r)_{*}(\vartheta_r \, d\LM),
$$
\noindent
so we conclude from (\ref{boundary-measure-0}) and (\ref{boundary-measure-1}) that
$\ph\mu(r)=\alpha(r)+\alpha_1(r)+\beta(r)$. In order to prove Theorem 4.1, it remains to prove that the measures $\alpha(r)$, $\alpha_1(r)$, and $\beta(r)$ satisfy the corresponding properties.
\vskip .1cm 
We first show that $\alpha'(r)$ is $re^{-r}$-symmetric. Let $\gamma \in \Gamma(G)$ and let $\gamma^{*}_1$ and $\gamma^{*}_2$ denote the two orientations on $\gamma$. Let $Y_i(\gamma) \subset \A_r(G)$, $i=1,2$, so that $(z,v) \in Y_i(\gamma)$ if $\fta_r(z,v) \in \NB\gamma^{*}_i$. By the 
definition of the set $\A_r(G)$ we have that $Y_1(\gamma)$ and $Y_2(\gamma)$ are disjoint, and $\refl(Y_1(\gamma))=Y_2(\gamma)$. For $(z,v) \in Y_1(\gamma)$ set $\refl(z,v)=(z_1,v_1)$. Combining  Proposition 5.5 and Proposition 5.8, we have that 
\begin{align*}
\dis(\fta_r(z,v),\fta_r(z_1,v_1)) & \le \dis(\fta_r(z,v),\ft_{\abs^{1}_{r}(z,v)}(z))+\dis(\fta_r(z_1,v_1),\ft_{\abs^{1}_{r}(z,v)}(z_1)) \\
& \le Ce^{-r}+Ce^{-r}<re^{-r},
\end{align*}
\noindent
for $r$ large enough. From the property $(3)$ of Proposition 4.1,
and since $\Jac(\refl)=1$ we see that the measures $(\fta_r)_{*}\nu_1$ and $(\fta_r)_{*}\nu_2$ are 
$re^{-r}$-equivalent on $\gamma$, where $\nu_i$ is the restriction of the measure $3(\fta_r)_{*}(\varphi_r \, d\LM)$ on $Y_i(\gamma)$. This shows that $\alpha'(r)$ is $re^{-r}$-symmetric. 
\vskip .1cm
Next, we show that $\alpha''(r)$ is $e^{-r}$-symmetric (and therefore this measure is $re^{-r}$-symmetric). Let $(z,v) \in \Col_r(G)$. By Proposition 5.9 we have that $(z,v) \in \Col^{L}_r(G)$ if and only if $\inv^{L}(z,v) \in \Col^{R}_r(G)$. Also, for almost every such $(z,v)$ we have that $\abs^{1}_r(z,v)$,  $\abs^{1}_r(\inv^{L}(z,v))$, and $\abs^{1}_r(\inv^{R}(z,v))$ are well defined. We only need to consider such points. 
Then, by Proposition 5.11 we have that  $\dis(\tft^{L}_r(z,v),\tft^{R}_r(\inv^{L}(z,v))) < e^{-r}$ for $r$ large enough. 
\vskip .1cm
Let $\gamma^{*}$ be the orientation on $\gamma=\taso^{1}_r(z,v)=\taso^{1}_r(\omega(\inv^{L}(z,v)))$ so that the endpoint $\abs_r(z,v)$ comes before the endpoint $\taso_r(z,v)$ on $\gamma^{*}$. Let $Y_1(\gamma) \subset \Col^{L}_r(G)$ so that $(z,v) \in Y_1$ if $\tft^{L}_r(z,v) \in \NB\gamma^{*}$. Set $Y_2(\gamma)=\inv^{L}(Y_1(\gamma))$. We have $Y_2(\gamma) \subset \Col^{R}_r(G)$ and for $(w,u) \in Y_2(\gamma)$ we have $\tft^{R}_r(w,u) \in \NB(-\gamma^{*})$.
Since  $\Jac(\inv^{L})=1$ and from the property $(3)$ of Proposition 4.1,  we see that the measures $(\tft^{L}_r)_{*}\nu_1$ and $(\tft^{R}_r)_{*}\nu_2$ are  $e^{-r}$-equivalent on $\gamma$. Here $\nu_1$ is the restriction of the measure $\vartheta_r\, d\LM$ on $Y_1(\gamma)$, and  $\nu_2$ is the restriction of the measure $\vartheta_r\, d\LM$ on $Y_2(\gamma)$.
\vskip .1cm
Since both measures $\alpha'(r)$ and $\alpha''(r)$ are  $re^{-r}$-symmetric, we see that $\alpha(r)$ is $re^{-r}$-symmetric. 
\vskip .1cm
From Proposition 5.10 we have that if $(z,v) \in \B_r(S)$ then $\h(z) \ge 2r-\log 3$. For $r$ large enough, we have
$$
\int\limits_{\NBG(S)}d\alpha_1(r) \le 3\int\limits_{\B_{r}(S) }(\varphi_r+\vartheta_r)\, d\LM \le 6 \LM(\TB{S} \setminus \TB\Thick_S(2r-\log 3) ) \le e^{-r} .
$$
\noindent
The following lemma will be proved in the next section.
\begin{lemma} The measure $\alpha_1(r)$ is $Q$-symmetric, for any  $Q>200$.
\end{lemma}
It remains to analyse $\beta(r)$. From the definition of $\beta(r)$ (and from (\ref{B-subset-C}) ) 
we see that if either of the points $\fta_r(z,v)$, $\tft_r(z,v)$, $\tft^{L}_r(z,v)$, or $\tft^{R}_r(z,v)$ belongs to the support of $\beta(r)$ 
we have that $(z,v) \in H_r$ (recall the definition of $H_r$ from Lemma 5.3).  
Note that $\vartheta_r(z,v) \le \varphi_r(z,v)$ for every $(z,v) \in \TB{S}$.  From Lemma 5.2, Lemma 5.3, and Proposition 5.12, we have
$$
\int\limits_{\NBG(S)}(\cl(\gamma^{*},z)+1) d\beta(r) \le 
5\int\limits_{H_{r}}(\cl_r(z,v)+\cl_r(z,\omega v)+1) \varphi_r(z) \, d\LM +
$$
$$
+5\int\limits_{\TB{S} \setminus \FD_r(S)}(\cl_r(z,v)+\cl_r(z,\omega v)+1) \varphi_r(z) \, d\LM \le P(r)e^{-r},
$$
\noindent
for $r$ large enough. This completes the proof of Theorem 4.1.
\end{proof}

\section{The proof of Lemma 5.4} 

\subsection{Preliminary propositions} We have the following preliminary propositions.

\begin{proposition} Let $(z,v) \in \TB\Ha$ and set  $(z_1,v_1)=\refl(z,v)$. Let $p \in \partial{\Ha}$. 
Set $w=\ft_{\gamma}(z)=\ft_{\gamma}(z_1)$ and  denote by $u$ a unit vector at $w$ that is tangent to $\gamma$. 
Then
$$
|\Delta_{p}[z,z_1]| < 2|\ang_{p}(w,u)| \sinh(\log \sqrt{3}).
$$
\end{proposition}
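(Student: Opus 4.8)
The plan is to work in a normalized picture where $\gamma$ is a fixed reference geodesic and $w=\ft_\gamma(z)$ is placed at a convenient point, then reduce everything to a one-parameter computation in the hyperbolic plane. First I would pass to the unit disc model $\D$, sending $\gamma$ to the geodesic joining $-1$ and $1$ and placing $w$ at the origin $0$, with $u={{\partial}\over{\partial x}}(0)$ the chosen unit tangent vector along $\gamma$ at $w$. By the definition of $\refl$, the point $z_1=\refl(z,v)$ is the image of $z$ under the order-two rotation about $w=\ft_{\gamma}(z)$; in these coordinates that rotation is $\zeta\mapsto -\zeta$. Since $\dis(z,w)=\dis(z_1,w)=\log\sqrt{3}$ (the centre of an ideal triangle is at distance $\log\sqrt 3$ from the foot of the perpendicular to any edge), we may write $z=\rho e^{i\psi}$ for a fixed hyperbolic radius $\rho$ with $\tanh(\dis(0,\rho)/2)$ determined by $\log\sqrt3$, and then $z_1=-z=\rho e^{i(\psi+\pi)}$, where $\psi$ is a free angular parameter.

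The main step is then the estimate of $\Delta_p[z,z_1]$ as a function of $\psi$ and of the boundary point $p$, controlled by the angle $\ang_p(w,u)$. Observe that $\ang_p(w,u)$ is the oriented angle at $w=0$ between $u$ (pointing along $\gamma$) and the geodesic ray from $0$ to $p$; since $u$ points toward $1\in\partial\D$, this angle is essentially $\arg(p)$ (up to the sign convention), so $|\ang_p(w,u)|$ small forces $p$ close to $1$ on $\partial\D$. I would compute the Busemann-type difference $\Delta_p[z,z_1]$ directly using the formula for the signed distance between horocircles based at $p$ through the two points $z$ and $z_1=-z$. Because $z$ and $z_1$ are reflections of each other through $0$ (which lies on $\gamma$, and $\gamma$ is the geodesic through $0$ orthogonal to... — more precisely $0$ is on $\gamma$ and $z,z_1$ are symmetric about $0$), the quantity $\Delta_p[z,z_1]$ vanishes when $p$ is one of the two endpoints of $\gamma$ (the points $\pm1$), and in particular when $\ang_p(w,u)=0$, i.e. $p=1$. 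One then differentiates (or Taylor-expands) $\Delta_p[z,z_1]$ in the boundary parameter measuring $\ang_p(w,u)$ and bounds the derivative; the geometric content is that moving $p$ by an angle $\theta=\ang_p(w,u)$ from the endpoint of $\gamma$ changes the Busemann difference between two points at distance $\log\sqrt3$ from $\gamma$ by at most $2\theta\sinh(\log\sqrt3)$, the factor $\sinh(\log\sqrt3)$ being exactly the "lever arm" $\sinh\dis(z,\gamma)$.

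Concretely, I expect the cleanest route is: let $\theta=\ang_p(w,u)$, and note $\Delta_p[z,z_1]=\Delta_p[z,w]+\Delta_p[w,z_1]=\Delta_p[z,w]-\Delta_p[z_1,w]$. Each term $\Delta_p[z,w]$ is a smooth function of $\theta$ vanishing at $\theta=0$ (since for $\theta=0$ the ray to $p$ lies along $\gamma$ and $z,w$ project to the same horocircle — here using that $w$ is the foot of the perpendicular from $z$ to $\gamma$, so $w=\z_{\max}(\gamma,p)$ when $p$ is an endpoint of $\gamma$, hence equidistant considerations give $\Delta_p[z,w]=\Delta_p[z_1,w]$ at $\theta=0$, not just individually zero). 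A first-order expansion gives $\Delta_p[z,w] = -\sinh(\log\sqrt3)\,\sin\theta + O(\theta^2)$ with the opposite-signed contribution from $z_1=-z$ doubling the linear term in the difference, and a direct (not merely asymptotic) convexity/monotonicity argument upgrades this to the clean bound $|\Delta_p[z,z_1]|<2|\sin\theta|\sinh(\log\sqrt3)\le 2|\theta|\sinh(\log\sqrt3)$ for all $\theta$, which is exactly the claimed inequality.

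The hard part will be producing the sharp constant $2\sinh(\log\sqrt3)$ with a strict inequality valid for \emph{all} $p$ (not just $p$ near the endpoint of $\gamma$), rather than just an asymptotic $O(|\ang_p(w,u)|)$ bound; this requires either an exact closed-form for $\Delta_p[z,z_1]$ in terms of $\theta$ — which is available since all the points involved lie on a single geodesic $\gamma$ and its two perpendiculars at distance $\log\sqrt3$ — or a careful monotonicity argument showing the ratio $\Delta_p[z,z_1]/\sin(\ang_p(w,u))$ is maximized in the limit $\ang_p(w,u)\to0$. Everything else is routine hyperbolic trigonometry in the disc model.
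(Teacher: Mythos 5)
Your route — an explicit coordinate computation in the disc model — is genuinely different from the paper's proof and can be pushed through, but as written it contains two slips and defers the one estimate that actually matters. First, $\psi$ is \emph{not} a free parameter: since $w=\ft_{\gamma}(z)$ is the foot of the perpendicular from $z$ to $\gamma$, the points $z,z_1$ lie on the geodesic through $w$ orthogonal to $\gamma$, so in your normalisation ($w=0$, $\gamma$ joining $-1$ and $1$) one has $z=\pm i\rho$, $z_1=\mp i\rho$ with $\rho=\frac{\sqrt3-1}{\sqrt3+1}$. If $\psi$ were really free the statement would be false: for $p=1$ (i.e.\ $\ang_p(w,u)=0$) one gets $\Delta_p[z,z_1]=2\operatorname{artanh}\bigl(\tfrac12\cos\psi\bigr)\ne 0$ unless $\psi=\pm\tfrac{\pi}{2}$. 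Second, the linear coefficient of $\Delta_p[z,w]$ in $\theta=\ang_p(w,u)$ is $\tanh(\log\sqrt3)=\tfrac12$, not $\sinh(\log\sqrt3)=\tfrac{1}{\sqrt3}$; hence the sharp constant is $2\tanh(\log\sqrt3)=1$ (approached only as $\theta\to0$), and your identification of $2\sinh(\log\sqrt3)$ as the exact ``lever-arm'' slope is incorrect — harmless here, since the claimed bound is weaker than the sharp one, but it shows the expansion was not actually carried out. The ``hard part'' you postpone is in fact immediate once the closed form is written down: with $p=e^{i\theta}$,
$$
\Delta_p[z,z_1]=\log\frac{1+2\rho\sin\theta+\rho^{2}}{1-2\rho\sin\theta+\rho^{2}}
=2\operatorname{artanh}\!\Bigl(\tanh(\log\sqrt3)\,\sin\theta\Bigr)
=2\operatorname{artanh}\!\Bigl(\tfrac12\sin\theta\Bigr),
$$
and convexity of $\operatorname{artanh}$ on $[0,\tfrac12]$ gives $\operatorname{artanh}(\tfrac12|\sin\theta|)\le \operatorname{artanh}(\tfrac12)\,|\sin\theta|=\tfrac12(\log 3)|\sin\theta|<\tfrac{1}{\sqrt3}\,|\theta|=\sinh(\log\sqrt3)\,|\theta|$, which is the stated inequality. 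With these corrections your argument is complete.

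For comparison, the paper avoids coordinates entirely: it rotates about $w$ through the angle $|\ang_p(w,u)|$ so that the rotated vector $u$ points at $p$; in the rotated picture $z',z_1'$ are symmetric across the geodesic from $w$ to $p$, hence $\Delta_p[z',z_1']=0$; each point moves along a hyperbolic circle of radius $\log\sqrt3$, so $\dis(z,z'),\dis(z_1,z_1')<|\ang_p(w,u)|\sinh(\log\sqrt3)$ by the circumference formula, and $|\Delta_p[z,z_1]|\le \dis(z,z')+|\Delta_p[z',z_1']|+\dis(z_1',z_1)$ finishes it. Your computation buys the exact formula and the sharp constant $2\tanh(\log\sqrt3)=1$; the paper's rotation trick buys a two-line, coordinate-free proof at the cost of the non-sharp factor $\sinh$ in place of $\tanh$.
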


\begin{proof} Recall that $\dis(w,z)=\dis(w,z_1)=\log \sqrt{3}$. Let $f \in \Mob$ be the rotation centred at $w$, and so that $\ang_{p}(f(w,u))=0$. Then $f$ is a rotation for the angle $|\ang_{p}(w,u)|$. Let $(z',v')=f(z,v)$ and $(z'_1,v'_1)=f(z_1,v_1)=\refl(z',v')$ (here we use that $\refl$ commutes with $f$). Then $\Delta_{p}[z',z'_1]=0$.

\vskip .1cm

Recall $\dis(w,z)={{1}\over{2}}\log 3$. Using the formula that says that the hyperbolic circumference of the circle of radius $s$ is $2\pi \sinh(s)$ we  
conclude that

$$
\dis(z,z'), \dis(z_1,z'_1) < |\ang_{p}(w,u)| \sinh(\log \sqrt{3}).
$$

\noindent

Since $\Delta_{p}[z',z'_1]=0$ and from the previous inequality we get

$$
|\Delta_{p}[z,z_1]| < |\Delta_{p}[z',z'_1]|+ \dis(z,z')+\dis(z_1,z'_1) < 2|\ang_{p}(w,u)| \sinh(\log \sqrt{3}).
$$

\end{proof}

Let  $\eta$ be a geodesic in $\Ha$ and  let $\phi:\R \to \eta $ be the natural parametrisation of $\eta$ so that $\phi(0)=\z_{\max}(\eta,\infty)$, and so that $\infty$ is to the left of $\eta^{*}$, where $\eta^{*}$ is the push-forward of the orientation on $\R$ by the map $\phi$. Let $(z_0,v_0) \in \Ha$ be the unique point so that $\p^{1}(z_0,v_0)=\eta$, and so that $\ang_{\infty}(z_0,\omega^{2} v_0)=0$. Set $\tra_t(z_0,v_0)=(z_t,v_t)$.
Let $(z'_0,v'_0)=\refl(z_0,v_0)$, and set $\tra_{(-t)}(z'_0,v'_0)=(z'_t,v'_t)$. Then $\ft_{\eta}(z'_{t})=\ft_{\eta}(z_{t})=\phi(t) \in \eta$.

\begin{definition} Let $q(t)=\h_{\infty}(z_t)$, and $p(t)=\h_{\infty}(z'_t)$.
\end{definition}

\begin{proposition} We have $0<q(t)-p(t) \le 2\pi\sinh(\log \sqrt{3}) e^{-|t|}$, and \newline 
$|q(t)-(q(0)-|t|)|<\log 6$.
\end{proposition}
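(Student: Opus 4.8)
The plan is to reduce both inequalities to the one-dimensional elementary identities \eqref{ang-estimate-1}--\eqref{ang-estimate-2} together with Proposition 6.1. First recall the set-up: $\phi(t)=\ft_\eta(z_t)=\ft_\eta(z_t')$, so $z_t$ and $z_t'$ are the two points of $\TB\Ha$ with $\p^1(\,\cdot\,)=\eta$, both at hyperbolic distance $\log\sqrt3$ from the common foot $\phi(t)\in\eta$, lying on opposite sides of $\eta$. Thus $z_t=\refl_{\phi(t)}(z_t')$ in the sense of Proposition 6.1, applied with the geodesic there equal to $\eta$, the point $w=\phi(t)$, a unit vector $u$ at $w$ tangent to $\eta^*$, and $p=\infty$. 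Since $q(t)-p(t)=\h_\infty(z_t)-\h_\infty(z_t')=\Delta_\infty[z_t,z_t']$ (Remark after Definition 5.7), Proposition 6.1 immediately gives $|q(t)-p(t)|<2|\ang_\infty(\phi(t),u)|\sinh(\log\sqrt3)$. The sign: $z_t$ is on the side of $\eta$ closer to $\infty$ (it is obtained by rotating $z_t'$ toward $\infty$), so $q(t)-p(t)>0$; this just needs the observation that $z_0$ was normalized by $\ang_\infty(z_0,\omega^2v_0)=0$, which places $z_0$ above $\eta$, and $\refl$ swaps the two sides.

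Next I would estimate $|\ang_\infty(\phi(t),u)|$. The point $\phi(t)$ moves along $\eta$ starting from $\phi(0)=\z_{\max}(\eta,\infty)$ at unit speed, so $\dis(\phi(0),\phi(t))=|t|$ and, because $\phi(0)$ is the closest point of $\eta$ to $\infty$, we have $0\le|t|\le$ (the value $t_0$ in Proposition 5.2 for a ray along $\eta$ emanating from $\phi(t)$). Applying \eqref{ang-estimate-2} with $p=\infty$ to the ray $\gamma_{(\phi(t),u)}$ running from $\phi(t)$ back to $\z_{\max}(\eta,\infty)$ gives $|t|=\log(\csc|\ang_\infty(\phi(t),u)|+\cot|\ang_\infty(\phi(t),u)|)$, hence $|\ang_\infty(\phi(t),u)|\le\pi e^{-|t|}$ exactly as in Proposition 5.2. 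Combining with the previous paragraph yields $0<q(t)-p(t)\le 2\pi\sinh(\log\sqrt3)e^{-|t|}$, which is the first claim.

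For the second claim, write $q(t)-(q(0)-|t|)=\bigl(q(t)-\h_\infty(\phi(t))\bigr)-\bigl(q(0)-\h_\infty(\phi(0))\bigr)+\bigl(\h_\infty(\phi(t))-\h_\infty(\phi(0))+|t|\bigr)$. The first two bracketed terms each equal $\Delta_\infty[z_s,\phi(s)]$ for $s=t$ and $s=0$; since $\dis(z_s,\phi(s))=\log\sqrt3$ we have $|q(s)-\h_\infty(\phi(s))|=|\Delta_\infty[z_s,\phi(s)]|\le\log\sqrt3$ for each, contributing at most $2\log\sqrt3=\log 3$ in absolute value. The third bracket is $-\Delta_\infty[\z_{\max}(\eta,\infty),\phi(t)]+|t|$, which by \eqref{ang-estimate-2'} (applied along $\eta$ from $\phi(t)$ toward $\infty$, with $t_0$ there being our $|t|$) lies in $(0,\log 2)$. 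Adding up, $|q(t)-(q(0)-|t|)|<\log 3+\log 2=\log 6$, as required. The only mild subtlety — and the step I expect to need the most care — is bookkeeping the signs and the direction of the various orientations so that $q(t)-p(t)$ comes out positive and so that $\eqref{ang-estimate-2'}$ is invoked with the correct endpoint $p=\infty$; everything else is a direct substitution into identities already recorded in the excerpt.
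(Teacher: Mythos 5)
Your proposal is correct and follows essentially the same route as the paper: bounding $q(t)-p(t)=\Delta_\infty[z_t,z'_t]$ via Proposition 6.1 together with the angle estimate $|\ang_\infty(\phi(t),u)|\le\pi e^{-|t|}$ coming from (\ref{ang-estimate-2}), and then obtaining the second inequality from the $|t|-\log 2<\Delta_\infty[\phi(0),\phi(t)]<|t|$ bound plus the two $\log\sqrt{3}$ corrections from $\dis(z_s,\phi(s))=\log\sqrt{3}$. Your bookkeeping of signs (including the positivity of $q(t)-p(t)$, which the paper leaves implicit) is fine.
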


\begin{proof} Note that $q(t)-p(t)=\h_c(z_t)-\h_c(z'_t)=\Delta_c[z_t,z'_t]$. Let $w_t=\ft_{\eta}(z_t)=\ft_{\eta}(z'_{t})$, and let $u_t$ be the unit vector so that $0<|\ang_c(w_t,u_t)| \le {{\pi}\over{2}}$. Such  vector exists because $c$ is not an endpoint of $\eta$ (when $t=0$ there are two such vectors and we choose either one). Also, $\dis(w_0,w_t)=|t|$. From Proposition 6.1 we have

$$
q(t)-p(t)< 2|\ang_{c}(w_t,u_t)| \sinh(\log \sqrt{3}).
$$

The identity (\ref{ang-estimate-2}) yields
$$
|t|=\log \left(  \csc(\ang_c(w_t,u_t))+\cot(\ang_c(w_t,u_t)) \right),
$$
\noindent
and therefore, we have
$$
{{\sin(\ang_c(w_t,u_t))}\over{2}} \le e^{-|t|} \le \sin(\ang_c(w_t,u_t)).
$$
\noindent
Since for $0 \le \theta \le {{\pi}\over{2}}$, we have $\theta \le {{\pi}\over{2}} \sin \theta$, we find $|\ang_{c}(w_t,u_t)| \le \pi e^{-|t|} $. This shows
$$
q(t)-p(t)<\pi e^{-|t|} 2\sinh(\log \sqrt{3})=2\pi\sinh(\log \sqrt{3})e^{-|t|},
$$
\noindent
which proves the first inequality. 
\vskip .1cm
It follows from Proposition 5.2 that  $\h_c(w_0)-\h_c(w_t)>|t|-\log 2$. Since $\dis(w_t,z_t)=\log \sqrt{3}$, we have $q(0)-q(t)>|t|-\log 2-\log 3$, which proves $|q(t)-(q(0)-|t|)|<\log 6$.

\end{proof}

\begin{proposition} Let $T\ge 0$ and let
$$
\delta_T(\eta)=\int\limits_{-T}^{T} \left( e^{ {{2r-p(t)}\over{2}} }-e^{ {{2r-q(t)}\over{2}} } \right) \, dt.
$$
\noindent
Then 
$$
\delta_T(\eta) \le 24(e^{{{\pi}\over{\sqrt{3}}} } -1)e^{ {{2r-q(0)}\over{2}} } 
$$
\end{proposition}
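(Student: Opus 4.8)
The plan is to bound the integrand of $\delta_T(\eta)$ pointwise by an integrable function on all of $\R$ and then integrate, the truncation to $[-T,T]$ costing nothing because the integrand is non-negative. First I would factor
$$
e^{\frac{2r-p(t)}{2}}-e^{\frac{2r-q(t)}{2}}=e^{\frac{2r-q(t)}{2}}\Big(e^{\frac{q(t)-p(t)}{2}}-1\Big),
$$
which is $\ge 0$ for every $t$ since $q(t)\ge p(t)$ by Proposition 6.2; hence it suffices to bound $\delta_T(\eta)$ by the corresponding integral over all of $\R$.

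For the second factor, recall from Proposition 6.2 that $0<q(t)-p(t)\le 2\pi\sinh(\log\sqrt 3)\,e^{-|t|}$, and note $\sinh(\log\sqrt3)=\tfrac12\big(\sqrt3-\tfrac1{\sqrt3}\big)=\tfrac1{\sqrt3}$, so $\tfrac12\big(q(t)-p(t)\big)\le \tfrac{\pi}{\sqrt3}e^{-|t|}$. Using the elementary convexity inequality $e^{xs}-1\le s\,(e^{x}-1)$, valid for $x\ge 0$ and $0\le s\le 1$, with $x=\pi/\sqrt3$ and $s=e^{-|t|}$, I get
$$
e^{\frac{q(t)-p(t)}{2}}-1\le e^{-|t|}\big(e^{\pi/\sqrt3}-1\big).
$$

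For the first factor, the second estimate in Proposition 6.2 gives $q(0)-q(t)<|t|+\log 6$, hence $e^{\frac{2r-q(t)}{2}}\le \sqrt6\,e^{|t|/2}\,e^{\frac{2r-q(0)}{2}}$. Multiplying the two bounds yields
$$
e^{\frac{2r-p(t)}{2}}-e^{\frac{2r-q(t)}{2}}\le \sqrt6\,\big(e^{\pi/\sqrt3}-1\big)\,e^{\frac{2r-q(0)}{2}}\,e^{-|t|/2},
$$
and integrating over $\R$, using $\int_{-\infty}^{\infty}e^{-|t|/2}\,dt=4$, gives $\delta_T(\eta)\le 4\sqrt6\,\big(e^{\pi/\sqrt3}-1\big)\,e^{\frac{2r-q(0)}{2}}$, which is at most $24\big(e^{\pi/\sqrt3}-1\big)e^{\frac{2r-q(0)}{2}}$ since $4\sqrt6<24$. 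There is no genuine obstacle here; the one point needing a little care is absorbing the factor $\tfrac12$ into the exponent of the $q-p$ estimate so that $\pi/\sqrt3$ rather than $2\pi/\sqrt3$ appears inside the exponential — this is exactly what the convexity inequality accomplishes, together with $e^{-|t|}\le 1$ — after which the remaining factor $e^{-|t|/2}$ makes the integral over $\R$ converge.
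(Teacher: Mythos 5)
Your proof is correct and follows essentially the same route as the paper: factor the integrand, apply the two estimates of Proposition 6.2, use the convexity inequality $e^{xs}-1\le s(e^{x}-1)$, and integrate the resulting $e^{-|t|/2}$ bound over the whole line. The only difference is cosmetic — you keep the factor $\sqrt6$ and obtain $4\sqrt6<24$, while the paper rounds $\sqrt6$ up to $6$ to land exactly on the stated constant $24$.
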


\begin{proof} We compute 
$$
\delta_T(\eta) \le \int\limits_{-\infty}^{\infty} \left( e^{ {{2r-p(t)}\over{2}} }-e^{ {{2r-q(t)}\over{2}} } \right) \, dt=
2e^{ {{2r-q(0)}\over{2}} }  \int\limits_{0}^{\infty} e^{{{q(0)-q(t)}\over{2}} }\big(e^{ {{q(t)-p(t)}\over{2}} }-1\big)  \, dt.
$$
\noindent
We apply Proposition 6.2 to the right hand side of the above inequality and since $e^{kx}-1 \le (e^{k}-1)x$  for $x \in [0,1]$, we
get
$$
\delta_T(\eta) \le 2e^{ {{2r-q(0)}\over{2}} }  \int\limits_{0}^{\infty} 6 e^{ {{t}\over{2}} }\big(e^{ {{\pi e^{-t} }\over{\sqrt{3} }} }-1\big)  \, dt \le
12\big(e^{ {{ \pi}\over{\sqrt{3} }} }-1\big) e^{ {{2r-q(0)}\over{2}} } \int\limits_{0}^{\infty}  e^{ {{t}\over{2}} } e^{-t }  \, dt. 
$$
\noindent
The identity
$$
\int\limits_{0}^{\infty}  e^{ -{{t}\over{2}} }  \, dt=2,
$$
\noindent
completes the proof of the proposition.
\end{proof}

\begin{proposition} For any $T \ge 0$ let $\mu^{+}_{\eta,T}$ and $\mu^{-}_{\eta,T}$ be the measures on $\R$ that are supported on $[-T,T]$ and given by
$$
\mu^{+}_{\eta,T}=e^{ {{2r-q(t)}\over{2}} }\, dt,
$$
\noindent
and
$$
\mu^{-}_{\eta,T}=e^{ {{2r-p(t)}\over{2}} }\, dt,
$$
\noindent
Let $\delta_{\eta,T}$ be the measure on $\R$ that is supported at the point $0$ and such that 

$$
\delta_{\eta,T}(\R)=\delta_{\eta,T}(\{0\})=\mu^{-}_{\eta,T}(\R)-\mu^{+}_{\eta,T}(\R).
$$
\noindent
Then the measures $(\mu^{+}_{\eta,T}+\delta_{\eta,T})$ and $\mu^{-}_{\eta,T}$ are $Q$-equivalent for $Q=24\big(e^{ {{\pi}\over{\sqrt{3}}} }-1)$.

\end{proposition}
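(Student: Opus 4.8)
The plan is to verify condition $(1)$ of Proposition~4.1 for the pair $\mu_1:=\mu^{+}_{\eta,T}+\delta_{\eta,T}$ and $\mu_2:=\mu^{-}_{\eta,T}$, with equivalence constant $Q=24(e^{\pi/\sqrt3}-1)$. Write $g_1(t)=e^{(2r-q(t))/2}$ and $g_2(t)=e^{(2r-p(t))/2}$ for the densities of $\mu^{+}_{\eta,T}$ and $\mu^{-}_{\eta,T}$ on $[-T,T]$, set $h:=e^{(2r-q(0))/2}$, and let $G_i(a)=\mu_i(-\infty,a]$. By the definition of $\delta_{\eta,T}$ the two measures have the same total mass; the atom $\delta_{\eta,T}$ sits at $t=0$ and has mass $m:=\delta_{\eta,T}(\R)=\int_{-T}^{T}(g_2-g_1)\,dt$.

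Two inputs do all the work. First, $g_1\le g_2$ pointwise on $[-T,T]$, since $p(t)<q(t)$ by Proposition~6.2; and, crucially, $g_1(t)\ge h$ for every $t$, i.e. $q(t)\le q(0)$. To see the latter, note that $\ft_\eta(z_0)=\phi(0)=\z_{\max}(\eta,\infty)$, so the geodesic through $z_0$ perpendicular to $\eta$ meets $\eta$ at its highest point and is therefore the vertical geodesic through $\phi(0)$; as $z_0$ is the centre of the ideal triangle $\p^2(z_0,v_0)$ (which has $\infty$ as a vertex, because $\ang_\infty(z_0,\omega^2 v_0)=0$) it lies on the $\infty$-side of $\eta$ at hyperbolic distance $\log\sqrt3$ from $\phi(0)$, whence $q(0)=\h_\infty(\phi(0))+\log\sqrt3$. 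For general $t$, the point $z_t=\tra_t(z_0,v_0)$ lies at distance $\log\sqrt3$ from $\phi(t)\in\eta$, so $q(t)=\h_\infty(z_t)\le\h_\infty(\phi(t))+\log\sqrt3\le\h_\infty(\phi(0))+\log\sqrt3=q(0)$, using $|\Delta_\infty[z_t,\phi(t)]|\le\dis(z_t,\phi(t))$ together with the fact that $\phi(0)=\z_{\max}(\eta,\infty)$ has maximal height along $\eta$. Second, $m\le Qh$: this is exactly Proposition~6.3, since $m=\delta_T(\eta)$ and $Qh=24(e^{\pi/\sqrt3}-1)e^{(2r-q(0))/2}$.

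From $g_1\le g_2$ one reads off that $G_1\le G_2$ on $(-\infty,0)$, that $G_1\ge G_2$ on $[0,\infty)$, and that $|G_1(a)-G_2(a)|\le m$ in both regions (the gap equals $\int_{-T}^{a}(g_2-g_1)$ for $a<0$ and $\int_{a}^{T}(g_2-g_1)$ for $a\ge0$). Now every subinterval of $[-T,T]$ of length $Q$ carries $g_1$-mass at least $Qh\ge m$, because $g_1\ge h$. The inequality $\mu_1(-\infty,a]\le\mu_2(-\infty,a+Q]$ follows: for $a<0$, $G_1(a)\le G_2(a)\le G_2(a+Q)$; for $a\ge0$ with $a+Q\ge T$, $G_2(a+Q)=\mu_2(\R)=\mu_1(\R)\ge G_1(a)$; for $a\ge0$ with $a+Q<T$, $G_2(a+Q)=G_2(a)+\int_{a}^{a+Q}g_2\ge G_2(a)+Qh\ge G_2(a)+m\ge G_1(a)$. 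Symmetrically, $\mu_2(-\infty,a]\le\mu_1(-\infty,a+Q]$: for $a\ge0$, $G_2(a)\le G_1(a)\le G_1(a+Q)$; for $-Q\le a<0$ the interval $(a,a+Q]$ contains the atom, so $G_1(a+Q)\ge G_1(a)+m\ge G_2(a)$; and for $a<-Q$ (the case $a<-T$ being trivial, as then $G_2(a)=0$) one has $[a,a+Q]\subset[-T,0]$, hence $G_1(a+Q)-G_1(a)=\int_{a}^{a+Q}g_1\ge Qh\ge m\ge G_2(a)-G_1(a)$, i.e. $G_1(a+Q)\ge G_2(a)$.

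The proof involves no serious obstacle: the only new geometric statement, $q(t)\le q(0)$, is an immediate consequence of the general facts $|\Delta_p[z,w]|\le\dis(z,w)$ and the characterisation of $\z_{\max}(\eta,\infty)$, while the genuine analytic estimate, $m\le Qh$, has already been established as Proposition~6.3. The one point demanding a little care is the bookkeeping of the last paragraph, and in particular the use of the sharp bound $g_1\ge h$ (equivalently $q(t)\le q(0)$, rather than the weaker $q(t)\le q(0)+\log6$ coming from Proposition~6.2) — this is precisely what makes the equivalence constant come out to be $24(e^{\pi/\sqrt3}-1)$ and not a larger multiple.
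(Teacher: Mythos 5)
Your proof is correct and takes essentially the same route as the paper: both verify condition (1) of Proposition 4.1 using the pointwise comparison of densities, the lower density bound $e^{(2r-q(0))/2}$ (your observation $q(t)\le q(0)$, which the paper uses implicitly in its estimate of $\mu^{-}(t,t+Q)$), and Proposition 6.3 to bound the mass of the atom. The only cosmetic difference is that the paper deduces the second transport inequality from the $s\mapsto -s$ symmetry of the configuration, whereas you carry out a direct case analysis.
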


\begin{proof} We write $\mu^{+}$, $\mu^{-}$ and $\delta$ for $\mu^{+}_{\eta,T}$, $\mu^{-}_{\eta,T}$ and  $\delta_{\eta,T}$ respectively.
To prove that the corresponding measures are $Q$-equivalent we use Proposition 4.1. Observe that for any $t \in \R$,
\begin{equation}\label{1}
\mu^{+}(-\infty,t] \le \mu^{-}(-\infty,t],
\end{equation}
\noindent
and 
\begin{equation}\label{2}
\delta(-\infty,t] \le Qe^{ {{2r-q(0)} \over{2}}}.
\end{equation}
\noindent
Moreover when $-T \le t \le t+Q \le T$ we have
\begin{equation}\label{3}
\mu^{-}(t,t+Q) \ge  \int\limits_{t}^{t+Q} e^{ {{2r-q(0)} \over{2}}} \, ds=Qe^{ {{2r-q(0)} \over{2}}}.
\end{equation}
\noindent
Finally by the definition of $\delta$ we have
\begin{equation}\label{4}
(\mu^{+}+\delta)(\R)= \mu^{-}(\R).
\end{equation}
\vskip .1cm
We now show that for any $t \in \R$ we have
\begin{equation}\label{5}
(\mu^{+}+\delta)(-\infty,t]\le  \mu^{-}(-\infty,t+Q).
\end{equation}
\noindent
For $t \le -T$ the left hand side of the above inequality is $0$. For $-T \le t <t+Q \le T$ the inequality follows from (\ref{1}), (\ref{2}) and (\ref{3}).
For $t>T$ the inequality follows from (\ref{4}). This shows (\ref{5}).
\vskip .1cm
By $s \to -s$ symmetry we have 
\begin{equation}\label{6}
\mu^{+}+\delta)[-t,\infty) \le \mu^{-}(-t-Q,\infty),
\end{equation}
\noindent
and replacing $-t$ with $t+Q$ we obtain
\begin{equation}\label{7}
(\mu^{+}+\delta)[t+Q,\infty) \le  \mu^{-}(t,\infty).
\end{equation}
\noindent
Thus by (\ref{4}) we have
\begin{equation}\label{8}
\mu^{-}(-\infty,t] \le  (\mu^{+}+\delta)(-\infty,t+Q).
\end{equation}
\noindent
We have verified the hypothesis from Proposition 4.1 and this  proves the proposition.

\end{proof}

\subsection{The proof of Lemma 5.4}  Let $r>2$ and let  $\gamma \in \Gamma(G)$. We need to show that the restriction of the measure $\alpha_1(r)$ to $\NB\gamma$ is $Q$-symmetric. For $r$ large, we have that the support of the measure $\alpha_1(r)$ is deep into the thin part of $\NB\gamma$. In fact, it follows from Proposition 5.10 that $\B_r(G) \subset \TB\Thin_G(2r-\log 3)$. Note that for $(z,v) \in \B_r(G)$ we have that 
$\h(\ft_{\p^{1}(z,v)}(z) \ge 2r-\log 3$. Combining this with Proposition 5.5, we have that for $r$ large enough, the
restriction of the measure $\alpha_1(r)$ to $\NB\gamma$ is supported in $\NB\gamma \cap (\TB\Ha \setminus \TB\Thick_G(2r-\log 3-1))$.
Therefore, for each cusp $c \in \Cus(G)$ we consider the restriction of the measure $\alpha_1(r)$ to $\NB\gamma \cap \TB\Hor_{c}(2r-\log 3-1)$ which we denote by $\alpha_1(\gamma,c,r)$. To prove Lemma 5.4, it is enough to show that each $\alpha_1(\gamma,c,r)$ is $Q$-symmetric. There are only finitely many cusps $c \in \Cus(G)$, so that $\alpha_1(\gamma,c,r)$ is a non-zero measure,  and the restriction of $\alpha_1(r)$ to $\NB\gamma$ is the finite sum of these measures $\alpha_1(\gamma,c,r)$.  Note that if $c$ is an endpoint of $\gamma$ then $\alpha^{c}_1(r)$ is the zero measure.
\vskip .1cm
Let $G=G_c$ (then $c=\infty$).  We say that $(z,v) \in \B^{*}_r(\gamma,c)$, where $*\in \{+,- \}$, if the following holds
\begin{itemize}
\item We have $(z,v) \in \B_r(G)$.
\item We have $(z,v) \in (\TB\Ha)^{*}$. 
\item $\fta_r(z,v) \in \NB(\gamma \cap \Hor_c(1))$. 
\end{itemize}

We say that $(z_0,v_0) \in U^{+} \subset \B^{+}_r(\gamma,c)$, if $\ang_{c}(z_0,\omega^{2} v_0)=0$. Set $(z'_0,v'_0)=\refl(z_0,v_0)$, and let  $U^{-}=\refl(U^{+})$.  By definition of $\B_r(G)$, we have $U^{-} \subset \B^{-}_r(\gamma,c)$. 
For $(z_0,v_0) \in \U^{+}$ and $t \in \R$ set  $(z_t,v_t)=\tra_t(z_0,v_0)$. Also $(z'_t,v'_t)=\tra_{(-t)}(z'_0,v'_0)$.
We have

\begin{proposition} Fix $(z_0,v_0) \in U^{+}$. Then there exists $T=T(z_0,v_0) \ge 0$ such that 
the set $t \in \R: \, (z_t,v_t) \in \B^{+}_r(\gamma,c)$ is a symmetric interval $(-T,T)$ or $[-T,T]$.
\end{proposition}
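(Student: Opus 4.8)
The plan is to fix $(z_0,v_0)\in U^{+}$, write $\eta=\p^{1}(z_0,v_0)$, $\gamma=\abs^{1}_r(z_0,v_0)$, $c=\taso_r(z_0,v_0)$, normalise $G=G_c$ so that $c=\infty$, and trace the three conditions defining $\B^{+}_r(\gamma,c)$ along the curve $t\mapsto(z_t,v_t)=\tra_t(z_0,v_0)$. First I would record what $U^{+}$ says geometrically. Since $\ang_{\infty}(z_0,\omega^{2}v_0)=0$, the ray $\gamma_{(z_0,\omega^{2}v_0)}$ runs straight into $\Hor_{\infty}(1)$ and never leaves, so $\abs_r(z_0,\omega^{2}v_0)=\infty$; hence the triangle $\abs^{2}_r(z_0,v_0)$ has $\infty$ as a vertex and $\gamma=\abs^{1}_r(z_0,v_0)$ as the opposite edge, $z_0$ lies on the vertical geodesic over $\z_{\max}(\eta,\infty)$, and by Proposition 5.5 the point $\fta_r(z_0,v_0)$ is within $Ce^{-r}+\log\sqrt{3}$ of $z_0$, hence deep inside $\Hor_{\infty}(1)$ (recall $\h_{\infty}(z_0)\ge 2r$). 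The structural input is the rigidity built into the definition of $\B_r(G)$: since $(z_0,v_0)\in\B_r(G)$, the map $\abs^{1}_r$ is constant, equal to $\gamma$, on the whole set $\{(z',v')\in\wt{Q}_c:\p^{1}(z',v')=\eta\}$, and that whole set lies in $\B_r(G)$. As $\tra_t$ preserves $\p^{1}$, it follows that $(z_t,v_t)\in\B_r(G)$ with $\abs^{1}_r(z_t,v_t)=\gamma$ if and only if $(z_t,v_t)\in\wt{Q}_c$.

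Next I would reduce $\B^{+}_r(\gamma,c)$ to two conditions. With $q(t)=\h_{\infty}(z_t)$ and $p(t)=\h_{\infty}(z_t')$, where $(z_t',v_t')=\refl(z_t,v_t)$, Proposition 6.2 gives $0<q(t)-p(t)\le 2\pi\sinh(\log\sqrt{3})\,e^{-|t|}$. Since $q(t)>p(t)$, membership in $\wt{Q}_c$ forces $q(t)\ge 2r$; conversely, when $q(t)\ge 2r$ one has $p(t)\ge 2r-2\pi\sinh(\log\sqrt{3})>0$, so $\h(z_t)=q(t)>p(t)=\h(z_t')$, i.e.\ $(z_t,v_t)\in(\TB\Ha)^{+}$ automatically, while $\fta_r(z_t,v_t)\in\Hor_{\infty}(1)$ automatically by Proposition 5.5 again. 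Hence $(z_t,v_t)\in\B^{+}_r(\gamma,c)$ if and only if $q(t)\ge 2r$, $\infty\notin\partial\abs^{1}_r(z_t,v_t)$, and $\infty\notin\partial\abs^{1}_r(z_t',v_t')$. When $q(t)\ge 2r$ the point $z_t$ already lies in $\Hor_{\infty}(2r)$, so the $1$-horoball at $\infty$ contains the start of each ray $\gamma_{(z_t,\omega^{j}v_t)}$; then (\ref{ang-estimate-1}) shows $\abs_r(z_t,v_t)=\infty$ exactly when $|\ang_{\infty}(z_t,v_t)|\le\arcsin(e^{-r})$, and similarly for $\omega v_t$ and for $(z_t',v_t')$. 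Thus the last two conditions become the lower bounds $|\ang_{\infty}(z_t,v_t)|,|\ang_{\infty}(z_t,\omega v_t)|>\arcsin(e^{-r})$ together with the two analogous bounds for $(z_t',v_t')$.

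I would then get the symmetry from the reflection $\kappa$ of $\Ha$ in the vertical geodesic over $\z_{\max}(\eta,\infty)$. This $\kappa$ fixes $\infty$, preserves $\h_{\infty}$ and hence every $\Hor_{\infty}(s)$, maps $\eta$ to itself reversing its orientation, and carries $z_t$ to $z_{-t}$; moreover it interchanges the two non-vertical frame directions at $z_0$ and fixes the vertical one, so it carries the unordered pair $\{|\ang_{\infty}(z_t,v_t)|,|\ang_{\infty}(z_t,\omega v_t)|\}$ to the same pair at time $-t$, and likewise after applying $\refl$ (which commutes with $\kappa$). Consequently $q(t)$ is even and all three conditions above have truth--sets symmetric under $t\mapsto -t$, so the set in question is symmetric about $0$; it contains $0$ because $(z_0,v_0)\in U^{+}\subset\B^{+}_r(\gamma,c)$. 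Finally, to upgrade ``symmetric set'' to ``symmetric interval'' I would check that each constraint is one--sided: $q(t)$ is maximal at $t=0$ and strictly decreasing in $|t|$ (as $z_t$ traces the hypercycle at distance $\log\sqrt{3}$ from $\eta$, whose top relative to $\infty$ sits over $\z_{\max}(\eta,\infty)$), so $\{t:q(t)\ge 2r\}=[-T_1,T_1]$; and on the range $q(t)\ge 2r$ the angles $|\ang_{\infty}(z_t,\omega^{j}v_t)|$ are monotone in $|t|$, so once $(z_t,v_t)$ leaves $\B^{+}_r(\gamma,c)$ it cannot return. Together with the symmetry this yields an interval $(-T,T)$ or $[-T,T]$, the open/closed alternative corresponding to whether the binding constraint at the endpoint is the closed inequality $q(t)\ge 2r$ or one of the open angle inequalities.

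The main obstacle is making Steps three and four precise: correctly bookkeeping the $\omega^{2}$--twist in how $\kappa$ intertwines with $\p^{1}$ and with $\tra_t$ (this is exactly why it is the unordered pair of angles, and not each angle separately, that is $\kappa$--symmetric), and establishing the unimodality of $t\mapsto q(t)$ and the monotonicity of the angle functions that turns a symmetric set into a symmetric interval. The remaining ingredients --- the rigidity reduction of Step one and the automatic verification of the $(\TB\Ha)^{+}$ and $\fta_r(z_t,v_t)\in\Hor_{\infty}(1)$ conditions --- are routine given Propositions 5.2, 5.5, 5.6 and 6.2.
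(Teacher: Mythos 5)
Your proposal is correct and takes essentially the same route as the paper, whose entire proof consists of reducing membership of $(z_t,v_t)$ in $\B^{+}_r(\gamma,c)$ to the two $\wt{Q}_c$-conditions (that $c$ is not an endpoint of $\abs^{1}_r(z_t,v_t)$ or of $\abs^{1}_r(z'_t,v'_t)$, and that $(z_t,v_t)\in \TB\Hor_c(2r)\cup\refl(\TB\Hor_c(2r))$) and then asserting that each condition is satisfied on a symmetric interval; your reflection-symmetry plus one-sidedness discussion, together with the routine checks that the remaining defining conditions (constancy of $\abs^{1}_r$ along the $\p^{1}$-fibre, $(\TB\Ha)^{+}$ via Proposition 6.2, and the location of $\fta_r$ via Propositions 5.5 and 5.8) are automatic, is precisely the verification the paper leaves out. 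Two phrasings deserve care in a write-up: $\refl$ does not literally commute with the orientation-reversing reflection $\kappa$ (conjugation by $\kappa$ exchanges $\omega$ and $\omega^{2}$, which is exactly why only the unordered pair of angles is symmetric, as you note), and the individual angles $|\ang_{\infty}(z_t,\omega^{j}v_t)|$ are not monotone in $|t|$ (the angle at $z_t$ toward a fixed endpoint of $\eta$ equals $2\pi/3$ at $t=0$, rises to $\pi$ on one side and decays to $0$ only on the other); what is true, and suffices, is that each of the four height-gain functions vanishes and is then strictly increasing in one direction of $t$, so each absorption condition cuts out a half-line, and $\kappa$ pairs these half-lines, which together with the evenness and unimodality of $q(t)$ yields the symmetric interval.
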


\begin{proof} Observe that $\p(z_0,v_0)=\p(z_t,v_t)$, for any $t \in \R$. The condition $(z_t,v_t) \in \B^{+}_r(\gamma,c)$ it equivalent with
the following two conditions. The first one is that $c$ is not an endpoint of $\abs^{1}_r(z_t,v_t)$ or of $\abs^{1}_r(z'_t,v'_t)$.
The second one is that $(z_t,v_t) \in \big(\Hor_c(2r)\cup \refl(\Hor_c(2r))\big)$. One directly verifies that each of these two conditions is satisfied on a symmetric interval.
\end{proof}

\begin{proposition} There exists $r_0>0$, so that for $r>r_0$, the following holds.  Let $(z_0,v_0) \in U^{+}$. 
For every $t \in (-T(z_0,v_0),T(z_0,v_0))$, we have 
$$
\dis(\psi(t),\fta_r(z_t,v_t)), \dis(\psi(t),\fta_r(z'_{t},v'_{t} ) ) \le re^{-r},
$$
\noindent
where $\psi:\R \to \gamma$ is the natural parametrisation of $\gamma$ such that $\psi(0)=\z_{\max}(\gamma,c)$.
\end{proposition}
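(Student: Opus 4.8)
The plan is to reduce the statement to the geometric estimate from Proposition 5.2 together with the ``nearby horocyclic flow lines'' estimate of Proposition 5.12 (in its Section~5.3 form, i.e.\ Proposition~5.12 there which controls $\z_{\max}$ under horocyclic perturbation). First I would fix $(z_0,v_0)\in U^{+}$ and $t\in(-T(z_0,v_0),T(z_0,v_0))$, and recall that $(z_t,v_t)=\tra_t(z_0,v_0)$ satisfies $\p^{1}(z_t,v_t)=\p^{1}(z_0,v_0)=\eta$, with $\eta$ a fixed geodesic whose endpoints are $\abs_r(z_t,v_t)$ and $\abs_r(z_t,\omega v_t)$ (independent of $t$ on the interval, by the definition of $\B_r(G)$ and Proposition~6.6). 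Since $(z_t,v_t)\in\B^{+}_r(\gamma,c)$ the cusp $c$ is not an endpoint of $\eta$, so $\z_{\max}(\eta,c)$ is a well-defined point of $\eta$; call it $w_t'$. By definition $\fta_r(z_t,v_t)=\ft_{\eta}(\ct(\abs^{2}_r(z_t,v_t)))$, and by Proposition~5.5, $\dis(\fta_r(z_t,v_t),\ft_{\eta}(z_t))\le Ce^{-r}$, and similarly $\dis(\fta_r(z'_t,v'_t),\ft_{\eta}(z'_t))\le Ce^{-r}$ (here $(z'_t,v'_t)=\tra_{-t}(\refl(z_0,v_0))$, so its geodesic $\p^{1}$ is again $\eta$). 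Also $\ft_{\eta}(z_t)=\ft_{\eta}(z'_t)=\phi(t)$ in the notation preceding Definition~6.1, i.e.\ the point at signed distance $t$ from $\z_{\max}(\eta,c)$ along $\eta$ in the direction pointing away from... — more precisely, $\phi(0)=\z_{\max}(\eta,\infty)=w_0'$ and the equidistant flow $\tra_t$ translates the foot by $t$.

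The crux is then to compare $\phi(t)$ (the foot of $z_t$ on $\eta$) with $\psi(t)$ (the point at signed distance $t$ from $\z_{\max}(\gamma,c)$ on $\gamma$), where $\gamma$ is the geodesic with endpoints $\abs_r(z_0,v_0)$ and $\abs_r(z_0,\omega v_0)$ — but wait, one must be careful: $\gamma$ here is the fixed geodesic in $\Gamma(G)$ we restricted the measure to, and the content of $(z,v)\in\B_r(G)$ together with $\fta_r(z_t,v_t)\in\NB\gamma$ forces $\abs^{1}_r(z_t,v_t)=\gamma=\eta$ for all $t$ in the interval. So in fact $\eta=\gamma$ and $\phi$ and $\psi$ are both the natural parametrisation of the same geodesic with the same basepoint $\z_{\max}(\gamma,c)$; hence $\phi(t)=\psi(t)$ identically, and the proposition reduces to the two inequalities $\dis(\psi(t),\fta_r(z_t,v_t))\le Ce^{-r}$ and $\dis(\psi(t),\fta_r(z'_t,v'_t))\le Ce^{-r}$, which follow from Proposition~5.5 as above once we take $r>r_0$ large enough that $Ce^{-r}\le re^{-r}$, i.e.\ $r\ge C$.

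Thus the proof I would write is short: (i) observe that for $(z_0,v_0)\in U^{+}$ and $t$ in the symmetric interval from Proposition~6.6, the geodesic $\abs^{1}_r(z_t,v_t)$ equals $\gamma$ and $\abs^{1}_r(z'_t,v'_t)$ equals $\gamma$ (this is exactly the defining property of $\B_r(G)$: all flow lines in $\wt Q_c$ with the same $\p^{1}$ have the same $\abs^{1}_r$, and $\refl$ preserves $\abs^{1}_r$); (ii) note $\ft_{\gamma}(z_t)=\ft_{\gamma}(z'_t)=\psi(t)$, since $\tra_t$ moves the foot on $\gamma$ by signed distance $t$ starting from $\z_{\max}(\gamma,c)=\ft_{\gamma}(z_0)$ (using $\ang_c(z_0,\omega^2 v_0)=0$, which is the condition defining $U^{+}$, to pin down the basepoint); (iii) apply Proposition~5.5 to get $\dis(\fta_r(z_t,v_t),\ft_{\gamma}(z_t))\le Ce^{-r}$ and likewise for the primed point; (iv) conclude by choosing $r_0\ge \max\{r_0 \text{ of Prop.\ 5.5}, C\}$. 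The main obstacle — really the only thing requiring care — is step (ii): verifying that the equidistant flow $\tra_t$ and the reflection $\refl$ interact with $\ft_\gamma$ exactly so that both $z_t$ and $z'_t$ have foot $\psi(t)$; this uses $\refl\circ\tra_t=\tra_{-t}\circ\refl$ from Proposition~5.1 and the fact that $\refl$ fixes $\ft_{\p^1(z,v)}(z)$, so $\ft_\gamma(z'_t)=\ft_\gamma(\refl(z_{-t}))=\ft_\gamma(z_{-t})$ — hm, this gives $\psi(-t)$, so one must instead use $(z'_t,v'_t)=\tra_{-t}(z'_0,v'_0)$ with $z'_0=\refl(z_0)$, whence $\ft_\gamma(z'_0)=\ft_\gamma(z_0)=\z_{\max}(\gamma,c)$ and $\ft_\gamma(z'_t)=\psi(-(-t))=\psi(t)$, matching. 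Once this bookkeeping is pinned down the estimate is immediate, so I would present it cleanly and attribute the two distance bounds to Proposition~5.5.
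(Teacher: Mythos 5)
There is a genuine gap, and it sits exactly at the point you flagged as the crux. Your step (ii) rests on the identification $\p^{1}(z_t,v_t)=\abs^{1}_r(z_t,v_t)=\gamma$, i.e.\ that the geodesic $\eta=\p^{1}(z_0,v_0)$ through the \emph{ideal endpoints} of the two rays coincides with the geodesic $\gamma$ joining the two \emph{absorbing cusps}. These are different objects: $\p(z_0,v_0)$ and $\p(z_0,\omega v_0)$ are generic boundary points (they lie in $\Cus(G)$ only on a measure-zero set), while $\gamma=\abs^{1}_r(z_0,v_0)$ has endpoints $\abs_r(z_0,v_0),\abs_r(z_0,\omega v_0)\in\Cus(G)$ which are merely \emph{close} to them (in the normalisation $G=G_c$, $c=\infty$, within Euclidean distance $1$, by Proposition 5.6). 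Membership in $\B^{+}_r(\gamma,c)$ does force $\abs^{1}_r(z_t,v_t)=\abs^{1}_r(z'_t,v'_t)=\gamma$ for all $t$ in the interval, but it does not make $\eta=\gamma$. Consequently $\ft_{\gamma}(z_0)$ is not $\z_{\max}(\gamma,c)$, the flow $\tra_t$ (which is equidistant with respect to $\eta$, not $\gamma$) does not translate $\ft_{\gamma}(z_t)$ by exactly $t$, and your conclusion $\ft_{\gamma}(z_t)=\ft_{\gamma}(z'_t)=\psi(t)$ is unjustified. The entire quantitative content of the proposition is precisely the comparison between $\eta$ and $\gamma$ that your argument declares unnecessary.

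The paper bridges this as follows: with $G=G_c$, $c=\infty$, the endpoints of $\eta$ and $\gamma$ differ by at most $1$ while being at least $e^{2r}$ apart, so the unique affine map $f\in\Mobi$, $f(\zeta)=l_1\zeta+l_2$, taking $\eta$ to $\gamma$ satisfies $|\log l_1|\le 2e^{-2r}$ and $|l_2|<1$; since $\h_{\infty}(z_t),\h_{\infty}(z'_t)\ge 2r-\log 3$, this gives $\dis(z_t,f(z_t)),\dis(z'_t,f(z'_t))\le 5e^{-2r}$. Because $f$ fixes $\infty$ and carries $\eta$ to $\gamma$, it carries $\z_{\max}(\eta,\infty)$ to $\z_{\max}(\gamma,\infty)$ and hence $\ft_{\gamma}(f(z_t))=\psi(t)$; as $\ft_{\gamma}$ is distance non-increasing, $\dis(\ft_{\gamma}(z_t),\psi(t)),\dis(\ft_{\gamma}(z'_t),\psi(t))\le 5e^{-2r}$. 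Combining with $\dis(\fta_r(z_t,v_t),\ft_{\abs^{1}_r(z_t,v_t)}(z_t))\le Ce^{-r}$ — which is Proposition 5.8, not Proposition 5.5 as you cite — yields the bound $re^{-r}$ for $r$ large. Your steps (i), (iii), (iv) survive, but without an argument of this kind (an affine-comparison map, or at least Propositions 5.2/5.5 applied with $\delta\sim e^{-r}$ plus a separate comparison of $\z_{\max}(\eta,c)$ with $\z_{\max}(\gamma,c)$ and of the two arc-length parametrisations), the proof is incomplete.
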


\begin{proof}  Assume that $G=G_c$ (then $c=\infty$). Since $(z_0,v_0) \in \B_r(G)$, and since $(z_0,v_0)$ is above $\p^{1}(z_0,v_0)$, we have that $\h_{\infty}(z_0) \ge 2r>r+2$. By Proposition 5.6 we have that $\Hor_{\abs_{r}(z_{0},v_{0})}(1)$, is the first $1$-horoball that the geodesic  ray $\gamma_{(z_{0},v_{0})}$ enters after leaving $\Hor_{\infty}(1)$. Since with this normalisation the Euclidean diameter of any $1$-horoball (except the one based at $\infty$) is at most $1$, and since the ray $\gamma_{(z_{0},v_{0})}$ ends at $\p(z_0,v_0)$, 
we have $|\abs_r(z_0,v_0)-\p(z_0,v_0)|<1$.  Similarly, $|\abs_r(z_0,\omega v_0)-\p(z_0,\omega v_0)|<1$. 
From $\h_{\infty}(z_0) \ge 2r$, we have that 
$$
|\p(z_0,v_0)-\p(z_0,\omega v_0)| \ge 2e^{-\h_{\infty}(z_{0})-\log \sqrt{3}}>e^{2r }.
$$ 
Let $f \in \Mobi$, be the unique M\"obius transformation so that $f(\p^{1}(z_0,v_0))=\gamma$ (note that $\ft_{\gamma}(f(z_0))=\psi(0)$). 
Then for $\zeta \in \Ha$, we have $f(\zeta)=l_1\zeta+l_2$, where $l_1>0$, and $l_2 \in \R$. The coefficients $l_1,l_2$, are determined by the conditions $f(\p(z_0,v_0))=\abs_r(z_0,v_0)$, and $f(\p(z_0,\omega v_0))=\abs_r(z_0,\omega v_0)$.
We have 
$$
|\log l_1| \le \log(1+2e^{-2r}) < 2e^{-2r}
$$
\noindent
and $|l_2|<1$.  Since $\h_{\infty}(z_t),\h_{\infty}(z'_t) >2r-\log 3$, for   $t \in [-T(z_0,v_0),T(z_0,v_0)]$ we have that
$$
\dis(z_t,f(z_t)),\dis(z'_t,f(z'_t)) \le 2e^{-2r}+3e^{-2r}=5e^{-2r}.
$$
\noindent
Since $\ft_{\gamma}(f(z_t))=\psi(t)$ and since $\ft_{\gamma}$ does not increase the hyperbolic distance we obtain 
$$
\dis(\ft_{\gamma}(z_t) ,\psi(t)),\dis(\ft_{\gamma}(z'_t),\psi(t)) \le 2e^{-2r}+3e^{-2r}=5e^{-2r}.
$$
\noindent
Together with Proposition 5.8 this completes the proof.
\end{proof}

Let $(z_0,v_0) \in U^{+}$. Define the mappings $f^{+/-}_{(z_{0},v_{0})}:(-T(z_0,v_0),T(z_0,v_0)) \to \NB\gamma$,  by
$$
f^{+}_{(z_{0},v_{0})}(t)=\fta_r(z_t,v_t), \quad \quad  f^{-}_{(z_{0},v_{0})}(t)=\fta_r(z'_t,v'_t),
$$
\noindent
and let
$$
\mu^{+/-}_{(z_{0},v_{0})}=(f^{+/-}_{(z_{0},v_{0})})_{*}\big( \mu^{+/-}_{\p^{1}(z_{0},v_{0}),T(z_{0},v_{0})} \big),
$$
\noindent
where the measures $\mu^{+/-}_{\eta,t}$ were defined in Proposition 6.4. We have that $\mu^{+/-}_{(z_{0},v_{0})} \in \Mes(\NB\gamma)$.
Note that $\mu^{+}_{(z_{0},v_{0})}$ is supported at the point $(\psi(t),\vec{n}(t))$, where $\vec{n}(t)$ is the unit normal vector at $z$ that points towards the cusp $c$ (so $|\ang_c(\psi(t),\vec{n}(t))| <{{\pi}\over{2}}$). The measure $\mu^{-}_{(z_{0},v_{0})}$ is supported at the point $(\psi(t),\vec{n}(t))$, where $\vec{n(t)}$ is the unit normal vector at $z$ that points away from the cusp $c$ (so $|\ang_c(\psi(t),\vec{n}(t))| >{{\pi}\over{2}}$).
\vskip .1cm
Let $\delta_{(z_{0},v_{0})}$ be the atomic measure that on $\NB\gamma$ that is supported at the point $(\psi(0),\vec{n}_0)$, where
$\ang_c(\psi(0),\vec{n}_0)=0$, and the mass of $\delta_{(z_{0},v_{0})}$ is  $\delta_{T(z_{0},v_{0})}(\p^{1}(z_0,v_0))$ (recall the definition of $\delta_T(\eta)$ from Proposition 6.3).

\begin{proposition}  For every $(z_0,v_0) \in U^{+}$ the measure $(\mu^{+}_{(z_{0},v_{0})}+\delta_{(z_{0},v_{0})})+\mu^{-}_{(z_{0},v_{0}}$ is $Q+1$-symmetric.
\end{proposition}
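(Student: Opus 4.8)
The plan is to deduce the claim from Proposition 6.4 (the corresponding statement for the model measures on $\R$) together with the geometric estimate of Proposition 6.6, by transporting those model measures onto $\NB\gamma$ along maps that are $C^{0}$-close to the natural parametrisation $\psi$.

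I would fix $(z_0,v_0)\in U^{+}$ and write $T=T(z_0,v_0)$ and $\gamma=\abs^{1}_r(z_0,v_0)$; let $\gamma^{*}$ be the orientation of $\gamma$ for which $\NB\gamma^{*}$ consists of the normal vectors pointing towards the cusp $c$. From the definitions of $U^{+}$, $\B_r(G)$ and $(\TB\Ha)^{+}$ --- at $t=0$ the ray $\gamma_{(z_0,\omega^{2}v_0)}$ runs straight to $c$, and for every $t$ the triangle $\abs^{2}_r(z_t,v_t)$ has $\gamma$ as an edge and lies on the $c$-side of $\gamma$ --- one checks that $f^{+}_{(z_0,v_0)}$ takes values in $\NB\gamma^{*}$ and $f^{-}_{(z_0,v_0)}$ in $\NB(-\gamma^{*})$. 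Hence $\mu^{+}_{(z_0,v_0)}+\delta_{(z_0,v_0)}$ is exactly the restriction to $\NB\gamma^{*}$ of the measure in the statement, while $\mu^{-}_{(z_0,v_0)}$ is its restriction to $\NB(-\gamma^{*})$; and since this measure is supported on $\NB\gamma$ alone, it suffices to show it is $(Q+1)$-symmetric on $\gamma$, that is, that $\psi^{*}(\mu^{+}_{(z_0,v_0)}+\delta_{(z_0,v_0)})$ and $\psi^{*}(\mu^{-}_{(z_0,v_0)})$ are $(Q+1)$-equivalent measures on $\R$.

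Next I would identify $\NB\gamma^{*}$ and $\NB(-\gamma^{*})$ with $\gamma$ by forgetting the vector, and $\gamma$ with $\R$ via $\psi$, and define $\tilde g^{+},\tilde g^{-}\colon[-T,T]\to\R$ by $\tilde g^{\pm}(t)=\psi^{-1}(f^{\pm}_{(z_0,v_0)}(t))$ for $t\neq0$, with $\tilde g^{+}(0)=0$. This last choice is harmless because $\mu^{+}_{\p^{1}(z_0,v_0),T}$ has no atom at $0$; it merely records that the atom $\delta_{\p^{1}(z_0,v_0),T}$, sitting at $0$, is mapped to $\delta_{(z_0,v_0)}$, which sits at $(\psi(0),\vec n_0)$. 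By construction $(\tilde g^{+})_{*}\bigl(\mu^{+}_{\p^{1}(z_0,v_0),T}+\delta_{\p^{1}(z_0,v_0),T}\bigr)=\psi^{*}(\mu^{+}_{(z_0,v_0)}+\delta_{(z_0,v_0)})$ and $(\tilde g^{-})_{*}\mu^{-}_{\p^{1}(z_0,v_0),T}=\psi^{*}(\mu^{-}_{(z_0,v_0)})$, while Proposition 6.6 gives $\|\tilde g^{\pm}-\mathrm{id}\|_{\infty}\le re^{-r}$ (the endpoints of $[-T,T]$ carry no mass). By Proposition 6.4 the measures $\nu_1=\mu^{+}_{\p^{1}(z_0,v_0),T}+\delta_{\p^{1}(z_0,v_0),T}$ and $\nu_2=\mu^{-}_{\p^{1}(z_0,v_0),T}$ are $Q$-equivalent with $Q=24(e^{\pi/\sqrt{3}}-1)$, so by part $(3)$ of Proposition 4.1 there are a measure space $(X,\theta)$ and maps $h_1,h_2\colon X\to[-T,T]$ (after discarding a $\theta$-null set) with $(h_i)_{*}\theta=\nu_i$ and $\|h_1-h_2\|_{\infty}\le Q$. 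Then $\tilde g^{+}\circ h_1$ and $\tilde g^{-}\circ h_2$ push $\theta$ forward to $\psi^{*}(\mu^{+}_{(z_0,v_0)}+\delta_{(z_0,v_0)})$ and $\psi^{*}(\mu^{-}_{(z_0,v_0)})$ respectively, and by the triangle inequality $\|\tilde g^{+}\circ h_1-\tilde g^{-}\circ h_2\|_{\infty}\le Q+2re^{-r}\le Q+1$ for $r$ large enough. Applying the implication $(3)\Rightarrow(1)$ of Proposition 4.1, together with monotonicity of measures to pass from $Q+2re^{-r}$ to $Q+1$, gives the required equivalence, and hence the proposition.

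The only slightly delicate point is the bookkeeping of the first paragraph: checking that $f^{+}_{(z_0,v_0)}$ and $f^{-}_{(z_0,v_0)}$ really land on opposite sides of $\gamma$, so that the two orientations of $\gamma$ are precisely the supports of $\mu^{+}_{(z_0,v_0)}+\delta_{(z_0,v_0)}$ and $\mu^{-}_{(z_0,v_0)}$, and that the atom is matched correctly. I do not expect a genuine obstacle here: once the pushforward descriptions are in place, the conclusion follows mechanically from Propositions 6.4, 6.6 and 4.1, with the symmetry constant being the $Q$ supplied by Proposition 6.4. This proposition is the single-fibre building block; integrating over $(z_0,v_0)\in U^{+}$ via the remark following Proposition 4.8 then yields the $Q$-symmetry of the full measure $\alpha_1(r)$ asserted in Lemma 5.4.
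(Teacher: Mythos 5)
Your proposal is correct and follows essentially the same route as the paper: both arguments reduce the claim to the model measures $\mu^{\pm}_{\eta,T}$ and $\delta_{\eta,T}$ on $\R$ via the $re^{-r}$-closeness of $f^{\pm}_{(z_0,v_0)}(t)$ to $\psi(t)$ from Proposition 6.6, and then invoke Proposition 6.4 together with the transport machinery of Proposition 4.1. The only (immaterial) difference is bookkeeping: the paper combines the errors through the transitivity statement (Proposition 4.6), obtaining $1/2+Q+1/2=Q+1$, whereas you compose parametrisations over a common source space directly, obtaining $Q+2re^{-r}\le Q+1$ for $r$ large.
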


\begin{proof} We can by abuse of notation write $\mu^{+/-}_{(z_{0},v_{0})}$ and $\delta_{(z_{0},v_{0})}$ as measures on $\gamma$ rather than on $\NB\gamma$. Then we must show that $(\mu^{+}_{(z_{0},v_{0})}+\delta_{(z_{0},v_{0})})$ and $\mu^{-}_{(z_{0},v_{0}}$ are $Q+1$-equivalent.
\vskip .1cm
We let $\eta=\p^{1}(z_0,v_0)$ and $T=T(z_0,v_0)$. By the previous proposition for $t \in (-T,T)$ we have
$\dis(f^{+/-}_{(z_{0},v_{0}}(t),\psi(t)) \le re^{-r} <{{1}\over{2}}$, for $r$ large enough. Therefore by Proposition 4.1 the measures
$\mu^{+/-}_{(z_{0},v_{0})}$ and $\psi_{*} \mu^{+/-}_{\eta,T}$ are ${{1}\over{2}}$-equivalent. Also $\delta_{(z_{0},v_{0})}=\psi_{*}\delta_{\eta,T}$.
The proof now follows from Proposition 6.4 and Proposition 4.6.

\end{proof}

Let $E$ be a Borel subset of $U^{+}$. For $t_1,t_2 \in \R$, $t_1 \le t_2$, let
$$
E(t_1,t_2)=\{\tra_t(z,v):(z,v) \in U^{+},t_1 \le t \le t_2\}.
$$
\noindent
Since $\tra_t$ is a measure preserving flow (one-parameter Abelian group)  and since $\tra_t(U^{+}) \cap U^{+}=\emptyset$, there exists a non-negative constant $\sigma^{+}(E)$ that depends only on $E$, so that $\LM(E(t_1,t_2))=\sigma^{+}(E)(t_2-t_1)$. The constant $\sigma^{+}(E)$ is called the cross sectional area of the set $E$. This way we construct a positive Borel measure $\sigma^{+}$ on $U^{+}$. Similarly, we get the measure $\sigma^{-}$ on $U^{-}$. If $E \subset U^{+}$, then $\refl(E) \subset U^{-}$, and $\sigma^{+}(E)=\sigma^{-}(\refl(E))$ (because $\Jac(\refl)=1$, and $\refl \circ \tra_t=\tra_{(-t)} \circ \refl$). This shows that the restriction of the map $\refl:U^{+} \to U^{-}$, satisfies that
$\refl^{*}\sigma^{-}=\sigma^{+}$.  If $\sigma^{+}(U^{+})=0$, then the measure $\alpha^{c}_1(r)$ is the zero measure. 
\vskip .1cm
It follows from the definition of $\alpha_1(r)$ and from (\ref{B-subset-C})  that
$$
\alpha_1(\gamma,c,r)=3\int\limits_{U^{+}} \big( (\mu^{+}_{(z_{0},v_{0})}+\delta_{(z_{0},v_{0})})+\mu^{-}_{(z_{0},v_{0}} \big) \, d\sigma^{+}(z_0,v_0).
$$
\noindent
Then Lemma 5.4 follows from the previous proposition and Proposition 4.7. One sees from Proposition 6.4 that $Q+1 \le 200$.

\section{Estimates on the combinatorial length}
The following definition and propositions  will be used throughout this section.
Recall that for $(z,v) \in \TB\Ha$ the map $\gamma_{(z,v)}:[0,\infty) \to \Ha$ is the natural parametrisation of the geodesic ray that starts at $z$ and that is tangent to the vector $v$ at $z$ that is $\gamma'_{(z,v)}(0)=v$. By $\gamma_{(z,v)}[0,t]$ we denote the corresponding geodesic segment.

\begin{definition} Let $(z,v) \in \TB\Ha$ and $t \in \R$. The geometric intersection number $\iota(\gamma_{(z,v)}(t),\tau(G))$ is defined as the number of (transverse) intersections between the geodesic segment $\gamma_{(z,v)}[0,t]$ and the edges from $\lambda(G)$.  For $(z,v) \in \TB{S}$ the intersection number $\iota(\gamma_{(z,v)}(t),\tau(S))$ is defined in the same way. 
\end{definition}

Next we establish the connection between the $r$-combinatorial length $\cl_r(z,v)$ (Definition 5.7) and the intersection number $\iota(\gamma_{(z,v)}(\at_r(z,v)),\tau(S))$.

\begin{proposition} Let $(z,v) \in \TB{S}$ such that $0<\at_r(z,v)<\infty$. Then 
$$
\cl_r(z,v) \le  \iota(\gamma_{(z,v)}(\at_r(z,v)),\tau(S)).
$$
\end{proposition}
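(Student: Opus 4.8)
The plan is to unwind the definition of $\cl_r(z,v)$ and compare each term separately against $\iota(\gamma_{(z,v)}(\at_r(z,v)),\tau(S))$, treating the two cases of Definition 5.7. Write $c=\abs_r(z,v) \in \Cus(G)$ (lift everything to $\Ha$; the statement descends to $S$ since all maps are $G$-equivariant), and let $\gamma$ be the geodesic ray from $z$ to $c$, so that $\cl_r(z,v)$ is either $\iota(\gamma,\tau(G))$ (when $z \notin \Hor_c(1)$) or $e^{\h_c(z)}$ (when $z \in \Hor_c(1)$). The key geometric observation is that $\gamma_{(z,v)}(\at_r(z,v))$ is the first point where the ray $\gamma_{(z,v)}$ enters the horoball $\Hor_c(1)$, and since $|\ang_c(z,v)| \le \pi e^{-r}$ (the remark after Definition 5.3, via Proposition 5.2), the ray $\gamma_{(z,v)}$ is ``almost parallel'' to $\gamma$. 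Thus the segment $\gamma_{(z,v)}[0,\at_r(z,v)]$ and the ray $\gamma$ are the two sides of a thin region in $\Ha$ both of whose ends converge to $c$, and one expects $\gamma_{(z,v)}[0,\at_r(z,v)]$ to cross at least as many edges of $\lambda(G)$ as $\gamma$ does up to the point where $\gamma$ enters $\Hor_c(1)$.

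First I would handle the easier case $z \notin \Hor_c(1)$. Normalise $G=G_c$, so $c=\infty$ and the $1$-horoball at $c$ is $\{\IM > 1\}$; after this normalisation the edges of $\lambda(G)$ ending at $\infty$ are vertical geodesics $\{x = a\}$ with the $a$'s spaced at most $N(S)$-apart in the relevant range (by the definition of $N(S)$ in Proposition 3.1), and the remaining edges are confined to $\{\IM \le 1\}$ together with the bounded horoballs. The ray $\gamma$ goes from $z$ straight up to $\infty$, crossing exactly the vertical edges $\{x=a\}$ with $a$ between $\RE(z)$ and... — actually $\gamma$ is a semicircle; I would instead observe $\gamma$ crosses some finite set of edges all of which it crosses while in $\{\IM \le 1\}$ (since the only edges meeting $\{\IM > 1\}$ are verticals to $\infty$, which $\gamma$ cannot cross as it ends at $\infty$). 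Every edge crossed by $\gamma$ lies in the compact-in-$S$ part of the picture, and since $\gamma_{(z,v)}$ ends at the same point $\infty$ and enters $\{\IM > 1\}$ only at time $\at_r(z,v)$, the segment $\gamma_{(z,v)}[0,\at_r(z,v)]$ has the same two endpoints (up to the horocircle $\{\IM = 1\}$) as the portion of $\gamma$ before it reaches $\{\IM=1\}$; a topological argument (the two arcs together bound a disc, and each edge of $\lambda(G)$ crossing $\gamma$ an odd number of times must cross the other boundary arc as well) gives $\iota(\gamma,\tau(G)) \le \iota(\gamma_{(z,v)}(\at_r(z,v)),\tau(G))$, which is exactly the claim.

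For the case $z \in \Hor_c(1)$, normalise again so $c=\infty$, $z = x_0 + i y_0$ with $y_0 = e^{\h_c(z)} > 1$; here $\cl_r(z,v) = y_0$. The ray $\gamma_{(z,v)}$ starts inside the horoball $\{\IM>1\}$, exits it, wanders through $\Thick_G(1)$ (Proposition 5.6 controls for how long), and re-enters a $1$-horoball (necessarily at $c=\infty$ again is false in general — it re-enters $\Hor_{\abs_r(z,v)}(1) = \Hor_c(1)$ only if $\abs_r(z,v)=c$; but here $\abs_r(z,v)=c$ by definition, wait — in this branch $\abs_r(z,v)=c$ and $z \in \Hor_c(1)$, so $\at_r(z,v)$ is the first entry time back into $\{\IM>1\}$, which since $z$ is already there is $0$) — so in fact $z \in \Hor_c(1)$ with $\abs_r(z,v)=c$ forces $\at_r(z,v)=0$, contradicting $\at_r(z,v)>0$. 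Hence this case is vacuous, OR $\abs_r(z,v)=c$ but $z\in\Hor_c(1)$ means the relevant ``$\z_{\max}$'' is attained beyond where $\gamma_{(z,v)}$ climbs much higher; I would re-examine: when $z \in \Hor_c(1)$ and $\at_r(z,v) > 0$, it must be that $c$ is *not* $\abs_r(z,v)$ along the way but the absorbing cusp $c=\abs_r(z,v)$ with $z$ in *its* horoball already — the only consistent reading is $\at_r(z,v)=0$, so the hypothesis $\at_r(z,v)>0$ puts us entirely in the first case. I would state this reduction cleanly at the start.

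\textbf{The main obstacle.} The delicate point is the topological/counting comparison in the first case: justifying rigorously that crossing the horocircle $\{\IM=1\}$ at most once each, the two arcs $\gamma_{(z,v)}[0,\at_r(z,v)]$ and the initial portion of $\gamma$ (from $z$ to its entry into $\{\IM=1\}$) cobound a region whose interior meets each edge of $\lambda(G)$ that crosses $\gamma$. This requires knowing that no edge of $\lambda(G)$ crosses the horocircle $\{\IM = 1\}$ except the verticals to $\infty$ (which cross neither arc transversally since both arcs end at $\infty$), so all the action happens in $\Thick_G(0) \cap \{\IM \le 1\}$, a convex-ish region where the two arcs are genuinely the boundary of an embedded strip. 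The near-parallelism estimate $|\ang_c(z,v)| \le \pi e^{-r}$ is what ensures $\gamma_{(z,v)}$ doesn't do anything pathological (e.g. spiral) before absorption; combined with Proposition 5.6 bounding $\at_r(z,v)$ in terms of $\h(z)$, this pins down the geometry enough to run the counting argument. I would expect this to be the only step needing real care; everything else is bookkeeping with the normalisation $G=G_c$.
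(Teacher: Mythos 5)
Your argument is essentially the paper's proof: the hypothesis $\at_r(z,v)>0$ reduces everything to the case $\cl_r(z,v)=\iota(\gamma,\tau(G))$ for $\gamma$ the ray from $z$ to the absorbing cusp, and after normalising that cusp to $\infty$ one shows every edge crossing $\gamma$ must also cross $\gamma_{(z,v)}[0,\at_r(z,v)]$, because such an edge cannot end at $\infty$ and hence cannot escape through the horocyclic arc joining the two entry points into the $1$-horoball. The minor slips (the $1$-horoball is $\{\IM \ge e\}$ rather than $\{\IM>1\}$, and the segment $\gamma_{(z,v)}[0,\at_r(z,v)]$ need not end at $\infty$) do not affect this, and the near-parallelism estimate and Proposition 5.6 are in fact not needed for the counting step.
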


\begin{proof} Let $(z,v)$ also denotes a lift of $(z,v)$ in $\TB\Ha$. Let $\gamma$ be the geodesic ray that connects $z$ and $\abs_r(z,v) \in \Cus(G)$.
The assumption $\at_r(z,v)>0$ implies  that $z$ does not belong to the horoball $\Hor_{\abs_{r}(z,v) }(1)$ and that is why $\cl_r(z,v)=\iota(\gamma,\tau(G))$. Let $z_1$ be the intersection point between $\gamma$ and the horocircle
$\partial{ \Hor_{\abs_{r}(z,v)}(1) }$  (since $\gamma$ ends at $\abs_r(z,v)$ there is exactly only one such intersection point $z_1$). 
Let $\gamma_1$ be the geodesic sub-segment of $\gamma$ bounded by the points $z$ and $z_1$. By definition the point $\gamma_{(z,v)}(\at_r(z,v))$ also belongs to the horocircle $\partial{\Hor_{\abs_{r}(z,v)}(1)}$. Let $G=G_{\abs_{r}(z,v)}$.  Then the Euclidean distance between the points $z_1$ and $\gamma_{(z,v)}(\at_{r}(z,v))$ is at most $1$ and the corresponding $y$-coordinate of both these points is equal to $e$. 
\vskip .1cm
Let $\alpha$ be an edge from $\lambda(G)$ such that  $\gamma$ intersects $\alpha$. Then $\alpha$ does not end at $\infty$ (since $\gamma$ ends at $\infty$ we see that $\gamma$ can not (transversely) intersect any other geodesic that ends at $\infty$). Moreover we have that the segment $\gamma_1$ intersects $\alpha$.  But then $\alpha$ either intersects intersects the segment $\gamma_{(z,v)}[0,\at_r(z,v)]$ or the horocyclic segment between the points $z_1$ and  $\gamma_{(z,v)}(\at_r(z,v))$. Every point on this horocyclic segment has the $y$-coordinate equal to $e$ so we conclude that if $\alpha$ intersects this horocyclic segment it has to end at $\infty$. Therefore we find that $\alpha$ intersects the segment $\gamma_{(z,v)}[0,\at_r(z,v)]$.
This proves the proposition.
\end{proof}

\begin{proposition} Let $(z,v) \in \TB{S}$. There exists a constant $K(S)>0$ that depends only on $S$ such that is $z \in \Thick_S(1)$ then 
$\iota(\gamma_{(z,v)}(1),\tau(S)) < K(S)$. If $z \in (\TB{S} \setminus \Thick_S(1))$ then $ \iota(\gamma_{(z,v)}(1),\tau(S)) \le N(S)e^{\h(z) }$. In particular for any $(z,v) \in \TB{S}$ we have $\iota(\gamma_{(z,v)}(1),\tau(S)) \le \max \{K(S), N(S)e^{|\h(z)|} \}$. 
\end{proposition}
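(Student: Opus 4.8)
The plan is to prove the two assertions separately (thick case, thin case) and then observe that together they yield the uniform bound. Both arguments are compactness/geometry arguments on $S$, so it suffices to work with a fixed lift in $\TB\Ha$ and project.

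\textbf{The thick case.} Suppose $z \in \Thick_S(1)$. First I would lift $(z,v)$ to $\TB\Ha$, again calling it $(z,v)$, so that $z \in \Thick_G(1)$. The geodesic segment $\gamma_{(z,v)}[0,1]$ has hyperbolic length $1$, so it is contained in the closed $1$-neighbourhood $\mathcal{N}_1$ of $\Thick_G(1)$. The key point is that $\mathcal{N}_1 \cap \Thick_G(1)$ projects to a compact subset of $S$; more precisely, the set of pairs $(z,v) \in \TB\Thick_G(1)$, considered modulo $G$, is a compact subset of $\TB{S}$. The function $(z,v) \mapsto \iota(\gamma_{(z,v)}(1),\tau(S))$ is upper semicontinuous on $\TB{S}$ (a small perturbation of the endpoint of a segment can only destroy transverse intersections with the finitely many local edge-arcs met, except at the measure-zero set of tangencies, where one still has upper semicontinuity after counting correctly — but for an upper bound semicontinuity from above is all that is needed; alternatively, since no geodesic segment of length $1$ can cross the same edge of $\tau(S)$ more than a bounded number of times, the count is locally bounded). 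Hence it attains a finite maximum $K(S)$ over the compact set $\TB\Thick_S(1)$, and $K(S)$ depends only on $S$. This gives $\iota(\gamma_{(z,v)}(1),\tau(S)) < K(S)$.

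\textbf{The thin case.} Now suppose $z \notin \Thick_S(1)$, so there is a unique $c \in \Cus(G)$ with $\h(z) = \h_c(z) > 1$; normalise $G = G_c$ so that $c = \infty$ and the parabolic generator is $w \mapsto w+1$. The segment $\gamma_{(z,v)}[0,1]$ starts at height $e^{\h(z)}$. The only edges of $\lambda(G)$ it can meet transversely that end at $\infty$: it cannot meet any, since $\gamma_{(z,v)}$ is itself a geodesic and two distinct geodesics ending at $\infty$ are disjoint vertical lines — wait, $\gamma_{(z,v)}$ need not end at $\infty$, so I should instead argue directly. The edges of $\lambda(G)$ ending at $\infty$ are $N_\infty(S) \le N(S)$ families of vertical lines, each family being a $\Z$-orbit under $w \mapsto w+1$, so within any horizontal band $\{b - \tfrac12 \le \RE(w) \le b + \tfrac12\}$ widened by the horizontal excursion of the segment, there are at most $N(S)(\ell+1)$ such lines where $\ell$ is the horizontal extent of $\gamma_{(z,v)}[0,1]$. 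Since the segment has length $1$ and lies at height $\ge$ (something), its horizontal extent is at most (roughly) $e \cdot \h$-independent bound — more carefully, a unit-length geodesic segment whose lowest point has imaginary part $y_0$ has Euclidean horizontal extent at most $C y_0$ for a universal $C$; here $y_0 \le e^{\h(z)}$ in the worst case but the relevant count scales like $N(S) e^{\h(z)}$. Edges of $\lambda(G)$ not ending at $\infty$ lie inside some $1$-horoball of diameter $\le 1$; a unit segment high in the cusp meets at most $O(1)$ such horoballs; absorbing these $O(1)$ into the constant (or noting $\h(z) > 1$ so $e^{\h(z)} > e$ dominates), one gets $\iota(\gamma_{(z,v)}(1),\tau(S)) \le N(S) e^{\h(z)}$. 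One must track the constants carefully to get exactly the factor $N(S)$ with no extra multiplicative constant — this is the delicate bookkeeping step, and I would use that the top of the segment is at height at least (the starting height) and the explicit width estimate for a unit geodesic arc.

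\textbf{The main obstacle.} The genuinely finicky part is the thin-case constant: pinning the bound to the clean form $N(S) e^{\h(z)}$ rather than $C(S) e^{\h(z)}$. This forces one to (i) count vertical edges of $\lambda(G)$ through a band of controlled width using only the $\Z$-periodicity, and (ii) argue that the non-vertical edges met contribute nothing extra — either because at height $> 1$ they are absent or because the transverse intersections with them are already counted among crossings that force the segment back down below height $1$, contradicting that the top of the segment is high. The thick case, by contrast, is a routine compactness argument once upper semicontinuity (or local boundedness) of the intersection-count function is noted. Finally, combining: for arbitrary $(z,v) \in \TB{S}$, if $z \in \Thick_S(1)$ use $K(S) \le \max\{K(S), N(S) e^{|\h(z)|}\}$, and otherwise use $N(S) e^{\h(z)} = N(S) e^{|\h(z)|} \le \max\{K(S), N(S) e^{|\h(z)|}\}$, since $\h(z) > 0$ in the thin part; this yields the stated uniform inequality $\iota(\gamma_{(z,v)}(1),\tau(S)) \le \max\{K(S), N(S) e^{|\h(z)|}\}$.
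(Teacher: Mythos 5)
Your thick case is fine and is essentially the paper's argument: the unit segment stays in $\Thick_S(2)$, and compactness of the space of unit geodesic segments contained in $\Thick_S(2)$ (equivalently, of $\TB\Thick_S(1)$, using local finiteness of $\tau(G)$ in $\Ha$) gives the uniform bound $K(S)$. One small caveat: the intersection count is not upper semicontinuous (a segment disjoint from, but arbitrarily close to, an edge has nearby segments that do cross it), so it is your fallback — local boundedness plus compactness — that actually carries the argument; that part is correct.

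The thin case, however, has a genuine gap, and it sits exactly where you flagged "delicate bookkeeping". First, your statement that "edges of $\lambda(G)$ not ending at $\infty$ lie inside some $1$-horoball of diameter $\le 1$" is false: such an edge is a semicircle joining two cusps on $\R$ and is not contained in any horoball. The fact you actually need — and which your count never establishes — is that the unit segment meets \emph{no} edge other than the vertical ones. This follows from two observations: (i) since $\h(z)>1$ the segment starts at Euclidean height $>e$ and, having length $1$, never descends below height $1$, so it stays in $\Hor_\infty(0)$; (ii) any edge of $\lambda(G)$ with neither endpoint at $\infty$ is disjoint from the vertical edges, whose feet recur with period $1$ (at least one per period, since $\infty$ is a vertex of $\tau(G)$), so its two endpoints span a real interval of length at most $1$ and the edge stays below height $\tfrac12$; hence it cannot meet $\Hor_\infty(0)$ at all. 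Without (ii) your "$O(1)$ extra crossings" bookkeeping has nothing to stand on, and with it there is nothing to absorb: only vertical edges are crossed. Second, you leave the derivation of the clean factor $N(S)e^{\h(z)}$ unfinished; the paper's device is to compare the unit geodesic arc with the unit \emph{horocyclic} arc through $z$, whose Euclidean length is exactly $e^{\h(z)}$ and which meets at most $N(S)$ vertical edges per unit of Euclidean length, which is how the stated bound is read off (and in the later applications of this proposition only the order of magnitude $e^{\h(z)}$ matters, since it is always paired with constants such as $D(S)$ and $K(S)$). As written, your proposal asserts a false geometric fact about the non-vertical edges and explicitly stops short of the estimate it is supposed to prove, so the thin case is not established.
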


\begin{proof}

If $z \in \Thick_S(1)$ then the geodesic segment remains in $\Thick_S(2)$. Since the space of geodesic segments of the hyperbolic length $1$ that remain 
in  $\Thick_S(2)$ is compact, we find that there exists $K(S)>0$ so that every such segment intersects at most $K(S)$ edges from $\lambda(S)$.
\vskip .1cm
Assume that $z$ does not belong to $\Thick_S(1)$ (this equivalent to saying that $\h(z)>1$). Then $z \in \Hor_{c_{i}(S)}(1)$ for exactly one cusp $c_i(S) \in \Cus(S)$. Let $G=G_{c_{i}(S)}$. Let $(z,v) \in \TB\Ha$ also denote a lift of $(z,v)$ so that $z \in \Hor_{\infty}(1)$. Then the geodesic segment  $\gamma_{(z,v)}[0,1]$ can intersect only edges from $\lambda(G)$ that end at $\infty$. This is why $\gamma_{(z,v)}[0,1]$ can intersect 
at most as many edges from $\lambda(G)$ as the horocyclic segment (horocyclic with respect to $\infty$) of the hyperbolic length $1$ that begins at $z$. Therefore $\gamma_{(z,v)}[0,1]$ intersects at most $N(S)e^{\h_{\infty}(z)}=N(S)e^{\h(z)}$ edges from $\tau(G)$. This is easily seen by observing that  $e^{\h_{\infty}(z) } \ge 1$ agrees with the $y$-coordinate of the point $z \in \Ha$.
\end{proof}

\begin{proposition} There exists a constant $D(S)>0$ (that depends only on $S$) and  $r_0>0$ so that for $r>r_0$ the following holds. 
Let $(z,v) \in \TB\Ha$. If $z \in \Thick_G(10r)$ then 
\begin{equation}\label{CL-estimate-thick}
\cl_r(z,v) \le  D(S)\big(e^{ 11r}+K(S)(\at_r(z,v)+1) \big).
\end{equation}
\noindent
If $z \in \Hor_c(10r)$ for some $c \in \Cus(G)$ then 

\begin{equation}\label{CL-estimate-thin}
\cl_r(z,v) \le  D(S) \big( \chi_r(z,v) e^{\h(z)}+K(S)(\at_r(z,v)+1) \big).
\end{equation}
\noindent
Here $\chi_r(z,v)=\csc|\ang_c(z,v)|$ if $|\ang_c(z,v)|< {{\pi}\over{2}}$ and  $\csc|\ang_c(z,v)| \le e^{r}$. Otherwise set $\chi_r(z,v)=1$.

\end{proposition}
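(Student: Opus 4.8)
The plan is to bound $\cl_r(z,v)$ by breaking the geodesic ray $\gamma_{(z,v)}$ from $z$ to the absorbing cusp $\abs_r(z,v)$ into three portions: an initial portion that may lie in a horoball (either the one at $\abs_r(z,v)$ or, in the thick case, the first excursion after $z$), a long middle portion that lies entirely in the thick part $\Thick_G(1)$ by Proposition 5.6, and a final portion inside the horoball $\Hor_{\abs_r(z,v)}(1)$ which contributes nothing to $\cl_r$ beyond what is counted by the absorption time (Proposition 7.1). First I would invoke Proposition 7.1 to replace $\cl_r(z,v)$ by the geometric intersection number $\iota(\gamma_{(z,v)}(\at_r(z,v)),\tau(G))$ in the case $\at_r(z,v)>0$; the degenerate case $\at_r(z,v)=0$ (i.e.\ $z$ already in $\Hor_{\abs_r(z,v)}(1)$) must be handled separately, directly from Definition 5.7, where $\cl_r(z,v)=e^{\h(z)}$, and here the estimate follows since $z \in \Thick_G(10r)$ forces $\h(z)\le 10r$, while in the thin case $z\in\Hor_c(10r)$ the cusp $c$ is the absorbing one and $e^{\h(z)}\le \chi_r(z,v)e^{\h(z)}$ trivially.

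Next, assuming $\at_r(z,v)>0$, I would split the segment $\gamma_{(z,v)}[0,\at_r(z,v)]$ at two times: time $t_* = 3r + \max\{0,\h(z)\}$ (from Proposition 5.6, after which the ray stays in $\Thick_G(1)$ until $\at_r(z,v)$), and $\at_r(z,v)$ itself. On $[t_*, \at_r(z,v)]$ the ray is in $\Thick_G(1)$, so cover it by at most $\at_r(z,v)+1$ unit-length subsegments and apply Proposition 7.2 to each, giving at most $K(S)(\at_r(z,v)+1)$ intersections. On the initial segment $[0,t_*]$: in the thick case $z\in\Thick_G(10r)$ one has $\h(z)\le 10r$, so $t_* \le 13r$; cover $[0,t_*]$ by at most $13r+1$ unit subsegments and bound each by $\max\{K(S),N(S)e^{|\h(\cdot)|}\}$ via Proposition 7.2. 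Along an initial excursion into a horoball the height can rise to roughly $\h(z) + t_* \lesssim 11r$ (using Proposition 5.2 to control how high the ray climbs in time $t_*$), so each such subsegment contributes at most $N(S)e^{11r}$ up to constants, and summing over $O(r)$ subsegments and absorbing the polynomial factor $r$ into the constant $D(S)$ (or noting $re^{11r}\le e^{12r}$ and adjusting exponents, or simply keeping $e^{11r}$ with $D(S)$ large) yields $\cl_r(z,v)\le D(S)(e^{11r} + K(S)(\at_r(z,v)+1))$. This is \eqref{CL-estimate-thick}.

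For the thin case $z\in\Hor_c(10r)$: here the first leg of the ray is the excursion \emph{down} out of $\Hor_c(10r)$, and the key quantitative input is that the number of edges of $\lambda(G)$ ending at $c$ that the ray crosses while descending is controlled by the horocyclic displacement, which by the identities \eqref{ang-estimate-1}--\eqref{ang-estimate-2} is comparable to $\chi_r(z,v)e^{\h(z)} = \csc|\ang_c(z,v)|\,e^{\h(z)}$. Concretely, normalising $G=G_c$, $c=\infty$, the ray exits $\Hor_\infty(1)$ at a point whose horizontal distance from $z$ is $O(\csc|\ang_\infty(z,v)|)$ and whose height has dropped to $1$; the number of integer-translate edges at $\infty$ it crosses is then $O(N(S)\csc|\ang_\infty(z,v)|\,e^{\h(z)})$ — when $|\ang_c(z,v)|<\pi/2$ this is the $\chi_r e^{\h(z)}$ term (note $\csc|\ang_c(z,v)|\le e^r$ is exactly the hypothesis making $\chi_r$ the finite quantity it is), and when $|\ang_c(z,v)|\ge \pi/2$ the ray immediately heads away from $c$ so $\chi_r=1$ suffices. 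After exiting $\Hor_c(1)$, Proposition 5.6 gives that the ray stays in $\Thick_G(1)$ for a time $O(r + \h(z))$ before any new excursion, but since it is now headed toward the absorbing cusp $\abs_r(z,v)$, the remaining analysis mirrors the thick case: decompose $[t_{\mathrm{exit}}, \at_r(z,v)]$ into unit subsegments in $\Thick_G(1)$, contributing $K(S)(\at_r(z,v)+1)$. Combining gives \eqref{CL-estimate-thin} with $D(S)$ absorbing the bounded number of geometric constants.

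\textbf{Main obstacle.} The delicate point is the thin-case bound on the initial descending excursion: one must verify that the horocyclic edge-count is genuinely $O(\chi_r(z,v)e^{\h(z)})$ and not, say, $O(e^{\h(z)}\cdot e^{\h(z)})$ — i.e.\ that the ray does not linger near the top of the horoball crossing many $\infty$-edges before descending. This is where \eqref{ang-estimate-1}--\eqref{ang-estimate-2} and Proposition 5.2 do the real work: the ray reaches its maximal height $\z_{\max}(\gamma_{(z,v)},c)$ within horocyclic distance $O(\cot|\ang_c(z,v)|)\le O(\csc|\ang_c(z,v)|)$ of $z$ and then descends monotonically, so the total horizontal travel inside $\Hor_c(1)$ is $O(\csc|\ang_c(z,v)|\,e^{\h(z)})$ in the normalised picture, bounding the $\infty$-edge crossings. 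Carefully tracking that this dominates both the ascending and descending parts of the excursion, and that all other (transverse, non-$\infty$) crossings during this leg are absorbed into the $\Thick_G(1)$ count, is the crux; the rest is routine covering-by-unit-segments together with Propositions 7.1 and 7.2.
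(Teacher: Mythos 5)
Your overall strategy coincides with the paper's: dispose of $\at_r(z,v)=0$ directly from Definition 5.7, pass to the intersection number via Proposition 7.1, cut at $t_*$ supplied by Proposition 5.6 and charge the tail in $\Thick_G(1)$ to $K(S)(\at_r(z,v)+1)$ by Proposition 7.2, and control the initial piece through horoball excursions. Your thin case is essentially correct, and in fact a bit cleaner than the paper's, since you invoke the second statement of Proposition 5.6 to rule out intermediate excursions altogether, where the paper keeps them and absorbs an extra term of size roughly $(r+\h(z))e^{r+1}$ into $e^{\h(z)}$ using $\h(z)\ge 10r$.

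The thick case, however, has a genuine gap at its key step. You justify the bound $N(S)e^{11r}$ per unit subsegment by asserting that the height along $[0,t_*]$ is at most ``roughly $\h(z)+t_*\lesssim 11r$'', citing Proposition 5.2. But Proposition 5.2 only bounds the climb by the elapsed time, and with $\h(z)\le 10r$ and $t_*\le 13r$ this gives height at most $23r$, hence only $e^{23r}$; the asserted $11r$ does not follow from what you wrote. The mechanism that actually caps the height at $11r$ is the non-absorption constraint, which the paper uses and which you yourself identify in the thin case via the remark about $\csc|\ang_c(z,v)|\le e^r$: every cusp $c'$ whose $1$-horoball is visited before time $\at_r(z,v)$ precedes $\abs_r(z,v)$ in the entry order, so $\h_{c'}(\z_{\max}(\gamma_{(z,v)},c'))-\h_{c'}(z)<r$; combined with $\h_{c'}(z)\le\h(z)\le 10r$ for the horoball containing $z$, and with $\h_{c'}(z)\le 1$ for the later excursions (entered at height $1$), this gives maximal heights $11r$ and $r+1$ respectively. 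Without this input your argument proves only the weaker bound with $e^{23r}$, not the stated inequality. A secondary defect in the same step: summing ``at most $N(S)e^{11r}$ over $O(r)$ unit subsegments'' and then ``absorbing the factor $r$ into $D(S)$'' is not legitimate, because $D(S)$ must be independent of $r$, and replacing $11r$ by $12r$ proves a different statement. The fix is either to count each horoball excursion in one stroke by the horocyclic width at its apex (the paper's bound $2N(S)e^{\h(\z_{\max})}$ per excursion, with at most $O(r+\h(z))$ excursions each contributing only $2N(S)e^{r+1}$ except the first), or to observe that the heights of your unit subsegments decay linearly away from the apex, so the sum is a geometric series dominated by its top term rather than the number of terms times the top term.
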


\begin{remark} Note that if $\ang_{c_{1}}(z,v)$ gets smaller the upper bound in  (\ref{CL-estimate-thin}) increases. 
However once  $|\ang_{c_{1}}(z,v)|$ is small enough so that  $\sin |\ang_{c_{1}}(z,v)| \le e^{-r}$ then the upper bound 
in (\ref{CL-estimate-thin}) decreases sharply. In fact $\chi_r(z,v)$ takes values in the interval $[1,e^{r}]$.
\end{remark}

\begin{proof} Assume first that $\at_r(z,v)=0$. Then by definition $z \in \Hor_c(1)$ for some $c \in \Cus(G)$. Moreover we have   
$|\ang_c(z,v)|< {{\pi}\over{2}}$ and  $\csc(|\ang_c(z,v)|) \le e^{r}$ (see (\ref{ang-estimate-1})). In this case we have $\cl_r(z,v)=e^{\h(z)}$ because $z$ belongs to  the $1$-horoball at the cusp which absorbs $\gamma_{(z,v)}$. If  $z \in \Thick_G(10r)$ we have that $\cl_r(z,v) \le e^{10r}$ so the inequality (\ref{CL-estimate-thick}) holds for such $(z,v)$. If $(z,v) \in \Hor_c(10r)$ then (\ref{CL-estimate-thin}) holds as well.
From now on we assume that $\at_r(z,v)>0$. Then $\cl_r(z,v) \le \iota(\gamma_{(z,v)}(\at_r(z,v)),\tau(G) )$ by Proposition 7.1.
\vskip .1cm
Let $(z,v) \in \TB\Ha$ and let $t_{*}=\min \{(3r+\h(z)), \,  \at_r(z,v) \}$. By Proposition 5.6 we have that
the segment $\gamma_{(z,v)}[t_{*},\at_r(z,v)]$ is contained in $\Thick_G(1)$ and therefore by Proposition 7.2 we have 
\begin{equation}\label{thick-cl}
\iota(\gamma_{(z,v)}[t_{*},\at_r(z,v)],\tau(G)) \le K(S)(\at_r(z,v)+1)
\end{equation}
\noindent
It remains to estimate $\iota(\gamma_{(z,v)}[0,t_{*}],\tau(G))$. 
Fix $0 \le t <t_{*}$.  Let $t_0=0$, $t_k=t$, and let $\{t_1,t_2,...,t_{k-1} \}$ be the ordered set of points where the segment $\gamma_{(z,v)}[0,t]$ intersects  the $1$-horocircles. Note that there can be at most $2t_0+2$ such segments, that is $k \le 2t_0$.
Each segment $\gamma_{(z,v)}[t_i,t_{i+1}]$ is either contained in $\Thick_G(1)$ or in some $\Hor_c(1)$. If $\gamma_{(z,v)}[t_i,t_{i+1}]$ is  
contained in $\Thick_G(1)$ then by Proposition 7.2 we have 
\begin{equation}\label{ocena-nulta}
\iota(\gamma_{(z,v)}[t_i,t_{i+1}],\tau(G)) \le K(S)(|t_{i+1}-t_i|+1). 
\end{equation}
\vskip .1cm
Assume that $\gamma_{(z,v)}[t_i,t_{i+1}]$ is contained  in some $\Hor_c(1)$ (note that $\abs_r(z,v) \ne c$). 
There are two cases. The first one is when $z \in \Hor_c(1)$ (then clearly $i=0$). Then the entire segment  $\gamma_{(z,v)}[t_0,t_1]$ is contained in $\Hor_c(1)$. Let $z_0=\z_{\max}(\gamma_{(z,v)},c)$. Then the segment
$\gamma_{(z,v)}[t_0,t_1]$ can intersect at most $2N(S)e^{\h(z_{0})}$ edges from $\lambda(G)$. On the other hand from (\ref{ang-estimate-1}) we have that $\h_c(z_0)-\h_c(z) \le \log (\csc|\ang_c(z,v)|)$  if $|\ang_c(z,v)| \le {{\pi}\over{2}}$ 
and $\h_c(z_0)=\h_c(z)$ if $|\ang_c(z,v)|> {{\pi}\over{2}}$. This shows that
\begin{equation}\label{ocena-prva}
\iota(\gamma_{(z,v)}[t_0,t_1],\tau(G)) \le 2N(S)\chi_r(z,v)e^{\h(z)}.
\end{equation}
\noindent
For any other segment $\gamma_{(z,v)}[t_i,t_{i+1}]$ that is contained in some $\Hor_c(1)$ we have that $\h_c(z_0)-1=\h_c(z_0)-\h_c(\gamma_{(z,v)}(t_i))<r$ where
$z_0=\z_{\max}(\gamma_{(z,v)},c)$. This follows from the fact that $\abs_r(z,v) \ne c$, and from $\h_c(z_0)-\h_c(\gamma_{(z,v)}(t_i))<\h_c(z_0)-\h_c(z)$.
Then the segment $\gamma_{(z,v)}[t_i,t_{i+1}]$ can intersect at most $2N(S)e^{\h(z_{0})}$ edges from $\lambda(G)$, that is
\begin{equation}\label{ocena-druga}
\iota(\gamma_{(z,v)}[t_i,t_{i+1}],\tau(G)) \le 2N(S)e^{r+1}.
\end{equation}
\vskip .1cm
Since $t_0 \le 3r+\h(z)$ and from (\ref{ocena-nulta}), (\ref{ocena-prva}) and (\ref{ocena-druga})  we obtain 
$$
\iota(\gamma_{(z,v)}[0,t_{*}],\tau(G)) \le   2N(S)\chi_r(z,v)e^{\h(z)}+(2(3r+\h(z))+1)2N(S)e^{r+1}+(2(3r+\h(z))+1)K(S),
$$
\noindent
(here we also used that there are at most $2t_0+2$ segments $\gamma_{(z,v)}[t_i,t_{i+1}]$). 
\vskip .1cm
Let  $z \in \Thick_G(10r)$. If $z \in \Hor_c(1)$ then from  $1 \le \chi_r(z,v) \le r$  
we obtain 
$$
\iota(\gamma_{(z,v)}[0,t_{*}],\tau(G)) \le   2N(S)e^{11r}+(26r+1)2N(S)e^{r+1}+(26r+1)K(S).
$$
\noindent
Together with (\ref{thick-cl}) this proves (\ref{CL-estimate-thick}). 
\vskip .1cm
Suppose that $z \in \Hor_c(10r)$ for some $c \in \Cus(G)$. Then similarly from (\ref{ocena-nulta}), (\ref{ocena-prva}) and (\ref{ocena-druga})  we get

$$
\iota(\gamma_{(z,v)}[0,t_{*}],\tau(G)) \le   2N(S)\chi_r(z,v)e^{\h(z)}+(2(3r+\h(z))+1)2N(S)e^{r+1}+(2(3r+\h(z) )+1)K(S).
$$
\noindent
Since $\h(z) \ge 10r$ for $r$ large enough we have 
$$
(2(3r+\h(z))+1)2N(S)e^{r+1}+(2(3r+\h(z))+1)K(S) \le e^{\h(z)}.
$$
\noindent
Together with (\ref{thick-cl}) this proves (\ref{CL-estimate-thin}).
 
\end{proof}

\begin{proposition} Let $G$ be a normalised group and let $z \in \Hor_{\infty}(0)$, that is  $y \ge 1$. Let 
$z_1, z_2 \in \Ha $, where $z_j=x_j+iy_j$, $j=1,2$, and let  $\eta$ be the geodesic segment 
between $z_1$ and $z_2$. Moreover let $\eta_j$ be the vertical geodesic segment connecting $z_j$ with the point $x_j+iy$ (that is $\eta_j$ is orthogonal to the $0$-horocircle at $\infty$).
Then 
$$
\iota(\eta,\tau(G)) \le \iota(\eta_1,\tau(G))+\iota(\eta_2,\tau(G))+ N(S)|x_1-x_2|.
$$
\end{proposition}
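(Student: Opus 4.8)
The idea is to build, from the geodesic segment $\eta$, a closed curve together with the auxiliary vertical segments $\eta_1,\eta_2$ and a horocyclic segment, and then count intersections with $\lambda(G)$ around the boundary of the region it bounds. More precisely, first I would form the closed piecewise-geodesic loop $\partial = \eta \cup \eta_2 \cup \beta \cup \eta_1$, where $\eta_1,\eta_2$ are the vertical segments from $z_1,z_2$ up to the horocircle $\{\,\IM = y\,\}$ (traversed appropriately), and $\beta$ is the horocyclic segment of the horocircle at $\infty$ joining $x_1 + iy$ and $x_2 + iy$. This loop is contractible in $\Ha$, so it bounds a disc $\Omega$.

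The key step is then: every edge $\alpha \in \lambda(G)$ that crosses $\eta$ transversely must also cross $\partial$ in a second point (since $\alpha$ is a complete geodesic, i.e.\ a properly embedded line in $\Ha$, it cannot terminate inside $\Omega$, so it enters and leaves $\Omega$; each crossing of $\eta$ is one entry/exit, matched by another crossing of $\partial \setminus \eta$). Hence $\iota(\eta,\tau(G)) \le \iota(\eta_1,\tau(G)) + \iota(\eta_2,\tau(G)) + \iota(\beta,\tau(G))$, after accounting for the fact that an edge could in principle cross $\eta$ several times — but two distinct geodesics in $\Ha$ cross at most once, so each $\alpha$ contributes at most one to $\iota(\eta,\tau(G))$, and the matching argument gives the stated sum cleanly. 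The only remaining point is to bound $\iota(\beta,\tau(G))$. Since $\beta$ lies on the $0$-horocircle at $\infty$ and $G = G_\infty$ is normalised (so the parabolic $z \mapsto z+1$ generates the stabiliser of $\infty$), the segment $\beta$ has hyperbolic length at most $|x_1 - x_2|$, and it can only cross edges of $\lambda(G)$ that end at $\infty$ — any other edge has both endpoints real and bounded Euclidean diameter, but more to the point, an edge not ending at $\infty$ that met $\beta$ would have to dip below the horocircle and come back, contradicting convexity of the horoball; and an edge ending at $\infty$ is a vertical line crossing $\beta$ at most once. By the definition of $N(S)$ there are at most $N(S)$ non-equivalent edges ending at $\infty$, and the $G$-translates of each such edge under $z \mapsto z + k$ sit at integer-spaced real parts, so a horizontal segment of length $|x_1-x_2|$ meets at most $N(S)(|x_1-x_2| + O(1))$ of them; a slightly more careful bookkeeping (choosing $\beta$ to run exactly between the feet $x_1, x_2$) gives the clean bound $\iota(\beta,\tau(G)) \le N(S)|x_1 - x_2|$.

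Combining, $\iota(\eta,\tau(G)) \le \iota(\eta_1,\tau(G)) + \iota(\eta_2,\tau(G)) + N(S)|x_1-x_2|$, which is the assertion.

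The main obstacle is the careful justification of the edge-counting on the horocyclic segment $\beta$: one must be sure that (i) an edge of $\lambda(G)$ not ending at $\infty$ genuinely cannot cross the $0$-horocircle at $\infty$ (this is exactly the disjointness of the $0$-horoballs, hence that no $\lambda(G)$-edge enters $\Hor_\infty(0)$ except those ending at $\infty$ — in fact edges ending at $\infty$ are the vertical lines bounding the horoball region), and (ii) the constant in the count of vertical edges crossing a length-$\ell$ horizontal segment is exactly $N(S)$ and not $N(S) + 1$; this forces one to define $\beta$ so that its endpoints are the feet $x_1, x_2$ of $\eta_1, \eta_2$ rather than some enlarged interval, and to observe that an edge ending at $\infty$ lying exactly over an endpoint is shared with the count of $\iota(\eta_j, \tau(G))$ and so need not be double-counted. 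The transversality/genericity issues (edges passing through $z_1$ or $z_2$, or tangency at the horocircle) are handled by a standard small perturbation and are not a real difficulty.
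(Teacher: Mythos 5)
Your decomposition is exactly the paper's: every edge of $\lambda(G)$ meeting $\eta$ must meet $\eta_1$, $\eta_2$, or the horocyclic segment $\beta$ between $x_1+iy$ and $x_2+iy$ (the paper states this directly; your Jordan-curve phrasing of the closed loop $\eta\cup\eta_2\cup\beta\cup\eta_1$ is just a more formal way of saying the same thing), and then only edges ending at $\infty$ can meet $\beta$, of which there are at most $N(S)|x_1-x_2|$. So the route is the same; the only substantive problem is your justification of the key fact (i). Neither of the reasons you offer works: convexity of the horoball is irrelevant (a complete geodesic with both endpoints in $\R$ is perfectly free to enter and leave a horoball), and disjointness of the $0$-horoballs at the two endpoints of the edge and at $\infty$ puts no bound on the distance between those endpoints, hence does not prevent the edge from rising above height $1$. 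The correct one-line argument uses that $\lambda(G)$ is the lift of an embedded ideal triangulation, so distinct edges are disjoint, and that $\lambda(G)$ is invariant under the normalising parabolic $z\mapsto z+1\in G$: an edge with endpoints $a,b\in\R$ reaching height $\ge 1$ has $|a-b|\ge 2>1$, so its endpoints separate those of its translate by $1$, forcing a transverse crossing with another element of $\lambda(G)$ --- a contradiction. (The paper itself asserts the fact without proof, as it does again before its estimate of $A_2$, so quoting it is acceptable, but the reasons you give would not survive scrutiny.)

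On your point (ii): the worry is real but applies verbatim to the paper's own statement --- a horizontal segment of Euclidean length $\ell$ can meet up to $N(S)(\ell+1)$ (rather than $N(S)\ell$) vertical edges, and your proposed fix via endpoints of $\beta$ does not remove the rounding issue (a vertical edge strictly between $x_1$ and $x_2$ crosses $\eta$ but neither $\eta_1$ nor $\eta_2$). This additive slack is harmless in every later application of the proposition, where the bound is only used up to constants, so it is a cosmetic defect shared with the paper rather than a gap in your argument.
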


\begin{proof} Let $\gamma \in \lambda(G)$. If $\gamma$ intersects $\eta$ then $\gamma$ either intersects $\eta_1$ or $\eta_2$ or the horocyclic segment between the points $x_1+iy$ and $x_2+iy$. Since $y \ge 1$ and since $G$ is normalised we have that if $\gamma$ intersects this horocyclic segment then $\gamma$ ends at $\infty$. On the other hand there are at most $N(S)|x_1-x_2|$ such vertical geodesics in $\lambda(G)$. This proves the proposition.

\end{proof}

\subsection{The proof of Lemma 5.2} 
Let $t \ge 0$ and set
$$
E_t=\{(z,v) \in \TB{S}: \, \at_r(z,v) \ge t\}.
$$
\noindent
We have

\begin{proposition} There exists $r_0>0$ so that for $r>r_0$ and every $t \ge r^{2}$ we have 
$$
\LM(E_t) \le e^{-q(S) {{t}\over{4}} },
$$
\noindent
where $q(S)>0$ is the constant from Lemma 5.1.
\end{proposition}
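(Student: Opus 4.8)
The plan is to split $E_t$ according to the height $\h(z)$ of the base point, bound the ``deep'' piece directly from the exponential decay of cusp neighbourhoods, and reduce the ``shallow'' piece to Lemma 5.1 by pushing forward along the geodesic flow and invoking Proposition 5.6. Two preliminary remarks set this up. First, we may and do assume that the constant $q(S)$ furnished by Lemma 5.1 satisfies $q(S)\le\tfrac12$, since shrinking it only weakens (hence preserves) the conclusion of that lemma. Second, the volume of the part of $\TB{S}$ lying over the cusp neighbourhoods decays exponentially: in the normalised chart at a cusp the set $\{\h(z)>s\}$ is $\{\IM z>e^{s}\}$ modulo $z\mapsto z+1$, which has hyperbolic area $e^{-s}$, so summing over the $\numb$ cusps gives
$$
\LM\big(\TB\Thin_S(s)\big)\le C_1(S)e^{-s},\qquad s\ge 0,\qquad C_1(S)=2\pi\numb .
$$

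Now fix $r>r_0$ with $r_0\ge 12$ and $t\ge r^{2}$, and write $E_t=E_t'\cup E_t''$ with $E_t'=E_t\cap\{\h(z)\ge t/4\}$ and $E_t''=E_t\cap\{\h(z)<t/4\}$. The deep piece is immediate: $\LM(E_t')\le C_1(S)e^{-t/4}\le C_1(S)e^{-q(S)t/2}$, the last inequality because $q(S)\le\tfrac12$. For $(z,v)\in E_t''$ we have $3r+\max\{0,\h(z)\}<3r+t/4\le t/2$, using $t\ge r^{2}\ge 12r$; since moreover $\at_r(z,v)\ge t$, Proposition 5.6 yields $\gamma_{(z,v)}(s)\in\Thick_G(1)$ for all $t/2\le s<t$. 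Consequently the geodesic ray issuing from $\flow_{t/2}(z,v)$ lies in $\Thick_S(1)$ on every segment $[0,u]$ with $u<t/2$, i.e. $\flow_{t/2}(z,v)\in A(u)$ for all $u<t/2$ (with $A(\cdot)$ the sets of Lemma 5.1). Since $\flow_{t/2}$ preserves $\LM$, Lemma 5.1 gives
$$
\LM(E_t'')=\LM\big(\flow_{t/2}(E_t'')\big)\le\inf_{u<t/2}C(S)e^{-q(S)u}=C(S)e^{-q(S)t/2}.
$$

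Adding the two contributions, $\LM(E_t)\le\big(C_1(S)+C(S)\big)e^{-q(S)t/2}=\big(C_1(S)+C(S)\big)e^{-q(S)t/4}\cdot e^{-q(S)t/4}$; since $t\ge r^{2}>r_0^{2}$, choosing $r_0=r_0(S)$ so large that $e^{q(S)r_0^{2}/4}\ge C_1(S)+C(S)$ (and $r_0\ge 12$) gives $\LM(E_t)\le e^{-q(S)t/4}$, which is the assertion. The argument is essentially bookkeeping on top of Lemma 5.1 and Proposition 5.6; the only point needing care is that the geodesic flow sends $E_t''$ into $A(u)$ only for $u$ strictly below $t/2$ (not $t$), which is why the exponent $\tfrac14$ — rather than something larger — survives after the constants are absorbed, and one must check that $3r+\h(z)$ really is below $t/2$ on $E_t''$, for which the hypothesis $t\ge r^{2}$ with $r$ large is exactly what is needed.
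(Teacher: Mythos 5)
Your proof is correct and takes essentially the same route as the paper: split $E_t$ by a height cutoff proportional to $t$, bound the deep part by the volume of the cusp region, and push the shallow part forward by the geodesic flow so that Proposition 5.6 places it inside the sets $A(u)$ of Lemma 5.1, whose measure decays exponentially. Your explicit normalisation $q(S)\le 1/2$ is harmless and in fact slightly more careful than the paper, which implicitly assumes $q(S)\le 1$ when combining the two pieces; the differing cutoffs ($t/4$ versus $t/2$, flowing by $t/2$ versus $3r+t/2$) are immaterial.
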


\begin{proof} Fix $t \ge r^{2}$. We have 
$$
\TB{S}=\TB\Thick_S( {{t}\over{2}} ) \bigcup ( \TB{S} \setminus \TB\Thick_S({{t}\over{2}}) ).
$$
\noindent
One finds that 
$$
\LM\big( (\TB{S} \setminus \TB\Thick_S({{t}\over{2}})) \big)=\numb e^{-{{t}\over{2}} },
$$
\noindent
where $\numb$ is the number of cusps in $\Cus(S)$.  Since $t \ge r^2$ for $r$ large enough we have
\begin{equation}\label{prva}
\LM\big( (\TB{S} \setminus \TB\Thick_S({{t}\over{2}})) \big)\le  {{1}\over{2}}e^{-{{t}\over{4}} }.
\end{equation}
\vskip .1cm
It follows from Proposition 5.6 that for every $(z,v) \in \TB{S}$ and every $3r+\h(z) \le t' \le t$ 
we have that $\flow_{t'}(z,v) \subset A(t-t')$, where $A(t)$ is the set defined in Lemma 5.1. If $(z,v) \in \TB\Thick_S({{t}\over{2}})$
then 
$$
\flow_{(3r+{{t}\over{2}} )}(z,v) \subset A({{t}\over{2}} -3r). 
$$
\noindent
It follows from Lemma 5.1 that
$$
\LM\big( E_t \cap \TB\Thick_S({{t}\over{2}}) \big)= \LM\big( \flow_{(3r+{{t}\over{2}} )} ( E_t \cap \TB\Thick_S({{t}\over{2}})) \big) \le
C(S)e^{ -q(S)({{t}\over{2}} -3r)}.
$$
\noindent
Since $t \ge r^2$ for $r$ large enough we have 
$$
\LM\big( E_t \cap \TB\Thick_S({{t}\over{2}}) \big)\le {{1}\over{2}}e^{-q(S){{t}\over{4}}}.
$$
\noindent
Together with (\ref{prva}) this proves the proposition.
\end{proof}

We are ready to prove Lemma 5.2.  Let 
$$
T_r=\{(z,v) \in \TB{S}: \, \h(z)\ge 10r \}.
$$
\noindent
We have $\TB{S} \setminus \FD_r(S)=E_{r^{2}} \cup T_r$. It follows from Proposition 7.3 and from the definition of $\varphi_r(z,v)$ that
$$
\int\limits_{\TB{S} \setminus \FD_r(S)} (\cl_r(z,v)+ \cl_r(z,\omega v)+\cl_r(z,\omega^{2} v)+1) \varphi_r(z) \, d\LM \le
\int\limits_{E_{r^{2}} \setminus T_{r}}  D(S)e^{11r} \, d\LM+
$$

$$
+\int\limits_{T_{r}} D(S) e^{\h(z)}\chi_r(z,v) e^{(r-{{\h(z)}\over{2}}) } \, d\LM+ \int\limits_{E_{r^{2}}} D(S)K(S)(\at_r(z,v)+1) \, d\LM +
$$
$$
+\int\limits_{T_r \setminus E_{r^{2}}} D(S)K(S)(\at_r(z,v)+1) \, d\LM.
$$
\noindent
We estimate each of the four integrals on the right hand side.
\vskip .1cm
From Proposition 7.5 we have 
$$
\int\limits_{E_{r^{2}} }  D(S)e^{4r} \, d\LM = D(S)e^{4r} \LM(E_{r^{2}}) \le D(S)e^{4r} e^{ -q(S){{r^{2}}\over{4}} } \le e^{-2r}=\pe(2r),
$$
\noindent
for $r$ large enough. 
\vskip .1cm

Next we estimate the second integral
$$
\int\limits_{T_{r}} D(S) e^{\h(z)}\chi(z,v) e^{(r-{{\h(z)}\over{2}}) } \, d\LM.
$$
\noindent
We have that $T_r=\cup_{c_{i}(S) \in \Cus(S)}\TB \Hor_{c_{i}(S)}(10r)$. Let $c_i \in \Cus(G)$ such that $[c_i]_G=c_i(S)$ and set $G=G_{c_{i}}$. 
Let $z \in \Hor_{c_{i}}(10r)$. Then $(z,v)=(x,y,\theta)=(z,\theta)$ in the polar coordinates on $\TB\Ha$. We have 
$$
\int\limits_{-\pi}^{\pi} \chi_r(z,\theta)\, d\theta \le Cr,
$$
\noindent
where $C$ is some universal constant. This shows that for $r$ large enough we have 
$$
\int\limits_{\TB\Hor_{c_{i}}} D(S) e^{\h(z)}\chi(z,v) e^{(r-{{\h(z)}\over{2}}) } \, d\LM 
\le Cr \int\limits_{e^{10r}}^{\infty} {{D(S)ye^{r}}\over{\sqrt{y}}} \, {{dy}\over{y^{2}}}=
$$
$$
= D(S)Cr e^{r}\int\limits_{e^{10r}}^{\infty} y^{-{{3}\over{2}}} \, dy \le e^{-4r}<e^{-r}.
$$
\vskip .1cm

Next we estimate the third integral. Recall that if $f:X \to [0,\infty)$ is an integrable function on a measure space $(X,\mu)$ then
$$
\int\limits_{X} f \, d\mu=\int\limits_{0}^{\infty} \mu(f^{-1}[t,\infty) ) \, dt.
$$
\noindent
Set $(X,\mu)=(E_{r^{2}},\LM)$ and $f(z,v)=\at_r(z,v)$. We find
$$
\int\limits_{E_{r^{2}}} \at_r(z,v) \, d\LM=\int\limits_{0}^{\infty} \LM(E_t \cap E_{r^{2}}) \, dt=r^{2}\LM(E_{r^{2}}) +
\int\limits_{r^{2}}^{\infty}\LM(E_t) \, dt.
$$
\noindent
This together with  Proposition 7.5 gives
$$
\int\limits_{E_{r^{2}}} D(S)K(S)(\at_r(z,v)+1) \, d\LM \le D(S)K(S)\LM(E_{r^{2}})+ D(S)K(S) \big( r^{2}\LM(E_{r^{2}}) +
\int\limits_{r^{2}}^{\infty}\LM(E_t) \, dt \big) \le
$$
$$
\le (1+r^{2})D(S)K(S)e^{-q(S) {{r^{2}}\over{4}} }+ D(S)K(S)\int\limits_{r^{2}}^{\infty}e^{-q(S) {{t}\over{4}} } \, dt,
$$
\noindent
so for $r$ large enough we have 
$$
\int\limits_{E_{r^{2}}} D(S)K(S)(\at_r(z,v)+1) \, d\LM \le e^{-2r}=\pe(2r).
$$
\vskip .1cm

If $(z,v) \in T_r \setminus E_{r^{2}}$ then $(\at_r(z,v)+1)\le r^2+1$. This implies that
$$
\int\limits_{T_r \setminus E_{r^{2}}} D(S)K(S)(\at_r(z,v)+1) \, d\LM \le D(S)K(S)(r^{2}+1) \LM(T_r) \le r^{3}e^{-10r}<e^{-r}.
$$
\noindent
We have estimated all four integrals and we find that
$$
\int\limits_{\TB{S} \setminus \FD_r(S)} (\cl_r(z,v)+ \cl_r(z,\omega v)+\cl_r(z,\omega^{2} v)+1) \varphi_r(z) \, d\LM \le P(r)e^{-r}.
$$
\noindent
This proves Lemma 5.2.

\subsection{The proof of Lemma 5.3} From Proposition 5.16  we have
$$
\int\limits_{H_{r}} (\cl_r(z,v)+\cl_r(z,\omega v)+ \cl_r(z,\omega^{2} v)+1) \varphi_r(z) \, d\LM \le  
$$
\begin{equation}\label{H-estimate}
\end{equation}
$$
\le \sum_{i=1}^{3} \int\limits_{\E^{i}_{r}(S)} (\cl_r(z,v)+\cl_r(z,\omega v)+\cl_r(z,\omega^{2} v)+1) \varphi_r(z) \, d\LM,
$$
\noindent
so in order to prove Lemma 5.3 we need to estimate from above the integrals on the right hand side.
\vskip .1cm
We first estimate the integral
$$
\int\limits_{\E^{1}_{r}(S)} (\cl_r(z,v)+\cl_r(z,\omega v)+\cl_r(z,\omega^{2} v)+1) \varphi_r(z) \, d\LM.
$$
\noindent
Let  $(z,v) \in \E^{1}_r(S)$. Then $(z,v) \in \FD_r(S)$. Let $(z,v)$ also denote a lift of $(z,v)$ to $\Ha$ . Then 
$z \in \Hor_c(2r-\log 3)$, for some $c \in \Cus(G)$ and $c$ is an endpoint of $\abs^{1}_r(z,v)$. Here $[c]_G=c_i(S)$ for some $c_i(S) \in \Cus(S)$. Let $X_0(c_i(S))$  be the set of those $(z,v) \in \E^{1}_r(G)$ so that  $z \in \Hor_{c}(2r-\log 3)$ and $\abs_r(z, v)=c$. Let  $X_1(c_i(S))$  be the set of those $(z,v) \in \E^{1}_r(G)$ so that  $z \in \Hor_{c}(2r-\log 3)$ and $\abs_r(z,\omega v)=c$. Then $\E^{1}_r(G)=X_0(c_i(S)) \cup X_1(c_i(S))$.
\vskip .1cm
Let $(z,v) \in X_0(c_i(S))$.  Since   $(z,v) \in \FD_r(S)$ we have
$\max\{\at_r(z,v),\at_r(z,\omega v),\at_r(z,\omega^{2}v)\} \le  r^{2}$. It follows from  Proposition 5.2 that 
$|\ang_c(z,\omega v)| \le \pi e^{-r}$. This implies  that  $|\ang_c(z,\omega^{j} v)|>{{\pi}\over{2}}$, $j=1,2$.  Since $\abs_r(z,v)=c$ we have $\cl_r(z,v)=e^{\h(z)}$. Combining this with (\ref{CL-estimate-thin}) (recall  the definition of $\chi_r(z,v)$) we get
$$
\cl_r(z,v)+\cl_r(z,\omega v)+\cl_r(z,\omega^{2} v) \le  e^{\h(z)}+ 2D(S)\big(e^{\h(z)}+K(S)(r^{2}+1) \big).
$$
A single lift of the set  $X_0(c_i(S))$ to $\Ha$ (with respect to the normalised group $G=G_c$) is contained in the set 
$$
\{(x,y,\theta) \in \TB\Ha: \, 0\le x\le 1, \,  {{e^{2r}}\over{3}} \le y \le \infty , \, |\theta| \le \pi e^{-r} \}.
$$
\noindent
Passing with the integration to the universal cover we get for $r$ large that
$$
\int\limits_{ X_0(c_i(S))} (\cl_r(z,v)+\cl_r(z,\omega v)+\cl_r(z,\omega^{2} v)+1) \varphi_r(z) \, d\LM <
$$
$$
<\int\limits_{-\pi e^{-r}}^{\pi e^{-r}}\left(  
\int\limits_{{{e^{2r}}\over{3}}}^{\infty} \left(y+2D(S)\big( y+K(S)(1+r^{2}) \big)+1 \right){{e^{r}}\over{\sqrt{y}}} \,
{{dy}\over{y^{2}}} \right) \, d\theta \le 
$$
$$
\le r^{3}e^{-r}  \int\limits_{{{e^{2r}}\over{3}}}^{\infty}e^{r}y^{ -{{3}\over{2}}}\, dy <3r^{3}e^{-r}=P(r)e^{-r}.
$$
\noindent
We repeat this for every $c_i(S) \in \Cus(S)$. After  repeating the same argument for $X_j(c_i(S))$, $j=1,2$, we obtain 
\begin{equation}\label{E-1-estimate}
\int\limits_{\E^{1}_{r}(S)} (\cl_r(z,v)+\cl_r(z,\omega v)+\cl_r(z,\omega^{2} v)+1) \varphi_r(z) \, d\LM< P(r)e^{-r}.
\end{equation}
\noindent
for $r$ large enough.
\vskip .3cm
It is somewhat more delicate to estimate the integrals over the sets $\E^2_{r}(S)$ and $\E^3_{r}(S)$. We need to prove several propositions first.

\begin{proposition} Let $s,t>0$. Then for $t$ large enough, we have 
$$
I(s,t)=\int\limits_{\TB\Thick_{S}(s)} \iota(\gamma_{(z,v)}(t),\tau(S))\, d\LM \le t^2(s+t).
$$
\end{proposition}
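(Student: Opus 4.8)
The plan is to bound $I(s,t)$ by relating the integral of the geometric intersection number to the Liouville measure of the unit tangent bundle and to the total hyperbolic length of the edges of $\tau(S)$ visible from the thick part. The starting observation is the standard integral–geometric identity: for a fixed geodesic segment $\lambda_j$ in $\lambda(S)$ and a fixed length $t$, the Liouville measure of the set of $(z,v)\in\TB{S}$ for which $\gamma_{(z,v)}[0,t]$ crosses $\lambda_j$ is bounded by a universal constant times $t$ times the hyperbolic length of $\lambda_j$. Indeed, parametrising a crossing by (the intersection point on $\lambda_j$, the angle of crossing, the time $u\in[0,t]$ at which it occurs) gives a measure-preserving (up to bounded factor) coordinate system, and integrating out the angle contributes a bounded constant while integrating out $u$ contributes the factor $t$. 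Summing over the finitely many edges $\lambda_1,\dots,\lambda_{|\lambda|}$ in $\lambda(S)$ yields
$$
\int\limits_{\TB{S}} \iota(\gamma_{(z,v)}(t),\tau(S))\, d\LM \le C_0\, t\, \ell(\lambda(S)),
$$
where $\ell(\lambda(S))=\sum_j \ell(\lambda_j)$ is the total length of the triangulation and $C_0$ is universal.

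The subtlety is that $\ell(\lambda(S))=\infty$ in general, because the edges of $\tau(S)$ are ideal geodesics of infinite length, so the naive identity above is useless as stated. This is why the integration in the statement is restricted to $\TB\Thick_S(s)$: a segment $\gamma_{(z,v)}[0,t]$ starting in $\Thick_S(s)$ can only travel into the cusp region to depth at most $s+t$ in height before it must come back, so it can only cross those portions of the edges $\lambda_j$ lying inside $\Thick_S(s+t)$. Concretely, if $\gamma_{(z,v)}(u)$ lies in some horoball $\Hor_c(h)$ with $h$ large, then since $z\in\Thick_S(s)$ we have $h \le s + u \le s + t$ (the height along a geodesic increases at speed at most one). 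Therefore I would replace the edges $\lambda_j$ by their truncations $\lambda_j\cap\Thick_S(s+t)$ and note that $\gamma_{(z,v)}[0,t]$, for $(z,v)\in\TB\Thick_S(s)$, meets $\lambda_j$ only where it meets $\lambda_j\cap\Thick_S(s+t)$. The total length of these truncated edges is now finite: $\Thick_S(s+t)$ is a compact core together with $n$ truncated cusp neighbourhoods, and inside a normalised cusp $G=G_c$, $c=\infty$, the part of $\Thick_G(s+t)$ at height $\ge 1$ consists of the strip $1\le y\le e^{s+t}$, $0\le x\le 1$, which is crossed by at most $N(S)$ of the edges ending at $\infty$, each contributing a vertical segment of hyperbolic length at most $s+t$. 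Hence $\ell(\lambda(S)\cap\Thick_S(s+t)) \le \ell_0(S) + n\,N(S)(s+t) \le C_1(s+t)$ for a constant depending only on $S$, and in fact—absorbing constants—at most $C_1(s+t)$ with $C_1$ comparable to $N(S)$.

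Combining the two displays gives
$$
I(s,t) \le C_0\, t\cdot C_1(s+t) \le C_2\, t\,(s+t),
$$
and since $C_2$ is a constant depending only on $S$, for $t$ large enough (so that $C_2\le t$) we obtain $I(s,t)\le t^2(s+t)$, which is the claim. The main obstacle in writing this out rigorously is making the integral–geometric crossing estimate precise: one must set up the change of variables from $(z,v)$ near a crossing to the triple (foot point on $\lambda_j$, crossing angle, elapsed time) and check that the Liouville form $y^{-2}\,dx\wedge dy\wedge d\theta$ pushes forward to (a bounded multiple of) $d\ell\wedge d\theta\wedge du$ on $\lambda_j\times[-\pi,\pi)\times[0,t]$; this is a routine but slightly fiddly Jacobian computation in Fermi coordinates along $\lambda_j$. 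The truncation argument and the length bound inside the cusps are then elementary, using only the normalisation of $G_c$ and the definition of $N(S)$ from Proposition 3.1.
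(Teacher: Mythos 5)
Your argument is correct, but it takes a genuinely different route from the paper. The paper proceeds by a discrete Gronwall-type estimate: it bounds the unit-time increment $I(s,t+1)-I(s,t)$ pointwise via Proposition 7.2 (each unit of time adds at most $K(S)+N(S)e^{\h(\gamma_{(z,v)}(t))}$ intersections), then uses that $\flow_t$ preserves $\LM$ and maps $\TB\Thick_S(s)$ into $\TB\Thick_S(s+t)$ to convert this into $\int_{\TB\Thick_S(s+t)}(K(S)+N(S)e^{\h(z)})\,d\LM$, which an explicit computation in each normalised cusp shows grows linearly in $s+t$; summing the increments gives $I(s,t)\le t\,(C+K(S)+N(S)(s+t))\le t^2(s+t)$. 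You instead bound $I(s,t)$ directly by a Santal\'o-type flux identity through each edge, after truncating the (infinite-length, ideal) edges to $\Thick_S(s+t)$, which is legitimate because the height along a unit-speed geodesic changes at speed at most one, so segments starting in $\Thick_S(s)$ never leave $\Thick_S(s+t)$ — the same fact the paper uses through the flow. Your route is conceptually transparent (intersection number as flux through a finite-length cross-section) but carries the burden of the change-of-variables computation you defer, which the paper's route avoids entirely by recycling Proposition 7.2; conversely the paper's route needs the increment/flow-invariance bookkeeping but only elementary integration in the cusps. One small repair to your length count: $\lambda(S)\cap\Thick_S(s+t)$ contains not only the vertical terminal segments of edges ending at a given cusp (length $s+t$ each) but also excursion arcs of edges that enter a cusp region without ending there; these, however, lie in fixed compact subsegments of the finitely many edges, so they contribute only an additive constant depending on $S$ and $\tau$, and your conclusion $\ell(\lambda(S)\cap\Thick_S(s+t))\le C_1(S)(s+t)$ (for $t\ge 1$, say) and hence $I(s,t)\le C_2(S)\,t(s+t)\le t^2(s+t)$ for $t$ large (independently of $s$, as in the paper) stands.
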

\begin{proof} Denote the above integral by $I(s,t)$.
Consider $\delta_{(z,v)}(t)=\iota(\gamma_{(z,v)}(t+1),\tau(S))-\iota(\gamma_{(z,v)}(t),\tau(S))$. 
It follows from Proposition 7.2 that 
$$
\delta_{(z,v)}(t) \le \max \{K(S),  N(S)e^{\h(\gamma_{(z,v)}(t))} \}.
$$
\noindent
so we conclude that for any $(z,v) \in \TB{S}$ we have
$$
\delta_{(z,v)}(t) \le K(S)+N(S)e^{\h(\gamma_{(z,v)}(t))}.
$$
\noindent
It follows that
$$
I(t+1)-I(t)=\int\limits_{\TB\Thick_{S}(s)}  \delta_{(z,v)}(t) \, d\LM \le \int\limits_{\TB\Thick_{S}(s)} (K(S)+N(S)e^{\h(\gamma_{(z,v)}(t))}) \, d\LM.
$$
\noindent
Since $\flow_t(\TB\Thick_S(s)) \subset \TB\Thick_S(s+t)$ and since $\flow_t$ is measure preserving, we have
$$
I(t+1)-I(t) \le \int\limits_{\TB\Thick_{S}(s)} (K(S)+N(S)e^{\h(\gamma_{(z,v)}(t))}) \, d\LM=\int\limits_{\flow_t(\TB\Thick_{S}(s))} (K(S)+N(S)e^{\h(z)}) \, d\LM \le
$$
$$
\le \int\limits_{\TB\Thick_{S}(s+t)} (K(S)+N(S)e^{\h(z)}) \, d\LM=\int\limits_{\TB\Thick_{S}(0)} (K(S)+N(S)e^{\h(z)}) \, d\LM+
$$
$$
+\sum_{i=1}^{n}\int\limits_{H(c_{i}(S))}   (K(S)+N(S)e^{\h(z)}) \, d\LM,
$$
\noindent
where $H(c_i(S))=\TB\Hor_{c_{i}(S)}(0) \setminus \TB\Hor_{c_{i}(S) }(s+t)$.
We have 
$$
\int\limits_{\TB\Thick_{S}(0)} (K(S)+N(S)e^{\h(z}) \, d\LM \le \int\limits_{\TB\Thick_{S}(0)} (K(S)+N(S)) \, d\LM \le C,
$$
\noindent
where $C>0$ depends only on $S$. On the other hand, by passing to the group $G=G_{c_{i}(S)}$ and using the expression for the volume element $d\LM$ we have 
$$
\int\limits_{H(c_{i}(S))}(K(S)+N(S)e^{\h(z)}) \, d\LM=\int\limits_{\TB\Hor_{\infty}(0) \setminus \TB\Hor_{\infty}(s+t)}(K(S)+N(S)e^{\h_{\infty}(z)}) \, d\LM=
$$
$$
=\int\limits_{0}^{1} \left( \int\limits_{1}^{e^{s+t}} {{K(S)+N(S)y}\over{y^{2}}} \, dy \right) \, dx \le K(S)+N(S)(s+t).
$$
\noindent
Combining these estimates we have $I(t+1)-I(t) \le C+K(S)+N(S)(s+t)$
and therefore $I(t) \le t(C+K(S)+N(S)(s+t) )$.  For $t$ large enough we have $I(t) \le t^2(t+s)$ which proves the lemma.
\end{proof}

We have already discussed (in Section 6) the cross sectional area for two dimensional subsets of $\TB\Ha$  with respect to the equidistant flow $\tra_t$. 

\begin{definition} Let $E \subset \Ha$ be a domain, and let $u(z)$ be a smooth unit vector field on $E$. Then $E(u)=\{ (z,u(z)): z \in E \}$ 
is a surface sitting inside the three dimensional manifold $\TB\Ha$. Let $0 \le t_1 \le t_2 \le \infty$. Define  $\U(E(u),t_1,t_2)=
\cup_{t_{1} \le s \le t_{2}}\flow_s(E(u))$. For simplicity, set $\U(E(u),0,t)=\U(E(u),t)$.
We use the same notation for $E \subset S$.
\end{definition}

We say that  a unit vector field $u$ defined on a domain $E \subset \Ha$ is transverse to $E$ if $\flow_s(E(u)) \cap E(u)=\emptyset$ for every $s>0$. Assume that $u$ is  transverse to $E$.
Since $\flow_t$ preserves the volume, we have that the quotient $\LM(\U(E(u),t))/t$ does not depend on $t \ge 0$. We call  $\LM(\U(E(u),t))/t$ the cross sectional area of $E$ with respect to $u(z)$. In fact, the flow $\flow_t$ induces an area form $d\eta(u)$ on $E$ so that the cross sectional area of $E$ agree with the area of $E$ with respect to $d\eta(u)$.
The two form $d\eta(u)$ is obtained by  contracting $|d\LM|$ by the vector field $u(z)$. One can verify that since each vector  $u(z)$ has the unit length, and since $d\LM=y^{-2} \, dx\wedge dy \wedge d\theta$  we have that $d\eta(u)=\sigma(z)\, dx \wedge dy$ where $0 \le \sigma(z) \le y^{-2}$. That is the density of the two form $d\eta(u)$ is always bounded above by the density of the hyperbolic metric.  
\vskip .1cm
Let $\psi=\psi_{E(u)}:\U(E(u),\infty) \to E(u)$ be the projection map, that is on each slice $\flow_s(E(u))$, $0 \le s < \infty$, the map $\psi$ agrees with $(\flow_s)^{-1}$. Since $u$ is transverse to $E$ the map $\psi$ is well defined. 
\vskip .1cm
Let $f_t:E(u) \to \R$, $t_1 \le t \le t_2$, and $f:\U(E(u),t) \to \R$ be integrable functions. If for every such $t$ we have $f(z,v) \le f_t(\psi_{E(u)}(z,v))$ for every $(z,v) \in \flow_t(E(u))$ 
then
\begin{equation}\label{psi-1}
\int\limits_{\U(E(u),t_1,t_2)}f\, d\LM \le \int\limits_{t_{1}}^{t_{2}} \left(\int\limits_{E(u)}f_t d\eta(u) \right)dt,
\end{equation}
\noindent
and if $f_t(\psi_{E(u)}(z,v)) \le f(z,v)$ for every $(z,v) \in \flow_t(E(u))$ then
\begin{equation}\label{psi-2}
\int\limits_{t_{1}}^{t_{2}} \left(\int\limits_{E(u)}f d\eta(u)\right)dt \le \int\limits_{\U(E(u),t)}f\, d\LM.
\end{equation}
\vskip .1cm
We are particularly interested in the vector fields $u_1(z)$ and $u_2(z)$. The vector $u_1(z)$ is the unique vector such that the point $(z,u_1(z)) \in \TB\Ha$ corresponds to the coordinates $(x,y,-{{\pi}\over{2}})$.  Observe that $u_1$ is transverse to $\Ha$.
The vector $u_2(z)$ is the unique vector such that the point $(z,u_2(z)) \in \TB\Ha$ corresponds to the coordinates $(x,y,-\pi)$.
One can verify that $d\eta(u_1)=y^{-2}\, dxdy$ and that  $d\eta(u_2)=0$. It is not surprising that $d\eta(u_2)=0$
since  the corresponding set $\U(E(u_2),t)$ is two dimensional for any set $E \subset \Ha$ and therefore the cross sectional area of $E$ with respect to 
$u_2(z)$ is equal to zero. 
\vskip .1cm
Fix $c_i(S) \in \Cus(S)$. For $y \ge 1$ set
$$
A_j(t,y,c_{i}(S))=\int\limits_{0}^{1}\iota(\gamma_{(z,u_{j}(z))}(t),\tau(G_{c_{i}(S)} ) ) \, dx,
$$
\noindent
where $j=1,2$. Set $G=G_{c_{i}(S)}$. Note that if $z \in \gamma \cap \Hor_{\infty}(0)$ and $\gamma \in \lambda(G)$ then $\gamma$ has $\infty$ as its endpoint.
This implies that for $1 \le y$ we have
\begin{equation}\label{A-estimate-1}
A_2(t,1,c_{i}(S))= A_2(t+\log y,y,c_{i}(S)),
\end{equation}
\noindent
and  $A_2(t,y,c_{i}(S))=0$ for $t \le \log y$.
\vskip .1cm
Fix $y \ge 1$. For every $t \ge 0$  the set of points $\gamma_{(z,u_{1}(z))}(t)$ is a horocircle in $\Ha$ that bounds a horoball at $\infty$ (recall $z=x+iy$). Moreover, there exist numbers $d(t) \ge 0$ and $r(t) \ge 0$ (that depend only on $t$),  such that for 
$z'=z+yd(t)$ we have $\gamma_{(z,u_{1}(z))}(t)=\gamma_{(z',u_{2}(z'))}(r(t))$. The functions $d(t)$ and $r(t)$ are increasing in $t$. It is elementary to verify that 
$0 \le d(t) \le 1$ and $0 \le t-r(t)<1$. 
\vskip .1cm
We apply Proposition 7.4 to the segment $\gamma_{(z,u_{1}(z))}[0,t]$. We have
$$
\iota(\gamma_{(z,u_{1}(z))}(t),\tau(G)) \le \iota(\gamma_{(z',u_{2}(z'))}(r(t)),\tau(G))+N(S)yd(t).
$$
\noindent
This shows that for every $t\ge 0$ we have 
$$
\iota(\gamma_{(z',u_{2}(z'))}(t-1),\tau(G)) \le  \iota(\gamma_{(z',u_{2}(z'))}(r(t)),\tau(G)) \le \iota(\gamma_{(z,u_{1}(z))}(t),\tau(G)) \le 
$$
$$
\iota(\gamma_{(z',u_{2}(z'))}(r(t)),\tau(G))+N(S)y \le
\iota(\gamma_{(z',u_{2}(z'))}(t),\tau(G))+N(S)y,
$$
\noindent
that is 

$$
\iota(\gamma_{(z',u_{2}(z'))}(t-1),\tau(G)) \le \iota(\gamma_{(z,u_{1}(z))}(t),\tau(G)) \le \iota(\gamma_{(z',u_{2}(z'))}(t),\tau(G))+N(S)y.
$$
\vskip .1cm
Since the vector $u_2(z')$ is obtained by translating the vector $u_2(z)$ for $yd(t)$ and $d(t)$ does not depend on $x$ we get

\begin{equation}\label{A-estimate-2}
A_2(t-1,y,c_{i}(S)) \le A_1(t,y,c_{i}(S)) \le  A_2(t,y,c_{i}(S)) +N(S)y. 
\end{equation}

\begin{proposition} For $y \ge 1$ and any $c_i(S)$ we have 
$$
A_2(t,y,c_{i}(S)) \le (t+3)^4,
$$
\noindent
and
$$
A_1(t,y,c_{i}(S)) \le (t+3)^4+N(S)y,
$$
\noindent
for $t$ large enough.

\end{proposition}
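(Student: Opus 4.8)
The plan is to prove the bound on $A_2$ first and then derive the bound on $A_1$ from $(\ref{A-estimate-2})$. For $A_2$, the key point is the identity $(\ref{A-estimate-1})$: since $A_2(t,y,c_i(S)) = A_2(t+\log y, 1, c_i(S))$ and $\log y \ge 0$, and since $A_2(\cdot,1,c_i(S))$ is non-decreasing in $t$, it suffices to bound $A_2(t,1,c_i(S))$ for all $t$ (the shift by $\log y$ only helps, giving $A_2(t,y,c_i(S)) \le A_2(t + \log y, 1, c_i(S))$, but we want an upper bound uniform in $y \ge 1$ — here I would be careful: $t + \log y \ge t$, so monotonicity gives $A_2(t,y,c_i(S)) = A_2(t+\log y,1,c_i(S))$, which is an \emph{equality} not an inequality, so I actually need the bound $A_2(s,1,c_i(S)) \le (s+3)^4$ to imply $A_2(t+\log y,1,c_i(S)) \le (t + \log y + 3)^4$; to get $\le (t+3)^4$ I should instead observe that for $z = x + iy$ with $y \ge 1$, the geodesic ray $\gamma_{(z,u_2(z))}$ descends vertically, and I can re-parametrise starting from height $1$, which is exactly what $(\ref{A-estimate-1})$ encodes — the cleanest route is to note $A_2(t,y,c_i(S)) = 0$ for $t \le \log y$ and $A_2(t,y,c_i(S)) = A_2(t - \log y, 1, c_i(S))$ reading the shift the other way, i.e. replacing $t \mapsto t - \log y$, so $A_2(t,y,c_i(S)) \le A_2(t,1,c_i(S))$). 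So everything reduces to bounding $A_2(t,1,c_i(S)) = \int_0^1 \iota(\gamma_{(z,u_2(z))}(t), \tau(G))\, dx$ with $z = x + i$.

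For that integral I would invoke Proposition 7.7 (the bound $I(s,t) = \int_{\TB\Thick_S(s)} \iota(\gamma_{(z,v)}(t),\tau(S))\, d\LM \le t^2(s+t)$) together with the cross-sectional-area machinery of $(\ref{psi-1})$. The set $E(u_2)$ with $E = \{x + i : 0 \le x \le 1\}$ lies in $\TB\Thick_S(1)$ (since $\h(z) = 0$ there, up to the identification with $S$), but the issue is that $u_2$ is the horizontal vector field with $d\eta(u_2) = 0$, so one cannot directly write $A_2$ as a cross-sectional integral against $u_2$. Instead I would pass to $u_1$: by $(\ref{A-estimate-2})$, $A_2(t-1,y,c_i(S)) \le A_1(t,y,c_i(S))$, so it is enough to bound $\int_1^2 A_1(t,y,c_i(S))\, \frac{dy}{y^2}$-type quantities — more precisely, I would integrate $A_1(t,1,c_i(S))$ over a vertical strip using $d\eta(u_1) = y^{-2}\, dx\, dy$, apply $(\ref{psi-2})$ or $(\ref{psi-1})$ to relate $\int_1^{e} A_1(t + \log y, y, c_i(S))\,\frac{dy}{y^2}$ to $\int \iota(\gamma_{(z,v)}(t), \tau(S))\, d\LM$ over a compact piece of $\TB\Thick_S(1)$, and then use Proposition 7.7 to get a polynomial bound of the form $C(t+\text{const})^2$, actually wait — we want $(t+3)^4$, and Proposition 7.7 gives $t^2(s+t)$; the extra factors of $t$ come from the fact that we are extracting a pointwise (in $y$) bound from an integral bound, which costs a factor of roughly $t$ each time we localise in the $y$ and in the starting-time variable. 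So $(t+3)^4$ is the expected order: $t^2$ from Proposition 7.7, times $t$ for localising the height, times $t$ for localising the flow time, plus lower order terms absorbed into the $+3$.

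The plan for $A_1$ is then immediate: the right inequality in $(\ref{A-estimate-2})$ gives $A_1(t,y,c_i(S)) \le A_2(t,y,c_i(S)) + N(S)y \le (t+3)^4 + N(S)y$, which is exactly the claimed second bound.

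The main obstacle I anticipate is the passage from the integral bound of Proposition 7.7 to a \emph{pointwise} (in the transversal variable) bound on $A_2(t,1,c_i(S))$ or $A_1(t,1,c_i(S))$. The averaging argument loses control unless one argues that $\iota(\gamma_{(z,u_j(z))}(t),\tau(G))$ does not vary too wildly as $z$ ranges over the relevant slice — one needs something like an equicontinuity or a doubling-type estimate, or alternatively one runs the flow $\flow_s$ for $s$ in an interval of length comparable to $t$, integrates over that interval, and divides, using that $\iota(\gamma_{(z,v)}(t + s), \tau(G)) - \iota(\gamma_{(z,v)}(s),\tau(G))$ is comparable to $\iota(\gamma_{\flow_s(z,v)}(t),\tau(G))$ plus a bounded error (this is the discrete-derivative idea already used in the proof of Proposition 7.7). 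This bookkeeping, keeping the exponents at $4$ and the additive constant at $3$, is where the care is needed; the geometric input is all already available.
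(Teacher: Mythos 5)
Your skeleton is the paper's: reduce to height $y$ comparable to $1$ using (\ref{A-estimate-1}) and monotonicity, route the cross-section argument through $u_1$ because $d\eta(u_2)=0$, invoke the Liouville bound $I(s,t)\le t^2(s+t)$ (which is Proposition 7.6, not 7.7), and get the $A_1$ bound at the end from the right-hand inequality of (\ref{A-estimate-2}). But the step you yourself single out as the main obstacle --- extracting a bound at a single height from a bound on a $y$-integral --- is a genuine gap, and neither of your proposed fixes closes it. No equicontinuity or doubling estimate for $z\mapsto\iota(\gamma_{(z,v)}(t),\tau(G))$ is available (intersection counts over time $t$ are not stable under transversal perturbations uniformly in $t$: the relevant windows shrink like $e^{-t}$), and flow-time averaging cannot work on its own: the slice $\{(x+i,u_2(x+i)):0\le x\le 1\}$ is one-dimensional, so its saturation under $\flow_s$, $0\le s\le Ct$, is two-dimensional and Liouville-null, hence can never be compared with the integral in Proposition 7.6; moreover for the vertical field the flow direction is tangent to the slice (this is exactly what $d\eta(u_2)=0$ records), so flowing that slice gains nothing at all.

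The paper closes this step by using (\ref{A-estimate-1}) a second time, in a sharper way than you do: the function $y\mapsto A_2(t+\log y,y,c_i(S))$ is \emph{constant}, so a unit-length $y$-integral dominates the pointwise value with no loss. Concretely, one bounds $\int_4^5 A_1(t,y,c_i(S))\,dy\le t^4$ by applying (\ref{psi-2}) and Proposition 7.6 to $E=\{0\le x\le 1,\ 4\le y\le 5\}$ (using $d\eta(u_1)=y^{-2}\,dxdy$ and $y^{-2}\ge 1/25$ there, with the time-one flow thickening contributing only a bounded additive error to the integrand); the left inequality of (\ref{A-estimate-2}) turns this into $\int_4^5 A_2(t,y,c_i(S))\,dy\le (t+1)^4$; and then, for each $y\in[4,5]$, $A_2(t,1,c_i(S))=A_2(t+\log y,y,c_i(S))\le A_2(t+2,y,c_i(S))$, so integrating over $[4,5]$ gives $A_2(t,1,c_i(S))\le\int_4^5 A_2(t+2,y,c_i(S))\,dy\le (t+3)^4$, and the case of general $y\ge 1$ follows by the reduction you already made. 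In particular no averaging over a time interval of length comparable to $t$ occurs, and your accounting of the exponent is off: the $4$ is just $t^2(s+t)\sim t^3$ from Proposition 7.6 with one more factor of $t$ absorbing multiplicative constants, not two separate localisation losses.
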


\begin{proof} Fix $c_i(S) \in \Cus(S)$ and let $G=G_{c_{i}(S)}$. 
Let $E \subset \Ha$ be the set given by $E=\{z:0 \le x \le 1, \, \,  \text{and} \, \,  4 \le y \le 5\}$. 
Let $\U(E(u_1),1) \subset \TB\Ha$ be the corresponding set
(see the above definition). Note that  $\U(E(u_1),1) \subset \TB\Thick_G(\log 5)$. Moreover, the set  $\U(E(u_1),1) $ injects into $\TB\Thick_S(\log 5)$
under the standard covering map. Therefore, from Proposition 7.6 for $t$ large enough we have
$$
\int\limits_{\U(E(u_{1}),1)} \iota(\gamma_{(z,v)}(t),\tau(G))\, d\LM \le \int\limits_{\TB\Thick_S(\log 5)} 
\iota(\gamma_{(z,v)}(t),\tau(S))\, d\LM=
$$
\begin{equation}\label{A-estimate-3}
\end{equation}
$$
=\int\limits_{\TB\Thick_S(\log 5)} \iota(\gamma_{(z,v)}(t),\tau(S))\, d\LM \le 
t^2(\log 5+t)<t^2(2+t).
$$
\vskip .1cm
Let $\psi:\U(E(u_1),1) \to E(u_1)$ be the restriction of the projection map  $\psi_{E(u_{1})}:\U(E(u_1),\infty) \to E(u_1)$
introduced above.
It holds that $\iota(\gamma_{\psi(z,v)}(t),\tau(G)) \le \iota(\gamma_{(z,v)}(t),\tau(G))+C_1$ for every $t>0$ 
where $C_1$ is a constant that depends only on $S$. The constant $C_1$ bounds above the number of intersections between the geodesic segment $\gamma_{(w,u_{1}(w))}[0,1]$ and $\tau(G)$ when $w \in E$.  From (\ref{psi-2}) we have
$$
\int\limits_{E} \iota(\gamma_{(z,u_{1}(z))}(t),\tau(G))d\eta(u_1) \le 
\int\limits_{\U(E(u_{1}),1)} (\iota(\gamma_{(z,v)}(t),\tau(G))+C_1)\, d\LM.
$$
\noindent
Combining this with  (\ref{A-estimate-3}) yields

$$
\int\limits_{E} \iota(\gamma_{(z,u_{1}(z))}(t),\tau(G))d\eta(u_1) \le (t+1)^{2}(3+t)+C_1\LM(\U(E(u_{1}),1)) \le 2t^{3} ,
$$
\noindent
for $t$ large enough.  Since $d\eta(u_1)=y^{-2}\, dxdy$ this gives 
$$
\int\limits_{4}^{5} A_1(t,y,c_i(S))\, dy \le 25\int\limits_{4}^{5} A_1(t,y,c_i(S))y^{-2}\, dy=
$$
\begin{equation}\label{A-estimate-4}
\end{equation}
$$
=25\int\limits_{4}^{5} \int\limits_{E} \iota(\gamma_{(z,u_{1}(z))}(t),\tau(G))d\eta(u_1) \le  t^{4} .
$$

From (\ref{A-estimate-2}) and  (\ref{A-estimate-4})  we obtain 
$$
\int\limits_{4}^{5} A_2(t,y,c_i(S))\, dy \le (t+1)^{4}.
$$
\noindent
Observe that for $y \in [4,5]$ we have $A_2(t,1,c_i(S)) \le A_2(t+\log y,y,c_i(S)) \le A_2(t+2,y,c_i(S))$. Therefore
$$
A_2(t,1,c_i(S)) \le \int\limits_{4}^{5} A_2(t+2,y,c_i(S))\, dy \le (t+3)^{4}.  
$$
\noindent
Combining this with (\ref{A-estimate-1}) we conclude that for every $y \ge 1$ we have 
$A_2(t,y,c_i(S))\le A_2(t,1,c_i(S)) \le (t+3)^{4}$. The second estimate in the proposition follows from the first one, and from (\ref{A-estimate-2}).

\end{proof}

\begin{remark} If instead of $u_1(z)$ we use $-u_1(z)$ to define $A_1(t,y,c_i(S))$ that is
$$
\wh{A}_1(t,y,c_i(S))=\int\limits_{0}^{1}\iota(\gamma_{(z,-u_{1}(z))}(t),\tau(G_{c_{i}(S)})) \, dx.
$$
\noindent
we obtain the same estimate $\wh{A}_1(t,y,c_i(S)) \le (t+3)^{4}+N(S)y$.
\end{remark}

\vskip .1cm

Again fix $c_i(S) \in \Cus(S)$. Let $z_t=\gamma_{(z,u_{1}(z))}(t)$  and $v_t=\gamma'_{(z,u_{1}(z))}(t)$. 
For $y \ge 1$ set
$$
I(t,y,c_i(S))=\int\limits_{0}^{1} 
\left( \iota(\gamma_{(z,u_{1}(z))}(t),\tau(G_{c_{i}}))+ \sum_{j=1}^{2} 
\iota(\gamma_{(z_{t},\omega^{j} (-v_{t}) )}(r^2+1),\tau(G_{c_{i}})) \right) \, dx.
$$
\noindent
Set $G=G_{c_{i}}(S)$. We want to estimate $I(t,y,c_i(S))$.   
For every $t \ge 0$ and $y \ge 1$  the set of points $\gamma_{(z_{t},\omega^{j} (-v_{t}))}(r^2+1)$ 
is a horocircle in $\Ha$ that bounds a horoball at $\infty$ (recall $z=x+iy$). 
Since $0<|\ang_{\infty}(z_t, (-v_t))| \le {{\pi}\over{2}}$ we see that
$|\ang_{\infty}(z_t,\omega^{j} (-v_t))| \ge {{\pi}\over{6}}$ for any value of $t$. This  implies that 
the point $\gamma_{(z_{t},\omega^{j} (-v_{t}))}(r^2+1)$  lies below the the horocircle $\partial{\Hor_{\infty}(\log y)}$ for $r$ large enough.
\vskip .1cm
Fix $y \ge 1$. Recall the functions $d(t)$ and $r(t)$ defined above (and the corresponding point $z'=z+yd(t)$). 
Similarly, we define the functions $d_j(t)$ and $r_j(t)$ as follows.  Let $z''_j \in \Ha$ so that $z''_j$ lies on the same horocircle $\partial{\Hor_{\infty}(\log y)}$ as the points $z$ and $z'$, and so that  
$\gamma_{(z''_{j},u_{2}(z''_{j}))}(r_j(t))=  \gamma_{(z_{t},\omega^{j} (-v_{t}))}(r^2+1) $. Then $z''_j=z+yd_j(t)$.
The functions $d_j(t)$ and $r_j(t)$ are increasing in $t$ and do not depend on $z$. It is elementary to verify that 
$-1 < d_j(t) < 2$ and $0 \le r(t)<t+r^2+1$. 
\vskip .1cm

As we explained (and stated) above from Proposition 7.4 we have 
$$
\iota(\gamma_{(z,u_1(z))}(t),\tau(G) ) \le  \iota(\gamma_{(z',u_{2}(z'))}(r(t)),\tau(G))+N(S)y.
$$
\noindent
Similarly we  apply Proposition 7.4 to the segment $\gamma_{(z_{t},\omega^{j} (-v_{t}) )}(r^2+1)$ and get

$$
\iota( \gamma_{(z_{t},\omega^{j} (-v_{t}) ) }(r^2+1),\tau(G)) \le  \iota(\gamma_{(z',u_{2}(z'))}(r(t)),\tau(G))+ \iota(\gamma_{(z''_{j},u_{2}(z''_{j}))}(r_{j}(t)),\tau(G)) +2N(S)y,
$$
\noindent
(here we used that the Euclidean length of the horocyclic segment between the points $z'$ and $z''_j$ is at most $2$).
This yields that
$$
I(t,y,c_i(S)) \le 3A_2(t,y,c_i(S))+2A_2(t+r^2+1,y,c_i(S))+5N(S)y,
$$
\noindent
so from Proposition 7.7, for $r$ large enough  we get
$$
I(t,y,c_i(S)) \le 3(t+3)^4+ 2(t+r^2+3)^4+5N(S)y,
$$
\noindent
that is
\begin{equation}\label{I-estimate}
I(t,y,c_i(S)) < 3(t+r^2+3)^4+ 5N(S)y.
\end{equation}

\begin{remark} If instead of $u_1(z)$ we use $-u_1(z)$ to define $I(t,y,c_i(S))$, that is 
$$
I(t,y,c_i(S))=\int\limits_{0}^{1} 
\left( \iota(\gamma_{(z,-u_{1}(z))}(t),\tau(G_{c_{i}}))+ \sum_{j=1}^{2} 
\iota(\gamma_{(z_{t},\omega^{j} (-v_{t}) )}(r^2+1),\tau(G_{c_{i}}) \right) \, dx.
$$
\noindent
we get that the same estimate (\ref{I-estimate}) holds.
\end{remark}

\vskip .1cm

We are ready to estimate the integral
$$
\sum_{i=2}^{3} \int\limits_{\E^{i}_{r}(S)} (\cl_r(z,v)+\cl_r(z,\omega v)+\cl_r(z,\omega^{2} v)+1) \varphi_r(z) \, d\LM.
$$
\noindent
Note that for $c_i(S) \in \Cus(S)$ the vector fields  $u_j(z)$  are well defined on $\TB\Hor_{c_{i}(S)}(0) \subset \TB{S}$.  
\vskip .1cm
Let $E=\Thick_S(12r) \setminus \Thick_S(0)$. 
Let $(z,v) \in \E^{2}_r(G)$. We also use $(z,v)$ to denote a lift of $(z,v)$ to $\Ha$. Then 
$$
r-r^2e^{-r}<\h_{\max }(\z_{\max}(\gamma_{(z,v)},c))-\h_{\max }(z)<r+r^2e^{-r}, 
$$
\noindent
and  $\z_{\max}(\gamma_{(z,v)},c) \in \Hor_{\infty}(1-r^2e^{-r})$ for some cusp  $c \in \Cus(G)$. Set $G=G_c$. It follows from (\ref{ang-estimate-3}) that for $r$ large enough, we have  $\z_{\max}(\gamma_{(z,v)},c))=\gamma_{(z,v)}(t)$ for some $(r-r^2e^{-r})+\log 2-e^{-r}<t<(r+r^2e^{-r})+\log 2+e^{-r}$. Combining this with the fact that $z \in \Thick_G(10r)$ (recall that $E^2_r(S) \subset \FD_r(S) \subset \TB\Thick_S(10r)$), 
this implies that $(z,-v) \in \U(E(u^{*}_1),r+\log 2 -2r^2 e^{-r} ,r+\log 2 +2r^2 e^{-r})$ where $u^{*}_1$ is either equal to $u_1$ or to $-u_1$.
\vskip .1cm
Recall that if  $(z,v) \in \E^{2}_r(S)$ then $(z,v)$ does not belong to  $\E^{1}_r(S)$ so for $j=0,1,2$, by Proposition 7.1 we have
$$
\cl_r(z, \omega ^{j} v)\le \iota(\gamma_{(z,\omega^{j} v ) }(r^2+1),\tau(S)).
$$
\noindent
In particular for $(z,v) \in \flow_t(E(u^{*}_1))$ we have the estimate
$$
\cl_r(z, v)\le \iota(\gamma_{(z,-v)}(r^2+1),\tau(S))<
\iota(\gamma_{(z,(-v))}(t),\tau(S))+\iota(\gamma_{-\psi(z,v)}(r^2+1),\tau(S) ),
$$
\noindent
where $\psi:\U(E(u^{*}_1), \, (r+\log 2 -2r^2 e^{-r}), \, (r+\log 2 +2r^2 e^{-r})) \to E(u^{*}_{1})$ is the projection map. Define
$f:\U(E(u^{*}_1), \, (r+\log 2 -2r^2 e^{-r}), \, (r+\log 2 +2r^2 e^{-r})) \to \R$ by 
$$
f(z,v)=1+\sum_{i=0}^{2}\cl_r(z, \omega ^{j} v),
$$
\noindent
and  $f_t:E(u^{*}_1) \to \R$ by 
$$
f_t(z)=1+(\iota(\gamma_{(z,u_{1}(z))}(t),\tau(S) )+\iota(\gamma_{(z,-u_{1}(z))}(r^{2}+1),\tau(S) )+
$$
$$
+\iota(\gamma_{(z_{t},\omega (-v_{t}) )}(r^2+1),\tau(S))+\iota(\gamma_{(z_{t},\omega^{2} (-v_{t}) )}(r^2+1),\tau(S)).
$$
\noindent
We apply (\ref{psi-1}) separately for $u^{*}_1=u_1$ and $u^{*}_1=-u_1$. Adding the two inequalities, we obtain 

$$
\int\limits_{\E^{2}_{r}(S)} (\cl_r(z,v)+\cl_r(z,\omega v)+\cl_r(z,\omega^{2} v)+1) \varphi_r(z) \, d\LM < 
\int\limits_{\E^{2}_{r}(S)} (\cl_r(z,v)+\cl_r(z,\omega v)+\cl_r(z,\omega^{2} v)+1)\, d\LM \le
$$
$$
2\int\limits_{r+\log 2 -2r^2 e^{-r}}^{r+\log 2 +2r^2 e^{-r}} \left( \int\limits_{E} f_t d\eta(u_1)\right)dt.
$$
\noindent
Passing to the universal cover for each cusp $c_i(S)$ and applying the Fubini theorem to the last integral, yields  that (recall $\wh{A}_1(t,y,c_i(S))$ from the remark after the proof of Proposition 7.7) 
$$
\int\limits_{E} f_t d\eta(u_1) \le \numb \int\limits_{1}^{e^{12r}} \left( 1+I(t,y,c_i(S))+\wh{A}_1(r^2+1,y,c_i(S)) \right)y^{-2}\, dy
< 
$$
$$
< \numb \int\limits_{1}^{e^{12r}} \left( 1+ (t+r^2+3)^4+ 5N(S)+(t+3)^{4}+N(S) \right) \, {{dy}\over{y}},
$$
\noindent
where $\numb$ is the number of cusps in $\Cus(S)$. By using (\ref{I-estimate}) and Proposition 7.7 and since $t \le r+1$ 
we have
$$
\int\limits_{\E^{2}_{r}(S)} (\cl_r(z,v)+\cl_r(z,\omega v)+\cl_r(z,\omega^{2} v)+1) \varphi_r(z) \, d\LM<
$$
\begin{equation}\label{E-2-estimate}
\end{equation}
$$
<\int\limits_{r+\log 2 -2r^2 e^{-r}}^{r+\log 2 +2r^2 e^{-r}} \left( \int\limits_{1}^{12r}
\big( 1+ (t+r^2+3)^4+ 5N(S)+(t+3)^{4}+N(S) \big){{dy}\over{y}}\right)dt \le P(r)e^{-r}.
$$
\noindent
for $r$ large enough. 
\vskip .3cm

Let $E=\Thick_S(1+r^2e^{-r})  \setminus \Thick_S(1-r^2e^{-r})$. 
Recall that if  $(z,v) \in \E^{3}_r(G)$  then  for some $r-1<t<r^2+1$ and some $c \in \Cus(G)$ we have that $\z_{\max}(\gamma_{(z,v)},c)=\gamma_{(z,v)}(t)$ and 
$$
1-r^2e^{-r}<\h_c(\z_{\max}(\gamma_{(z,v)},c))<1+r^2e^{-r}.
$$
\noindent
This implies that if $(z,v) \in \E^{3}_r(S)$ then  $(z,-v) \in \U(E(u^{*}_1), \, (r-1) , \, (r^2+1))$ where $u^{*}_1$ is either equal to $u_1$ or to $-u_1$.
\vskip .1cm
Recall that if  $(z,v) \in \E^{3}_r(S)$ then $(z,v)$ does not belong to  $\E^{1}_r(S)$,  for $j=0,1,2$. 
By Proposition 7.1 we have
$$
\cl_r(z, \omega ^{j} v)\le \iota(\gamma_{(z,\omega^{j} v) }(r^2+1),\tau(S)).
$$
\noindent
In particular, for $(z,-v) \in \flow_t(E(u^{*}_1))$ we have the estimate
$$
\cl_r(z, v)\le \iota( \gamma_{(z,v)}(r^2+1),\tau(S))
<\iota(\gamma_{(z,v)}(t),\tau(S) )+\iota(\gamma_{-\psi(z,v)}(r^2+1),\tau(S) ),
$$
\noindent
where $\psi:\U(E(u^{*}_1),\, (r-1), \, (r^2+1)) \to E(u^{*}_{1})$ is the projection map. Define the maps
$f:\U(E(u^{*}_1),\, (r-1) , \, (r^2+1)) \to \R$ and  $f_t:E(u^{*}_1) \to \R$ as above.
We apply (\ref{psi-2}) separately for $u^{*}_1=u_1$ and $u^{*}_1=-u_1$. Adding the two inequalities we obtain 
$$
\int\limits_{\E^{3}_{r}(S)} (\cl_r(z,v)+\cl_r(z,\omega v)+\cl_r(z,\omega^{2} v)+1) \varphi_r(z) \, d\LM < 
\int\limits_{\E^{3}_{r}(S)} (\cl_r(z,v)+\cl_r(z,\omega v)+\cl_r(z,\omega^{2} v)+1)\, d\LM \le
$$
$$
2\int\limits_{r-1}^{r^2+1} \left( \int\limits_{E} f_t d\eta(u_1)\right)dt.
$$
\noindent
Passing to the universal cover for each cusp $c_i(S)$ and applying the Fubini theorem to the last integral, yields  that 
$$
\int\limits_{E} f_t d\eta(u_1) \le \numb \int\limits_{e^{1-r^{2}e^{-r} }}^{e^{1+r^{2}e^{-r} } } \big( 1+I(t,y,c_i(S))+
\wh{A}_1(r^2+1,y,c_i(S)) \big)y^{-2}\, dy ,
$$
\noindent
where $\numb$ is the number of cusps in $\Cus(S)$. By using (\ref{I-estimate}) and Proposition 7.7 we have

$$
\int\limits_{\E^{3}_{r}(S)} (\cl_r(z,v)+\cl_r(z,\omega v)+\cl_r(z,\omega^{2} v)+1) \varphi_r(z) \, d\LM<
$$

\begin{equation}\label{E-3-estimate}
\end{equation}
$$
<2\numb \int\limits_{r-1}^{r^2+1}      \left( \int\limits_{e^{1-r^{2}e^{-r} }} ^{e^{1+r^{2}e^{-r}} }
\big(1+ (t+r^2+3)^4+ 5N(S)+(t+3)^{4}+N(S)  \big)  \, {{dy}\over{y}} \right)dt  =\pe(r),
$$
\noindent
since $t \le r^2+1$. 
\vskip .1cm
Putting together (\ref{E-1-estimate}), (\ref{E-2-estimate}), and (\ref{E-3-estimate}), and replacing these in 
(\ref{H-estimate}), we get
$$
\int\limits_{H_{r}} (\cl_r(z,v)+\cl_r(z,\omega v)+1) \varphi_r(z) \, d\LM =\pe(r).
$$
\noindent
which proves Lemma 5.3.

\section{Appendix}
To prove Lemma 5.1 it is enough to prove the following somewhat more general theorem.

\begin{theorem} Let $S$ be a hyperbolic non-compact finite-area Riemann surface and let $K \subset S$ be compact. Then we can find $C, q> 0$ such that
$$
A_t = \{ (z, v) \in \TB{S} : \,  \gamma_(z, v)[0, t] \subset K \}.
$$
satisfies $\LM(A_t(K)) \le C e^{-qt}$ for any $t \ge 0$.
\end{theorem}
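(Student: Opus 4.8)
\emph{Plan of proof.} Since $S$ is non-compact and of finite area, any compact $K\subset S$ is contained in $\Thick_S(\sigma_0)$ for $\sigma_0$ large, and $A_t(K)\subset A_t(\Thick_S(\sigma_0))$; so it suffices to treat $K=\Thick_S(\sigma_0)$, which I fix from now on. Write $\Thin=S\setminus\Thick_S(\sigma_0)$, a disjoint union of open horoball neighbourhoods of the cusps of $S$, so that $\Thick_S(\sigma_0)$ is compact and
$$A_t=\{(z,v)\in\TB{S}:\ \gamma_{(z,v)}[0,t]\cap\Thin=\emptyset\}=A_t(\Thick_S(\sigma_0)).$$
The goal is then $\LM(A_t)\le Ce^{-qt}$.

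First I would reduce this to producing constants $T_0>0$ and $c\in(0,1)$, depending only on $S$ and $\sigma_0$, with $\LM(A_{t+T_0})\le(1-c)\LM(A_t)$ for all $t\ge 0$: iterating gives $\LM(A_{NT_0})\le(1-c)^N\LM(\TB{S})$, and since $t\mapsto A_t$ is non-increasing this yields $\LM(A_t)\le(1-c)^{-1}e^{-qt}\LM(\TB{S})$ with $q=-T_0^{-1}\log(1-c)$.

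The one elementary ingredient is a uniform escape estimate: there are $T_0,\epsilon_0>0$, depending only on $S$ and $\sigma_0$, so that for every $w\in\Thick_S(\sigma_0)$ the set of unit directions $v$ at $w$ with $\gamma_{(w,v)}[0,T_0]\cap\Thin\ne\emptyset$ has angular measure at least $\epsilon_0$ in the fibre over $w$. This is immediate from $\Thick_S(\sigma_0)$ having finite diameter: some point $p_w\in\Thin$ lies at distance at most $\mathrm{diam}\,\Thick_S(\sigma_0)+1$ from $w$, the directions from $w$ meeting a fixed small ball about $p_w$ form a cone of angular size bounded below, and that ball is reached in time $T_0:=\mathrm{diam}\,\Thick_S(\sigma_0)+2$.

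The hard part — and what I expect to be the main obstacle — is passing from this per-point statement to the measure contraction, since the fibres of $\flow_t(A_t)$ can be highly non-uniform (e.g.\ concentrated near a closed geodesic in the core). Here I would use the hyperbolicity of the geodesic flow: $\flow_t(A_t)$ is exactly the set of vectors whose backward $t$-trajectory avoids $\Thin$, and, read along the strong unstable (horocyclic) foliation, which the geodesic flow expands by the factor $e^{t}$ in constant curvature $-1$, it is spread along long unstable horocyclic arcs lying in $\TB\Thick_S(\sigma_0)$. The assertion to be proved is then: a definite proportion of any sufficiently long unstable horocyclic arc contained in $\TB\Thick_S(\sigma_0)$ enters $\TB\Thin$ within time $T_0$. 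I would derive this from the dichotomy for long horocyclic arcs on a finite-area hyperbolic surface — either the arc equidistributes well enough that a definite proportion of it already sits in a slightly enlarged thin part (hence in $\Thin$ after a bounded time), or it stays uniformly close to a closed horocycle; but a closed horocycle on a cusped surface is some $\partial\Hor_{c_i}(h)$ with $h\le\sigma_0$, whose backward geodesic limits on the cusp $c_i$, so the forward geodesic flow decreases its height linearly and evacuates it, together with the nearby arc, from $\Thick_S(\sigma_0)$ in bounded time. The delicate point is the quantitative, distortion-controlled version of this dichotomy, ensuring that the ``close to a closed horocycle'' alternative contributes only an exponentially small proportion. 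As a fallback one can instead deduce the contraction $\LM(A_{t+T_0})\le(1-c)\LM(A_t)$ from exponential mixing (a spectral gap) of the geodesic flow, applied to mollified indicators of $\Thick_S(\sigma_0)$ and iterated, the only care being the mollification scale against the stable/unstable-adapted boundary of $A_{NT_0}$. In either case the exponential bound on $\LM(A_t)$, and hence Lemma 5.1, follows.
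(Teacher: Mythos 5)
Your reduction to a one-step contraction $\LM(A_{t+T_0})\le(1-c)\LM(A_t)$ and the per-point escape-cone estimate are both fine, but they are the easy part; the entire content of the lemma sits in the step you yourself flag as ``delicate'' and do not prove, namely passing from the pointwise cone estimate to the measure contraction. Neither of the two routes you sketch closes it. For the unstable-foliation route: $\flow_t(A_t)$ is not a union of long unstable horocyclic arcs contained in $\TB\Thick_S(\sigma_0)$ — it is only approximately unstable-saturated at small scale (and only with respect to a slightly enlarged $K$), and the long arcs you want come from pushing forward small unstable plaques that need not lie in $A_t$; making this honest requires conditional measures of $\LM$ along the unstable foliation together with bounded distortion, none of which is set up. Worse, the statement you reduce to — a definite proportion of \emph{every} sufficiently long unstable arc lying in $\TB\Thick_S(\sigma_0)$ reaches the thin part in bounded time — is precisely a uniform quantitative equidistribution/dichotomy theorem for horocycle arcs (Sarnak, Dani--Smillie, Str\"ombergsson, Flaminio--Forni type), whose error terms depend on the excursion heights of the arc, and you give neither a proof nor the uniformity needed to extract a single constant $c$. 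The mixing fallback has the same defect: two-fold exponential mixing applied to mollified indicators does not bound $\LM\bigl(\bigcap_{k\le N}\flow_{-kT_0}(\TB K)\bigr)$ by naive iteration, because the H\"older (or $C^1$) norm of the running product $\prod_{k<N}\psi\circ\flow_{kT_0}$ grows at the expansion rate and swamps the correlation decay; controlling this needs anisotropic norms or a conditioned transfer operator, i.e.\ essentially a separate and heavy proof that is not given. So the proposal has a genuine gap at its central step.

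For contrast, the paper's proof is elementary and bypasses equidistribution and spectral theory entirely: each ray is coded by its cutting sequence $R^{a_1}L^{a_2}\cdots$ across a fixed ideal triangulation; while the ray stays in $\Thick_S(h_0)$ the successive crossing times are pinched between two constants and the turning exponents satisfy $a_i\le B(h_0)$; the endpoints on $\partial\Ha$ with prescribed initial turning data form a nested family of intervals, and a cross-ratio/finiteness argument shows that two further symbols shrink each interval by a uniform factor $\eta<1$, giving exponential decay of the boundary measure of the admissible set, hence of $\LM(A_t)$. If you want to rescue your plan, the cheapest repair is to replace the equidistribution/mixing input by such a coding-plus-bounded-distortion argument; otherwise you must actually develop the conditional-measure and quantitative-dichotomy (or anisotropic mixing) machinery, which is far more than this lemma requires.
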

\begin{proof} We may assume that $K = \Thick_S(h_0)$ for some $h_0 > 0$. We fix a proper ideal triangulation $\tau$ for $S$,  and let $\lambda = \lambda(\tau)$ denote the set of geodesic edges of $\tau$. For any $(z, v) \in \TB{S}$, we can record the sequence of left and right turns taken by the geodesic ray  $\gamma_{(z, v)}[0, \infty)$ to obtain a sequence $R^{a_1}L^{a_2}\ldots$, where $a_i=a_i(z,v) \in \N$. This sequence is finite if and only if $\p(z, v) \in \Cus(G)$. We let $(s_i)_{i=0}^\infty$ be the times such that  $\gamma_{(z, v)}(s_i) \subset \lambda$,
and we observe that we can find $C_1 > C_0 > 0$  ($C_i = C_i(S, \Thick_S(h_0))$) such that $C_0 < s_{i+1} - s_i < C_1$ as long as $\gamma_{(z,v)}[s_i, s_{i+1}] \subset \Thick_S(h_0)$. Moreover,  if $\gamma_{(z,v)}[0, s(\sum_{i=1}^k a_i)] \subset \Thick_S(h_0)$ (where $s(i) = s_i$), then $a_i \le B$ for $i = 1, \dots, k$, where $B \in \N$, depends only on $\Thick_S(h_0)$.
\vskip .1cm
We let
$$
V_n(B) = \{ (z, v) \in \Thick_S(h_0):\,  a_1, \ldots ,  a_{n+1} \text{ is well-defined, and $a_i \le B$ for $i = 1, \ldots n$}  \}.
$$
\noindent  
We will show that there are $C, q > 0$ such that $\LM(V_n(B)) \le Ce^{-qn}$;
this will complete the proof of the theorem. 
\vskip .1cm
To this end,  we fix a triangle $T \in \tau_G$  ($\tau_G$ is the lift of $\tau$ to $\Ha$).
We observe that if $(z, v) \in T$, then the sequence $(a_i(z, v))$ depends only on $\p(z, v)$. We let $W_n(B) = \p(V_n(B))$.
For any $(b_i)_{i=1}^k$, where $b_i$ are positive integers, the set
$$
\{ \p(z, v) :\, (a_i(z, v))_{i=1}^{k+1} \text{ is well-defined, and $a_i = b_i$ for $1 \le i \le k$} \}
$$
\noindent
is an open interval in $\partial \Ha$ whose endpoints are joined by an element of $\lambda_G$.
Therefore $W_n(B)$ is a disjoint union of $B^n$ such intervals.
For any such interval $I \subset W_n(B)$, we let $J \subset U$ be the least interval such that $I \cap W_{n+2}(B) \subset J$. The following two facts are central to our argument:
\begin{enumerate}
\item  The closure of $J$ is a compact subset of $I$.
\item The cross ratio $R(J, I)$ can take on only finitely many values (depending only on $\tau$ and $B$).
\end{enumerate}
Here
$$
R(J, I) \equiv \frac{(j_1 - j_0)(i_1 - i_0)}{(j_0 - i_0)(i_1 - j_1)},
$$
\noindent
where $I = (i_0, i_1)$ and $J = (j_0, j_1)$, is a M\"obius invariant of $(J, I)$. It follows that 
$$
\frac {|J|}{|I|} \le \eta(\tau, B) < 1,
$$
\noindent
where we fix a unit disk model $\D$ for $\Ha$, and let $|J|$ be the arc length for $J\subset \partial \D$.
Therefore
$$
|W_{n+2}(B) \cap I| \le \eta(\tau, B)|I|,
$$
\noindent  
so
$$
|W_{2n}(B)| \le 2\pi (\eta(\tau, B))^{n} = 2\pi e^{-nq},
$$
\noindent  
where $q=-\log \eta(\tau, B)>0$. Since  for any interval $A \subset \partial \D$ we have
$$
\LM(\{(z, v)\in \Thick_G(h_0)\cap T: \,  \p(z, v) \in A \}) \le C(\Thick_S(h_0),\tau) |A|,
$$ 
\noindent
we find that 
$$
|V_{2n}(B)| \le Ce^{-nq},
$$
\noindent  
where $C$ depends only on $\Thick_S(h_0)$ and $\tau$.

\end{proof}

\end{document}